\documentclass[12pt]{article}
\usepackage{amsmath,amssymb,amsthm,mathtools,mathrsfs,latexsym}
\usepackage{graphicx}
\usepackage{xcolor}
\usepackage{tikz}
\usepackage[linkcolor=blue,colorlinks=true,urlcolor=red,bookmarksopen=true]{hyperref}
\usepackage{newclude}
\usepackage{imakeidx}
\makeindex
\usepackage{indentfirst}
\usepackage{bm}

\allowdisplaybreaks

\title{Global-in-time strong solutions for the 2D and 3D generalized compressible Navier-Stokes-Korteweg system with arbitrarily large initial data}
\date{}
\author{
	\bf\large Yongteng Gu$^{a}$, Xiangdi Huang$^{a}$\thanks{E-mail addresses: xdhuang@amss.ac.cn (X. Huang); guyongteng@amss.ac.cn (Y. Gu); mengweili@amss.ac.cn (W. Meng); zhouhuitao141@163.com (H. Zhou).}, Weili Meng$^a$, Huitao Zhou$^{a}$\\
	\small a. State Key Laboratory of Mathematical Sciences, Academy of Mathematics and Systems Science,\\
	\small Chinese Academy of Sciences, Beijing 100190, China;\\
}

\newcommand{\divg}{{\rm  div}}

\let\div\relax
\DeclareMathOperator*{\div}{div}

\usepackage{hyperref}
\hypersetup{hypertex=true,
	colorlinks=true,
	linkcolor=blue,
	anchorcolor=blue,
	citecolor=blue}

\usepackage{color}
\newtheorem{thm}{Theorem}[section]
\newtheorem{corl}{Corollary}[section]
\newtheorem{lema}{Lemma}[section]
\newtheorem{prop}{Proposition}[section]
\newtheorem{defi}{Definition}[section]
\newtheorem{rmk}{Remark}[section]

\newtheorem{cond}{Condition}[section]

\allowdisplaybreaks

\begin{document}
	\maketitle %显示标题
	\begin{abstract}
		In 1901, Korteweg formulated a constitutive equation for the Cauchy stress tensor to provide a continuum mechanical model for capillarity within fluids. Dunn and Serrin [Arch. Ration. Mech. Anal. 88(2):95-133,1985] in 1985 further modified the system of compressible fluids based on the Korteweg theory of capillarity. Since then, for the 2D and 3D compressible Navier-Stokes-Korteweg system, the global existence of strong solutions with arbitrarily large initial data has remained a challenging open problem. In this paper, under the assumption that the viscosity coefficients satisfy a BD-type algebraic relation of the form $\mu(\rho)=\nu\rho^{\alpha}$ and $\lambda(\rho)=2\nu(\alpha-1)\rho^{\alpha}$, and that the Korteweg stress tensor complies with a generalized Bohm identity of the form $\kappa(\rho)=\varepsilon^2\alpha^2\rho^{2\alpha-3}$,  we establish the global existence of strong solutions for the 2D and 3D systems on the torus with arbitrarily large regular initial data, thereby providing an affirmative answer to this longstanding problem under such structural assumptions which can be regarded as a natural extension of the quantum compressible Navier-Stokes equations in the presence of the Bohm potential. The analysis is carried out in the intermediate non-dispersive regime, characterized by the condition that the capillarity constant $\varepsilon$ does not exceed the viscosity constant $\nu$. This result provides the first proof of the global-in-time existence of strong solutions for the 3D general Navier-Stokes-Korteweg system with arbitrarily large initial data in the non-dispersive regime. \\[4mm]
		{\bf Keywords:} compressible Navier-Stokes-Korteweg system; density-dependent viscosity; global strong solutions; large initial data.\\[4mm]
		{\bf Mathematics Subject Classifications (2020):} 35D35; 35Q30; 35Q35; 35Q40; 76N10.\\[4mm]
	\end{abstract}
	
	\tableofcontents 
	
	\section{Introduction}
	In this paper, we consider the following compressible Navier-Stokes-Korteweg system:
	\begin{equation}
		\label{Equ1}
		\left\{
		\begin{array}{l}
			\rho_t+\div(\rho u)=0,\\
			(\rho u)_t+\divg(\rho u\otimes u)+\nabla P=\div(2\mu(\rho) \mathbb{D}u)+\nabla(\lambda(\rho)\divg u)+\div\mathbb{K},
		\end{array}
		\right.
	\end{equation}
	where $t$ is time, $x$ is the spatial coordinate, and $\rho, u=(u_1,\dots,u_N)^{t}(N=2,3),$ and  $P=\rho^\gamma(\gamma\ge 1)$  represent respectively the fluid density, velocity and pressure; $\mathbb{D} u$ is the deformation tensor given by
	\begin{align*}
		\mathbb{D} u=\frac{\nabla u+(\nabla u)^{t}}{2}.
	\end{align*}
	The density-dependent viscosity coefficients  $\mu(\rho)$ and $\lambda(\rho)$ satisfy the physical constraints
	\begin{align*}
		\mu(\rho)\ge 0, \quad 2\mu(\rho)+N\lambda(\rho)\ge 0.
	\end{align*}
	The capillarity term $\mathbb{K}$ is defined as
	\begin{align*}
		\mathbb{K}=\Big(\rho \divg (\kappa(\rho)\nabla\rho)-\frac{\rho\kappa'(\rho)-\kappa(\rho)}{2}|\nabla\rho|^2\Big)\mathbb{I}
		-\kappa(\rho)\nabla\rho\otimes\nabla\rho,
	\end{align*}
	where $\mathbb{I}$ is the identity matrix. Accordingly,
	\begin{align*}
		\divg \mathbb{K}=\nabla\left(\rho\kappa(\rho)\Delta \rho+\frac{\kappa(\rho)+\rho\kappa'(\rho)}{2}|\nabla\rho|^2\right)-\divg (\kappa(\rho)\nabla\rho\otimes\nabla\rho).
	\end{align*}
	
	The system \eqref{Equ1} originates from the work of \cite{Korteweg}, who in 1901 introduced a stress tensor incorporating density gradients to model capillarity, a framework further refined by Dunn and Serrin \cite{Dunn-Serrin}. Within this framework, if we consider the situation where $\kappa(\rho)=0$, system \eqref{Equ1} degenerates into the compressible Navier-Stokes equations with density-dependent viscosities. For such systems, provided the viscosity coefficients satisfy the Bresch-Desjardins (BD) relation $\lambda(\rho) = 2\rho\mu'(\rho) - 2\mu(\rho)$, a critical mathematical entropy estimate can be obtained \cite{Bresch-Desjardins-Lin,Bresch-Desjardins}. Subsequently, the global existence of weak solutions in multiple dimensions for flows obeying the BD relation has been extensively studied. This includes the work of Guo–Jiu–Xin \cite{Guo-Jiu-Xin} on spherically symmetric finite-energy weak solutions, Vasseur–Yu \cite{Vasseur-Yu} for the viscous Saint-Venant model ($\mu(\rho)=\rho, \lambda(\rho)=0$), and Li–Xin \cite{Li-Xin} for general viscosities $\mu(\rho)=\rho^\alpha, \lambda(\rho)=2(\alpha-1)\rho^\alpha$. Further developments were achieved by Bresch–Vasseur–Yu \cite{Bresch-Vasseur-Yu} in the context of a general physical symmetric viscous stress tensor, and by Huang–Meng–Zhang \cite{Huang-Meng-Zhang-111} for spherically symmetric weak solutions with higher regularity, incorporating the phenomenon of vacuum vanishing in such weak solutions. Despite these advancements, the global existence of strong or smooth solutions for the compressible Navier–Stokes system with density-dependent viscosity—even for the specific form $\mu(\rho)=\rho^\alpha$, $\lambda(\rho)=2(\alpha-1)\rho^\alpha$—has long remained a challenging open problem for arbitrarily large initial data in multiple dimensions. Only recently has significant progress been made. Zhang \cite{Zhang} pioneered the field by proving that for $\alpha < 1$ (subject to certain exponent constraints), arbitrarily large spherically symmetric initial data give rise to a unique global classical solution. Huang–Meng–Zhang \cite{Huang-Meng-Zhang-111} subsequently improved this result via a parameter continuation argument and extended its validity to the critical case $\alpha = 1$ in two dimensions (the viscous Saint-Venant equations).  Concurrently, Chen–Zhang–Zhu \cite{Chen-Zhang-Zhu} established the unique global regular solution for large spherically symmetric data to the viscous Saint-Venant equations using a distinct approach. Gu–Huang \cite{Gu-Huang} further extended the $\alpha = 1$ result from \cite{Huang-Meng-Zhang-111} to the three-dimensional case and generalized the framework to non-isentropic flows involving entropy transport. Notably, in all these recent breakthroughs, the solutions are established for arbitrarily large initial data without requiring smallness assumptions. However, the global existence of smooth or strong solutions for general (non-symmetric) large initial data when $\alpha \leq 1$, or even for spherically symmetric data when $\alpha > 1$, remains an open question to this day.
	\par
	In the case where
	\begin{align*}
		\mu(\rho)=\nu\rho,\quad\lambda(\rho)=0,\quad\kappa(\rho)=\frac{\varepsilon^2}{\rho},
	\end{align*}
	with constants $\nu>0$ and $\varepsilon>0$, system \eqref{Equ1} reduces to the quantum Navier-Stokes equations. We focus particularly on recent results regarding the case $\nu = \varepsilon>0$. Regarding the one-dimensional quantum Navier–Stokes system, Jüngel \cite{Jüngel} proved the global existence of smooth solutions on $\mathbb{R}$ with strictly positive particle densities, valid for arbitrarily large initial data. In higher dimensions, Yu-Wu \cite{Yu-Wu-2021} established a blow-up criterion based on the lower bound of the density. However, the global existence of strong/smooth solutions for arbitrarily large initial data remained unproven. This challenge was recently addressed by Huang–Meng–Zhang \cite{Huang-Meng-Zhang-31}, who employed a modified Nash–Moser iteration to derive a crucial relationship linking the $L^\infty$ estimate of the effective velocity to the lower bound of the density.  From this, they obtained a positive lower bound for the density, which in turn yielded the global-in-time existence of strong solutions for arbitrarily large initial data in periodic domains. Furthermore, Huang–Gu–Lei \cite{Huang-Gu-Lei} proved the global existence of strong solutions for large initial data in the whole space by utilizing a precise density truncation combined with a refined De Giorgi iteration. It is worth noting that both results establish the global existence of large-data solutions far from vacuum in general domains, achieved without imposing any symmetry or special geometric assumptions on the initial data.\par
	
	Turning to the general Navier-Stokes-Korteweg system, a substantial body of literature is also available. Regarding the one-dimensional case, for system \eqref{Equ1} featuring density-dependent viscosity and capillarity coefficients, Germain-LeFloch \cite{Germain-LeFloch} established the global existence of finite-energy weak solutions for the Cauchy problem, further proving their convergence toward Euler system entropy solutions. Additionally, the vanishing capillarity limit was investigated by Burtea-Haspot \cite{Burtea-Haspot}. More specifically, Antonelli, Bresch, and Spirito \cite{Antonelli-Bresch-Spirito} obtained global weak solutions for the periodic problem with large data, provided that the viscosity and capillarity coefficient satisfy certain prescribed conditions. In the multi-dimensional setting, the local-in-time existence of strong solutions for the compressible Korteweg model was first established by Hattori and Li \cite{Hattori-Li} for the Cauchy problem, provided the initial density is strictly away from vacuum. This local theory was subsequently extended to the initial-boundary-value problem by Kotschote \cite{Kotschote}. Later, Haspot \cite{Haspot} studied the existence of both local and global strong solutions under the assumption of small initial data, considering various types of density-dependent viscosity and capillarity coefficients.
	\par
	As previously noted, for the case where $\mu(\rho)=\nu\rho$, $\lambda(\rho)=0,\kappa(\rho)=\varepsilon^2/\rho$ and $\nu=\varepsilon$, Huang et al. established the global existence of strong solutions for the multi-dimensional Navier–Stokes–Korteweg system with arbitrarily large initial data in both periodic domains and the whole space \cite{Huang-Gu-Lei, Huang-Meng-Zhang-31}. Therefore, a natural question is whether we can establish the global existence of strong solutions with arbitrarily large initial data for the NSK system \eqref{Equ1} with general viscosity and capillary coefficients. This is precisely the main purpose of the present paper.
	
	Throughout this paper, we assume that the viscosity coefficients $\mu(\rho), \lambda(\rho)$ and the capillary coefficient $\kappa(\rho)$ satisfy
	\begin{align}\label{vis coff}
		\mu(\rho)=\nu\rho^\alpha,\quad\lambda(\rho)=2\nu(\alpha-1)\rho^\alpha,\quad\kappa(\rho)=\varepsilon^2\alpha^2\rho^{2\alpha-3},\quad \nu\ge\varepsilon>0,
	\end{align}
	where $\nu$ is the viscosity constant and $\varepsilon$ is the capillarity constant. We study system \eqref{Equ1}--\eqref{vis coff} with prescribed initial data \((\rho_0,u_0)\), which are periodic with period $1$ in each spatial direction. We require that
	\begin{align}\label{ini data}
		\rho(x,0)=\rho_0(x),\quad u(x,0)=u_0(x),\quad x\in\mathbb{T}^N,
	\end{align}
	where \(\mathbb{T}^N=\mathbb{R}^N/\mathbb{Z}^N\).
	
	Before stating our main results, we introduce the following notation and conventions throughout this paper:
	\begin{align*}
		\int fdx=\int_{\mathbb{T}^N} fdx,\quad \int_{Q_T}f dxdt=\int_0^T\int f dxdt,\quad Q_T=\mathbb{T}^N\times[0,T].
	\end{align*}
	For \(1\le s\le\infty\) and $k\in\mathbb{N}^+$, we denote the standard Lebesgue and Sobolev spaces as follows:
	\begin{align*}
		L^s=L^s(\mathbb{T}^N),\quad W^{k,s}=W^{k,s}(\mathbb{T}^N),\quad H^k=W^{k,2}.
	\end{align*}
	For \(1\le a,s\le\infty\), \(k\in\mathbb N^+\) and \(T>0\), we denote
	\begin{align*}
		\|f\|_{L^a_TL^s}=\|f\|_{L^a(0,T;L^s)},\quad\|f\|_{L^a_TW^{k,s}}=\|f\|_{L^a(0,T;W^{k,s})}.
	\end{align*}
	The constants $C$ appearing in the proofs may vary from line to line and generally depend on quantities such as certain norms of the initial data or the system parameters stated in the corresponding proposition or lemma.
	
	We now state the main results of this paper.
	\begin{thm}\label{Thm 1.1}
		Let $N\in \{2,3\}$ and $\beta=\sqrt{1-\frac{\varepsilon^2}{\nu^2}}$. Assume that $(\alpha,\gamma,\beta)$ satisfies
		\begin{align}
			&N=2,\quad\alpha\in \Big(\frac{\sqrt{5}-1}{2},1\Big), \quad\gamma\in[1,\infty),\quad \beta\in [0,\beta_{2}^+(\alpha));\label{2d gamma}\\
			&N=3,\quad \alpha\in\left(
			\frac{9\sqrt3-4\sqrt2}{9\sqrt3-2\sqrt2},\,1
			\right),\quad\gamma\in \Big[1,\frac{15\alpha-7}{3}\Big),\quad \beta\in[0,\beta_{3}^+(\alpha)), \label{3d gamma}
		\end{align}
		where $\beta_{N}^+(x)\in(0,1)$ denotes the unique positive root of the following quadratic equation:
		\begin{align*}
			\frac{2x}{1-x}=\frac{4(1-\beta_{N}^+(x))(Nx-N+1)}{(\beta_{N}^+(x)+\sqrt{N}(1-x)(1-\beta_{N}^+(x)))^2}.
		\end{align*}
		Moreover, assume that the initial data $(\rho_0,u_0)$ satisfy 
		\begin{align}\label{ini rho_0, u_0}
			0<\underline{\rho_0}\le \rho_0\leq \overline{\rho_0},\quad\rho_0\in H^3,\quad u_0\in H^2,
		\end{align}
		where $\underline{\rho_0}$ and $\overline{\rho_0}$ are positive constants. Then the problem \eqref{Equ1}--\eqref{ini data} admits a unique global strong solution $(\rho,u)$ satisfying, for any $T\in(0,\infty)$ and $(x,t)\in \mathbb{T}^N\times[0,T]$,
		\begin{align}\label{232}
			\left\{
			\begin{array}{l}
				(C(T))^{-1}\leq \rho(x,t)\leq C(T),\\
				\rho\in C([0,T];H^3)\cap L^2(0,T;H^4),\ \rho_t\in C([0,T];H^1)\cap L^2(0,T;H^2),\\
				u\in C([0,T];H^2)\cap L^2(0,T;H^3),\ u_t\in L^\infty(0,T;L^2)\cap L^2(0,T;H^1),
			\end{array}
			\right.
		\end{align}
		where the constant $C(T)>0$ depends on the initial data, $N,\gamma,\alpha,\nu,\varepsilon$, and $T$.
	\end{thm}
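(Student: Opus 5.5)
The argument is the standard continuation scheme: local well-posedness plus a priori bounds on any interval $[0,T]$ that depend only on $T$ and the data. Local existence and uniqueness of a strong solution in the class \eqref{232}, with density away from vacuum, follows from the Hattori--Li/Kotschote theory adapted to $\mathbb{T}^N$. The blow-up criterion is the analogue of Yu--Wu: the solution extends as long as $\rho$ stays bounded above and below and suitable norms of the effective velocity remain bounded. So the whole task is to prove $C(T)^{-1}\le\rho\le C(T)$ together with higher-order bounds.

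The key structural step is to introduce the effective velocity. Let
\[
\varphi(\rho)=\rho^{\alpha-1}\quad(\text{so } \nabla\varphi=(\alpha-1)\rho^{\alpha-2}\nabla\rho).
\]
The BD velocity is $v=u+\nu\frac{\alpha}{\alpha-1}\nabla\varphi = u+\nu\alpha\rho^{\alpha-2}\nabla\rho$. Under the generalized Bohm relation $\kappa=\varepsilon^2\alpha^2\rho^{2\alpha-3}$ the capillary term can be written as $\varepsilon^2\operatorname{div}(\rho\nabla^2\phi)$-type with $\phi=\alpha\rho^{\alpha-1}/(\alpha-1)$. I would then form the combination
\[
w=u+c\,\nabla\phi,\qquad c=\nu(1-\beta) \ \text{ or } \ \nu(1+\beta),
\]
with $c$ a root of $c^2-2\nu c+\varepsilon^2=0$, hence $\beta=\sqrt{1-\varepsilon^2/\nu^2}$. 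With this choice $w$ satisfies a parabolic system
\[
\rho(w_t+u\cdot\nabla w)-\operatorname{div}(\tilde\nu\rho^\alpha\nabla w)+\dots+\nabla P=0,
\]
and $\rho$ satisfies a degenerate parabolic equation driven by $w$. The two steps are then:

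\begin{itemize}
\item Apply the energy and BD-entropy estimates, i.e.\ the $L^2$-type bounds for $\sqrt\rho u$, $\sqrt\rho\nabla\phi$, $\rho^{\gamma}$.
\item Derive $L^p$ estimates of $\rho|w|^p$ by testing the $w$-equation with $|w|^{p-2}w$.
\end{itemize}

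The cross term $\int\rho^{\alpha}|w|^{p-2}\nabla w\cdot\nabla\phi\,w$, coming from the mismatch between $\nabla\rho$-weights, must be absorbed using the dissipation. This is exactly where the quadratic relation defining $\beta_N^+(\alpha)$ arises. It is an algebraic condition making a quadratic form in $(\nabla w,\nabla|w|)$ positive, the $\sqrt N(1-\alpha)$ coming from $|\operatorname{div}w|\le\sqrt N|\nabla w|$. The constraint $\alpha>(\sqrt5-1)/2$ in 2D, or the 3D threshold, ensures that the admissible $p$ exceeds the value needed below.

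Next come the density bounds. The upper bound for $\rho$ follows from the effective-velocity $L^p$ bound plus the mass equation written as $\partial_t\phi+u\cdot\nabla\phi$ in terms of $w$, using Sobolev embedding (the pressure range $\gamma<(15\alpha-7)/3$ in 3D enters here). The lower bound I expect to be the main obstacle. Following Huang--Meng--Zhang, I would run a modified Nash--Moser (or De Giorgi) iteration on the $w$-equation to get
\[
\|w\|_{L^\infty(Q_T)}\le C\,(1+\|\rho^{-1}\|_{L^\infty(Q_T)})^{\theta}
\]
with $\theta$ small. I would then feed this into the transport–parabolic equation for $\rho^{-1}$, or for $\log\rho$ along particle paths, to obtain $\|\rho^{-1}\|_\infty\le C(1+\|w\|_\infty)^{\delta}$ with $\theta\delta<1$, which closes the argument. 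Tracking the exponents so that $\theta\delta<1$ holds exactly in the stated $(\alpha,\gamma,\beta)$ region is the delicate point.

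With $\rho$ two-sided bounded, the system is uniformly parabolic in $(\rho,u)$ (the Korteweg term gives parabolic smoothing for $\rho$ at order $H^3/H^4$). Standard energy estimates then give $\nabla u\in L^\infty L^2$, $u_t\in L^2H^1$, and then the $H^2$/$H^3$ bounds in \eqref{232}, via the effective form of the equations and elliptic regularity on $\mathbb{T}^N$. Uniqueness follows from an $L^2$ stability estimate for differences of strong solutions. Combining these bounds with the blow-up criterion extends the local solution to all $T$.
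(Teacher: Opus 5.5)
Your overall architecture matches the paper: the effective velocity (your $w$, the paper's $v$) $=u+c\alpha\rho^{\alpha-2}\nabla\rho$ with $c^2-2\nu c+\varepsilon^2=0$ (the paper fixes $c=\nu(1+\beta)$), energy estimates, weighted $L^p$ bounds from a quadratic form in $(\nabla w,\nabla|w|)$, density bounds, higher-order estimates, and continuation. However, two steps fail as stated.

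\textbf{The lower bound.} A Nash--Moser iteration on the $w$-equation cannot produce $\|w\|_{L^\infty(Q_T)}$. Because of the terms $(\nu-c)\divg(\rho^\alpha(\nabla w)^t)$ and $(\alpha-1)(2\nu-c)\nabla(\rho^\alpha\divg w)$, testing with $|w|^{p-2}w$ is coercive only when
\[
p-2<\frac{4(1-\beta)(N\alpha-N+1)}{\big(\beta+\sqrt N(1-\alpha)(1-\beta)\big)^2}.
\]
This is the very restriction you identified, so the admissible exponents are bounded. The Huang--Meng--Zhang mechanism works only because for $\alpha=1$, $\nu=\varepsilon$ the velocity equation reduces to $\rho w_t+\rho u\cdot\nabla w+\nabla P=\nu\divg(\rho\nabla w)$, which is diagonal in its principal part, so every $p$ is admissible. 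The paper stresses that this route is unavailable for $\alpha<1$.

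What is needed instead is an iteration on the scalar equation for $\tau=\rho^{-1}$,
\[
\partial_t\tau-c\alpha\divg(\tau^{1-\alpha}\nabla\tau)+2c\alpha\tau^{-\alpha}|\nabla\tau|^2+w\cdot\nabla\tau-\tau\divg w=0,
\]
using only $\sup_t\int\rho|w|^{\check q+2}dx$. Test with $\tau^{p+1}$ and write $\tau^{p+1+\alpha}|w|^2=(\rho|w|^{\check q+2})^{\frac{2}{\check q+2}}\tau^{l}$ with $l=p+1+\alpha+\frac{2}{\check q+2}$. This gives a reverse H\"older inequality whose only nonlinear defect is $\|\tau^{\frac{p+3-\alpha}{2}}\|_{L^2_TL^2}^2\le T\,\Theta_T^{1-\alpha}\sup_t\int\tau^{p+2}dx$, where $\Theta_T=1+\sup_t\|\tau\|_{L^\infty}$. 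Since $1-\alpha<1$, this defect is absorbed at the end by Young's inequality.

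The iteration needs $\alpha<\frac{\check q+1}{\check q+2}$. Its starting bound on $\sup_t\|\tau\|_{L^{q}}$ via Gr\"onwall needs $\alpha<\frac{\check q}{\check q+2}$, i.e.\ $\check q>\frac{2\alpha}{1-\alpha}$. The equation defining $\beta_N^+$ is exactly the compatibility of this with the displayed bound (with $p-2=\check q$); no $\theta\delta<1$ bookkeeping is involved.

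Likewise, the upper bound is not a consequence of Sobolev embedding. It needs its own iteration on the fast-diffusion equation $\rho_t+\divg(\rho w)=c\Delta(\rho^\alpha)$, with a smaller $\hat q>N-2$ satisfying $\alpha>\frac{\hat q+1}{\hat q+2}$. It must also come before the $L^{\check q+2}$ bound, which uses $\rho\in L^\infty$ to control $\int\rho^{2\gamma-\alpha}|w|^{\check q}dx$. (Also, no cross term with $\nabla\phi$ arises when testing, since the convective derivative is along $u$ and $\rho_t+\divg(\rho u)=0$.)

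\textbf{The 3D first-level higher-order estimate is not standard.} The nonlinear diffusion $c\Delta(\rho^\alpha)$ produces $\int|\nabla\rho|^2|\nabla^2\rho|^2dx+\int|\nabla\rho|^6dx$ in the $H^2$-energy inequality for $\rho$. In 2D, Gagliardo--Nirenberg reduces these to dissipation plus $C\|\nabla^2\rho\|_{L^2}^4$, and Gr\"onwall closes because $\int_0^T\|\nabla^2\rho\|_{L^2}^2dt<\infty$. In 3D the same procedure leaves a term like $\|\nabla^2(\rho^\alpha)\|_{L^2}^6$, which does not close for large data. The paper supplies an extra dissipative estimate by testing the gradient of the mass equation with $|\nabla\rho|^2\nabla\rho$:
\[
\frac{1}{4c}\frac{d}{dt}\int|\nabla\rho|^4dx+\frac{\delta_\alpha}{2}\int\rho^{\alpha-1}|\nabla\rho|^2|\nabla^2\rho|^2dx\le C\int\rho^{1-\alpha}|\nabla\rho|^2|\nabla(\rho w)|^2dx,\quad \delta_\alpha=\alpha\Big(1-\frac{243(1-\alpha)^2}{8(2-\alpha)^2}\Big).
\]
Here $\delta_\alpha>0$ exactly when $\alpha>\frac{9\sqrt3-4\sqrt2}{9\sqrt3-2\sqrt2}$, and this is the origin of the 3D threshold in the theorem. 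The density bounds alone only need $\alpha>\frac{\sqrt{57}-3}{6}$ in 3D, so attributing that threshold to the velocity $L^p$ estimate is mistaken. Without this extra estimate, your higher-order step does not close in 3D.
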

	\begin{rmk}
		The main theorem of this paper extends the results of \cite{Huang-Meng-Zhang-31}, which correspond to the endpoint case $\alpha=1$, $\beta=0$, to the more general regime $\alpha<1$, $\beta\ge0$. Due to the inherent differences between the dissipative and dispersive structures, the methods and techniques developed herein are not applicable to the endpoint case, and vice versa. To overcome the challenges posed by the general framework, it is necessary to develop novel Nash-Moser iteration schemes to establish both upper and lower bounds for the density. Furthermore, we need to enhance the regularity of the fast diffusion equation with convection term satisfied by the density, which presents additional difficulties of a fundamentally new nature.
	\end{rmk}
	\begin{rmk}
		It is worth clarifying the primary sources of the constraints on the indices $\alpha$, $\beta$, and $\gamma$. In two dimensions, the restrictions on $\alpha$ and $\beta$ arise from the derivation of the lower density bound. In three dimensions, the restriction on $\alpha$ arises from the first-level higher-order estimates, the restriction on $\beta$ comes from estimating the lower bound of the density, and the restriction on $\gamma$ originates from estimating the upper bound of the density. Further details will be given in the discussion of the proof strategy and in the subsequent proofs.
	\end{rmk}
	\begin{rmk}
		Observe that within the admissible range of $\alpha$, the quantity $\beta_N^+(\alpha)\in(0,1)$ is well-defined. As $\alpha$ increases monotonically from $\frac{-N+\sqrt{9N^2-8N}}{2N}$ to $1$, the function $\beta_{N}^+(\alpha)$ initially rises from $0$, achieves a maximum, and subsequently decreases to $0$. In the case $N=2$, the maximum value of $\beta_{2}^+$ is approximately $0.27$, occurring at $\alpha \approx 0.85$; for $N=3$, the maximum value of $\beta_{3}^+$ is roughly $0.21$, attained at $\alpha \approx 0.91$. 
		For the origin of $\beta_N^+$, we refer to Lemma \ref{Lem cond}. In two dimensions, the lower bound of $\alpha$ originates from this source. However, in three dimensions, the lower bound $\frac{9\sqrt{3}-4\sqrt{2}}{9\sqrt{3}-2\sqrt{2}} \approx 0.778$ is greater than $\frac{\sqrt{57}-3}{6} \approx 0.758$. This is because the first-level higher-order estimates impose a more stringent restriction on $\alpha$ in the three-dimensional case.
	\end{rmk}
	\begin{rmk}
		Due to the physical constraint $2\mu(\rho) + N\lambda(\rho) \ge0$, the admissible range of $\alpha$ is given by $\alpha \in [\frac{N-1}{N}, \infty)$. The endpoint case $\alpha = \frac{N-1}{N}$ appears to be particularly challenging, as the dissipative estimate for the velocity field is no longer available. This naturally raises the question of what happens to the strong solutions when $\alpha$ lies outside the range considered in Theorem \ref{Thm 1.1}. Specifically, taking the two-dimensional case as an example, for $\alpha \in (\frac{1}{2}, \frac{\sqrt{5}-1}{2}]\cup (1,\infty)$,  whether the NSK system with regular initial data admits global strong solutions remains open and is left for future work.
	\end{rmk}
	\begin{rmk}
		This paper focuses on the regime $\nu \ge \varepsilon$. The reason is that, for $\nu < \varepsilon$, the effective velocity introduced in \eqref{eff v}—which plays a crucial role in our proof—is no longer real-valued. It is therefore of interest to examine whether global strong solutions exist for the NSK system under the condition $\nu < \varepsilon$.
	\end{rmk}
    \begin{rmk}
        The modified Nash--Moser iteration method developed in this paper is applicable to a broad class of compressible fluid models whose viscosity coefficients satisfy the BD condition with a power-law form, including, but not limited to, the compressible Navier--Stokes equations, the compressible Navier--Stokes--Poisson equations, and the compressible Navier--Stokes--Korteweg--Poisson system. The reason is that this iteration method relies only on the parabolic equations \eqref{Equ2}\( _1\) and \eqref{tau equ} satisfied by $\rho$ and $\rho^{-1}$, respectively, and requires only a certain density-weighted $L^p$ integrability of $v$. For such fluid models, after an effective velocity transformation, one obtains a system similar to \eqref{Equ2}, with the same density equation but a different equation for the effective velocity. Therefore, as long as the $L^p$ estimates for the effective velocity can be established, the iteration method developed here can be applied to obtain the upper and lower bounds for the density. In this sense, for this class of fluid models, the condition for obtaining the density bounds is essentially reduced to the $L^p$ integrability of the effective velocity.
    \end{rmk}
	Motivated by the work of Burtea and Haspot \cite{Burtea-Haspot-40} on the one-dimensional Navier-Stokes-Korteweg system and that of Bresch, Gisclon, and Lacroix-Violet \cite{Bresch-Gisclon-Violet} for the higher-dimensional case, we introduce new effective velocities $v_{\pm}$ as follows:
	\begin{align}\label{eff v}
		v_{\pm}=u+c_\pm\alpha\rho^{\alpha-2}\nabla\rho,
	\end{align}
	where $c_\pm=\nu\pm\sqrt{\nu^2-\varepsilon^2}$. Consequently, system $\eqref{Equ1}$ can be reformulated as the following parabolic system:
	\begin{equation}
		\label{Equ2'}
		\left\{
		\begin{array}{l}
			\rho_t + \div(\rho v_\pm) - c_{\pm}\Delta(\rho^\alpha) = 0,\\[4pt]
			\begin{aligned}
				\rho (v_\pm)_t + \rho u\cdot\nabla v_\pm + \nabla P 
				&= \nu\divg(\rho^\alpha\nabla v_{\pm}) + (\nu-c_{\pm})\divg(\rho^\alpha(\nabla v_{\pm})^t) \\
				&\quad + (\alpha-1)(2\nu-c_{\pm})\nabla(\rho^\alpha\divg v_{\pm}).
			\end{aligned}
		\end{array}
		\right.
	\end{equation}
	A detailed derivation of system $\eqref{Equ2'}$ can be found in Lemma \ref{Lem A-2} of the Appendix. In what follows, we choose the effective velocity with the plus sign. More precisely, we define
	\begin{align}
		v = v_+, \quad c = c_+,
	\end{align}
	and $(\rho, v)$ is governed by the parabolic system:
	\begin{equation}
		\label{Equ2}
		\left\{
		\begin{array}{l}
			\rho_t + \divg(\rho v) - c\Delta(\rho^\alpha) = 0,\\[4pt]
			\begin{aligned}
				\rho v_t + \rho u\cdot\nabla v + \nabla P 
				&= \nu\divg(\rho^\alpha\nabla v) + (\nu-c)\divg(\rho^\alpha(\nabla v)^t) \\
				&\quad + (\alpha-1)(2\nu-c)\nabla(\rho^\alpha\divg v).
			\end{aligned}
		\end{array}
		\right.
	\end{equation}
	
	We now outline the structure of this paper. In Section 2, we present the main strategies of the article, focusing on the upper and lower bounds of the density and the key points in obtaining higher-order estimates. Section 3 is devoted to the local well-posedness theory for the density--effective velocity system \eqref{Equ2} and the original NSK system \eqref{Equ1}. Section 4 provides the energy estimates. Section 5 is devoted to the proofs of the upper and lower density bounds. In Section 6, we derive the higher-order estimates for the density and the effective velocity. Section 7 is reserved for the proof of our main theorem. Finally, the Appendix provides a detailed derivation of system \eqref{Equ2'} and the proof of local well-posedness.
	
	\section{Main strategy}
	When $\alpha = 1$ and $\nu = \varepsilon> 0$, it was proved in \cite{Huang-Meng-Zhang-31} that problem \eqref{Equ1}--\eqref{ini data} admits a unique global strong solution. The authors obtained an upper bound for the density by applying the De Giorgi iteration technique to the linear parabolic equation $\eqref{Equ2}_1$ satisfied by $\rho$. Then they established a precise relationship between the $L^\infty$ estimate of the effective velocity and the lower bound of the density, and subsequently used a Nash–Moser iteration method on equation \eqref{3-n8.5} for $\rho^{-1}$ to derive a lower bound for the density. Finally, they showed that the upper and lower bounds of the density allow one to close the estimates for higher-order derivatives of $(\rho, v)$, thereby yielding a global strong solution.
	
	However, when $\alpha < 1$, the nonlinearity introduces substantial new difficulties compared with the case $\alpha=1$. The density equation is transformed into a fast diffusion equation with convection term. First, when estimating the upper and lower bounds of the density, the equations $\eqref{Equ2}_1$ and \eqref{3-n8.5} satisfied by $\rho$ and $\rho^{-1}$ become nonlinear parabolic equations, so the method developed in \cite{Huang-Meng-Zhang-31} for estimating the density bounds is no longer applicable. To overcome this, we develop an original modified Nash–Moser iteration method to obtain both the upper and lower bounds for the density. Moreover, the nonlinearity in equation $\eqref{Equ2}_1$ caused by $\alpha < 1$ gives rise to nonlinear terms when estimating higher-order derivatives; how to handle these nonlinear terms is another major difficulty we face. 
	
	In this section, we demonstrate how the modified Nash–Moser method is used to obtain the upper and lower bounds for the density, and how the difficulties posed by the nonlinear terms in the higher-order derivative estimates are overcome.

	\subsection{Upper bounds for the density}
	We first highlight some key points of our approach to obtaining an upper bound for the density.
	\begin{itemize}
		\item \textbf{Prerequisite of the iteration: Regularity of $v$}\\
		The nonlinear parabolic equation $\eqref{Equ2}_1$ for $\rho$  shows that improving the regularity of $\rho$ from $L^p$ to $L^\infty$ for some finite $p$ requires some regularity of the coefficient $v$. Indeed, in the iteration process we make use of the integrability of $\|\rho^{\frac{1}{\hat{q}_N+2}}v\|_{L^\infty_TL^{\hat{q}_N+2}}$, where $\hat{q}_N>0$ will be specified later.
		
		\item \textbf{Core of the iteration: Reverse Hölder's inequality}\\
		Using equation $\eqref{Equ2}_1$ together with the density-weighted $L^\infty_TL^{\hat{q}_N+2}$ estimate for $v$ to perform $L^p$ estimates for $\rho$, we obtain a constant $C_0 > 0$, independent of $p$, such that for any $p > 0$,
		\begin{align}\label{M-1}
			\|\rho^{\frac{p+2}{2}}\|_{L^\infty_T L^2}^2+\|\rho^{\frac{p+\alpha+1}{2}}\|_{L^2_TH^1}^2\le C_0(p+2)^2\|\rho\|_{L^{l}_TL^{l\frac{\hat{q}_N+2}{\hat{q}_N}}}^l+C_0\|\rho_0^{\frac{p+2}{2}}\|_{L^2}^2,
		\end{align}
		where $l=p+3-\alpha-\frac{2}{\hat{q}_N+2}$. The two energies on the left-hand side of the above inequality can be interpolated to yield an energy that shares the same form as the first term on the right-hand side, thereby constructing a reverse Hölder inequality. More precisely, we take $r$ such that $H^1 \hookrightarrow L^r$, with $r$ to be chosen later. Then,
		\begin{align*}
			\|\rho\|_{L^s_TL^{\frac{\hat{q}_N+2}{\hat{q}_N}s}}&\le \|\rho^{\frac{p+2}{2}}\|_{L^\infty_T L^2}^{\xi}\|\rho^{\frac{p+\alpha+1}{2}}\|_{L^2_TL^r}^\eta,
		\end{align*}
		where  {\small
			\begin{align*}
				\begin{split}
					&s=(p+2)\Big(\frac{\hat{q}_N}{\hat{q}_N+2}-\frac{2}{r}\Big)+p+\alpha+1,\\ &\eta=\frac{2}{(p+2)\Big(\frac{\hat{q}_N}{\hat{q}_N+2}-\frac{2}{r}\Big)+p+\alpha+1},\quad \xi=\frac{2\Big(\frac{\hat{q}_N}{\hat{q}_N+2}-\frac{2}{r}\Big)}{(p+2)\Big(\frac{\hat{q}_N}{\hat{q}_N+2}-\frac{2}{r}\Big)+p+\alpha+1}.
				\end{split}
		\end{align*}}
		By the Sobolev embedding, there exists a constant $C_r > 0$ such that
		\begin{align}\label{M-2}
			\begin{split}
				\|\rho\|_{L^s_TL^{\frac{\hat{q}_N+2}{\hat{q}_N}s}}&\le C_r^\eta\left(C_0(p+2)^2\|\rho\|_{L^{l}_TL^{l\frac{\hat{q}_N+2}{\hat{q}_N}}}^l+C_0\|\rho_0^{\frac{p+2}{2}}\|_{L^2}^2\right)^{\frac{\xi+\eta}{2}}.
			\end{split}
		\end{align}
		This is precisely the core of the iteration—the reverse Hölder inequality.
		
		\item \textbf{Procedure of the iteration: On the choice of $r$}\\
		A natural way to proceed with the iteration is to take $p_0$ sufficiently large, and denote the corresponding quantities $l, s, \xi, \eta$ determined by $p_0$ as $l_0, s_0, \xi_0, \eta_0$. Then we set $s_0 = l_1$, let $p_1$ be the value of $p$ determined by $l_1$, and denote the corresponding quantities associated with $p_1$ as $l_1, s_1, \xi_1, \eta_1$. Continuing this process indefinitely... 
		
		Based on the following two observations, we are able to use the reverse Hölder inequality \eqref{M-2} to initiate the iteration process and obtain an upper bound for the density. On the one hand, to ensure that this iteration scheme can reach $L^\infty$, we need $l_k \to \infty$ as $k \to \infty$. In fact, we know that 
		\begin{align*}
			\frac{s}{l}&\to1+\frac{\hat{q}_N}{\hat{q}_N+2}-\frac{2}{r}, \text{ as }p\to \infty.
		\end{align*}
		This indicates that we need to choose $r$ such that $\frac{\hat{q}_N}{\hat{q}_N+2} - \frac{2}{r} > 0$ holds, in order to ensure that $l_k \to \infty$ as $k \to \infty$. The existence of $r$ such that $\frac{\hat{q}_N}{\hat{q}_N+2} - \frac{2}{r} > 0$ and $H^1 \hookrightarrow L^r$ is guaranteed if
		\begin{align}\label{14'}
			\hat{q}_N>N-2.
		\end{align}
		
		On the other hand, to prevent the energy from blowing up during the iteration, the following condition is required:
		\begin{align*}
			\frac{\xi+\eta}{2}l\leq 1,
		\end{align*}
		which is equivalent to
		\begin{align*}
			\alpha\ge\frac{\Big(\frac{\hat{q}_N}{\hat{q}_N+2}-\frac{2}{r}\Big)\hat{q}_N+2\hat{q}_N+2}{\Big(\frac{\hat{q}_N}{\hat{q}_N+2}-\frac{2}{r}\Big)(\hat{q}_N+2)+2\hat{q}_N+4}.
		\end{align*}
		Such an $r$ exists provided that
		\begin{align}\label{15'}
			\alpha>\frac{\hat{q}_N+1}{\hat{q}_N+2}.
		\end{align}
		
		\item \textbf{On the estimate of $\|\rho^{\frac{1}{\hat{q}_N+2}}v\|_{L^\infty_T L^{\hat{q}_N+2}}$}\\
		From $\eqref{14'}$ and $\eqref{15'}$, it can be seen that for any $\alpha > \frac{N-1}{N}$, obtaining an upper bound for the density requires the boundedness of $\|\rho^{\frac{1}{\hat{q}_N+2}}v\|_{L^\infty_T L^{\hat{q}_N+2}}$, where $\hat{q}_N > N-2$ can be taken sufficiently small. The dissipative nature of the parabolic equation $\eqref{Equ2}_2$ for $v$ implies that the achievable integrability of $v$ is influenced by the closeness of $\alpha$ to $1$ and $\beta$ to $0$. More precisely, the closer $\alpha$ gets to $1$ and the closer $\beta$ gets to $0$, the higher the level of integrability that $v$ can attain. 
		
		In fact, multiplying $\eqref{Equ2}_2$ by $|v|^{\hat{q}_N}v$ and integrating the resulting equation over $\mathbb{T}^N$ yields that
		\begin{align*}
			\begin{split}
				&\quad\frac{1}{\hat{q}_N+2}\frac{d}{dt}\int \rho|v|^{\hat{q}_N+2}dx+\nu\int \rho^\alpha|v|^{\hat{q}_N}|\nabla v|^2dx+\nu\hat{q}_N\int\rho^\alpha|v|^{\hat{q}_N}|\nabla|v||^2dx\\
				&\quad+(\nu-c)\int \rho^\alpha(\nabla v)^t:\nabla(|v|^{\hat{q}_N}v)dx+(\alpha-1)(2\nu-c)\int\rho^\alpha\divg v\divg(|v|^{\hat{q}_N}v)dx\\
				&\leq C\int \rho^\gamma|v|^{\hat{q}_N}|\nabla v|dx.
			\end{split}
		\end{align*}
		 Using the facts that $|\nabla |v|| \leq |\nabla v|$ and $|\divg  v| \leq \sqrt{N} |\nabla v|$ yields
		\begin{align}\label{16}
			\begin{split}
				&\quad\frac{1}{\hat{q}_N+2}\frac{d}{dt}\int \rho|v|^{\hat{q}_N+2}dx+\hat\delta_0\int \rho^\alpha|v|^{\hat{q}_N}|\nabla v|^2dx\leq C\int \rho^\gamma|v|^{\hat{q}_N}|\nabla v|dx,
			\end{split}
		\end{align}
		where 
		\begin{align*}
			\hat{\delta}_0=(2\nu-c)(1+N(\alpha-1))-\frac{(c-\nu+\sqrt{N}(1-\alpha)(2\nu-c))^2}{4\nu}\hat{q}_N.
		\end{align*}
		Note that $\hat{\delta}_0>0$ if 
		\begin{align}\label{hat{q}_N}
			\hat{q}_N<\frac{4(1-\beta)(1+(\alpha-1)N)}{(\beta+\sqrt{N}(1-\alpha)(1-\beta))^2}.
		\end{align}
		Consequently, such a $\hat{q}_N$ exists that enables the estimate of $\|\rho^{\frac{1}{\hat{q}N+2}}v\|_{L^\infty_T L^{\hat{q}_N+2}}$ to be established, and this estimate can then be utilized in the iteration to close the upper bound for the density, under the condition that
		\begin{align*}
			N-2<\frac{4(1-\beta)(1+(\alpha-1)N)}{(\beta+\sqrt{N}(1-\alpha)(1-\beta))^2}.
		\end{align*}
		This gives rise to the conditions on $\alpha$ and $\beta$ in $\eqref{2d upper}$ and $\eqref{3d upper}$. In the three-dimensional case, handling the right-hand side term in $\eqref{16}$ imposes an upper bound on $\gamma$, as shown in $\eqref{3d upper}$.

		\item \textbf{Additional remarks}\\
		In addition to the key points mentioned above, various other difficulties arise during the iteration process. For example, in \eqref{M-1}, obtaining a bound for $\|\rho^{\frac{p+\alpha+1}{2}}\|_{L^2_T L^2}^2$ presents a challenge. A further difficulty is how to design, from the reverse Hölder inequality \eqref{M-2}, a suitable iteration scheme that also reflects the influence of the initial data. Finally, establishing the boundedness of the initial step of the iteration, i.e., $\|\rho\|_{L^{l_0}_T L^{\frac{\hat{q}_N+2}{\hat{q}_N}l_0}}$, poses yet another difficulty. We refer the reader to Proposition~\ref{Prop 2d RT} for further details.
	\end{itemize}
	
	\subsection{Positive lower bounds for the density}
	As for the derivation of a positive lower bound for the density, the method is essentially the same as that for the upper bound, with only minor modifications. It is particularly worth pointing out that the reason why the method used for the upper bound is also applicable to the lower bound is that $\alpha < 1$. Specifically, from $\eqref{Equ2}_1$, we obtain
	\begin{align}\label{tau equ}
		\partial_t \tau-c\alpha\divg(\tau^{1-\alpha}\nabla \tau)+2c\alpha \tau^{-\alpha}|\nabla \tau|^2+v\cdot\nabla \tau-\tau\divg v=0,
	\end{align}
	where $\tau=\rho^{-1}$. Thus $\tau$ also satisfies a parabolic equation, which makes it possible to use the modified Nash–Moser-type iteration method to raise the regularity of $\tau$ from $L^p$ with finite $p$ to $L^\infty$, provided that we have certain regularity of $v$, namely $\|\rho^{\frac{1}{\check{q}_N+2}} v\|_{L^\infty_TL^{\check{q}_N+2}}$ for some $\check{q}_N$. 
	
	We now discuss the main differences in the iteration for the lower bound of the density compared to that for the upper bound, and explain why the iteration is valid only for $\alpha < 1$.
	
	\begin{itemize}
		\item \textbf{Iteration scheme: $L^q$ to $L^\infty$}\\
		By applying $\eqref{tau equ}$ to perform standard $L^p$ estimates on $\tau$, we obtain a constant $C_3 \ge 1$ independent of $p$, such that
		\begin{align}\label{3-8.4'}
			\|\tau^{\frac{p+2}{2}}\|_{L^\infty_T L^2}^2+\|\tau^{\frac{p-\alpha+3}{2}}\|_{L^2_TH^1}^2\le C_3\Theta_T^{1-\alpha}(p+3)^2\|\tau\|_{L^{l}_TL^{l\frac{\check{q}_N+2}{\check{q}_N}}}^l+C_3\Theta_T^{1-\alpha}\|\tau_0^{\frac{p+2}{2}}\|_{L^2}^2,
		\end{align}
		where
		$l=p+1+\alpha+\frac{2}{\check{q}_N+2}$ and $\Theta_T=\sup_{0\leq t\leq T}\|\tau\|_{L^\infty}+1$. As in the iteration procedure for the upper bound of the density, we take $r$ such that the Sobolev embedding $H^1 \hookrightarrow L^r$ holds. It follows from the interpolation inequality that
		\begin{align*}
			\|\tau\|_{L^s_TL^{\frac{\check{q}_N+2}{\check{q}_N}s}}\le \|\tau^{\frac{p+2}{2}}\|_{L^\infty_T L^2}^{\xi}\|\tau^{\frac{p+3-\alpha}{2}}\|_{L^2_TL^r}^\eta,
		\end{align*}
		where{\small
			\begin{align*}
				\begin{split}
					&s=(p+2)\Big(\frac{\check{q}_N}{\check{q}_N+2}-\frac{2}{r}\Big)+p-\alpha+3,\\ &\eta=\frac{2}{(p+2)\Big(\frac{\check{q}_N}{\check{q}_N+2}-\frac{2}{r}\Big)+p-\alpha+3},\quad \xi=\frac{2\Big(\frac{\check{q}_N}{\check{q}_N+2}-\frac{2}{r}\Big)}{(p+2)\Big(\frac{\check{q}_N}{\check{q}_N+2}-\frac{2}{r}\Big)+p-\alpha+3}.
				\end{split}
		\end{align*}}
		Therefore, using the Sobolev embedding, we obtain the reverse H\"older inequality
		\begin{align*}
			\begin{split}
				\|\tau\|_{L^s_TL^{\frac{\check{q}_N+2}{\check{q}_N}s}}
				&\le \tilde{C}_r^\eta\left(C_3\Theta_T^{1-\alpha}(p+3)^2\|\tau\|_{L^{l}_TL^{l\frac{\check{q}_N+2}{\check{q}_N}}}^l+C_3\Theta_T^{1-\alpha}\|\tau_0^{\frac{p+2}{2}}\|_{L^2}^2\right)^{\frac{\xi+\eta}{2}}.
			\end{split}
		\end{align*}
		On the one hand, to ensure that the iteration reaches $L^\infty$, we need
		\begin{align*}
			\frac{s}{l} \to 1 + \frac{\check{q}_N}{\check{q}_N+2} - \frac{2}{r} > 1 \quad \text{as } p \to \infty.
		\end{align*}
		This requires $\frac{\check{q}_N}{\check{q}_N+2} - \frac{2}{r} > 0$, where $r < \frac{2N}{N-2}$. Thus we require
		\begin{align}\label{19}
			\check{q}_N > N-2.
		\end{align}
		On the other hand, to prevent the energy accumulation from blowing up during the iteration, we need
		\begin{align*}
			\frac{\xi+\eta}{2} l \leq 1,
		\end{align*}
		which is equivalent to
		\begin{align*}
			\alpha\le\frac{\Big(\frac{\check{q}_N}{\hat{q}_N+2}-\frac{2}{r}\Big)\check{q}_N+2\check{q}_N+2}{\Big(\frac{\check{q}_N}{\check{q}_N+2}-\frac{2}{r}\Big)(\check{q}_N+2)+2\check{q}_N+4}.
		\end{align*}
		Such an $r$ exists provided that
		\begin{align}\label{20}
			\alpha < \frac{\check{q}_N+1}{\check{q}_N+2},
		\end{align}
		which is the opposite of the integrability condition $\eqref{15'}$ imposed on the effective velocity in the iteration for the upper bound of the density. \textbf{This is precisely why our iteration procedure requires $\alpha < 1$}. In light of these two observations, we can improve the regularity of $\tau$ from a finite regularity to $L^\infty$. 
		
		\item \textbf{Starting point of the iteration: $L^q$ regularity of $\tau$}\\
		As shown above, the upper bound of $\tau$ is established once we prove the boundedness of the starting point of the iteration. To this end, we obtain from \eqref{tau equ} that, for any $q > 0$,
		\begin{align*}
			\begin{split}
				\frac{1}{q+2}\frac{d}{dt}\int \tau^{q+2}dx+\frac{c(q+3)\alpha}{2}\int \tau^{q+1-\alpha}|\nabla \tau|^2dx\leq C\left(\int  \tau^{l_q\frac{\check{q}_N+2}{\check{q}_N}}dx\right)^{\frac{\check{q}_N}{\check{q}_N+2}},
			\end{split}
		\end{align*}
		where $l_q=q+1+\alpha+\frac{2}{\check{q}_N+2}$.  Since \eqref{20} holds, we have $l_q<q+3-\alpha$. This implies that
		\begin{align*}
			\begin{split}
				C\|\tau\|_{L^{l_q\frac{\check{q}_N+2}{\check{q}_N}}}^{l_q}&\le C\|\tau\|_{L^{(q+3-\alpha)\frac{\check{q}_N+2}{\check{q}_N}}}^{l_q}= C\|\tau^{\frac{q+3-\alpha}{2}}\|_{L^{\frac{2(\check{q}_N+2)}{\check{q}_N}}}^{\frac{2l_q}{q+3-\alpha}}\\
				&\le C\|\tau^{\frac{q+3-\alpha}{2}}\|_{H^1}^{\frac{2l_q}{q+3-\alpha}}\\
				&\le C\|\nabla \tau^{\frac{q+3-\alpha}{2}}\|_{L^2}^{\frac{2l_q}{q+3-\alpha}}+C\|\tau^{\frac{q+3-\alpha}{2}}\|_{L^1}^{\frac{2l_q}{q+3-\alpha}}\\
				&\le\delta\|\nabla \tau^{\frac{q+3-\alpha}{2}}\|_{L^2}^2+C+C\left(\int \tau^{q+2}dx\right)^{\frac{l_q}{q+2}},
			\end{split}
		\end{align*}
		where $\delta > 0$ is a sufficiently small constant. Consequently,
		\begin{align*}
			\frac{1}{q+2}\frac{d}{dt}\int \tau^{q+2}dx\leq C+C\left(\int \tau^{q+2}dx\right)^{\frac{l_q}{q+2}}.
		\end{align*}
		In order to obtain an estimate for $\|\tau\|_{L^\infty_T L^{q+2}}$ via Grönwall's inequality, we need $l_q \leq q+2$, which is equivalent to
		\begin{align}\label{21}
			\alpha<\frac{\check{q}_N}{\check{q}_N+2}.
		\end{align}
		One can observe that condition $\eqref{21}$ is more restrictive than $\eqref{20}$.
		
		\item \textbf{On the estimate of $\|\rho^{\frac{1}{\check{q}N+2}}v\|_{L^\infty_T L^{\check{q}_N+2}}$}\\
		Owing to the dissipation in the parabolic equation for $v$, it follows from $\eqref{hat{q}_N}$ that obtaining the integrability of $\|\rho^{\frac{1}{\check{q}N+2}} v\|_{L^\infty_T L^{\check{q}_N+2}}$ requires
		\begin{align}\label{check{q}_N}
			\check{q}_N<\frac{4(1-\beta)(1+(\alpha-1)N)}{(\beta+\sqrt{N}(1-\alpha)(1-\beta))^2},
		\end{align}
		which, together with $\eqref{19}$ through $\eqref{21}$, implies that we need $\check{q}_N>N-2$ and 
		\begin{align*}
			\frac{2\alpha}{1-\alpha}<\check{q}_N<\frac{4(1-\beta)(1+(\alpha-1)N)}{(\beta+\sqrt{N}(1-\alpha)(1-\beta))^2}.
		\end{align*}
		To ensure the existence of such $\check{q}_N$, it imposes restrictions on $\alpha$ and $\beta$ as given in $\eqref{2d lower}$ and $\eqref{3d lower}$. Observe that, in the process of estimating the lower bound of the density, the effective velocity requires higher integrability than that needed for the upper bound. Therefore, the requirements on the parameters for closing the lower bound of the density are more stringent than those for the upper bound.
	\end{itemize}

	\subsection{Derivation of higher-order estimates}
		In what follows, we explain the key points of our approach to the higher-order estimates.
	
	Once positive upper and lower bounds for the density are established, the main challenge in the higher-order analysis is to close the first higher-order energy that couples the continuity and momentum equations. The difficulty is not merely that the system must be differentiated to higher orders. In the regime $\alpha<1$, the time derivative term $\rho_t$ in the continuity equation retains a linear structure, whereas the diffusion term $\Delta(\rho^\alpha)$ preserves a fast-diffusion-type nonlinearity. This structural disparity makes the higher-order estimates considerably more delicate than in the case $\alpha=1$ treated in \cite{Huang-Meng-Zhang-31}.
	
	Specifically, when carrying out the first level of higher-order estimates, we obtain the following differential inequality:
	\begin{align}\label{der1}
		\frac{d}{dt}\mathcal{E}(t)+\Psi(t)\leq C\int |\nabla\rho|^2|\nabla^2\rho|^2dx+C\int |\nabla \rho|^6dx+\text{additional terms},
	\end{align}
	where $\mathcal{E}(t)$ and $\Psi(t)$ satisfy
	\begin{align*}
		\begin{split}
			\mathcal{E}(t)&\sim \int (|\nabla^2\rho|^2+|\nabla v|^2)dx,\\
			\Psi(t)&\sim \int(|\nabla\rho_t|^2+|\nabla\Delta(\rho^\alpha)|^2+|v_t|^2+|\nabla^2 v|^2)dx.
		\end{split}
	\end{align*}
	For the additional terms in \eqref{der1}, a rather involved integration by parts, combined with H\"older's inequality, interpolation inequalities, the Gagliardo--Nirenberg inequality, and Young's inequality, allows us to decompose them into two parts: one that can be absorbed into $\Psi(t)$, and another that can be controlled by the a priori energy estimates, the density bounds, and the $L^p$-integrability of the effective velocity for $p>N$. However, the terms
	\begin{align}\label{der2}
		C\int |\nabla\rho|^2|\nabla^2\rho|^2dx+C\int |\nabla\rho|^6dx,
	\end{align}
	which arise essentially from the nonlinear effect of the diffusion term, are difficult to decompose into two parts that can be respectively absorbed by $\Psi(t)$ and controlled by lower-order estimates.
	
	We treat \eqref{der2} separately in the cases $N=2$ and $N=3$. 
	\begin{itemize}
		\item \textbf{The case $N=2$.}\\
		Since the density has positive upper and lower bounds, estimating \eqref{der2} is equivalent to estimating
		\begin{align}\label{der3}
			C\int |\nabla(\rho^\alpha)|^2|\nabla^2(\rho^\alpha)|^2dx+C\int |\nabla(\rho^\alpha)|^6dx,
		\end{align}
		which, via integration by parts and the density bounds, can be controlled by $C\|\nabla^2(\rho^\alpha)\|_{L^3}^3$. Furthermore, the two-dimensional Gagliardo--Nirenberg inequality yields
		\begin{align}\label{der4}
			\begin{split}
				C\|\nabla^2 (\rho^\alpha)\|_{L^3}^3&\leq C\|\nabla^2 (\rho^\alpha)\|_{L^2}^2\|\nabla^2 (\rho^\alpha)\|_{H^1}\\
				&\leq \frac{1}{2}\Psi(t)+C\|\nabla^2 (\rho^\alpha)\|_{L^2}^4.
			\end{split}
		\end{align}
		As for the term $C\|\nabla^2 (\rho^\alpha)\|_{L^2}^4$, it can be estimated as follows:
		\begin{align*}
			\begin{split}
				C\|\nabla^2 (\rho^\alpha)\|_{L^2}^4&\le C\|\nabla^2\rho\|_{L^2}^4+C\|\nabla\rho\|_{L^4}^8 \\
				&\le C\|\nabla^2\rho\|_{L^2}^4\\
				&\le C\|\nabla^2\rho\|_{L^2}^2\mathcal{E}(t),
			\end{split}
		\end{align*}
		where the second inequality above follows from integration by parts and the density bounds. This allows us to apply Gr\"onwall's inequality to close the first level of higher-order estimates in the two-dimensional case.
		
		\item \textbf{The case $N=3$.}\\
		When $N=3$, the strategy used in the two-dimensional case is no longer applicable. Indeed, if one were to proceed as in \eqref{der4} and apply the three-dimensional Gagliardo--Nirenberg inequality, then closing the first level of higher-order estimates would require controlling $\|\nabla^2(\rho^\alpha)\|_{L^2}^6$, which is not feasible.
		
		Therefore, it is necessary to seek an additional dissipative structure to control \eqref{der2}. Note that integration by parts together with the density bounds yields
		\begin{align*}
			C\int |\nabla\rho|^6dx\leq C\int |\nabla\rho|^2|\nabla^2\rho|^2dx,
		\end{align*}
		so that controlling \eqref{der2} reduces to controlling
		\begin{align*}
			C\int |\nabla\rho|^2|\nabla^2\rho|^2dx. 
		\end{align*}
		To this end, we test the parabolic equation for the density, namely $\eqref{Equ2}_1$, by $\frac{1}{c}|\nabla\rho|^2\nabla\rho$, and prove that, provided $\alpha$ satisfies the coercivity condition
		\begin{align*}
			\frac{9\sqrt{3}-4\sqrt{2}}{9\sqrt{3}-2\sqrt{2}}<\alpha<1,
		\end{align*}
		the following estimate holds:
		\begin{align}\label{der5}
			\frac{1}{4c}\frac{d}{dt}\int |\nabla\rho|^4 dx 
			+ \frac{\delta_\alpha}{2}\int \rho^{\alpha-1}
			|\nabla\rho|^2|\nabla^2\rho|^2 dx 
			\leq C\int \rho^{1-\alpha}|\nabla\rho|^2|\nabla(\rho v)|^2 dx,
		\end{align}
		where $\delta_\alpha>0$ depends only on $\alpha$. 
		Combining \eqref{der5} with \eqref{der1} in a suitable weighted manner enables us to control \eqref{der2}. Moreover, the newly appearing term on the right-hand side of \eqref{der5} is already contained in the additional terms of \eqref{der1}, and hence can be estimated by splitting it into two parts: one that is absorbed by $\Psi(t)$, and another that is controlled by lower-order estimates. In summary, this allows us to apply Gr\"onwall's inequality and thereby close the first level of higher-order estimates in the three-dimensional case.
	\end{itemize}
	
	The estimates for the higher-order derivatives of the density and the effective velocity at the second level can be obtained by a relatively standard procedure.	\textbf{In this sense, the heart of the argument lies in the first-level estimate, where one must make the fast diffusion, the convection, and the Lam\'e structure interact in a compatible way.}
	
	\section{Local well-posedness of solutions}
	The following lemma establishes the local existence and uniqueness of strong solutions to the problem \eqref{Equ2} with the 
	initial data $(\rho_0,v_0)$. For the reader's convenience, the detailed proof is deferred to the Appendix.
	\begin{lema}\label{Lem loc}
		Let $N\in \{2,3\}$ and $\beta=\sqrt{1-\frac{\varepsilon^2}{\nu^2}}$. Assume that $(\alpha,\gamma,\beta)$ satisfy
		\begin{align}
			\alpha\in \Big(1-\frac{1}{N},\infty\Big),\quad \gamma\in [1,\infty),\quad \beta\in [0,1),
		\end{align}
		and the initial data $(\rho_0,v_0)$ satisfy 
		\begin{align}
			0<\underline{\rho_0}\le \rho_0\leq \overline{\rho_0},\quad\rho_0\in H^3,\quad v_0\in H^2,
		\end{align}
		where $\underline{\rho_0}$ and $\overline{\rho_0}$ are positive constants. Then there exists a positive time $T_0>0$ depending on $\alpha,\gamma,\nu,\varepsilon,N,\overline{\rho_0},\underline{\rho_0},\|\rho_0\|_{H^3}$, and $\|v_0\|_{H^2}$ such that the problem \eqref{Equ2} with the 
		initial data $(\rho_0,v_0)$  admits a unique strong solution $(\rho,v)$ on $\mathbb{T}^N\times[0,T_0]$, which satisfies
		\begin{align}\label{10-1}
			\left\{
			\begin{array}{l}
				\frac{1}{2}\underline{\rho_0}
				\leq\rho(x,t)\leq\frac{3}{2}\overline{\rho_0},
				\quad \forall(x,t)\in\mathbb{T}^N
				\times [0,T_0], \\
				\rho\in C([0,T_0];H^3)\cap L^2(0,T_0;H^4),
				\quad \rho_t\in C([0,T_0];H^1)\cap 
				L^2(0,T_0;H^2),\\
				v\in C([0,T_0];H^2)\cap L^2(0,T_0;H^3),
				\quad v_t\in L^\infty(0,T_0;L^2)\cap 
				L^2(0,T_0;H^1).
			\end{array}
			\right.
		\end{align}
	\end{lema}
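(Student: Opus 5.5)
The plan is a standard contraction (or Picard iteration plus compactness) argument for the quasilinear parabolic system \eqref{Equ2}, with $u=v-c\alpha\rho^{\alpha-2}\nabla\rho$ substituted into the convection term. For a fixed $T_0$ I will work in the space $X_{T_0}$ of pairs $(\bar\rho,\bar v)$ satisfying the regularity in \eqref{10-1}, with $\bar\rho(\cdot,0)=\rho_0$, $\bar v(\cdot,0)=v_0$, $\frac12\underline{\rho_0}\le\bar\rho\le\frac32\overline{\rho_0}$, and the bounds
\begin{align*}
\sup_{t\le T_0}\big(\|\bar\rho\|_{H^3}+\|\bar v\|_{H^2}\big)+\|\bar\rho\|_{L^2_{T_0}H^4}+\|\bar v\|_{L^2_{T_0}H^3}+\|\bar v_t\|_{L^\infty_{T_0}L^2\cap L^2_{T_0}H^1}\le M .
\end{align*}
Here $M$ is a large multiple of $\|\rho_0\|_{H^3}+\|v_0\|_{H^2}$.

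Given $(\bar\rho,\bar v)\in X_{T_0}$, I define $(\rho,v)$ by solving two linear problems. The first is
\begin{align*}
\rho_t-c\,\divg(\alpha\bar\rho^{\alpha-1}\nabla\rho)=-\divg(\bar\rho\bar v).
\end{align*}
The second is
\begin{align*}
\bar\rho v_t-\nu\divg(\bar\rho^\alpha\nabla v)-(\nu-c)\divg(\bar\rho^\alpha(\nabla v)^t)-(\alpha-1)(2\nu-c)\nabla(\bar\rho^\alpha\divg v)=-\bar\rho\bar u\cdot\nabla\bar v-\nabla P(\bar\rho),
\end{align*}
where $\bar u=\bar v-c\alpha\bar\rho^{\alpha-2}\nabla\bar\rho$. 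The first is uniformly parabolic, since $\bar\rho$ is bounded above and below. For the Lamé-type operator in the second, I need to check uniform (Legendre–Hadamard, in fact $L^2$-coercive) ellipticity. The bilinear form is
\begin{align*}
\nu|\nabla v|^2+(\nu-c)\nabla v:(\nabla v)^t+(\alpha-1)(2\nu-c)(\divg v)^2 .
\end{align*}
On the torus, integration by parts gives $\int\nabla v:(\nabla v)^t=\int(\divg v)^2$ up to lower-order terms carrying $\nabla\bar\rho$. The symbol coefficient of $(\divg v)^2$ is $(2\nu-c)(1+\alpha-1)=(2\nu-c)\alpha$, which is positive because $c<2\nu$ when $\beta<1$. (Testing on Fourier modes, the eigenvalues are $\nu$ and $\nu+(2\nu-c)\alpha>0$, which requires only $\alpha>0$; the constraint $\alpha>1-1/N$ is harmless.) Standard Galerkin/energy theory on $\mathbb{T}^N$ then gives unique solutions with the stated regularity.

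Next I derive the a priori bounds for the map. For $\rho$, I take $\nabla^k$, $k\le3$, test with $\nabla^k\rho$, and use the $H^4$ dissipation. For $v$, I test with $v_t$ and $\Delta v$, differentiate in time, test with $v_t$, and then use elliptic regularity of the Lamé operator to recover $H^3$. The nonlinear forcing terms are controlled by Sobolev embeddings ($N\le3$) through polynomial quantities in $M$, multiplied by powers of $T_0$ from Hölder in time. This yields bounds of the form $C_0(\text{data})+C(M)T_0^{\theta}$, with $\theta>0$. The density bounds follow from $\rho-\rho_0=\int_0^t\rho_t$ together with $\|\rho_t\|_{L^\infty}\lesssim\|\rho_t\|_{H^2}\in L^2_t$, giving a deviation of at most $C(M)T_0^{1/2}\le\frac12\underline{\rho_0}$. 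Choosing $M=2C_0$ and $T_0$ small makes the map send $X_{T_0}$ into itself.

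For the contraction, I estimate differences in the weaker norm $L^\infty_tH^1\times L^\infty_tL^2$ for $(\rho,v)$, with dissipation $L^2_tH^2\times L^2_tH^1$. This gives a factor $C(M)T_0^{\theta}<1$, so the map is a contraction in the weak norm. The fixed point inherits the strong bounds by weak-$*$ lower semicontinuity, and uniqueness follows from the same difference estimate. Time continuity into $H^3$ and $H^2$ follows from the usual Lions–Magenes argument.

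I expect the main obstacle to be the top-order $\rho$ estimate. The convection $\bar u$ contains $\nabla\bar\rho$, so the forcing $\bar\rho\bar u\cdot\nabla\bar v$ and the $v$-regularity couple back to $\nabla^2\bar\rho$, and the derivative count has to close: $v\in L^2H^3$ needs forcing in $L^2H^1$, which involves $\nabla^2\bar\rho\,\nabla\bar v\in L^2H^1$. This works with $\bar\rho\in L^\infty H^3$ and $\bar v\in L^2H^3$ in dimensions $N\le3$. Similarly, $\divg(\bar\rho\bar v)$ in $L^2H^2$ requires $\bar v\in L^2H^3$, which is available. So the loop should close, but the bookkeeping of which norm picks up the $T_0^\theta$ factor is the delicate point.
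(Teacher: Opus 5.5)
Your fully linearized map $(\bar\rho,\bar v)\mapsto(\rho,v)$ with a Banach contraction is a legitimate alternative to the paper's scheme. The paper instead uses Schauder's theorem for the quasilinear density equation with frozen velocity (Proposition~\ref{Prop 9.1}), then a linear momentum equation (Proposition~\ref{Prop 9.2}), then a Gauss--Seidel iteration. However, your self-map step fails as stated. In $\rho_t-c\,\divg(\alpha\bar\rho^{\alpha-1}\nabla\rho)=-\divg(\bar\rho\bar v)$ the forcing loses one derivative relative to $\rho$, so the top-order estimate is
\begin{align*}
\sup_{t\le T_0}\|\rho\|_{H^3}^2+\|\rho\|_{L^2_{T_0}H^4}^2\le e^{C(M)T_0}\Big(C\|\rho_0\|_{H^3}^2+C_1\|\nabla^3\bar v\|_{L^2_{T_0}L^2}^2+C(M)T_0\Big),
\end{align*}
with $C_1$ depending only on $\alpha,c,\underline{\rho_0},\overline{\rho_0}$. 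The middle term carries no power of $T_0$. Indeed, $L^2_{T_0}H^3$ is the dissipation norm of $\bar v$; it is bounded only by $M$ in $X_{T_0}$, and no choice of $T_0$ in terms of $\|v_0\|_{H^2}$ makes it small. So the density bound is not of the form $C_0(\mathrm{data})+C(M)T_0^\theta$, and with a single radius $M=2C_0$ the map does not send $X_{T_0}$ into itself unless $C_1<1$.

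The missing idea is the triangular choice of radii around which the paper's proof is organized. The velocity estimates see the density only through its pointwise bounds and through $T_0$-small factors, e.g.\ $\int_0^{T_0}\|\bar\rho_t\|_{L^\infty}^2dt\le C(M)T_0^{1/4}$ by Gagliardo--Nirenberg. So one first fixes the velocity radius from the data alone (the paper's $R$ in \eqref{35-1}). Next one fixes the density radius as a function of it: this is the paper's $M_R^2\sim D_0+D_1R^2$, where $D_1R^2$ comes exactly from $\|\eta\|_{L^\infty}^2\|\nabla^3 w\|_{L^2_TL^2}^2$ in \eqref{21-10}. Only then is $T_0$ chosen. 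Since $\bar\rho$ solves no equation while your $v_t$-estimate needs $\bar\rho_t$ and $\nabla\bar\rho_t$ (through the coefficient $\bar\rho$ and through $\partial_t\bar u$), $X_{T_0}$ must also control $\bar\rho_t$, with its own place in this hierarchy.

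The same derivative loss invalidates the contraction factor you claim. With $\delta\rho$ measured in $L^\infty_tH^1\cap L^2_tH^2$, testing $-\divg(\bar\rho_2\,\delta\bar v)$ against $\Delta\delta\rho$ produces $C\|\delta\bar v\|_{L^2_tH^1}^2$ with no factor $T_0^\theta$. The velocity difference does pick up $T_0^\theta$, so there are two repairs. One is a large weight on the velocity part of the difference norm. The other is lowering the density difference to $L^\infty_tL^2\cap L^2_tH^1$, where $\int\bar\rho_2\,\delta\bar v\cdot\nabla\delta\rho$ becomes $T_0$-small after time integration; this is in the spirit of the weighted energy \eqref{mathfrakEn}.

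There is also a minor slip. The eigenvalues of the Lam\'e symbol are $\nu$ and $\alpha(2\nu-c)$, not $\nu+\alpha(2\nu-c)$; after your integration by parts the coefficient of $(\divg v)^2$ is $\alpha(2\nu-c)-\nu$. Positivity, and hence your ellipticity claim, is unaffected.
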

	
	By means of the effective velocity transformation $v=u+c\alpha\rho^{\alpha-2}\nabla\rho$, the following local well-posedness result follows immediately from Lemma \ref{Lem loc}.
	\begin{lema}\label{Lem loc'}
		Under the same assumptions on $N, \alpha, \gamma$, 
		and $\beta$ as in Lemma \ref{Lem loc}, we
		assume that the initial data $(\rho_0,u_0)$ satisfy 
		\begin{align}\label{iiiini}
			0<\underline{\rho_0}\le \rho_0\leq \overline{\rho_0},\quad\rho_0\in H^3,\quad u_0\in H^2,
		\end{align}
		where $\underline{\rho_0}$ and $\overline{\rho_0}$ are positive constants. Then there exists a positive time $T_0 > 0$ 
		depending on $\alpha,\gamma,\nu,\varepsilon, N, 
		\overline{\rho_0},\underline{\rho_0},
		\|\rho_0\|_{H^3}$, and $\|u_0\|_{H^2}$ such that 
		the problem \eqref{Equ1}--\eqref{ini data} admits a unique strong 
		solution $(\rho,u)$ on $\mathbb{T}^N \times [0,T_0]$, 
		which satisfies
		\begin{align}\label{10-2}
			\left\{
			\begin{array}{l}
				\frac{1}{2}\underline{\rho_0}
				\leq\rho(x,t)\leq\frac{3}{2}\overline{\rho_0},
				\quad \forall(x,t)\in\mathbb{T}^N
				\times [0,T_0], \\
				\rho\in C([0,T_0];H^3)\cap L^2(0,T_0;H^4),
				\quad \rho_t\in C([0,T_0];H^1)\cap 
				L^2(0,T_0;H^2),\\
				u\in C([0,T_0];H^2)\cap L^2(0,T_0;H^3),
				\quad u_t\in L^\infty(0,T_0;L^2)\cap 
				L^2(0,T_0;H^1).
			\end{array}
			\right.
		\end{align}
	\end{lema}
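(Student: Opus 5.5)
The plan is to reduce the statement to Lemma \ref{Lem loc} through the effective velocity map, and then convert the solution back. Given $(\rho_0,u_0)$ as in \eqref{iiiini}, set
\begin{align*}
v_0=u_0+c\alpha\rho_0^{\alpha-2}\nabla\rho_0,\qquad c=\nu+\sqrt{\nu^2-\varepsilon^2}.
\end{align*}
Since $\rho_0\in H^3\hookrightarrow W^{1,\infty}$ ($N\le 3$) and $\underline{\rho_0}\le\rho_0\le\overline{\rho_0}$, the function $\rho_0^{\alpha-2}$ is smooth in $\rho_0$ on $[\underline{\rho_0},\overline{\rho_0}]$. Hence the composition estimate gives $\rho_0^{\alpha-2}\in H^3$. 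Because $H^2$ is an algebra for $N\le 3$, we get $\rho_0^{\alpha-2}\nabla\rho_0\in H^2$, and therefore $v_0\in H^2$ with $\|v_0\|_{H^2}\le C(\alpha,\nu,\varepsilon,\underline{\rho_0},\overline{\rho_0},\|\rho_0\|_{H^3},\|u_0\|_{H^2})$. Lemma \ref{Lem loc} then yields $T_0$, depending only on the quantities listed in the statement, and a unique strong solution $(\rho,v)$ of \eqref{Equ2} satisfying \eqref{10-1}.

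Next we define $u=v-c\alpha\rho^{\alpha-2}\nabla\rho$ and check regularity. Since $\frac12\underline{\rho_0}\le\rho\le\frac32\overline{\rho_0}$ and $\rho\in C([0,T_0];H^3)\cap L^2(0,T_0;H^4)$, the same composition and product estimates give
\begin{align*}
\rho^{\alpha-2}\nabla\rho\in C([0,T_0];H^2)\cap L^2(0,T_0;H^3),
\end{align*}
and hence $u\in C([0,T_0];H^2)\cap L^2(0,T_0;H^3)$. For the time derivative we write
\begin{align*}
\partial_t(\rho^{\alpha-2}\nabla\rho)=(\alpha-2)\rho^{\alpha-3}\rho_t\nabla\rho+\rho^{\alpha-2}\nabla\rho_t.
\end{align*}
With $\rho_t\in C([0,T_0];H^1)\cap L^2(0,T_0;H^2)$ and $\nabla\rho\in L^\infty$, the first term lies in $L^\infty(0,T_0;L^2)\cap L^2(0,T_0;H^1)$. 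The second term lies in $L^\infty(0,T_0;L^2)\cap L^2(0,T_0;H^1)$, using $\nabla\rho\in L^\infty$ and $\nabla^2\rho\in L^\infty_tL^3$ for the $H^1$ bound. So $u_t$ has the regularity in \eqref{10-2}. The density bounds and the regularity of $\rho$ carry over unchanged.

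Then we verify that $(\rho,u)$ solves \eqref{Equ1}. The computation of Lemma \ref{Lem A-2} is an algebraic identity valid for sufficiently regular positive $\rho$. Read in reverse, it shows that \eqref{Equ2} together with $v=u+c\alpha\rho^{\alpha-2}\nabla\rho$ is equivalent to \eqref{Equ1}. Concretely, $\rho_t+\divg(\rho v)-c\Delta(\rho^\alpha)=\rho_t+\divg(\rho u)$, because $c\alpha\rho^{\alpha-1}\nabla\rho=c\nabla\rho^\alpha$. The momentum equation is recovered by the same identities; they hold a.e. in $Q_{T_0}$, since all terms are in $L^2(0,T_0;L^2)$ at the regularity above. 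The initial data match by construction.

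For uniqueness, suppose $(\tilde\rho,\tilde u)$ is another strong solution of \eqref{Equ1}--\eqref{ini data} in the class \eqref{10-2} on $[0,T_0]$. Then $\tilde v=\tilde u+c\alpha\tilde\rho^{\alpha-2}\nabla\tilde\rho$ lies in the class \eqref{10-1}, by the same product estimates run in the opposite direction. It solves \eqref{Equ2} with data $(\rho_0,v_0)$, so the uniqueness part of Lemma \ref{Lem loc} gives $(\tilde\rho,\tilde v)=(\rho,v)$ and hence $\tilde u=u$. One point needs care: the uniqueness class in Lemma \ref{Lem loc} includes the density bounds $\frac12\underline{\rho_0}\le\tilde\rho\le\frac32\overline{\rho_0}$. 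If only positivity and continuity of $\tilde\rho$ are given, I would compare on the maximal interval where the bounds hold and use continuity to extend the equality to all of $[0,T_0]$.

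I expect the main obstacle to be purely technical: the $L^2(0,T_0;H^1)$ bound on $u_t$ and the $L^2_tH^3$ bound on $u$, where products of derivatives of $\rho$ need the three-dimensional Sobolev embeddings. These are routine once the density is bounded away from zero.
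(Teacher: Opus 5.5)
Your proposal is correct and follows the paper's proof exactly: define $v_0=u_0+c\alpha\rho_0^{\alpha-2}\nabla\rho_0\in H^2$, apply Lemma \ref{Lem loc}, and set $u=v-c\alpha\rho^{\alpha-2}\nabla\rho$. You also fill in three things the paper leaves implicit: the regularity of $u$ and $u_t$, the reverse reading of Lemma \ref{Lem A-2}, and the transfer of uniqueness through the effective-velocity map. Your closing remark about a larger uniqueness class is unnecessary, because the class \eqref{10-2} already contains the density bounds.
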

	\begin{proof}
		Defining $v_0 = u_0 + c\alpha\rho_0^{\alpha-2}
		\nabla\rho_0$, we readily see that $v_0 \in H^2$. 
		By virtue of Lemma \ref{Lem loc}, we obtain a 
		unique strong solution $(\rho,v)$ to the problem 
		\eqref{Equ2} with the initial data $(\rho_0,v_0)$ 
		on $\mathbb{T}^N \times [0,T_0]$ satisfying 
		\eqref{10-1} for some $T_0 > 0$. Define $u = v - c\alpha \rho^{\alpha-2}\nabla\rho$. 
		It follows that $(\rho,u)$ is a 
		strong solution to the problem \eqref{Equ1}--\eqref{ini data} on $\mathbb{T}^N \times [0,T_0]$ 
		satisfying \eqref{10-2}. This completes the proof of Lemma \ref{Lem loc'}.
	\end{proof}
	
	Throughout the remainder of this paper, unless otherwise stated, we assume that $N=2$ or $3$, $\alpha\in\left(1-\frac{1}{N},1\right)$, $\gamma\in[1,\infty)$, and $\beta\in[0,1)$.
	Suppose that the initial data $(\rho_0,u_0)$ 
	satisfy \eqref{ini rho_0, u_0} and  $v_0 = u_0 + c\alpha\rho_0^{\alpha-2}\nabla\rho_0$. 
	Let $(\rho,v)$ be the strong solution to the 
	system \eqref{Equ2} with the initial data 
	$(\rho_0,v_0)$ defined on $\mathbb{T}^N \times [0,T^*)$ and $u=v-c\alpha\rho^{\alpha-2}\nabla\rho$, 
	where $T^* > 0$ denotes the maximal existence time 
	of the solution. To prove the global-in-time 
	existence of $(\rho,v)$, we suppose on the contrary 
	that $T^* < \infty$. 
    
    Fix $T \in (0, T^*)$. The solution $(\rho, v)$ remains well-defined on $\mathbb{T}^N \times [0, T]$, enjoys the regularity stated in $\eqref{10-1}_2$ and $\eqref{10-1}_3$ in $[0,T]$, and stays away from vacuum.

	\section{Energy estimates}
	In this section, we aim to derive the basic energy 
	estimates. To this end, we define
	\begin{align*}
		E_0 = \int \left( \frac{1}{2}\rho_0|u_0|^2 +\pi_+(\rho_0) + \frac{2\varepsilon^2\alpha^2}{(2\alpha-1)^2}|\nabla(\rho_0^{\alpha-\frac{1}{2}})|^2 \right) dx,
	\end{align*}
	where 
	\begin{equation*}
		\pi_+(\rho_0)=\left\{
		\begin{array}{ll}
			\frac{1}{\gamma-1}\rho_0^\gamma,\quad &\text{ if }\gamma>1,\\
			\rho_0\log_+\rho_0, \quad &\text{ if }\gamma=1.
		\end{array}
		\right.
	\end{equation*}
	
	First, we derive the standard energy estimates for 
	the original system $\eqref{Equ1}$.
	\begin{prop}\label{Prop ene1}
		There exists a constant $C>0$ depending on $\gamma,\alpha,\nu,\varepsilon$, and $E_0$ such that
		\begin{align}\label{2-1}
			\sup_{0\leq t\leq T}\int \left(\rho|u|^2+\rho^\gamma+|\nabla(\rho^{\alpha-\frac{1}{2}})|^2\right)dx+\int_{Q_T} \rho^\alpha|\mathbb{D}u|^2dxdt\leq C.
		\end{align}
	\end{prop}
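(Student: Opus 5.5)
The plan is the standard energy identity: multiply the momentum equation of \eqref{Equ1} by $u$, integrate over $\mathbb{T}^N$, and use the continuity equation. Since $\rho$ is bounded away from zero and has the regularity in \eqref{10-1} on $[0,T]$, all integrations by parts are justified.

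\textbf{Convective and pressure terms.} These give $\frac{d}{dt}\int \frac12\rho|u|^2dx$ in the usual way. The pressure term becomes $\frac{d}{dt}\int \pi(\rho)\,dx$, where $\pi(\rho)=\frac{1}{\gamma-1}\rho^\gamma$ if $\gamma>1$ and $\pi(\rho)=\rho\log\rho-\rho+1$ if $\gamma=1$. This uses $\rho\nabla\pi'(\rho)=\nabla P$ and the continuity equation.

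\textbf{Viscous terms.} These produce
\[
\int 2\mu(\rho)|\mathbb{D}u|^2+\lambda(\rho)(\divg u)^2\,dx .
\]
Write $\mathbb{D}u=(\mathbb{D}u-\frac{1}{N}\divg u\,\mathbb{I})+\frac1N\divg u\,\mathbb{I}$. Then the integrand is bounded below by
\[
\Big(2-\frac{2}{N}\Big)\nu\rho^\alpha\Big|\mathbb{D}u-\frac{1}{N}\divg u\,\mathbb{I}\Big|^2+\frac{2\mu+N\lambda}{N}(\divg u)^2 .
\]
Since $2\mu+N\lambda=2\nu\rho^\alpha(1+N(\alpha-1))>0$ for $\alpha>1-\frac1N$, this controls $c_\alpha\nu\int\rho^\alpha|\mathbb{D}u|^2dx$ with $c_\alpha>0$.

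\textbf{Korteweg term.} This is the step needing care. The key identity is that $\divg\mathbb{K}=\rho\nabla\big(\divg(\kappa\nabla\rho)-\frac12\kappa'|\nabla\rho|^2\big)$. This follows from the formula for $\divg\mathbb{K}$ given in the introduction, and I would verify it directly. Then
\[
\int \divg\mathbb{K}\cdot u\,dx=-\int \divg(\rho u)\,w\,dx=\int \rho_t w\,dx,\qquad w=\divg(\kappa\nabla\rho)-\tfrac12\kappa'|\nabla\rho|^2 .
\]
Here $w$ is exactly the variational derivative of $\int\frac12\kappa(\rho)|\nabla\rho|^2dx$ with a minus sign. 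So $\int\rho_t w\,dx=-\frac{d}{dt}\int\frac12\kappa(\rho)|\nabla\rho|^2dx$.

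With $\kappa=\varepsilon^2\alpha^2\rho^{2\alpha-3}$, we have
\[
\frac12\kappa|\nabla\rho|^2=\frac{2\varepsilon^2\alpha^2}{(2\alpha-1)^2}\big|\nabla(\rho^{\alpha-\frac12})\big|^2,
\]
because $\nabla\rho^{\alpha-\frac12}=(\alpha-\frac12)\rho^{\alpha-\frac32}\nabla\rho$. This matches $E_0$.

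\textbf{Conclusion.} Integrating in time gives the identity with initial energy at most $E_0$, up to a constant from $\pi$ versus $\pi_+$. For $\gamma=1$, the bound uses $\rho\log\rho-\rho+1\ge0$ and mass conservation $\int\rho\,dx=\int\rho_0\,dx\le C$. For $\gamma>1$ the bound on $\int\rho^\gamma dx$ is immediate. For $\gamma=1$, $\int\rho\,dx$ is controlled by mass conservation.

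\textbf{Main obstacle.} The only delicate points are the Korteweg identity and, for $\gamma=1$, the bookkeeping of the entropy $\rho\log\rho$ against $\pi_+$. The latter is handled by $\rho\log_+\rho\le \rho\log\rho+e^{-1}$ and mass conservation.
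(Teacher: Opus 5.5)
Your proposal is correct and is essentially the paper's argument: you test the momentum equation with $u$, and you write the capillary force as $\rho\nabla(\cdot)$ so that the continuity equation turns $\int\divg\mathbb{K}\cdot u\,dx$ into $-\frac{d}{dt}\int\frac{2\varepsilon^2\alpha^2}{(2\alpha-1)^2}|\nabla(\rho^{\alpha-\frac12})|^2dx$. The differences are cosmetic. You use the general identity $\divg\mathbb{K}=\rho\nabla\big(\kappa\Delta\rho+\frac12\kappa'|\nabla\rho|^2\big)$, which the paper specializes in Lemma \ref{Lem A-1}. You bound the viscous term with a deviatoric/trace split, valid for every $\alpha>1-\frac1N$, where the paper uses $|\divg u|^2\le N|\mathbb{D}u|^2$ together with $\alpha<1$. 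For $\gamma=1$ your constant indeed depends only on $E_0$, since $\int\rho_0\,dx\le e+\int\rho_0\log_+\rho_0\,dx\le e+E_0$.
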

	\begin{proof}
		Our analysis is devoted to the case $\gamma > 1$, the proof for $\gamma = 1$ being entirely similar. Multiplying $\eqref{Equ1}_2$ by $u$, integrating the resulting equation over $\mathbb{T}^N$, and using integration by parts together with $\eqref{Equ1}_1$, we obtain
		\begin{align*}
			\begin{split}
				\frac{d}{dt}\int\left( \frac{1}{2}\rho|u|^2+\frac{1}{\gamma-1}\rho^\gamma \right)&dx+2\nu\int \rho^\alpha|\mathbb{D} u|^2dx\\
				&+2\nu(\alpha-1)\int\rho^\alpha(\divg u)^2dx=\int \divg\mathbb{K}\cdot udx.
			\end{split}
		\end{align*}
		Using Lemma \ref{Lem A-1} and $\eqref{Equ1}_1$, we have
		\begin{align*}
			\begin{split}
				\int \divg\mathbb{K}\cdot udx&=-\frac{2\varepsilon^2\alpha^2}{2\alpha-1}\int \rho^{\alpha-\frac{3}{2}}\Delta(\rho^{\alpha-\frac{1}{2}})\divg(\rho u)dx\\
				&=\frac{2\varepsilon^2\alpha^2}{2\alpha-1}\int \rho^{\alpha-\frac{3}{2}}\Delta(\rho^{\alpha-\frac{1}{2}})\partial_t\rho dx\\
				&=\frac{4\varepsilon^2\alpha^2}{(2\alpha-1)^2}\int \Delta(\rho^{\alpha-\frac{1}{2}})\partial_t(\rho^{\alpha-\frac{1}{2}})dx\\
				&=-\frac{2\varepsilon^2\alpha^2}{(2\alpha-1)^2}\frac{d}{dt}\int |\nabla(\rho^{\alpha-\frac{1}{2}})|^2dx.
			\end{split}
		\end{align*}
		Therefore,
		\begin{align*}
			&\quad\frac{d}{dt}\int\left( \frac{1}{2}\rho|u|^2+\frac{1}{\gamma-1}\rho^\gamma+\frac{2\varepsilon^2\alpha^2}{(2\alpha-1)^2}|\nabla(\rho^{\alpha-\frac{1}{2}})|^2 \right)dx\\
			&+2\nu\int \rho^\alpha|\mathbb{D} u|^2dx+2\nu(\alpha-1)\int\rho^\alpha(\divg u)^2dx=0.
		\end{align*}
		For $\alpha \in \big(\frac{N-1}{N}, 1\big)$, integrating the above equality over time and invoking the basic estimate $|\divg  u|^2 \le N |\mathbb{D} u|^2$, we obtain $\eqref{2-1}$.
	\end{proof}
	
	Next, we derive the energy estimates for the 
	transformed system \eqref{Equ2}.
	\begin{prop}\label{Prop ene2}
		There exists a constant $C>0$ depending on $\gamma,\alpha,\nu,\varepsilon$, and $E_0$ such that
		\begin{align}\label{2-2}
			\sup_{0\leq t\leq T}\int \rho|v|^2dx+\int_{Q_T}\rho^{\gamma+\alpha-3}|\nabla\rho|^2dxdt+\int_{Q_T} \rho^\alpha|\nabla v|^2dxdt\leq C.
		\end{align}
	\end{prop}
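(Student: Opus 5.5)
The plan is to run the $L^2$ energy estimate directly on the transformed system \eqref{Equ2}. We test $\eqref{Equ2}_2$ with $v$ and use $\eqref{Equ2}_1$ in the form $\rho_t+\divg(\rho u)=0$ to handle the transport terms. Note that $\eqref{Equ2}_1$ is just the continuity equation rewritten, since $\rho v-c\nabla(\rho^\alpha)=\rho u$. This gives
\begin{align*}
\frac12\frac{d}{dt}\int\rho|v|^2dx+\nu\int\rho^\alpha|\nabla v|^2dx+(\nu-c)\int\rho^\alpha(\nabla v)^t:\nabla v\,dx+(\alpha-1)(2\nu-c)\int\rho^\alpha(\divg v)^2dx=-\int\nabla P\cdot v\,dx.
\end{align*}

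For the pressure term we write $v=u+c\alpha\rho^{\alpha-2}\nabla\rho$. The $u$-part is $-\int\nabla P\cdot u\,dx=\frac{d}{dt}\int\pi_+(\rho)\,dx$ up to sign, by the continuity equation; we simply move it to the left as $\frac{d}{dt}\int\frac{1}{\gamma-1}\rho^\gamma dx$. The $\nabla\rho$-part produces the good term
\begin{align*}
c\alpha\gamma\int\rho^{\gamma+\alpha-3}|\nabla\rho|^2dx
\end{align*}
on the left. For $\gamma=1$ we argue in the same way with $\rho\log\rho$. Alternatively, one can avoid this splitting altogether: keep $-\int\nabla P\cdot v\,dx$ on the right and invoke Proposition \ref{Prop ene1}, which already bounds $\sup_t\int\rho^\gamma dx$. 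The splitting is cleaner, though, and it is what produces the second term in \eqref{2-2}.

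The main point is the coercivity of the viscous quadratic form
\begin{align*}
\nu|\nabla v|^2+(\nu-c)(\nabla v)^t:\nabla v+(\alpha-1)(2\nu-c)(\divg v)^2 .
\end{align*}
We use $c-\nu=\nu\beta\ge0$, $2\nu-c=\nu(1-\beta)$, and $\alpha<1$, so the last two terms have the unfavorable sign. We bound them by $|(\nabla v)^t:\nabla v|\le|\nabla v|^2$ and $(\divg v)^2\le N|\nabla v|^2$. This leaves the lower bound
\begin{align*}
\nu\big[(1-\beta)-N(1-\alpha)(1-\beta)\big]|\nabla v|^2=\nu(1-\beta)\big(1+N(\alpha-1)\big)|\nabla v|^2 ,
\end{align*}
which is positive since $\beta<1$ and $\alpha>1-\frac1N$. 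This matches $\hat\delta_0$ in \eqref{16} with $\hat q_N=0$.

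A sharper treatment, splitting $\nabla v$ into its symmetric and antisymmetric parts, is unnecessary. Integrating in time then gives \eqref{2-2}, with the initial quantity $\int\rho_0|v_0|^2dx$ controlled by $E_0$. Indeed,
\begin{align*}
\rho|v|^2\le2\rho|u|^2+C\rho^{2\alpha-3}|\nabla\rho|^2=2\rho|u|^2+C|\nabla(\rho^{\alpha-\frac12})|^2 .
\end{align*}
The only potential obstacle is this algebraic positivity of the dissipation, and it holds throughout the standing parameter range.
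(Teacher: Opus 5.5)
Your proposal is correct and follows the paper's proof essentially step for step. You test $\eqref{Equ2}_2$ with $v$ and use the continuity equation for the transport terms; you split the pressure term via $v=u+c\alpha\rho^{\alpha-2}\nabla\rho$ into $\frac{d}{dt}\int\frac{1}{\gamma-1}\rho^\gamma dx$ plus the good term $c\alpha\gamma\int\rho^{\gamma+\alpha-3}|\nabla\rho|^2dx$; and you bound the two indefinite viscous terms below exactly as in \eqref{14}--\eqref{15}, obtaining the coercivity constant $(2\nu-c)(1+N(\alpha-1))$. Your check that $\int\rho_0|v_0|^2dx\le CE_0$ fills in a detail the paper leaves implicit. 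The only slip is the aside about avoiding the splitting: a bound on $\sup_t\int\rho^\gamma dx$ alone does not control $\int_0^T\!\int\nabla P\cdot v\,dxdt$ unless you rewrite it through the continuity equation, which is the splitting itself, so that remark should be dropped.
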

	\begin{proof}
		As before, we treat the case $\gamma > 1$; the case $\gamma = 1$ can be handled similarly. Multiplying $\eqref{Equ2}_2$ by $v$, integrating the resulting equation over $\mathbb{T}^N$, and using integration by parts together with $\eqref{Equ1}_1$, we obtain
		\begin{align}\label{13}
			\begin{split}
				&\quad\frac{d}{dt}\int \left(\frac{1}{2}\rho|v|^2+\frac{1}{\gamma-1}\rho^\gamma\right)dx+c\gamma\alpha\int\rho^{\gamma+\alpha-3}|\nabla\rho|^2dx\\
				&+\nu\int \rho^\alpha|\nabla v|^2dx+(\nu-c)\int \rho^\alpha
				\nabla v:(\nabla v)^tdx+(\alpha-1)(2\nu-c)\int \rho^\alpha(\divg v)^2dx=0. 
			\end{split}
		\end{align}
		Since $|\divg v|^2\leq N|\nabla v|^2$, $\alpha<1$, and $c\in [\nu,2\nu)$, we have
		\begin{align}\label{14}
			(\alpha-1)(2\nu-c)\int \rho^\alpha(\divg v)^2dx\ge N(\alpha-1)(2\nu-c)\int \rho^\alpha|\nabla v|^2dx
		\end{align}
		and 
		\begin{align}\label{15}
			(\nu-c)\int \rho^\alpha\nabla v:(\nabla v)^tdx\ge (\nu-c)\int \rho^\alpha|\nabla v|^2dx.
		\end{align}
		Substituting $\eqref{14}$ and $\eqref{15}$ into $\eqref{13}$, we obtain
		\begin{align}
			\begin{split}
				&\quad\frac{d}{dt}\int \left(\frac{1}{2}\rho|v|^2+\frac{1}{\gamma-1}\rho^\gamma\right)dx+c\gamma\alpha\int\rho^{\gamma+\alpha-3}|\nabla\rho|^2dx\\
				&+(2\nu-c)(1+N(\alpha-1))\int \rho^\alpha|\nabla v|^2dx\leq 0.
			\end{split}
		\end{align}
		Integrating the above inequality over time, we obtain $\eqref{2-2}$, due to $\alpha > \frac{N-1}{N}$ and $c < 2\nu$.
		
	\end{proof}
	
	Moreover, we establish the higher regularity estimates for the density resulting from the Korteweg term.
	\begin{prop}\label{Prop ene3}
		There exists a constant $C>0$ depending on $\gamma,\alpha,\nu,\varepsilon$, and $E_0$ such that
		\begin{align}\label{2-3}
			\int_{Q_T}\rho^{3\alpha-6}|\nabla\rho|^4dxdt+\int_{Q_T}|\nabla(\rho^{\frac{3}{2}\alpha-2}\nabla\rho)|^2dxdt\leq C.
		\end{align}
	\end{prop}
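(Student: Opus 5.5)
The plan is to derive a BD-type entropy identity for the original system \eqref{Equ1} by testing the momentum equation with the ``drift'' field
\[
w:=\alpha\rho^{\alpha-2}\nabla\rho=\tfrac{\alpha}{\alpha-1}\nabla(\rho^{\alpha-1}),\qquad \rho w=\nabla(\rho^\alpha),
\]
so that $v=u+cw$. The Korteweg stress then produces the higher-order dissipation in \eqref{2-3}. Everything else is controlled by Propositions \ref{Prop ene1} and \ref{Prop ene2}.

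\textbf{Step 1: the time derivative and the cross terms.} I will compute $\frac{d}{dt}\int \rho u\cdot w\,dx$. For the part where the time derivative falls on $\rho w=\nabla(\rho^\alpha)$, I use $\partial_t(\rho^\alpha)=-\alpha\rho^{\alpha-1}\divg(\rho u)$ and integrate by parts. For the part where it falls on $u$, I use $\eqref{Equ1}_2$ multiplied by $w$. The standard BD cancellation, which is exactly what makes \eqref{Equ2} parabolic (Lemma \ref{Lem A-2}), should make the convective terms and the viscous terms $\int\divg(2\nu\rho^\alpha\mathbb{D}u+\lambda\divg u\,\mathbb{I})\cdot w\,dx$ combine with the $\partial_t(\rho w)$ contribution. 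What remains is either exact or bounded by $C\int\rho^\alpha|\nabla u|^2dx$-type quantities, which are already integrable in time by \eqref{2-1}; alternatively they are controlled through $\int_{Q_T}\rho^\alpha|\nabla v|^2$ from \eqref{2-2}, writing $u=v-cw$.

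The pressure term has a good sign, $\int\nabla P\cdot w\,dx=\gamma\alpha\int\rho^{\gamma+\alpha-3}|\nabla\rho|^2dx$. This term is in any case bounded in $L^1_T$ by \eqref{2-2}.

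The boundary-in-time term is harmless:
\[
\Big|\int\rho u\cdot w\,dx\Big|\le\Big(\int\rho|u|^2\Big)^{1/2}\Big(\int\alpha^2\rho^{2\alpha-3}|\nabla\rho|^2\Big)^{1/2}\le C\Big(\int\rho|u|^2\Big)^{1/2}\|\nabla(\rho^{\alpha-\frac12})\|_{L^2},
\]
and the right-hand side is bounded by Proposition \ref{Prop ene1}.

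\textbf{Step 2: the Korteweg term, which is the main obstacle.} From the computation in the proof of Proposition \ref{Prop ene1}, $\divg\mathbb{K}=\frac{2\varepsilon^2\alpha^2}{2\alpha-1}\,\rho\nabla\big(\rho^{\alpha-\frac32}\Delta\rho^{\alpha-\frac12}\big)$. Hence
\[
\int\divg\mathbb{K}\cdot w\,dx=-\tfrac{2\varepsilon^2\alpha^2}{2\alpha-1}\int\rho^{\alpha-\frac32}\Delta(\rho^{\alpha-\frac12})\,\Delta(\rho^\alpha)\,dx.
\]
I then expand both Laplacians in terms of $\rho$, $\nabla\rho$ and $\nabla^2\rho$. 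After integrating by parts, the integrand becomes a quadratic form in the two quantities $\rho^{\frac32\alpha-2}\nabla^2\rho$ and $\rho^{\frac32\alpha-3}\nabla\rho\otimes\nabla\rho$, with the same total weight $\rho^{3\alpha-4}$.

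To prove coercivity I would use the Jüngel--Matthes systematic integration-by-parts method. One adds the vanishing shift identities
\[
\int\divg\big(\rho^{3\alpha-5}(|\nabla\rho|^2\nabla\rho-(\nabla\rho\cdot\nabla)\nabla\rho\,\cdot)\ \dots\big)\,dx=0
\]
with free coefficients, and then chooses those coefficients so that the resulting form is bounded below by $\delta\big(\rho^{3\alpha-4}|\nabla^2\rho|^2+\rho^{3\alpha-6}|\nabla\rho|^4\big)$ for $\alpha\in(\frac{N-1}N,1)$. I expect the positivity check of this form for the relevant range of $\alpha$ to be the delicate part. If it fails near some endpoint, I would keep part of the $|\nabla\rho|^4$ term on the right and bound it by the $\nabla^2$ term using $\int\rho^{3\alpha-6}|\nabla\rho|^4\lesssim\int\rho^{3\alpha-4}|\nabla^2\rho|^2$, which follows from one more integration by parts.

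\textbf{Step 3: conclusion.} Finally I convert to the form stated in \eqref{2-3}. Expanding
\[
\nabla\big(\rho^{\frac32\alpha-2}\nabla\rho\big)=\rho^{\frac32\alpha-2}\nabla^2\rho+\big(\tfrac32\alpha-2\big)\rho^{\frac32\alpha-3}\nabla\rho\otimes\nabla\rho,
\]
I get $|\nabla(\rho^{\frac32\alpha-2}\nabla\rho)|^2\le C\big(\rho^{3\alpha-4}|\nabla^2\rho|^2+\rho^{3\alpha-6}|\nabla\rho|^4\big)$. Integrating the resulting differential inequality over $[0,T]$ and using the bounds from Steps 1 and 2 then gives \eqref{2-3}.
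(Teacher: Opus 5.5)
Your overall strategy is sound: test $\eqref{Equ1}_2$ with $w=\alpha\rho^{\alpha-2}\nabla\rho$ and let the Korteweg term supply the second-order dissipation (the classical BD-entropy route). But there is a genuine gap exactly where the content of \eqref{2-3} lies: Step 2 is never carried out. You only expect that a J\"ungel--Matthes computation will give coercivity. Your fallback does not repair a failure: replacing $\int\rho^{3\alpha-6}|\nabla\rho|^4dx$ by $K\int\rho^{3\alpha-4}|\nabla^2\rho|^2dx$ helps only if the negative coefficient is smaller than the positive one divided by $K$, which is the same positivity question. No pointwise argument can work either, since $\rho^\alpha|\nabla w|^2=\alpha^2\rho^{3\alpha-4}|\nabla^2\rho-(2-\alpha)\rho^{-1}\nabla\rho\otimes\nabla\rho|^2$ vanishes wherever $\nabla^2\rho=(2-\alpha)\rho^{-1}\nabla\rho\otimes\nabla\rho$.

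The missing idea is the structural identity established in the proof of Lemma \ref{Lem A-2} (see \eqref{A-4} and the computation following \eqref{A-5}):
\[
\divg\mathbb{K}=\varepsilon^2\big(\divg(\rho^\alpha\nabla w)+(\alpha-1)\nabla(\rho^\alpha\divg w)\big).
\]
It gives at once
\[
-\int\divg\mathbb{K}\cdot w\,dx=\varepsilon^2\int\rho^\alpha\big(|\nabla w|^2-(1-\alpha)(\divg w)^2\big)dx\ge\varepsilon^2(N\alpha-N+1)\int\rho^\alpha|\nabla w|^2dx .
\]
This yields $\int_{Q_T}\rho^\alpha|\nabla(\rho^{\alpha-2}\nabla\rho)|^2dxdt\le C$. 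You still need an integrated splitting to reach \eqref{2-3}. Write $\rho^{\alpha-2}\nabla\rho=\rho^{-\alpha/2}G$ with $G=\rho^{\frac32\alpha-2}\nabla\rho$. The cross term then equals $\frac{\alpha}{2}\int\divg(\rho^{-1}\nabla\rho)|G|^2dx$, and the piece $\frac{\alpha}{2(\alpha-1)}\int\nabla(\rho^{1-\alpha})\cdot\nabla(\rho^{\alpha-1})|G|^2dx$ is nonnegative precisely because $\alpha<1$. This is \eqref{new 1}--\eqref{new 2}.

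Step 1 is right in outline but misstated. \eqref{2-1} controls only $\int_{Q_T}\rho^\alpha|\mathbb{D}u|^2$, not $\int_{Q_T}\rho^\alpha|\nabla u|^2$. Bounding $\nabla u$ through $u=v-cw$ is circular unless you use that $\nabla w$ is symmetric. What actually happens is the following.
\begin{itemize}
\item The viscous cross term is exact: $\int\mathbb{S}:\nabla w\,dx=\nu\frac{d}{dt}\int\rho|w|^2dx$ with $\mathbb{S}=2\nu\rho^\alpha\mathbb{D}u+\lambda(\rho)\divg u\,\mathbb{I}$, and this is bounded by \eqref{2-1}.
\item The transport term satisfies $\int\rho u\cdot\frac{Dw}{Dt}dx=\int\rho^\alpha(|\mathbb{D}u|^2-|\mathbb{A}u|^2)dx-(1-\alpha)\int\rho^\alpha(\divg u)^2dx$. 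Here $\mathbb{A}u$ is the antisymmetric part of $\nabla u$, and it enters the entropy identity with a favorable sign.
\end{itemize}
Repaired this way, your argument uses only \eqref{2-1}, so it does not need $\nu\ge\varepsilon$. The paper instead gets the bound on $\int_{Q_T}\rho^\alpha|\nabla w|^2$ in two lines. It expands $\int_{Q_T}\rho^\alpha|\nabla v|^2$ from \eqref{2-2} with $v=u+cw$, and the cross term pairs the symmetric $\nabla w$ only with $\mathbb{D}u$.
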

	\begin{proof}
		By Young's inequality, we obtain
		\begin{align*}
			\begin{split}
				&\quad\int_{Q_T}\rho^\alpha|\nabla v|^2dxdt=\int_{Q_T}\rho^\alpha|\nabla(u+c\alpha\rho^{\alpha-2}\nabla\rho)|^2dxdt\\
				&=\int_{Q_T}\rho^\alpha|\nabla u|^2dxdt+c^2\alpha^2\int_{Q_T}\rho^\alpha|\nabla(\rho^{\alpha-2}\nabla\rho)|^2dxdt+2c\alpha\int_{Q_T} \rho^{\alpha} \nabla u:\nabla(\rho^{\alpha-2}\nabla\rho)dxdt\\
				&=\int_{Q_T}\rho^\alpha|\nabla u|^2dxdt+c^2\alpha^2\int_{Q_T}\rho^\alpha|\nabla(\rho^{\alpha-2}\nabla\rho)|^2dxdt+2c\alpha\int_{Q_T} \rho^{\alpha} \mathbb{D} u:\nabla(\rho^{\alpha-2}\nabla\rho)dxdt\\
				&\ge \int_{Q_T}\rho^\alpha|\nabla u|^2dxdt+\frac{c^2\alpha^2}{2}\int_{Q_T}\rho^\alpha|\nabla(\rho^{\alpha-2}\nabla\rho)|^2dxdt-C\int_{Q_T} \rho^\alpha |\mathbb{D} u|^2dxdt,
			\end{split}
		\end{align*}
		where the third line holds because $\nabla(\rho^{\alpha-2}\nabla\rho)$ is a symmetric matrix. This, together with $\eqref{2-1}$ and $\eqref{2-2}$, implies that
		\begin{align}\label{new 3}
			\int_{Q_T}\rho^\alpha|\nabla u|^2dxdt+\frac{c^2\alpha^2}{2}\int_{Q_T}\rho^\alpha|\nabla(\rho^{\alpha-2}\nabla\rho)|^2dxdt\leq C.
		\end{align}
		A direct computation shows that
		\begin{align}\label{new 2}
			\begin{split}
            &\quad\int_{Q_T}\rho^\alpha|\nabla(\rho^{\alpha-2}\nabla\rho)|^2dxdt\\
				&=\int_{Q_T} \rho^\alpha|\nabla(\rho^{-\frac{\alpha}{2}}\rho^{\frac{3}{2}\alpha-2}\nabla\rho)|^2dxdt\\
				&=\int_{Q_T} \rho^\alpha|\nabla(\rho^{-\frac{\alpha}{2}})\otimes\rho^{\frac{3}{2}\alpha-2}\nabla\rho+\rho^{-\frac{\alpha}{2}}\nabla(\rho^{\frac{3}{2}\alpha-2}\nabla\rho)|^2dxdt\\
				&=\frac{\alpha^2}{4}\int_{Q_T} \rho^{3\alpha-6}|\nabla\rho|^4dxdt+\int_{Q_T} |\nabla(\rho^{\frac{3}{2}\alpha-2}\nabla\rho)|^2dxdt\\
				&\quad+2\int_{Q_T} \rho^\alpha\nabla(\rho^{-\frac{\alpha}{2}})\cdot\rho^{-\frac{\alpha}{2}}\nabla(\rho^{\frac{3}{2}\alpha-2}\nabla\rho)\cdot\rho^{\frac{3}{2}\alpha-2}\nabla\rho dxdt.
			\end{split}
		\end{align}
		We now estimate the last term on the right-hand side. Combining integration by parts with the condition $\alpha<1$, we obtain
		\begin{align}\label{new 1}
			\begin{split}
				&\quad2\int_{Q_T} \rho^\alpha\nabla(\rho^{-\frac{\alpha}{2}})\cdot\rho^{-\frac{\alpha}{2}}\nabla(\rho^{\frac{3}{2}\alpha-2}\nabla\rho)\cdot\rho^{\frac{3}{2}\alpha-2}\nabla\rho dxdt\\
				&=-\alpha\int_{Q_T} \rho^{-1}\nabla\rho \cdot\nabla(\rho^{\frac{3}{2}\alpha-2}\nabla\rho)\cdot\rho^{\frac{3}{2}\alpha-2}\nabla\rho dxdt\\
				&=\frac{\alpha}{2}\int_{Q_T} \divg(\rho^{-1}\nabla\rho)|\rho^{\frac{3}{2}\alpha-2}\nabla\rho|^2 dxdt\\
				&=\frac{\alpha}{2(\alpha-1)}\int_{Q_T} \divg(\rho^{1-\alpha}\nabla(\rho^{\alpha-1}))|\rho^{\frac{3}{2}\alpha-2}\nabla\rho|^2 dxdt\\
				&=\frac{\alpha}{2(\alpha-1)}\int_{Q_T} \nabla(\rho^{1-\alpha})\cdot\nabla(\rho^{\alpha-1})|\rho^{\frac{3}{2}\alpha-2}\nabla\rho|^2 dxdt+\frac{\alpha}{2(\alpha-1)}\int_{Q_T}\rho^{1-\alpha}\Delta(\rho^{\alpha-1})|\rho^{\frac{3}{2}\alpha-2}\nabla\rho|^2 dxdt\\
				&\ge \frac{\alpha}{2(\alpha-1)}\int_{Q_T}\rho^{1-\alpha}\Delta(\rho^{\alpha-1})|\rho^{\frac{3}{2}\alpha-2}\nabla\rho|^2 dxdt\\
				&\ge -\frac{\alpha^2}{8}\int_{Q_T}\rho^{3\alpha-6}|\nabla\rho|^4dxdt-C\int_{Q_T} \rho^\alpha|\nabla^2(\rho^{\alpha-1})|^2dxdt,
			\end{split}
		\end{align}
		where the last inequality follows from Young's inequality. Substituting $\eqref{new 1}$ into $\eqref{new 2}$ yields
		\begin{align*}
			\frac{\alpha^2}{8}\int_{Q_T}\rho^{3\alpha-6}|\nabla\rho|^4dxdt+\int_{Q_T} |\nabla(\rho^{\frac{3}{2}\alpha-2}\nabla\rho)|^2dxdt\leq C\int_{Q_T}\rho^\alpha|\nabla(\rho^{\alpha-2}\nabla\rho)|^2dxdt, 
		\end{align*}
		which, together with $\eqref{new 3}$, implies $\eqref{2-3}$.
	\end{proof}
	
	With the above estimates at hand, we are now in a position to derive the following corollary, which will be used to relax the restriction on the exponent $\gamma$ in the three-dimensional case.
	\begin{corl}\label{Cor 1}
		Let $N=3$. There exists a constant $C>0$ depending on $\gamma,\alpha,\nu,\varepsilon,E_0$ and $T^*$ such that
		\begin{align}\label{2-4}
			\|\rho^{\frac{3}{2}\alpha-1}\|_{L^2_TH^2}\leq C.
		\end{align}
	\end{corl}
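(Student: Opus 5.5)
The plan is to write $w=\rho^{\frac{3}{2}\alpha-1}$ and control separately $\|w\|_{L^2_TL^2}$, $\|\nabla w\|_{L^2_TL^2}$ and $\|\nabla^2 w\|_{L^2_TL^2}$, using only Propositions \ref{Prop ene1}--\ref{Prop ene3} and the fact that $T<T^*<\infty$. The key identity is
\begin{align*}
\nabla w=\Big(\tfrac{3}{2}\alpha-1\Big)\rho^{\frac{3}{2}\alpha-2}\nabla\rho,
\end{align*}
so $\nabla^2 w$ is exactly $(\frac{3}{2}\alpha-1)\nabla(\rho^{\frac{3}{2}\alpha-2}\nabla\rho)$. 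By \eqref{2-3}, this gives $\|\nabla^2 w\|_{L^2(Q_T)}\le C$ directly. Since $\alpha>\frac{2}{3}$ in three dimensions, the exponent $\frac{3}{2}\alpha-1$ is positive and the problem is non-degenerate.

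For the first derivative, I use $|\nabla w|^2=C\rho^{3\alpha-4}|\nabla\rho|^2$. I would control it by interpolating with the $L^4$ bound in \eqref{2-3}. Writing
\begin{align*}
\rho^{3\alpha-4}|\nabla\rho|^2=\big(\rho^{\frac{3\alpha-6}{2}}|\nabla\rho|^2\big)\rho^{\frac{3\alpha}{2}-1},
\end{align*}
Cauchy--Schwarz gives
\begin{align*}
\int_{Q_T}|\nabla w|^2\le \Big(\int_{Q_T}\rho^{3\alpha-6}|\nabla\rho|^4\Big)^{1/2}\Big(\int_{Q_T}\rho^{3\alpha-2}\Big)^{1/2}.
\end{align*}
Thus everything reduces to a bound on $\int_{Q_T}\rho^{3\alpha-2}$, which is the same quantity as $\|w\|_{L^2_TL^2}^2$.

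For the zeroth-order term, I would first note that mass conservation gives $\int\rho\,dx=\int\rho_0\,dx$, and Proposition \ref{Prop ene1} gives $\sup_t\int\rho^\gamma\le C$. Then $\rho\in L^\infty_TL^{\max(1,\gamma)}$. If $3\alpha-2\le\gamma$, or $3\alpha-2\le1$ (which always holds since $\alpha<1$), the bound $\int_{Q_T}\rho^{3\alpha-2}\le CT^*$ is immediate from Hölder. Here the bound depends on $T^*$ because $T\le T^*$. This is why $T^*$ enters the constant.

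The only potential obstacle is making sure no lower-order term hides an unbounded negative power of $\rho$ near vacuum. Since all exponents above are positive, the estimate closes. Should a Poincaré-type step be preferred, $w$ can alternatively be bounded via $\|w-\bar w\|\le C\|\nabla w\|$ together with $\bar w\le C(\int\rho)^{\frac{3}{2}\alpha-1}$.

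Combining the three bounds gives \eqref{2-4}.
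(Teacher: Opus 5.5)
Your proof is correct and follows the paper's argument. The term $\nabla^2 w$ is controlled directly by \eqref{2-3}, and your Cauchy--Schwarz splitting of $\rho^{3\alpha-4}|\nabla\rho|^2$ is the paper's Young inequality $\rho^{3\alpha-4}|\nabla\rho|^2\le\rho^{3\alpha-6}|\nabla\rho|^4+\rho^{3\alpha-2}$.

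The only difference is the zeroth-order term. The paper applies Sobolev embedding to $\rho^{\alpha-\frac12}$ to get $\rho\in L^\infty_TL^{6\alpha-3}$, then bounds $\rho^{3\alpha-2}\le\rho^{6\alpha-3}+1$. You instead note that $0<3\alpha-2<1$ and use the $L^1$ (or $L^\gamma$) bound on $\rho$, which is more elementary and equally valid.
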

	\begin{proof}
		By virtue of $\eqref{2-1}$ and the fact that $\alpha - \frac{1}{2} < \gamma$, we obtain, using the standard Sobolev embedding, the boundedness of $\|\rho^{\alpha-\frac{1}{2}}\|_{L^\infty_T L^6}$, which in turn implies the boundedness of $\|\rho\|_{L^\infty_T L^{6\alpha-3}}$.
		
		For the term $\|\rho^{\frac{3}{2}\alpha-1}\|_{L^2_TL^2}$, applying Young's inequality and $\alpha>\frac{2}{3}$ gives
		\begin{align}\label{new 4}
			\int_{Q_T}\rho^{3\alpha-2}dxdt\leq \int_{Q_T} \big(\rho^{6\alpha-3}+1\big)
			dxdt\leq C.
		\end{align}
		For the term $\|\nabla(\rho^{\frac{3}{2}\alpha-1})\|_{L^2_TL^2}$, we obtain from \eqref{2-3}, Young's inequality, and \eqref{new 4} that
		\begin{align}\label{new 5}
			\begin{split}
				\int_{Q_T}|\nabla(\rho^{\frac{3}{2}\alpha-1})|^2dxdt&\leq C\int_{Q_T}\rho^{3\alpha-4}|\nabla\rho|^2dxdt\\
				&\leq C\int_{Q_T}\rho^{3\alpha-6}|\nabla \rho|^4dx+C\int_{Q_T}\rho^{3\alpha-2}dxdt\\
				&\leq C.
			\end{split}
		\end{align}
		
		A combination of $\eqref{new 4}$, $\eqref{new 5}$, and $\eqref{2-3}$ leads to $\eqref{2-4}$. The proof is complete.
	\end{proof}

	\section{Upper and positive lower bounds for the density}
	In this section, we apply a modified Nash--Moser-type iteration technique to establish both an upper bound and a positive lower bound for the density.
    
    \subsection{Parameter conditions}
In this subsection, we identify the parameter conditions required for closing the upper and lower bounds of the density, respectively, and prove that the parameter conditions for the lower-bound closure are more restrictive than those for the upper-bound closure.

We first introduce the definition of $\beta_{N,\rho_{max}}^+$.
\begin{defi}
Let $N=2$ and $x\in(0.5,1)$, or $N=3$ and $x\in\Big(\frac{9-\sqrt{48}}{3},1\Big)$. Define $\beta_{N,\rho_{max}}^+(x)\in(0,1]$ as the unique positive root of the following equation:
	\begin{align}\label{61-1}
		N-2=\frac{4(1-\beta_{N,\rho_{max}}^+(x))(Nx-N+1)}{(\beta_{N,\rho_{max}}^+(x)+\sqrt{N}(1-x)(1-\beta_{N,\rho_{max}}^+(x)))^2}.
	\end{align}
\end{defi}
\begin{rmk}
	The quantity $\beta_{N,\rho_{max}}^+$ is well-defined.
	
	For any $x\in(0.5,1)$, define the function
	\begin{align}\label{f2}
		f_{2}(x,y):=\frac{4(1-y)(2x-1)}{(y+\sqrt{2}(1-x)(1-y))^2},\quad \forall y\in [0,1].
	\end{align}
	Since the numerator in the definition of $f_2(x,y)$ is strictly decreasing in $y$, while the denominator is strictly increasing in $y$, it follows that $f_{2}(x,y)$ is strictly decreasing in $y\in[0,1]$. Note that
	\begin{align*}
		f_2(x,1)=0,
	\end{align*}
	which shows that $y_x=1$ is the unique solution to the equation $f_{2}(x,y_x)=0$ in $(0,1]$. Therefore, $\beta_{2,\rho_{max}}^+(x)$ is well-defined and equals $1$.
	
	For any $x\in\Big(\frac{9-\sqrt{48}}{3},1\Big)$, define
	\begin{align}\label{f3}
		f_{3}(x,y):=\frac{4(1-y)(3x-2)}{(y+\sqrt{3}(1-x)(1-y))^2}, \quad \forall y\in [0,1].
	\end{align}
	Since the numerator of $f_3(x,y)$ is strictly decreasing in $y$ and the denominator is strictly  increasing in $y$, we conclude that $f_{3}(x,y)$ is strictly decreasing on $[0,1]$. Note that
	\begin{align*}
		f_3(x,0)=\frac{4(3x-2)}{3(1-x)^2}>1\Leftarrow x\in\Big(\frac{9-\sqrt{48}}{3},1\Big), 
	\end{align*}
	and $f_3(x,1)=0$. Therefore, by the intermediate value theorem and the monotonicity of $f_3(x,\cdot)$, there exists a unique $\beta_{3,\rho_{max}}^+(x)\in (0,1)$ such that $f_{3}(x,\beta_{3,\rho_{max}}^+(x))=1$.
\end{rmk}

We next introduce the definition of $\beta_{N,\rho_{min}}^+$.
\begin{defi}
	Let $N=2$ and $x\in\Big(\frac{\sqrt{5}-1}{2},1\Big)$, or $N=3$ and $x\in\Big(\frac{\sqrt{57}-3}{6},1\Big)$. Define $\beta_{N,\rho_{min}}^+(x)\in(0,1)$ as the unique positive root of the following equation:
	\begin{align}\label{61-2}
		\frac{2x}{1-x}=\frac{4(1-\beta_{N,\rho_{min}}^+(x))(Nx-N+1)}{(\beta_{N,\rho_{min}}^+(x)+\sqrt{N}(1-x)(1-\beta_{N,\rho_{min}}^+(x)))^2}.
	\end{align}
\end{defi}
\begin{rmk}
	The quantity $\beta_{N,\rho_{min}}^+$ is well-defined.
	
	 Fix $x\in\Big(\frac{\sqrt{5}-1}{2},1\Big)$. Note that
	 \begin{align*}
	 	f_2(x,0)=\frac{4(2x-1)}{2(1-x)^2}>\frac{2x}{1-x}\Leftarrow x\in\Big(\frac{\sqrt{5}-1}{2},1\Big), 
	 \end{align*}
	 and $f_2(x,1)=0$. Therefore, by the intermediate value theorem and the monotonicity of $f_2(x,\cdot)$, there exists a unique $\beta_{2,\rho_{min}}^+(x)\in (0,1)$ such that $f_{2}(x,\beta_{2,\rho_{min}}^+(x))=\frac{2x}{1-x}$.
	
	A similar argument shows that $\beta_{3,\rho_{min}}^+(x)$ is well-defined, since
	\begin{align*}
		f_3(x,0)=\frac{4(3x-2)}{3(1-x)^2}>\frac{2x}{1-x}\Leftarrow x\in\Big(\frac{\sqrt{57}-3}{6},1\Big).
	\end{align*}
\end{rmk}

We now present the parameter conditions required to close the upper bound and the lower bound of the density, respectively.

\begin{cond}
The proposed parameter conditions for obtaining the upper and lower bounds of the density are as follows. For the upper bound:
\begin{align}
	&N=2,\quad
	\alpha\in(0.5,1),\quad
	\gamma\in[1,\infty),\quad
	\beta\in[0,\beta_{2,\rho_{max}}^+(\alpha)), \label{2d upper}\\
	&N=3,\quad
	\alpha\in\Big(\frac{9-\sqrt{48}}{3},1\Big),\quad
	\gamma\in\Big[1,\frac{15\alpha-7}{3}\Big),\quad
	\beta\in[0,\beta_{3,\rho_{max}}^+(\alpha)),  \label{3d upper}
\end{align}
and for the lower bound:
\begin{align}
	&N=2,\quad
	\alpha\in\Big(\frac{\sqrt5-1}{2},1\Big),\quad
	\gamma\in[1,\infty),\quad
	\beta\in[0,\beta_{2,\rho_{min}}^+(\alpha)), \label{2d lower}\\
	&N=3,\quad
	\alpha\in\Big(\frac{\sqrt{57}-3}{6},1\Big),\quad
	\gamma\in\Big[1,\frac{15\alpha-7}{3}\Big),\quad
	\beta\in[0,\beta_{3,\rho_{min}}^+(\alpha)).  \label{3d lower}
\end{align}
\end{cond}

The following lemma shows that the parameter restrictions imposed by the lower-bound closure of the density are stronger than those imposed by the upper-bound closure.
\begin{lema}\label{Lem cond}
	The conditions in \eqref{2d lower} are contained in those in \eqref{2d upper}, and the conditions in \eqref{3d lower} are contained in those in \eqref{3d upper}.
\end{lema}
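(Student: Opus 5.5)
The plan is to compare the three components of each condition separately: the range of $\alpha$, the range of $\gamma$, and the range of $\beta$.

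\textbf{The $\alpha$-ranges.} In two dimensions, $\frac{\sqrt5-1}{2}\approx0.618>0.5$. In three dimensions, we need $\frac{\sqrt{57}-3}{6}\approx0.758>\frac{9-\sqrt{48}}{3}\approx0.691$. To make this rigorous, square after clearing denominators: the inequality is equivalent to $\sqrt{57}+2\sqrt{48}>21$. Squaring, this becomes $249+4\sqrt{2736}>441$, i.e.\ $\sqrt{2736}>48$, which holds since $48^2=2304$. Hence the $\alpha$-interval in \eqref{2d lower} lies inside that of \eqref{2d upper}, and likewise for \eqref{3d lower} and \eqref{3d upper}.

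\textbf{The $\gamma$-ranges.} These are identical in the upper and lower conditions, so nothing needs to be checked.

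\textbf{The $\beta$-ranges.} This is the key step: for every $\alpha$ in the lower-bound range we must show $\beta_{N,\rho_{min}}^+(\alpha)\le\beta_{N,\rho_{max}}^+(\alpha)$. I will use the strict monotonicity of $f_N(\alpha,\cdot)$ on $[0,1]$, established in the preceding remarks.

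By definition, $f_N(\alpha,\beta_{N,\rho_{min}}^+(\alpha))=\frac{2\alpha}{1-\alpha}$ and $f_N(\alpha,\beta_{N,\rho_{max}}^+(\alpha))=N-2$.

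For $N=2$ we have $\beta_{2,\rho_{max}}^+=1$, while $\beta_{2,\rho_{min}}^+(\alpha)\in(0,1)$, so the inclusion is immediate.

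For $N=3$ it suffices to show $\frac{2\alpha}{1-\alpha}>1$, i.e.\ $\alpha>\frac13$, which clearly holds on the admissible range. Then
\[
f_3\big(\alpha,\beta_{3,\rho_{min}}^+(\alpha)\big)>1=f_3\big(\alpha,\beta_{3,\rho_{max}}^+(\alpha)\big).
\]
Since $f_3(\alpha,\cdot)$ is strictly decreasing, this forces $\beta_{3,\rho_{min}}^+(\alpha)<\beta_{3,\rho_{max}}^+(\alpha)$. Consequently $[0,\beta_{3,\rho_{min}}^+(\alpha))\subset[0,\beta_{3,\rho_{max}}^+(\alpha))$.

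\textbf{Main obstacle.} The only genuine point is that the two roots are compared at the same $\alpha$. This is legitimate because the lower-bound $\alpha$-range is contained in the upper-bound range, so both roots are defined there. Beyond that, the argument reduces to the elementary inequality $\frac{2\alpha}{1-\alpha}>N-2$ combined with the monotonicity of $f_N(\alpha,\cdot)$.
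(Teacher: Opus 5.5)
Your proposal is correct and follows essentially the same route as the paper: you compare the $\alpha$-intervals, note that the $\gamma$-ranges coincide, and then use $f_N(\alpha,\beta^+_{N,\rho_{min}}(\alpha))=\frac{2\alpha}{1-\alpha}>N-2=f_N(\alpha,\beta^+_{N,\rho_{max}}(\alpha))$ together with the strict decrease of $f_N(\alpha,\cdot)$. Your exact check that $\frac{\sqrt{57}-3}{6}>\frac{9-\sqrt{48}}{3}$ and your direct use of $\beta^+_{2,\rho_{max}}=1$ for $N=2$ are minor refinements of the paper's numerical comparison and its single argument covering both $N=2$ and $N=3$.
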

\begin{proof}
	Since
	\begin{align*}
		0.62\approx\frac{\sqrt{5}-1}{2}>0.5,\quad 0.75\approx\frac{\sqrt{57}-3}{6}>\frac{9-\sqrt{48}}{3}\approx 0.69,
	\end{align*}
	the inclusion relations for $\alpha$ hold.  Thus, it suffices to verify that for $N=2$ with $\alpha\in\left(\frac{\sqrt5-1}{2},1\right)$, and for $N=3$ with $\alpha\in\left(\frac{\sqrt{57}-3}{6},1\right)$, the following holds:
	\begin{align}\label{61-3}
		\beta_N^+(\alpha):=\min\{\beta_{N,\rho_{min}}^+(\alpha),\beta_{N,\rho_{max}}^+(\alpha)\}=\beta_{N,\rho_{min}}^+(\alpha).
	\end{align}
	Indeed, it follows from the definitions of $\beta_{N,\rho_{min}}^+(\alpha)$ and $\beta_{N,\rho_{max}}^+(\alpha)$ that
	\begin{align*}
		f_N(\alpha,\beta_{N,\rho_{max}}^+(\alpha))=N-2,\quad f_N(\alpha,\beta_{N,\rho_{min}}^+(\alpha))=\frac{2\alpha}{1-\alpha}.
	\end{align*}
	Since $f_N(\alpha,\cdot)$ is strictly decreasing on $[0,1]$ and $\frac{2\alpha}{1-\alpha}>N-2$, the identity \eqref{61-3} follows.
\end{proof}

    \subsection{$L^{\hat{q}_N+2}$ bounds for the effective velocity}
    From the parabolic equation $\eqref{Equ2}_1$ satisfied by $\rho$, the key to improving the regularity of the density via the modified Nash--Moser iteration lies in obtaining certain integrability estimates for $v$. Indeed, the iteration requires density-weighted $L^\infty_T L^p$ estimates for $v$ with $p>N$.

	We now determine the integrability exponent for the effective velocity that yields the upper bound for the density, treating the cases $N = 2$ and $N = 3$ separately.
	\begin{itemize}
		\item \textbf{Case I: The two-dimensional case}\\
		For any $(\alpha, \gamma, \beta)$ satisfying \eqref{2d upper}, we choose $\hat{q}_2 > 0$ sufficiently small, depending on $\alpha$ and $\beta$, such that the following condition holds:
		\begin{align}
			&\alpha>\frac{\hat{q}_2+1}{\hat{q}_2+2},\label{2d alpha 1}\\ &0<\hat{q}_2<\frac{4(1-\beta)(2\alpha-1)}{(\beta+\sqrt{2}(1-\alpha)(1-\beta))^2}. \label{2d alpha 2}
		\end{align}
		Such a $\hat{q}_2>0$ satisfying both $\eqref{2d alpha 1}$ and $\eqref{2d alpha 2}$ exists because $\alpha \in (0.5,1)$ and $\beta\in [0,\beta_{2,\rho_{max}}^+(\alpha))=[0,1)$.
		
		\item \textbf{Case II: The three-dimensional case}\\
		For any $(\alpha, \gamma, \beta)$ satisfying \eqref{3d upper},
		we choose $\hat{q}_3 > 1$ sufficiently small, depending on $\alpha,\beta$ and $\gamma$, such that the following condition holds:
		\begin{align}
			&\alpha>\frac{\hat{q}_3+1}{\hat{q}_3+2}, \label{3d alpha 1}\\ 
			&1<\hat{q}_3<\frac{4(1-\beta)(3\alpha-2)}{(\beta+\sqrt{3}(1-\alpha)(1-\beta))^2},  \label{3d alpha 2}\\
			&\gamma<3\alpha-1+\frac{6\alpha-4}{\hat{q}_3+2}.  \label{3d alpha 3}
		\end{align}
		Such a choice of $\hat q_3$ is possible. Indeed, \eqref{3d alpha 1} is guaranteed by $\alpha > \frac{2}{3}$; \eqref{3d alpha 2} follows from $\alpha \in \big(\frac{9-\sqrt{48}}{3}, 1\big)$, $\beta \in [0, \beta_{3,\rho_{max}}^+(\alpha))$, together with the definition \eqref{61-1} of $\beta_{3,\rho_{max}}^+(\alpha)$ and the function $f_3(\alpha,\cdot)$ defined in \eqref{f3} is strictly decreasing; and \eqref{3d alpha 3} is satisfied for $\gamma \in \big[1, \frac{15\alpha-7}{3}\big)$.
	\end{itemize}

	We now proceed to establish the $\rho$-weighted $L^{\hat{q}_N+2}$ integrability of $v$.
	\begin{prop}\label{Prop 3.1}
		Assume that $\eqref{2d upper}$ or $\eqref{3d upper}$ holds. Then there exists a constant $C > 0$, depending on $\gamma,\alpha,\nu,\varepsilon,E_0,T^*,\beta,N,\hat{q}_N$, and $\|\rho_0^{1/(\hat{q}_N+2)} v_0\|_{L^{\hat{q}_N+2}}$, such that
		\begin{align}\label{4-3}
			\sup_{0\leq t\leq T}\int \rho|v|^{\hat{q}_N+2}\leq C.
		\end{align}
	\end{prop}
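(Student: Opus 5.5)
We test $\eqref{Equ2}_2$ with $|v|^{\hat q_N}v$ and integrate over $\mathbb{T}^N$. The continuity equation we use is $\eqref{Equ1}_1$, i.e. $\rho_t+\divg(\rho u)=0$, so the transport part $\rho v_t+\rho u\cdot\nabla v$ produces exactly $\frac{1}{\hat q_N+2}\frac{d}{dt}\int\rho|v|^{\hat q_N+2}dx$. For the viscous terms, set $q=\hat q_N$ and write
\begin{align*}
\nabla(|v|^{q}v)=|v|^{q}\nabla v+q|v|^{q-1}v\otimes\nabla|v|.
\end{align*}
\begin{itemize}
\item The $\nu\divg(\rho^\alpha\nabla v)$ term gives $\nu\int\rho^\alpha|v|^q|\nabla v|^2dx+\nu q\int\rho^\alpha|v|^q|\nabla|v||^2dx$.
\item The cross term with coefficient $\nu-c\le0$ is bounded below using $\nabla v:(\nabla v)^t\ge-|\nabla v|^2$ on the first piece, and Cauchy--Schwarz, $|(\nabla v)^t:(v\otimes\nabla|v|)|\le|v||\nabla v||\nabla|v||$, on the second.
\item The term with coefficient $(\alpha-1)(2\nu-c)<0$ is handled the same way, using $|\divg v|\le\sqrt N|\nabla v|$ and $|\divg(|v|^qv)|\le |v|^q(\sqrt N|\nabla v|+q|\nabla|v||)$.
\end{itemize}
Collecting terms and writing $X=|\nabla v|$, $Y=|\nabla|v||$ (with weight $\rho^\alpha|v|^q$), the integrand is bounded below by the quadratic form
\begin{align*}
\big((2\nu-c)(1+N(\alpha-1))\big)X^2+\nu qY^2-q\big(c-\nu+\sqrt N(1-\alpha)(2\nu-c)\big)XY .
\end{align*}
Minimizing in $Y$ (a Young's inequality) yields the coercive constant $\hat\delta_0$ of \eqref{16}. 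Since $c=\nu(1+\beta)$ and $2\nu-c=\nu(1-\beta)$, we have $\hat\delta_0>0$ exactly when \eqref{2d alpha 2}, resp. \eqref{3d alpha 2}, holds. This is the choice of $\hat q_N$ made above.

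For the pressure term, integration by parts gives
\begin{align*}
\Big|\int\nabla P\cdot|v|^qv\,dx\Big|\le C\int\rho^\gamma|v|^q|\nabla v|dx\le\frac{\hat\delta_0}{2}\int\rho^\alpha|v|^q|\nabla v|^2dx+C\int\rho^{2\gamma-\alpha}|v|^qdx .
\end{align*}
It then remains to show
\begin{align*}
\int_0^T\!\!\int\rho^{2\gamma-\alpha}|v|^q\,dx\,dt\le C+C\int_0^T\!\!\int\rho|v|^{q+2}\,dx\,dt,
\end{align*}
after which Gr\"onwall closes \eqref{4-3}. Hölder with exponents $\frac{q+2}{q}$ and $\frac{q+2}{2}$ gives
\begin{align*}
\int\rho^{2\gamma-\alpha}|v|^q dx\le\Big(\int\rho|v|^{q+2}dx\Big)^{\frac{q}{q+2}}\Big(\int\rho^{(2\gamma-\alpha)\frac{q+2}{2}-\frac q2}dx\Big)^{\frac{2}{q+2}},
\end{align*}
so by Young it suffices to bound $\int_0^T\!\!\int\rho^{m}dx\,dt$ with $m=(2\gamma-\alpha)\frac{q+2}{2}-\frac q2$.

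In 2D this power is controlled for every $\gamma$: $\rho^{\alpha-\frac12}\in L^\infty_TH^1$ by \eqref{2-1}, and $H^1\hookrightarrow L^s$ for all $s<\infty$ on $\mathbb{T}^2$, so $\rho\in L^\infty_TL^s$ for all $s<\infty$. In 3D this is the main obstacle. The tools are:
\begin{itemize}
\item Corollary \ref{Cor 1}: $\rho^{\frac32\alpha-1}\in L^2_TH^2\subset L^2_TL^\infty$;
\item \eqref{2-1}: $\rho\in L^\infty_TL^{\max(\gamma,6\alpha-3)}$.
\end{itemize}
Interpolating, $\int\rho^{m}dx\le\|\rho\|_{L^\infty}^{m-\gamma}\int\rho^\gamma dx$, and $\|\rho\|_{L^\infty}^{m-\gamma}\in L^1_T$ requires $m-\gamma\le 3\alpha-2$. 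I expect this exponent bookkeeping to be exactly \eqref{3d alpha 3}, $\gamma<3\alpha-1+\frac{6\alpha-4}{q+2}$. If the crude Hölder split above is too lossy, I would instead split $\rho^\gamma|v|^q|\nabla v|$ more carefully, pairing the $L^\infty$ factor directly with the dissipation, and arrange the exponents to match \eqref{3d alpha 3}. Finally, integrating in time and using the initial bound $\|\rho_0^{1/(q+2)}v_0\|_{L^{q+2}}$ gives \eqref{4-3}.
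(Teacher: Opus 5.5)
Your derivation of the weighted energy inequality is correct and is the paper's: the quadratic form in $X=|\nabla v|$, $Y=|\nabla|v||$ and the minimization in $Y$ give exactly $\hat\delta_0$ and its positivity criterion. There is one small slip: since $\nu-c\le 0$, a lower bound needs $\nabla v:(\nabla v)^t\le|\nabla v|^2$, not $\ge -|\nabla v|^2$; your final quadratic form does use the right one. The pressure bound and the two-dimensional case also match the paper.

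\textbf{The gap: the three-dimensional closing step.} Your expectation that the exponent count reproduces \eqref{3d alpha 3} is false. Put $q=\hat q_3$ and $m=(2\gamma-\alpha)\frac{q+2}{2}-\frac q2$. Your requirement $m-\gamma\le 3\alpha-2$ is equivalent to $\gamma\le\frac{(\alpha+1)q+8\alpha-4}{2(q+1)}$, and
\begin{align*}
3\alpha-1+\frac{6\alpha-4}{q+2}-\frac{(\alpha+1)q+8\alpha-4}{2(q+1)}=\frac{(5\alpha-3)q^2+(20\alpha-12)q+8\alpha-4}{2(q+1)(q+2)}>0 .
\end{align*}
So you only reach $\gamma<\frac{9\alpha-3}{4}$ (as $q\downarrow 1$), instead of $\gamma<\frac{15\alpha-7}{3}$. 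Concretely, take $\alpha=0.9$, $\beta=0$, $\gamma=\frac53$. This satisfies \eqref{3d upper}, and \eqref{3d alpha 3} holds for every admissible $\hat q_3$. Yet your argument needs $\gamma<1.275$, or $\gamma<1.35$ if you interpolate against $L^{6\alpha-3}$ instead of $L^\gamma$. Your fallback of ``pairing the $L^\infty$ factor with the dissipation'' does not say what to do and does not locate the loss.

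\textbf{Where the loss is, and the paper's fix.} The paper removes two losses.
\begin{itemize}
\item \emph{The time integrability.} Set $Y=\int\rho|v|^{q+2}dx$ and $Z=\int\rho^m dx$. Applying Young to $Y^{\frac{q}{q+2}}Z^{\frac{2}{q+2}}$ forces you to prove $Z\in L^1(0,T)$. Instead, divide to get $\frac{d}{dt}Y^{\frac{2}{q+2}}\le CZ^{\frac{2}{q+2}}$, so that only $\rho\in L^{\frac{2m}{q+2}}_TL^m$ is needed.
\item \emph{The space--time integrability of $\rho$.} Do not degrade Corollary \ref{Cor 1} to $L^2_TL^\infty$, nor \eqref{2-1} to $L^\infty_TL^\gamma$. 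By Gagliardo--Nirenberg, interpolate $\rho^{\frac32\alpha-1}$ between $L^\infty_TL^{\frac{12\alpha-6}{3\alpha-2}}$ (that is, $\rho\in L^\infty_TL^{6\alpha-3}$) and $L^2_TW^{1,6}$ (which contains $L^2_TH^2$ in 3D). This gives $\rho\in L^{\frac{2\zeta'}{q+2}}_TL^{\zeta'}$ with $\zeta'=\frac{(5\alpha-3)q+22\alpha-12}{2}$.
\end{itemize}
Condition \eqref{3d alpha 3} is then exactly $m\le\zeta'$. Hence $\rho^m\le\rho^{\zeta'}+1$, and $\int_0^T Z^{\frac{2}{q+2}}dt\le C$ closes the estimate.

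Both refinements are needed.
\begin{itemize}
\item If you keep your Young step, the same interpolation only yields $\rho\in L^{11\alpha-6}_{t,x}$. The admissible $\gamma$ then falls short of \eqref{3d alpha 3} by $\frac{(5\alpha-3)q}{2(q+2)}$.
\item If you use the $Y^{\frac{2}{q+2}}$ step but keep $L^\infty$ in place of $W^{1,6}$, it falls short by $\frac{2\alpha-1}{2}$.
\end{itemize}
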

	\begin{proof}
		Multiplying $\eqref{Equ2}_2$ by $|v|^{\hat{q}_N}v$, integrating the resulting equation over $\mathbb{T}^N$, and using integration by parts, we obtain
		\begin{align}\label{34}
			\begin{split}
				&\quad\frac{1}{\hat{q}_N+2}\frac{d}{dt}\int \rho|v|^{\hat{q}_N+2}dx+\nu\int \rho^\alpha|v|^{\hat{q}_N}|\nabla v|^2dx+\nu\hat{q}_N\int\rho^\alpha|v|^{\hat{q}_N}|\nabla|v||^2dx\\
				&\quad+(\nu-c)\int \rho^\alpha(\nabla v)^t:\nabla(|v|^{\hat{q}_N}v)dx+(\alpha-1)(2\nu-c)\int\rho^\alpha\divg v\divg(|v|^{\hat{q}_N}v)dx\\
				&\leq C\int \rho^\gamma|v|^{\hat{q}_N}|\nabla v|dx.
			\end{split}
		\end{align}
		Since $\nu\le c$, we have
		\begin{align}\label{35}
			\begin{split}
				&\quad(\nu-c)\int \rho^\alpha(\nabla v)^t:\nabla(|v|^{\hat{q}_N} v)dx\\
				&\ge (\nu-c)\int \rho^\alpha|v|^{\hat{q}_N}|\nabla v|^2dx+(\nu-c)\hat{q}_N\int \rho^\alpha|v|^{\hat{q}_N}|\nabla v||\nabla|v||dx.
			\end{split}
		\end{align}
		Moreover, since $c < 2\nu$, we obtain
		\begin{align}\label{36}
			\begin{split}
				&\quad(\alpha-1)(2\nu-c)\int \rho^\alpha\divg v\divg(|v|^{\hat{q}_N} v)dx\\
				&\ge (\alpha-1)(2\nu-c)\int \rho^\alpha|v|^{\hat{q}_N}(\divg v)^2dx+(\alpha-1)(2\nu-c)\hat{q}_N\int\rho^\alpha |v|^{\hat{q}_N}|\divg v||\nabla|v||dx\\
				&\ge N(\alpha-1)(2\nu-c)\int \rho^\alpha|v|^{\hat{q}_N}|\nabla v|^2dx+\sqrt{N}(\alpha-1)(2\nu-c)\hat{q}_N\int\rho^\alpha |v|^{\hat{q}_N}|\nabla v||\nabla|v||dx,
			\end{split}
		\end{align}
		where the last inequality follows from $|\divg v| \leq \sqrt{N} |\nabla v|$. 
		Adding $\eqref{35}$ and $\eqref{36}$ together with Young's inequality gives
		\begin{align}\label{49 new}
			\begin{split}
				&\quad(\nu-c)\int \rho^\alpha(\nabla v)^t:\nabla(|v|^{\hat{q}_N} v)dx+(\alpha-1)(2\nu-c)\int \rho^\alpha\divg v\divg(|v|^{\hat{q}_N} v)dx\\
				&\ge \Big(\nu-c+N(\alpha-1)(2\nu-c)\Big)\int \rho^\alpha|v|^{\hat{q}_N}|\nabla v|^2dx\\
				&\quad+\hat{q}_N\Big(\nu-c+\sqrt{N}(\alpha-1)(2\nu-c)\Big)\int\rho^\alpha |v|^{\hat{q}_N}|\nabla v||\nabla|v||dx\\
				&\ge \Big(\nu-c+N(\alpha-1)(2\nu-c)\Big)\int \rho^\alpha|v|^{\hat{q}_N}|\nabla v|^2dx\\
				&\quad-\nu\hat{q}_N\int\rho^\alpha|v|^{\hat{q}_N}|\nabla|v||^2dx-\frac{(c-\nu+\sqrt{N}(1-\alpha)(2\nu-c))^2}{4\nu}\hat{q}_N\int \rho^\alpha|v|^{\hat{q}_N}|\nabla v|^2dx.
			\end{split}
		\end{align}
		Substituting $\eqref{49 new}$ into $\eqref{34}$ yields
		\begin{align}\label{8-0}
			\begin{split}
				&\quad\frac{1}{\hat{q}_N+2}\frac{d}{dt}\int \rho|v|^{\hat{q}_N+2}dx+\hat{\delta}_0\int \rho^\alpha|v|^{\hat{q}_N}|\nabla v|^2dx\leq C\int \rho^\gamma|v|^{\hat{q}_N}|\nabla v|dx,
			\end{split}
		\end{align}
		where
		\begin{align*}
			\hat{\delta}_0:=(2\nu-c)(1+N(\alpha-1))-\frac{(c-\nu+\sqrt{N}(1-\alpha)(2\nu-c))^2}{4\nu}\hat{q}_N>0,
		\end{align*}
		due to
		\begin{align*}
			\hat{q}_N<\frac{4(1-\beta)(1+(\alpha-1)N)}{(\beta+\sqrt{N}(1-\alpha)(1-\beta))^2},
		\end{align*}
		which is guaranteed by $\eqref{2d alpha 2}$ and $\eqref{3d alpha 2}$. Thus, applying Young's inequality yields
		\begin{align}\label{6-1}
			\begin{split}
				\frac{1}{\hat{q}_N+2}\frac{d}{dt}\int \rho|v|^{\hat{q}_N+2}dx&+\frac{\hat{\delta}_0}{2}\int \rho^\alpha|v|^{\hat{q}_N}|\nabla v|^2dx\leq C\int \rho^{2\gamma-\alpha}|v|^{\hat{q}_N}dx.
			\end{split}
		\end{align}
		
		Next, we proceed to estimate the right-hand side of the above inequality. When $N=2$, it follows from Young's inequality that
		\begin{align}\label{6-2}
			C\int \rho^{2\gamma-\alpha}|v|^{\hat{q}_2}dx&\leq C\int \rho|v|^{\hat{q}_2+2}dx+C\int \rho^{\Big(2\gamma-\alpha-\frac{\hat{q}_2}{\hat{q}_2+2}\Big)\frac{\hat{q}_2+2}{2}}dx.
		\end{align}
		Combining $\eqref{2-1}$ with the fact that $\gamma > \alpha - \frac{1}{2}$, we obtain from the Sobolev embedding the boundedness of $\|\rho^{\alpha-\frac{1}{2}}\|_{L^\infty_TL^p}$ for any $1 \leq p < \infty$, which in turn yields the boundedness of $\|\rho\|_{L^\infty_TL^p}$ for any $1 \leq p < \infty$. Consequently, it follows from $\eqref{6-1}$ and $\eqref{6-2}$ that
		\begin{align*}
			\frac{1}{\hat{q}_2+2}\frac{d}{dt}\int \rho|v|^{\hat{q}_2+2}dx\leq C\int \rho|v|^{\hat{q}_2+2}dx+C,
		\end{align*}
		which together with Grönwall's inequality implies \eqref{4-3}.
		
		When $N=3$, we use \eqref{6-1} and Hölder's inequality to obtain
		\begin{align}\label{4-6}
			\begin{split}
				\frac{1}{\hat{q}_3+2}\frac{d}{dt}\int \rho|v|^{\hat{q}_3+2}dx&\leq C\int \rho^{2\gamma-\alpha}|v|^{\hat{q}_3}dx\\
				&\leq C\left(\int \rho|v|^{\hat{q}_3+2}dx\right)^{\frac{\hat{q}_3}{\hat{q}_3+2}}\left(\int \rho^{\frac{(2\gamma-\alpha)(\hat{q}_3+2)-\hat{q}_3}{2}}dx\right)^{\frac{2}{\hat{q}_3+2}}.
			\end{split}
		\end{align}
		To handle the right-hand side of the above inequality, we use the Gagliardo-Nirenberg inequality, Hölder's inequality, \eqref{2-1}, and \eqref{2-4} to obtain
		\begin{align*}
			\begin{split}
				\|\rho^{\frac{3}{2}\alpha-1}\|_{L_T^{\frac{2}{\hat{q}_3+2}\zeta}L^\zeta}&\le C\|\rho^{\frac{3}{2}\alpha-1}\|_{L^\infty_T L^{\frac{12\alpha-6}{3\alpha-2}}}^{\theta}\|\rho^{\frac{3}{2}\alpha-1}\|_{L^{2}_T W^{1,6}}^{1-\theta}\le C,
			\end{split}
		\end{align*}
		where
		\begin{align*}
			\theta=\frac{(2\alpha-1)\hat{q}_3+16\alpha-8}{(5\alpha-3)\hat{q}_3+22\alpha-12},\quad \zeta=\frac{(5\alpha-3)\hat{q}_3+22\alpha-12}{3\alpha-2}.
		\end{align*}
		This implies that
		\begin{align}\label{u2'}
			\|\rho\|_{L^{\frac{2}{\hat{q}_3+2}\zeta'}_TL^{\zeta'}}\leq C,
		\end{align}
		where
		\begin{align*}
			\zeta' = \frac{(5\alpha-3)\hat{q}_3 + 22\alpha - 12}{2}.
		\end{align*}
		Consequently, it follows from \eqref{4-6} and \eqref{3d alpha 3} that
		\begin{align*}
			\begin{split}
				\frac{d}{dt}\|\rho^{\frac{1}{\hat{q}_3+2}}v\|_{L^{\hat{q}_3+2}}^2&\leq C\left(\int \rho^{\frac{(2\gamma-\alpha)(\hat{q}_3+2)-\hat{q}_3}{2}}dx\right)^{\frac{2}{\hat{q}_3+2}}\\
				&=C\left(\int \rho^{\zeta'}\rho^{\frac{(2\gamma-6\alpha+2)\hat{q}_3+4\gamma-24\alpha+12}{2}}dx\right)^{\frac{2}{\hat{q}_3+2}}\\
				&\leq C\left(\int (\rho^{\zeta'}+1)dx\right)^{\frac{2}{\hat{q}_3+2}}\\
				&\leq C\left(\int \rho^{\zeta'} dx\right)^{\frac{2}{\hat{q}_3+2}}+C,
			\end{split}
		\end{align*}
		which, after integrating in time and employing $\eqref{u2'}$, leads to \eqref{4-3}.
		
	This completes the proof.
		
	\end{proof}

    \subsection{Upper bound for the density}
	We are now ready to establish the upper bound for the density. Let
	\begin{align*}
		M_0=\int \rho_0 dx.
	\end{align*}
	\begin{prop}\label{Prop 2d RT}
		Assume that $\eqref{2d upper}$ or $\eqref{3d upper}$ holds.  Then there exists a constant $C > 0$, depending on $\gamma,\alpha,\nu,\varepsilon,E_0,T^*,\beta,N,\hat{q}_N,\|\rho_0^{1/(\hat{q}_N+2)} v_0\|_{L^{\hat{q}_N+2}},M_0^{-1}$, and $\|\rho_0\|_{L^\infty}$ such that
		\begin{align}\label{RT}
			\sup_{0\leq t\leq T}\|\rho\|_{L^\infty}\leq C.
		\end{align}
	\end{prop}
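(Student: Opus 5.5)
The plan is the modified Nash--Moser scheme outlined in Section 2.1. It has four steps: an $L^p$ energy estimate for $\rho$, a reverse H\"older inequality, a starting bound for the iteration, and the iteration itself.

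\textbf{Step 1: $L^p$ energy estimate.} I multiply $\eqref{Equ2}_1$ by $\rho^{p+1}$ and integrate over $\mathbb{T}^N$. The diffusion term $-c\Delta(\rho^\alpha)$ produces
\[
c\alpha(p+1)\int \rho^{p+\alpha-1}|\nabla\rho|^2dx\sim \frac{c}{(p+2)}\int|\nabla\rho^{\frac{p+\alpha+1}{2}}|^2dx .
\]
The convection term becomes $(p+1)\int\rho^{p+1}v\cdot\nabla\rho\,dx$. I split it as $\rho^{\frac{p+\alpha-1}{2}}|\nabla\rho|\cdot\rho^{\frac{p+3-\alpha}{2}}|v|$ and apply Young's inequality. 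Then I apply H\"older to $\int\rho^{p+3-\alpha}|v|^2dx$, writing it as
\[
\int(\rho^{\frac{2}{\hat q_N+2}}|v|^2)\,\rho^{p+3-\alpha-\frac{2}{\hat q_N+2}}dx .
\]
Proposition \ref{Prop 3.1} controls the first factor in $L^{\frac{\hat q_N+2}{2}}$, uniformly in time. This leaves $\|\rho\|_{L^{l\frac{\hat q_N+2}{\hat q_N}}}^l$ with $l=p+3-\alpha-\frac{2}{\hat q_N+2}$.

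The full $H^1$ norm on the left of \eqref{M-1} also needs the $L^2$ part $\|\rho^{\frac{p+\alpha+1}{2}}\|_{L^2_TL^2}$. I control it by Poincar\'e-type arguments using mass conservation $M_0$; this is where $M_0^{-1}$ enters. I then integrate in time to obtain \eqref{M-1} with $C_0$ independent of $p$.

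\textbf{Step 2: reverse H\"older inequality.} I interpolate between $L^\infty_TL^2$ and $L^2_TL^r$, with $H^1\hookrightarrow L^r$ and $\frac{\hat q_N}{\hat q_N+2}-\frac2r>0$. This choice is possible by \eqref{14'}: $\hat q_2>0$, and $\hat q_3>1$ allows $r<6$. I also choose $r$ so that $\frac{\xi+\eta}{2}l\le1$, which is allowed by \eqref{2d alpha 1} and \eqref{3d alpha 1}. The result is \eqref{M-2}.

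\textbf{Step 3: starting point of the iteration.} I need $\|\rho\|_{L^{l_0}_TL^{\frac{\hat q_N+2}{\hat q_N}l_0}}$ bounded for some fixed large $p_0$. I do a Gr\"onwall argument analogous to the $\tau$ case in Section 2.2. Since $\alpha>\frac{\hat q_N+1}{\hat q_N+2}$, we have $l<p+\alpha+1$, so the right-hand side norm can be interpolated between $\|\rho^{\frac{p+\alpha+1}{2}}\|_{H^1}$ and low norms. Here I use $\|\rho\|_{L^\infty_TL^1}=M_0$, together with $\rho\in L^\infty_TL^{q}$ (every $q$ in 2D, $q=6\alpha-3$ in 3D) from \eqref{2-1}. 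This gives a differential inequality of the form $\frac{d}{dt}\int\rho^{p+2}\le C+C(\int\rho^{p+2})^{\theta}$. The exponent satisfies $\theta\le1$ precisely because of the gap $l<p+\alpha+1$; this needs checking, and failing it I would use the time-integrated bounds \eqref{u2'} or Corollary \ref{Cor 1} in 3D. The upshot is $\rho\in L^\infty_TL^{p+2}$ for every finite $p$, which covers the starting norm since $T\le T^*<\infty$.

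\textbf{Step 4: iteration.} I set $l_{k+1}=s_k$, so that $l_k\to\infty$ geometrically because $s/l\to 1+\frac{\hat q_N}{\hat q_N+2}-\frac2r>1$. Let
\[
A_k=\max\Big\{\|\rho\|_{L^{l_k}_TL^{\frac{\hat q_N+2}{\hat q_N}l_k}},\ \|\rho_0\|_{L^\infty}+1,\ 1\Big\}.
\]
Then \eqref{M-2} gives $A_{k+1}\le (C\,b^k)^{\eta_k}A_k^{\frac{\xi_k+\eta_k}{2}l_k}$ with $\frac{\xi_k+\eta_k}{2}l_k\le1$. The initial-data term is dominated by $\|\rho_0\|_{L^\infty}^{p+2}$ times a constant, and $\|\rho_0\|_{L^\infty}^{p+2}$ is absorbed into $A_k^{l_k}$ up to the harmless factor $\rho_0^{1-\alpha+\cdots}$. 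Since $\eta_k\sim 2/l_k$ is summable and $\sum k\eta_k<\infty$, the product $\prod_k (Cb^k)^{\eta_k}$ converges. Hence $\sup_k A_k\le C$, and letting $k\to\infty$ gives \eqref{RT}.

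\textbf{Main obstacle.} I expect the main difficulty in the bookkeeping of Step 4: the exponent $\frac{\xi+\eta}{2}l$ may be strictly less than $1$ rather than equal to it, and the initial-data term has the wrong homogeneity. Both must be handled by working with $\max\{\cdot,1\}$ normalizations. The $L^2$-part control in Step 1 is a secondary obstacle.
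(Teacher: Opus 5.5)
Your proposal is correct and follows essentially the same route as the paper. The steps match: the $\rho^{p+1}$ energy estimate with the $L^2$ part controlled through mass conservation (the paper uses H\"older plus $M_0\le(\int\rho^{p+2})^{1/(p+2)}$ to get the factor $M_0^{\alpha-1}$), the same reverse H\"older inequality under a $\max\{\cdot,\mathrm{const}\}$ normalization, and a Gr\"onwall starting bound. For that bound the paper uses \eqref{2-1} directly in 2D; in 3D it checks that the Lebesgue exponent stays at most $6$, so plain Sobolev plus Young with $l_q<q+\alpha+1$ gives a linear Gr\"onwall inequality, which settles the point you left as ``needs checking''. The one slip is the sign of the absorbed initial-data factor: it is $\|\rho_0\|_{L^\infty}^{p+2-l}$ with $p+2-l=\alpha-1+\frac{2}{\hat q_N+2}\in(0,1)$, not $\rho_0^{1-\alpha+\cdots}$, but it is still a $p$-independent constant, so your argument goes through.
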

	\begin{proof}
		\noindent \textbf{Step 1: Preparation for the iteration}\\
		We claim that there exists a universal constant $C_0 \ge 1$, depending on the quantities stated in Proposition \ref{Prop 2d RT} but independent of $p$, such that for any $p > 0$,
		\begin{align}\label{2-8.4}
			\|\rho^{\frac{p+2}{2}}\|_{L^\infty_T L^2}^2+\|\rho^{\frac{p+\alpha+1}{2}}\|_{L^2_TH^1}^2\le C_0(p+2)^2\|\rho\|_{L^{l}_TL^{l\frac{\hat{q}_N+2}{\hat{q}_N}}}^l+C_0\|\rho_0^{\frac{p+2}{2}}\|_{L^2}^2,
		\end{align}
		where
		$l=p+3-\alpha-\frac{2}{\hat{q}_N+2}=\frac{p(\hat{q}_N+2)+(3-\alpha)(\hat{q}_N+2)-2}{\hat{q}_N+2}$.
		
		Indeed, for any $p > 0$, multiplying $\eqref{Equ2}_1$ by $\rho^{p+1}$, integrating the resulting equation over $\mathbb{T}^N$, and using integration by parts together with Young's inequality, we obtain
		\begin{align}\label{2-6}
			\begin{split}
				&\quad\frac{1}{p+2}\frac{d}{dt}\int \rho^{p+2}dx+c\alpha(p+1)\int \rho^{\alpha+p-1}|\nabla\rho|^2dx\\
				&=\int \rho v\cdot\nabla\rho^{p+1}dx\\
				&\le \frac{c\alpha(p+1)}{2}\int \rho^{\alpha+p-1}|\nabla\rho|^2dx+C(p+1)\int \rho^{p-\alpha+3}|v|^2dx\\
				&\le \frac{c\alpha(p+1)}{2}\int \rho^{\alpha+p-1}|\nabla\rho|^2dx+C(p+1)\left(\int \rho|v|^{\hat{q}_N+2}dx\right)^{\frac{2}{\hat{q}_N+2}}\left(\int \rho^{l\frac{\hat{q}_N+2}{\hat{q}_N}}dx\right)^{\frac{\hat{q}_N}{\hat{q}_N+2}}.
			\end{split}
		\end{align}
		It follows from $\eqref{4-3}$ that
		\begin{align}\label{2-8}
			\begin{split}
				&\quad\frac{1}{p+2}\frac{d}{dt}\int \rho^{p+2}dx+\frac{c\alpha(p+1)}{2}\int \rho^{\alpha+p-1}|\nabla\rho|^2dx\\
				&\le C(p+1)\left(\int \rho^{l\frac{\hat{q}_N+2}{\hat{q}_N}}dx\right)^{\frac{\hat{q}_N}{\hat{q}_N+2}}.
			\end{split}
		\end{align} 
		Integrating the above inequality over time yields
		\begin{align}\label{2-8.5}
			\begin{split}
				\|\rho^{\frac{p+2}{2}}\|_{L^\infty_T L^2}^2+\|\nabla\rho^{\frac{p+\alpha+1}{2}}\|_{L^2_TL^2}^2\le C(p+2)^2\|\rho\|_{L^{l}_TL^{l\frac{\hat{q}_N+2}{\hat{q}_N}}}^l+C\|\rho_0^{\frac{p+2}{2}}\|_{L^2}^2.
			\end{split}
		\end{align}
		Next, we estimate $\|\rho^{\frac{p+\alpha+1}{2}}\|_{L^2_TL^2}^2$. By Hölder's inequality, we have
		\begin{align}\label{2-9}
			\begin{split}
				\|\rho^{\frac{p+\alpha+1}{2}}\|_{L^2_TL^2}^2&=\int_0^T\int \rho^{p+\alpha+1}dxdt\\
				&\leq \int_0^T \left(\int \rho^{p+2}dx\right)^{\frac{p+\alpha+1}{p+2}}\left(\int 1 dx\right)^{\frac{1-\alpha}{p+2}}dt\\
				&\leq T \left(\sup_{0\leq t\leq T}\int \rho^{p+2}dx\right)^{\frac{p+\alpha+1}{p+2}}\\
				&\leq T^*\left(\sup_{0\leq t\leq T}\int\rho^{p+2}dx\right)^{\frac{\alpha-1}{p+2}}\sup_{0\leq t\leq T} \int \rho^{p+2}dx.
			\end{split}
		\end{align}
		For any $t \in [0,T]$, we obtain from $\eqref{Equ2}_1$ and Hölder's inequality that
		\begin{align*}
			M_0&=\int\rho_0dx=\int \rho(x,t)dx\leq \left(\int \rho^{p+2}(x,t)dx\right)^{\frac{1}{p+2}}\left(\int 1 dx\right)^{\frac{p+1}{p+2}}\\
			&\leq  \left(\int \rho^{p+2}(x,t)dx\right)^{\frac{1}{p+2}}\leq \left(\sup_{0\leq t\leq T}\int \rho^{p+2}dx\right)^{\frac{1}{p+2}},
		\end{align*}
		which, together with $\alpha < 1$, implies that
		\begin{align}\label{2-10}
			\left(\sup_{0\leq t\leq T}\int\rho^{p+2}dx\right)^{\frac{\alpha-1}{p+2}}\leq M_0^{\alpha-1}.
		\end{align}
		Therefore, combining $\eqref{2-9}$ and $\eqref{2-10}$, we obtain
		\begin{align}
			\begin{split}
				\|\rho^{\frac{p+\alpha+1}{2}}\|_{L^2_TL^2}^2&\leq T^*M_0^{\alpha-1}\|\rho^{\frac{p+2}{2}}\|_{L^\infty_T L^2}^2,
			\end{split}
		\end{align}
		which, combined with $\eqref{2-8.5}$, gives $\eqref{2-8.4}$.\\
		
		\noindent \textbf{Step 2: Core of the iteration—Reverse Hölder's inequality}\\
		We choose $r \in \left(2,\frac{2N}{N-2}\right)$, depending on $\alpha$, $N$, and $\hat{q}_N$, such that
		\begin{align}
			&\frac{\hat{q}_N}{\hat{q}_N+2}-\frac{2}{r}>0, \label{2-5.55}\\
			&\alpha\ge\frac{\Big(\frac{\hat{q}_N}{\hat{q}_N+2}-\frac{2}{r}\Big)\hat{q}_N+2\hat{q}_N+2}{\Big(\frac{\hat{q}_N}{\hat{q}_N+2}-\frac{2}{r}\Big)(\hat{q}_N+2)+2\hat{q}_N+4}. \label{2-5.6}
		\end{align}
		Such an $r$ exists. Indeed, condition \eqref{2-5.55} follows from the fact that $\hat{q}_2 > 0$ and $\hat{q}_3 > 1$, while condition \eqref{2-5.6} is ensured by \eqref{2d alpha 1} and \eqref{3d alpha 1}. Note that for $N = 2$ or $3$, it holds that $H^1\hookrightarrow L^r$. 
		By the interpolation inequality, we have
		\begin{align}\label{2-11}
			\|\rho\|_{L^s_TL^{\frac{\hat{q}_N+2}{\hat{q}_N}s}}\le \|\rho^{\frac{p+2}{2}}\|_{L^\infty_T L^2}^{\xi}\|\rho^{\frac{p+\alpha+1}{2}}\|_{L^2_TL^r}^\eta,
		\end{align}
		where{\small
			\begin{align}\label{2-11.5}
				\begin{split}
					&s=(p+2)\Big(\frac{\hat{q}_N}{\hat{q}_N+2}-\frac{2}{r}\Big)+p+\alpha+1,\\ &\eta=\frac{2}{(p+2)\Big(\frac{\hat{q}_N}{\hat{q}_N+2}-\frac{2}{r}\Big)+p+\alpha+1},\quad \xi=\frac{2\Big(\frac{\hat{q}_N}{\hat{q}_N+2}-\frac{2}{r}\Big)}{(p+2)\Big(\frac{\hat{q}_N}{\hat{q}_N+2}-\frac{2}{r}\Big)+p+\alpha+1}.
				\end{split}
		\end{align}}
		From $\eqref{2-11}$, $\eqref{2-8.4}$, and the Sobolev embedding, we obtain, for some constant $C_r \ge 1$,
		\begin{align}\label{2-12}
			\begin{split}
				\|\rho\|_{L^s_TL^{\frac{\hat{q}_N+2}{\hat{q}_N}s}}&\le C_r^\eta\|\rho^{\frac{p+2}{2}}\|_{L^\infty_T L^2}^{\xi}\|\rho^{\frac{p+\alpha+1}{2}}\|_{L^2_TH^1}^\eta\\
				&\le C_r^\eta\left(C_0(p+2)^2\|\rho\|_{L^{l}_TL^{l\frac{\hat{q}_N+2}{\hat{q}_N}}}^l+C_0\|\rho_0^{\frac{p+2}{2}}\|_{L^2}^2\right)^{\frac{\xi+\eta}{2}}.
			\end{split}
		\end{align}
		Let $C_1=\|\rho_0\|_{L^2}+\|\rho_0\|_{L^\infty}+1$. Then it follows that
		\begin{align}\label{2-13}
			\|\rho_0^{\frac{p+2}{2}}\|_{L^2}^2=\|\rho_0\|_{L^{p+2}}^{p+2}\leq (\|\rho_0\|_{L^2}+\|\rho_0\|_{L^\infty})^{p+2}\leq C_1^{p+2}\leq C_1^{C_2l},
		\end{align}
		where $C_2$ is a constant depending on $\alpha$, $N$, and $\hat{q}_N$, but independent of $p$ satisfying
		\begin{align*}
			\begin{split}
				\frac{p+2}{l}\leq C_2.
			\end{split}
		\end{align*}
		Let $\Upsilon(l)=\max\{\|\rho\|_{L^l_TL^{\frac{\hat{q}_N+2}{\hat{q}_N}l}}, C_1^{C_2}\}$. Then, from $\eqref{2-12}$ and $\eqref{2-13}$, we deduce that
		\begin{align*}
			\begin{split}
				\Upsilon(s)&\le \max\{ C_r^\eta\left(C_0(p+2)^2\|\rho\|_{L^{l}_TL^{l\frac{\hat{q}_N+2}{\hat{q}_N}}}^l+C_0\|\rho_0^{\frac{p+2}{2}}\|_{L^2}^2\right)^{\frac{\xi+\eta}{2}},C_1^{C_2}\}\\
				&\le\max\{ C_r^\eta\left(C_0(p+2)^2\Upsilon(l)^l+C_0\Upsilon(l)^l\right)^{\frac{\xi+\eta}{2}},C_1^{C_2}\}\\
				&\le \max\{C_r^\eta(2C_0(p+2)^2\Upsilon(l)^l)^{\frac{\xi+\eta}{2}}, \Upsilon(l)\}.
			\end{split}
		\end{align*}
		By the definition of $\xi,\eta,l$, condition \eqref{2-5.6} is equivalent to
		\begin{align*}
			\frac{\xi+\eta}{2}l\leq 1.
		\end{align*}
		Thus,
		\begin{align}\label{2-14}
			\Upsilon(s)\leq (2C_r^2C_0)^{\frac{\xi+\eta}{2}}(p+2)^{\xi+\eta}\Upsilon(l).
		\end{align}
		This is precisely the reverse Hölder inequality that forms the core of the iteration.\\
		
		\noindent \textbf{Step 3: Starting the iteration}\\
		We first choose a starting point $l_0$ for the iteration. By virtue of $\eqref{2-5.55}$, we have
		\begin{align*}
			\frac{s}{l}&=\left((p+2)\Big(\frac{\hat{q}_N}{\hat{q}_N+2}-\frac{2}{r}\Big)+p+\alpha+1\right)\frac{\hat{q}_N+2}{p(\hat{q}_N+2)+(3-\alpha)(\hat{q}_N+2)-2}\\
			&\to1+\frac{\hat{q}_N}{\hat{q}_N+2}-\frac{2}{r}>1, \text{ as }p\to \infty.
		\end{align*}
		Therefore, we can choose $p_0 \ge 2$ sufficiently large such that, for all $p > p_0$,
		\begin{align}\label{2-15}
			\frac{s}{l}\ge 1+\frac{1}{2}\left(\frac{\hat{q}_N}{\hat{q}_N+2}-\frac{2}{r}\right)=:d>1.
		\end{align}
		and 
		\begin{align}\label{2-16}
			\frac{s}{l}\le 1+\frac{3}{2}\left(\frac{\hat{q}_N}{\hat{q}_N+2}-\frac{2}{r}\right)=:d'.
		\end{align}
		Let $l_0, s_0, \xi_0, \eta_0$ be the values obtained by replacing $p$ with $p_0$ in the corresponding definitions. Set $s_0 = l_1$, and let $p_1$ be the value of $p$ determined by $l_1$, with $l_1, s_1, \xi_1, \eta_1$ being the corresponding quantities associated with $p_1$. Repeating this procedure recursively and substituting $p = p_k$ into $\eqref{2-14}$, we deduce that for any $k\in\mathbb{N}$,
		\begin{align}\label{2-16.1}
			\begin{split}
				\Upsilon(l_{k+1})&\leq (2C_r^2C_0)^{\frac{\xi_k+\eta_k}{2}}(p_k+2)^{\xi_k+\eta_k}\Upsilon(l_k)\\
				&\leq (2C_r^2C_0)^{\frac{\xi_k+\eta_k}{2}+\frac{\xi_{k-1}+\eta_{k-1}}{2}}(p_k+2)^{\xi_k+\eta_k}(p_{k-1}+2)^{\xi_{k-1}+\eta_{k-1}}\Upsilon(l_{k-1})\\
				&\leq (2C_r^2C_0)^{\sum_{i=0}^{k}\frac{\xi_i+\eta_i}{2}}\prod_{i=0}^k(p_i+2)^{\xi_i+\eta_i}\Upsilon(l_0).
			\end{split}
		\end{align}
		From $\eqref{2-15}$, we obtain
		\begin{align}\label{2-16.109}
			s_i\ge d^{i+1}l_0,
		\end{align}
		which, together with $\eqref{2-11.5}$, yields
		\begin{align}\label{2-16.11}
			\begin{split}
				\frac{\xi_i+\eta_i}{2}=\frac{\frac{\hat{q}_N}{\hat{q}_N+2}-\frac{2}{r}+1}{s_i}\leq \frac{\frac{\hat{q}_N}{\hat{q}_N+2}-\frac{2}{r}+1}{l_0}\frac{1}{d^{i+1}}.
			\end{split}
		\end{align}
		Thus,
		\begin{align}\label{2-16.115}
			\sum_{i=0}^{\infty}\frac{\xi_i+\eta_i}{2}<\infty.
		\end{align}
		On the other hand, from $\eqref{2-16}$, one has 
		\begin{align*}
			s_i\leq d'^{i+1}l_0,
		\end{align*}
		which implies that
		\begin{align}\label{2-16.12}
			p_i\le \frac{d'^{i+1}l_0-2\left(\frac{\hat{q}_N}{\hat{q}_N+2}-\frac{2}{r}\right)-\alpha-1}{\frac{\hat{q}_N}{\hat{q}_N+2}-\frac{2}{r}+1}\leq \frac{l_0}{\frac{\hat{q}_N}{\hat{q}_N+2}-\frac{2}{r}+1}d'^{i+1}.
		\end{align}
		Thus, it follows from \eqref{2-16.11} and \eqref{2-16.12} that
		\begin{align*}
			&\quad \sum_{i=0}^{\infty}(\xi_i+\eta_i)\log(p_i+2)\\
			&\le \sum_{i=0}^{\infty}\frac{2\left(\frac{\hat{q}_N}{\hat{q}_N+2}-\frac{2}{r}+1\right)}{l_0}\frac{1}{d^{i+1}}\log\left(\frac{2l_0}{\frac{\hat{q}_N}{\hat{q}_N+2}-\frac{2}{r}+1}d'^{i+1}\right)\\
			&\le\sum_{i=0}^{\infty} \frac{2\left(\frac{\hat{q}_N}{\hat{q}_N+2}-\frac{2}{r}+1\right)}{l_0}\frac{1}{d^{i+1}}\left(\log\left(\frac{2l_0}{\frac{\hat{q}_N}{\hat{q}_N+2}-\frac{2}{r}+1}\right)+(i+1)\log d'\right)\\
			&<\infty,
		\end{align*}
		which implies that
		\begin{align}\label{2-16.13}
			\prod_{i=0}^\infty(p_i+2)^{\xi_i+\eta_i}<\infty.
		\end{align}
		
		\noindent \textbf{Step 4: Estimate of $\Upsilon(l_0)$}\\
		According to the definition of $\Upsilon(l_0)$, it suffices to obtain the boundedness of $\Upsilon(l_0)$ if we can prove that for any $q>0$, there exists a constant $C_q > 0$, depending on the quantities stated in Proposition \ref{Prop 2d RT}  and on $q$, such that
		\begin{align}\label{2-16.14}
			\sup_{0\leq t\leq T}\|\rho\|_{L^{q+2}}\leq C_q.
		\end{align}
		
		Indeed, when $N=2$, estimate \eqref{2-16.14} follows directly from \eqref{2-1} and the Sobolev embedding. Therefore, in what follows we focus on the case $N=3$. For any $q> 0$, multiplying $\eqref{Equ2}_1$ by $\rho^{q+1}$, integrating the resulting equation over $\mathbb{T}^3$, and using integration by parts together with Young's inequality, we obtain from \eqref{2-8} that
		\begin{align}\label{4-6.1}
			\begin{split}
				\frac{1}{q+2}\frac{d}{dt}\int \rho^{q+2}dx+\frac{c\alpha(q+1)}{2}\int \rho^{\alpha+q-1}|\nabla\rho|^2dx\leq C\|\rho\|_{L^{{l_q}\frac{\hat{q}_3+2}{\hat{q}_3}}}^{l_q},
			\end{split}
		\end{align}
		where $l_q=q+3-\alpha-\frac{2}{\hat{q}_3+2}$.  The Sobolev embedding implies that
		\begin{align}\label{4-6.9}
			C\|\rho\|_{L^{l_q\frac{\hat{q}_3+2}{\hat{q}_3}}}^{l_q}&=C\|\rho^{\frac{q+\alpha+1}{2}}\|_{L^{\frac{2}{q+\alpha+1}\frac{\hat{q}_3+2}{\hat{q}_3}l_q}}^{\frac{2l_q}{q+\alpha+1}}\leq C\|\rho^{\frac{q+\alpha+1}{2}}\|_{H^1}^{\frac{2l_q}{q+\alpha+1}},
		\end{align}
		where we have used the fact that
		\begin{align*}
			\frac{2}{q+\alpha+1}\frac{\hat{q}_3+2}{\hat{q}_3}l_q\leq 6 \Longleftrightarrow q \ge \frac{2-2\alpha \hat{q}_3-\alpha}{\hat{q}_3-1},
		\end{align*}
		which holds automatically for $q > 0$, $\alpha \in (2/3, 1)$, and $\hat{q}_3> 1$.
		Observe that $l_q<q+\alpha+1$ owing to \eqref{3d alpha 1}. Thus, we get from \eqref{4-6.9}, H\"older's inequality and Young's inequality that
		\begin{align}\label{4-7}
			\begin{split}
				C\|\rho\|_{L^{l_q\frac{\hat{q}_3+2}{\hat{q}_3}}}^{l_q}&\leq C\|\rho^{\frac{q+\alpha+1}{2}}\|_{H^1}^{\frac{2l_q}{q+\alpha+1}}\\
				&\le \delta\|\rho^{\frac{q+\alpha+1}{2}}\|_{H^1}^{2}+C\\
				&\le \delta\|\nabla \rho^{\frac{q+\alpha+1}{2}}\|_{L^2}^{2}+C\|\rho\|_{L^{q+\alpha+1}}^{q+\alpha+1}+C\\
				&\le  \delta\|\nabla \rho^{\frac{q+\alpha+1}{2}}\|_{L^2}^{2}+C\|\rho\|_{L^{q+2}}^{q+2}+C,
			\end{split}
		\end{align}
		where $\delta>0$ is a generic small constant depending on $c$, $q$, and $\alpha$ such that
		\begin{align*}
			\delta\int |\nabla \rho^{\frac{q+\alpha+1}{2}}|^2dx\leq \frac{c\alpha(q+1)}{2}\int \rho^{\alpha+q-1}|\nabla\rho|^2dx.
		\end{align*}
		Substituting $\eqref{4-7}$ into $\eqref{4-6.1}$ yields
		\begin{align*}
			\frac{1}{q+2}\frac{d}{dt}\int \rho^{q+2}dx\leq C\int \rho^{q+2}dx+C,
		\end{align*}
		which, together with Grönwall's inequality, implies \eqref{2-16.14}.\\
		
		\noindent \textbf{Step 5: Upper bound for the density}\\
		Recalling \eqref{2-16.1}, we have, for any $k \in \mathbb{N}$, 	\begin{align}\label{4-7.1}
			\begin{split}
				\Upsilon(l_{k+1})&\leq (2C_r^2C_0)^{\sum_{i=0}^{\infty}\frac{\xi_i+\eta_i}{2}}\prod_{i=0}^\infty(p_i+2)^{\xi_i+\eta_i}\Upsilon(l_0),
			\end{split}
		\end{align}
		where $\sum_{i=0}^{\infty}\frac{\xi_i+\eta_i}{2}$, $\prod_{i=0}^\infty (p_i+2)^{\xi_i+\eta_i}$, and $\Upsilon(l_0)$ are all bounded quantities due to $\eqref{2-16.115}$, $\eqref{2-16.13}$, and $\eqref{2-16.14}$, respectively. It follows from the definition of  $\Upsilon(l_{k+1})$ and Hölder's inequality that
		\begin{align}
			\|\rho\|_{L^{l_{k+1}}(Q_T)}\leq \Upsilon(l_{k+1})\leq (2C_r^2C_0)^{\sum_{i=0}^{\infty}\frac{\xi_i+\eta_i}{2}}\prod_{i=0}^\infty(p_i+2)^{\xi_i+\eta_i}\Upsilon(l_0).
		\end{align}
		Passing to the limit $k \to \infty$, it follows from $\eqref{2-16.109}$ that $l_{k+1} \to \infty$, and hence,
		\begin{align*}
			\|\rho\|_{L^\infty(Q_T)}\leq (2C_r^2C_0)^{\sum_{i=0}^{\infty}\frac{\xi_i+\eta_i}{2}}\prod_{i=0}^\infty(p_i+2)^{\xi_i+\eta_i}\Upsilon(l_0).
		\end{align*}
		This completes the proof of Proposition \ref{Prop 2d RT}.
	\end{proof}
	
	\subsection{$L^{\check{q}_N+2}$ bounds for the effective velocity}
	From $\eqref{Equ2}_1$, we obtain that $\tau$ satisfies the equation
	\begin{align}\label{3-n8.5}
		\partial_t \tau-c\alpha\divg(\tau^{1-\alpha}\nabla \tau)+2c\alpha \tau^{-\alpha}|\nabla \tau|^2+v\cdot\nabla \tau-\tau\divg v=0.
	\end{align}
    The key to establishing a positive lower bound for the density via the modified Nash--Moser iteration still lies in suitable integrability estimates for the effective velocity $v$. 
	
	We discuss the selection of integrability exponents for the effective velocity that leads to a lower bound for the density, treating the cases $N = 2$ and $N = 3$ separately. 
	\begin{itemize}
		\item \textbf{Case I: The two-dimensional case}\\
		For any $(\alpha, \gamma, \beta)$ satisfying \eqref{2d lower}, 
		we choose $\check{q}_2 > 0$, depending on $\alpha$ and $\beta$, such that the following condition holds:
		\begin{align}
			&\alpha<\frac{\check{q}_2}{\check{q}_2+2},\label{3-1'}\\ &\check{q}_2<\frac{4(1-\beta)(2\alpha-1)}{(\beta+\sqrt{2}(1-\alpha)(1-\beta))^2}. \label{3-1}
		\end{align}
		
		\item \textbf{Case II: The three-dimensional case}\\
		Similarly, for any $(\alpha, \gamma, \beta)$ satisfying \eqref{3d lower},
		we choose $\check{q}_3 > 1$, depending on $\alpha$ and $\beta$, such that the following condition holds:
		\begin{align}
			&\alpha<\frac{\check{q}_3}{\check{q}_3+2},\label{3-1.1'}\\ &\check{q}_3<\frac{4(1-\beta)(3\alpha-2)}{(\beta+\sqrt{3}(1-\alpha)(1-\beta))^2}. \label{3-1.1}
		\end{align}
	\end{itemize}

	\begin{rmk}\label{RMK 5.1}
		Indeed, \eqref{3-1'} together with \eqref{3-1}, or \eqref{3-1.1'} together with \eqref{3-1.1}, is equivalent to
		\begin{align*}
			\frac{2\alpha}{1-\alpha}<\check{q}_N<\frac{4(1-\beta)(N\alpha-N+1)}{(\beta+\sqrt{N}(1-\alpha)(1-\beta))^2}.
		\end{align*}
        The existence of $\check{q}_N$ satisfying the above inequality is guaranteed by $\beta\in[0,\beta_{N,\rho_{min}}^+(\alpha))$, together with the fact that $\beta_{N,\rho_{min}}^+(\alpha)$ satisfies \eqref{61-2} and the function $f_N(\alpha,\cdot)$, defined in \eqref{f2} and \eqref{f3}, is strictly decreasing.
	\end{rmk}
	
	We now establish the $\rho$-weighted $L^{\check{q}_N+2}$ integrability of $v$.
	\begin{prop}\label{Prop 4.1}
		Assume that $\eqref{2d lower}$ or $\eqref{3d lower}$ holds. Then there exists a constant $C > 0$, depending on $\gamma,\alpha,\nu,\varepsilon,E_0,T^*,\beta,N,\hat{q}_N,\|\rho_0^{1/(\hat{q}_N+2)} v_0\|_{L^{\hat{q}_N+2}},M_0^{-1},\|\rho_0\|_{L^\infty},\check{q}_N$, and $\|\rho_0^{1/(\check{q}_N+2)} v_0\|_{L^{\check{q}_N+2}}$ such that
		\begin{align}\label{8-1}
			\sup_{0\leq t\leq T}\int \rho|v|^{\check{q}_N+2}\leq C.
		\end{align}
	\end{prop}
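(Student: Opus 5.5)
The plan is to repeat the argument of Proposition \ref{Prop 3.1} with $\hat{q}_N$ replaced by $\check{q}_N$. The main simplification is that Proposition \ref{Prop 2d RT} now gives an upper bound for the density. First, Lemma \ref{Lem cond} shows that \eqref{2d lower} (resp. \eqref{3d lower}) implies \eqref{2d upper} (resp. \eqref{3d upper}). Hence Propositions \ref{Prop 3.1} and \ref{Prop 2d RT} are available, and
\begin{align*}
\sup_{0\le t\le T}\|\rho\|_{L^\infty}\le C,
\end{align*}
with $C$ depending on the quantities listed, including $\hat{q}_N$ and $\|\rho_0^{1/(\hat{q}_N+2)}v_0\|_{L^{\hat{q}_N+2}}$. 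This is why those quantities appear in the dependence of the constant in \eqref{8-1}.

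Next, I multiply $\eqref{Equ2}_2$ by $|v|^{\check{q}_N}v$ and integrate over $\mathbb{T}^N$. This yields the analogue of \eqref{34}. Since $\nu\le c<2\nu$ and $\alpha<1$, the lower bounds \eqref{35}, \eqref{36} and the Young's-inequality step \eqref{49 new} hold verbatim. The result is
\begin{align*}
\frac{1}{\check{q}_N+2}\frac{d}{dt}\int\rho|v|^{\check{q}_N+2}dx+\check{\delta}_0\int\rho^\alpha|v|^{\check{q}_N}|\nabla v|^2dx\le C\int\rho^\gamma|v|^{\check{q}_N}|\nabla v|dx,
\end{align*}
where
\begin{align*}
\check{\delta}_0=(2\nu-c)(1+N(\alpha-1))-\frac{(c-\nu+\sqrt{N}(1-\alpha)(2\nu-c))^2}{4\nu}\check{q}_N .
\end{align*}
Writing $c=\nu(1+\beta)$ and $2\nu-c=\nu(1-\beta)$, the condition $\check{\delta}_0>0$ is exactly \eqref{3-1} (resp. \eqref{3-1.1}); see Remark \ref{RMK 5.1}. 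Absorbing half of the dissipation by Young's inequality, the right-hand side is bounded by $C\int\rho^{2\gamma-\alpha}|v|^{\check{q}_N}dx$.

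I then estimate this term by splitting $\rho^{2\gamma-\alpha}|v|^{\check{q}_N}=(\rho|v|^{\check{q}_N+2})^{\frac{\check{q}_N}{\check{q}_N+2}}\rho^{2\gamma-\alpha-\frac{\check{q}_N}{\check{q}_N+2}}$. The exponent $2\gamma-\alpha-\frac{\check{q}_N}{\check{q}_N+2}$ is positive because $\gamma\ge1>\alpha$, so the upper density bound applies. Together with Young's inequality this gives
\begin{align*}
C\int\rho^{2\gamma-\alpha}|v|^{\check{q}_N}dx\le C\int\rho|v|^{\check{q}_N+2}dx+C.
\end{align*}
This works uniformly for $N=2,3$. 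Unlike the proof of Proposition \ref{Prop 3.1}, no Gagliardo--Nirenberg argument with Corollary \ref{Cor 1} is needed, and no extra constraint on $\gamma$ arises beyond what is already required for Proposition \ref{Prop 2d RT}. Grönwall's inequality on $[0,T]\subset[0,T^*)$ then gives \eqref{8-1}, with the initial value $\|\rho_0^{1/(\check{q}_N+2)}v_0\|_{L^{\check{q}_N+2}}$ entering the constant.

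The only delicate point is the coercivity step, that is, verifying $\check{\delta}_0>0$ under the chosen $\check{q}_N$. This reduces to the algebraic identity behind \eqref{hat{q}_N} after substituting $c=\nu(1+\beta)$, and the choice of $\check{q}_N$ in Remark \ref{RMK 5.1} guarantees it. Everything else is easier than in Proposition \ref{Prop 3.1}, because $\|\rho\|_{L^\infty}$ is now controlled.
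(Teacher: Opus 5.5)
Your proposal is correct and is essentially the paper's proof: you rerun the $|v|^{\check{q}_N}v$ estimate leading to \eqref{8-0} with $\check{\delta}_0>0$ guaranteed by \eqref{3-1} or \eqref{3-1.1}, use the density upper bound \eqref{RT} (available via Lemma \ref{Lem cond}) to control $C\int\rho^{2\gamma-\alpha}|v|^{\check{q}_N}dx$ by $C\int\rho|v|^{\check{q}_N+2}dx+C$, and close with Gr\"onwall. The only cosmetic difference is in that middle bound: the paper writes $\rho^{2\gamma-\alpha}\le C\rho$ and then uses $\rho|v|^{\check{q}_N}\le\rho|v|^{\check{q}_N+2}+\rho$, whereas you use the H\"older-type splitting from \eqref{6-2}; both work.
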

	\begin{rmk}
		From Lemma \ref{Lem cond} and Proposition \ref{Prop 2d RT}, we know that when $\eqref{2d lower}$ or $\eqref{3d lower}$ holds, the density admits an upper bound, which plays a crucial role in the proof of Proposition \ref{Prop 4.1}.
	\end{rmk}
	\begin{proof}[Proof of Proposition \ref{Prop 4.1}]
		Replacing $\hat{q}_N$ by $\check{q}_N$ in the derivation of \eqref{8-0}, we obtain
		\begin{align}\label{8-2}
			\begin{split}
				\frac{1}{\check{q}_N+2}\frac{d}{dt}\int \rho|v|^{\check{q}_N+2}dx+\check{\delta}_0\int \rho^\alpha|v|^{\check{q}_N}|\nabla v|^2dx\leq C\int \rho^\gamma|v|^{\check{q}_N}|\nabla v|dx,
			\end{split}
		\end{align}
		where 
		\begin{align*}
			\check{\delta}_0:=(2\nu-c)(1+N(\alpha-1))-\frac{(c-\nu+\sqrt{N}(1-\alpha)(2\nu-c))^2}{4\nu}\check{q}_N>0,
		\end{align*}
		due to
		\begin{align*}
			\check{q}_N<\frac{4(1-\beta)(1+(\alpha-1)N)}{(\beta+\sqrt{N}(1-\alpha)(1-\beta))^2},
		\end{align*}
		which is guaranteed by $\eqref{3-1}$ and $\eqref{3-1.1}$.
		Therefore, applying Young's inequality to the right-hand side of $\eqref{8-2}$ yields
		\begin{align*}
			\frac{1}{\check{q}_N+2}\frac{d}{dt}\int \rho|v|^{\check{q}_N+2}dx\leq C\int \rho^{2\gamma-\alpha}|v|^{\check{q}_N}dx.
		\end{align*}
		Further, using \eqref{RT}, we deduce that
		\begin{align*}
			\frac{1}{\check{q}_N+2}\frac{d}{dt}\int \rho|v|^{\check{q}_N+2}dx\leq C\int \rho|v|^{\check{q}_N}dx\leq C\int \rho|v|^{\check{q}_N+2}dx+C,
		\end{align*}
		which, together with Gronwall's inequality, implies \eqref{8-1}.
	\end{proof}

    \subsection{Positive lower bound for the density}
	Building on Proposition \ref{Prop 4.1}, we now establish a positive lower bound for the density.
	\begin{prop}\label{Prop 4.2}
		Assume that $\eqref{2d lower}$ or $\eqref{3d lower}$ holds. Then there exists a constant $C > 0$, depending on $\gamma,\alpha,\nu,\varepsilon,E_0,T^*,\beta,N,\hat{q}_N,\|\rho_0^{1/(\hat{q}_N+2)} v_0\|_{L^{\hat{q}_N+2}},M_0^{-1},\|\rho_0\|_{L^\infty},\check{q}_N$, $\|\rho_0^{1/(\check{q}_N+2)} v_0\|_{L^{\check{q}_N+2}}$, and $\|\tau_0\|_{L^\infty}$ such that
		\begin{align}\label{VT}
			\sup_{0\leq t\leq T}\|\tau\|_{L^\infty}\leq C.
		\end{align}
	\end{prop}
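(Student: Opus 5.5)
We run the modified Nash--Moser iteration of Proposition \ref{Prop 2d RT} on the equation \eqref{3-n8.5} for $\tau=\rho^{-1}$. The inputs are the weighted bound \eqref{8-1} from Proposition \ref{Prop 4.1} and the conditions \eqref{3-1'}--\eqref{3-1.1}.

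\textbf{Step 1: $L^p$ energy inequality.} For $p>0$ we multiply \eqref{3-n8.5} by $\tau^{p+1}$ and integrate. The diffusion term gives $c\alpha(p+1)\int\tau^{p+1-\alpha}|\nabla\tau|^2dx$. The term $2c\alpha\int\tau^{p+1-\alpha}|\nabla\tau|^2dx$ has a good sign and can be kept; this is the source of the factor $(p+3)$ in \eqref{3-8.4'}. We rewrite the convective terms as $\int(v\cdot\nabla\tau-\tau\divg v)\tau^{p+1}dx=(p+3)\int\tau^{p+2}v\cdot\nabla\tau\,dx$ after integration by parts. Young's inequality then bounds this by half the dissipation plus $C(p+3)\int\tau^{p+3+\alpha}|v|^2dx$. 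Writing $|v|^2=\rho^{\frac{2}{\check q_N+2}}|v|^2\,\tau^{\frac{2}{\check q_N+2}}$ and applying H\"older with \eqref{8-1}, this is at most $C(p+3)\|\tau\|^{l}_{L^{l\frac{\check q_N+2}{\check q_N}}}$ with $l=p+1+\alpha+\frac{2}{\check q_N+2}$, as stated in \eqref{3-8.4'}. (The exponent arithmetic, $p+3+\alpha$ versus $p+1+\alpha$, must be matched against \eqref{3-8.4'} when the details are written out.)

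For the zeroth-order part of the $H^1$ norm, $\|\tau^{\frac{p+3-\alpha}{2}}\|_{L^2_TL^2}^2$, we write $\int\tau^{p+3-\alpha}dx\le\Theta_T^{1-\alpha}\int\tau^{p+2}dx$. This produces the factor $\Theta_T^{1-\alpha}$ in \eqref{3-8.4'}, which is the one place the argument differs from Step 1 of Proposition \ref{Prop 2d RT}.

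\textbf{Step 2: reverse H\"older inequality and iteration.} With $r<\frac{2N}{N-2}$ chosen so that $\frac{\check q_N}{\check q_N+2}-\frac2r>0$ (possible since $\check q_N>N-2$) and $\frac{\xi+\eta}{2}l\le1$ (possible by \eqref{20}, itself implied by \eqref{3-1'} or \eqref{3-1.1'}), interpolation and Sobolev give
\[
\Upsilon(s)\le \bigl(2\tilde C_r^2C_3\Theta_T^{1-\alpha}\bigr)^{\frac{\xi+\eta}{2}}(p+3)^{\xi+\eta}\,\Upsilon(l).
\]
Here $\Upsilon$ is defined as before, with $C_1$ now built from $\|\tau_0\|_{L^\infty}$. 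We then iterate exactly as in Steps 3 and 5 of Proposition \ref{Prop 2d RT}. Set $S=\sum_i\frac{\xi_i+\eta_i}{2}$, which is finite since $\sum\frac{\xi_i+\eta_i}{2}\lesssim 1/l_0$, and which is made small by taking $p_0$ large. This yields
\[
\Theta_T\le C\,\Theta_T^{(1-\alpha)S}\,\Upsilon(l_0)+1 .
\]
Choosing $p_0$ large enough that $(1-\alpha)S<1$, the power of $\Theta_T$ is sublinear. Since $\Theta_T<\infty$ on $[0,T]\subset[0,T^*)$, we may absorb it and obtain $\Theta_T\le C(1+\Upsilon(l_0)^{1/(1-(1-\alpha)S)})$.

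\textbf{Step 3: the starting point.} It remains to bound $\sup_t\|\tau\|_{L^{q+2}}$ for every $q$, exactly as sketched in Section 2.2. The $L^q$ inequality has right-hand side $C\|\tau\|^{l_q}_{L^{l_q(\check q_N+2)/\check q_N}}$. We bound this by Sobolev applied to $\tau^{\frac{q+3-\alpha}{2}}$, using $l_q<q+3-\alpha$ from \eqref{20} and the embedding requirement $\frac{2(\check q_N+2)}{\check q_N}\le 6$ when $N=3$, which holds since $\check q_N>1$. After Young's inequality the $L^1$ part is $\|\tau^{\frac{q+3-\alpha}{2}}\|_{L^1}^{2l_q/(q+3-\alpha)}$. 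We control it by $(\int\tau^{q+2})^{l_q/(q+2)}$, using the interpolation $\frac{q+3-\alpha}{2}\le q+2$, and this step needs no $\Theta_T$. Condition \eqref{21}, i.e.\ \eqref{3-1'} or \eqref{3-1.1'}, gives $l_q\le q+2$, so Gr\"onwall's inequality applies and yields the bound. This supplies $\Upsilon(l_0)$ and closes \eqref{VT}.

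\textbf{Main obstacle.} The delicate step is handling $\Theta_T^{1-\alpha}$ inside the iteration. The argument relies on the accumulated exponent $(1-\alpha)S$ being strictly less than $1$, and on $\Theta_T$ being a priori finite for $T<T^*$; both need to be verified carefully. A secondary check is the sign and exact power bookkeeping in the convection term, so that the exponent $l$ matches \eqref{3-8.4'}.
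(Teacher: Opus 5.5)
Your proposal is correct and follows the paper's proof essentially step for step. The shared steps are: the $\tau^{p+1}$ energy inequality carrying the factor $\Theta_T^{1-\alpha}$, the reverse H\"older inequality, and iteration from a late enough starting point that the accumulated power of $\Theta_T$ is sublinear, followed by absorbing it with Young's inequality; the paper starts at an index $j$ with $\sum_{i\ge j}\frac{\xi_i+\eta_i}{2}<1$, which is equivalent to your taking $p_0$ large. The starting $L^{q+2}$ bound via $l_q<q+2$ is also the same. The exponent slip you flagged is only a slip: the convective terms integrate to $(p+3)\int\tau^{p+1}v\cdot\nabla\tau\,dx$ (not $\tau^{p+2}$), so Young's inequality gives the weight $\tau^{p+1+\alpha}|v|^2$ and hence exactly $l=p+1+\alpha+\frac{2}{\check q_N+2}$.
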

	\begin{proof}
 To avoid an overload of notation, we retain the symbols $r,l,s,\xi,\eta,d,d'$ in the proof, with the understanding that their definitions are distinct from those used in the corresponding estimates for the upper bound of the density in Proposition~\ref{Prop 2d RT}.
    
    \noindent \textbf{Step 1: Preparation for the iteration}\\
		We claim that there exists a universal constant $C_3 \ge 1$, depending on the quantities stated in Proposition \ref{Prop 4.2} but independent of $p$ and $\Theta_T$, such that for any $p > 0$,
		\begin{align}\label{3-8.4}
			\|\tau^{\frac{p+2}{2}}\|_{L^\infty_T L^2}^2+\|\tau^{\frac{p-\alpha+3}{2}}\|_{L^2_TH^1}^2\le C_3\Theta_T^{1-\alpha}(p+3)^2\|\tau\|_{L^{l}_TL^{l\frac{\check{q}_N+2}{\check{q}_N}}}^l+C_3\Theta_T^{1-\alpha}\|\tau_0^{\frac{p+2}{2}}\|_{L^2}^2,
		\end{align}
		where
		$l=p+1+\alpha+\frac{2}{\check{q}_N+2}=\frac{p(\check{q}_N+2)+(1+\alpha)(\check{q}_N+2)+2}{\check{q}_N+2}$ and $\Theta_T=\sup_{0\leq t\leq T}\|\tau\|_{L^\infty}+1$.
		
		Indeed, for any $p>0$, multiplying $\eqref{3-n8.5}$ by $\tau^{p+1}$, integrating the resulting equation over $\mathbb{T}^N$, and using integration by parts together with Young's inequality, we obtain
		\begin{align*}
			\begin{split}
				&\quad\frac{1}{p+2}\frac{d}{dt}\int \tau^{p+2}dx+c(p+3)\alpha\int \tau^{p+1-\alpha}|\nabla \tau|^2dx\\
				&\leq (p+3)\int |v||\nabla \tau|\tau^{p+1}dx\\
				&\leq \frac{c(p+3)\alpha}{2}\int \tau^{p+1-\alpha}|\nabla \tau|^2dx+C(p+3)\int \tau^{p+1+\alpha}|v|^2dx\\
				&\leq \frac{c(p+3)\alpha}{2}\int \tau^{p+1-\alpha}|\nabla \tau|^2dx+C(p+3)\int (\rho|v|^{\check{q}_N+2})^{\frac{2}{\check{q}_N+2}}\tau^{p+1+\alpha+\frac{2}{\check{q}_N+2}}dx\\
				&\leq \frac{c(p+3)\alpha}{2}\int \tau^{p+1-\alpha}|\nabla \tau|^2dx+C(p+3)\left(\int \rho|v|^{\check{q}_N+2}dx\right)^{\frac{2}{\check{q}_N+2}}\left(\int  \tau^{l\frac{\check{q}_N+2}{\check{q}_N}}dx\right)^{^{\frac{\check{q}_N}{\check{q}_N+2}}}.
			\end{split}
		\end{align*}
		It follows from \eqref{8-1} that
		\begin{align}\label{3-8}
			\begin{split}
				\frac{1}{p+2}\frac{d}{dt}\int \tau^{p+2}dx+\frac{c(p+3)\alpha}{2}\int \tau^{p+1-\alpha}|\nabla \tau|^2dx\leq C(p+3)\left(\int  \tau^{l\frac{\check{q}_N+2}{\check{q}_N}}dx\right)^{\frac{\check{q}_N}{\check{q}_N+2}},
			\end{split}
		\end{align}
		which, after time integration, gives
		\begin{align}\label{3-8.5}
			\begin{split}
				\|\tau^{\frac{p+2}{2}}\|_{L^\infty_T L^2}^2+\|\nabla \tau^{\frac{p+3-\alpha}{2}}\|_{L^2_TL^2}^2\leq C(p+3)^2\|\tau\|_{L^{l}_TL^{l\frac{\check{q}_N+2}{\check{q}_N}}}^l+C\|\tau_0^{\frac{p+2}{2}}\|_{L^2}^2.
			\end{split}
		\end{align}
		It remains to estimate $\|\tau^{\frac{p+3-\alpha}{2}}\|_{L^2_TL^2}^2$. A direct computation shows that
		\begin{align*}
			\begin{split}
				\|\tau^{\frac{p+3-\alpha}{2}}\|_{L^2_TL^2}^2&=\int_0^T\int \tau^{p+3-\alpha}dxdt\\
				&\leq \Theta_T^{1-\alpha}\int_0^T\int \tau^{p+2}dxdt\\
				&\leq T^*\Theta_T^{1-\alpha}\sup_{0\leq t\leq T}\int \tau^{p+2}dx,
			\end{split}
		\end{align*}
		which, together with $\eqref{3-8.5}$, gives $\eqref{3-8.4}$.\\
		
		\noindent \textbf{Step 2: Core of the iteration—Reverse Hölder's inequality}\\
		We choose $r \in \left(2,\frac{2N}{N-2}\right)$, depending on $\alpha$, $N$, and $\check{q}_N$, such that
		\begin{align}
			&\frac{\check{q}_N}{\check{q}_N+2}-\frac{2}{r}>0, \label{8-5.55}\\
			&\alpha\le\frac{\Big(\frac{\check{q}_N}{\check{q}_N+2}-\frac{2}{r}\Big)\check{q}_N+2\check{q}_N+2}{\Big(\frac{\check{q}_N}{\check{q}_N+2}-\frac{2}{r}\Big)(\check{q}_N+2)+2\check{q}_N+4}. \label{8-5.6}
		\end{align}
		Such a choice of $r$ is possible. Indeed, condition \eqref{8-5.55} follows from the facts that $\check{q}_2>0$ and $\check{q}_3>1$, while condition \eqref{8-5.6} follows from \eqref{3-1'} and \eqref{3-1.1'}, which imply $\alpha<\frac{\check{q}_N+1}{\check{q}_N+2}$. Note that for $N=2$ or $3$, we have the Sobolev embedding $H^1\hookrightarrow L^r$.
		
		From Hölder's inequality, one has
		\begin{align}\label{3-11}
			\|\tau\|_{L^s_TL^{\frac{\check{q}_N+2}{\check{q}_N}s}}\le \|\tau^{\frac{p+2}{2}}\|_{L^\infty_T L^2}^{\xi}\|\tau^{\frac{p+3-\alpha}{2}}\|_{L^2_TL^r}^\eta,
		\end{align}
		where{\small
			\begin{align}\label{3-11.5}
				\begin{split}
					&s=(p+2)\Big(\frac{\check{q}_N}{\check{q}_N+2}-\frac{2}{r}\Big)+p-\alpha+3,\\ &\eta=\frac{2}{(p+2)\Big(\frac{\check{q}_N}{\check{q}_N+2}-\frac{2}{r}\Big)+p-\alpha+3},\quad \xi=\frac{2\Big(\frac{\check{q}_N}{\check{q}_N+2}-\frac{2}{r}\Big)}{(p+2)\Big(\frac{\check{q}_N}{\check{q}_N+2}-\frac{2}{r}\Big)+p-\alpha+3}.
				\end{split}
		\end{align}}
		Combining $\eqref{3-11}$, $\eqref{3-8.4}$ with the Sobolev embedding, we obtain, for some Sobolev embedding constant $\tilde{C}_r \ge 1$,
		\begin{align}\label{3-12}
			\begin{split}
				\|\tau\|_{L^s_TL^{\frac{\check{q}_N+2}{\check{q}_N}s}}&\le \tilde{C}_r^\eta\|\tau^{\frac{p+2}{2}}\|_{L^\infty_T L^2}^{\xi}\|\tau^{\frac{p+3-\alpha}{2}}\|_{L^2_TH^1}^\eta\\
				&\le \tilde{C}_r^\eta\left(C_3\Theta_T^{1-\alpha}(p+3)^2\|\tau\|_{L^{l}_TL^{l\frac{\check{q}_N+2}{\check{q}_N}}}^l+C_3\Theta_T^{1-\alpha}\|\tau_0^{\frac{p+2}{2}}\|_{L^2}^2\right)^{\frac{\xi+\eta}{2}}.
			\end{split}
		\end{align}
		Let $C_4=\|\tau_0\|_{L^2}+\|\tau_0\|_{L^\infty}+1$. It follows that
		\begin{align}\label{3-13}
			\|\tau_0^{\frac{p+2}{2}}\|_{L^2}^2=\|\tau_0\|_{L^{p+2}}^{p+2}\leq (\|\tau_0\|_{L^2}+\|\tau_0\|_{L^\infty})^{p+2}\leq C_4^{p+2}\leq C_4^{C_5l},
		\end{align}
		with $C_5$ being a constant depending on $\alpha$, $N$, and $\check{q}_N$ such that
		\begin{align*}
			\begin{split}
				\frac{p+2}{l}\leq C_5.
			\end{split}
		\end{align*}
		Define $\Xi(l)=\max\{\|\tau\|_{L^l_TL^{\frac{\check{q}_N+2}{\check{q}_N}l}}, C_4^{C_5}\}$. Then, by $\eqref{3-12}$ and $\eqref{3-13}$, we get
		\begin{align*}
			\begin{split}
				\Xi(s)&\leq \max\{ \tilde{C}_r^\eta\left(C_3\Theta_T^{1-\alpha}(p+3)^2\|\tau\|_{L^{l}_TL^{l\frac{\check{q}_N+2}{\check{q}_N}}}^l+C_3\Theta_T^{1-\alpha}\|\tau_0^{\frac{p+2}{2}}\|_{L^2}^2\right)^{\frac{\xi+\eta}{2}},C_4^{C_5}\}\\
				&\leq\max\{ \tilde{C}_r^\eta\left(C_3\Theta_T^{1-\alpha}(p+3)^2\Xi(l)^l+C_3\Theta_T^{1-\alpha}\Xi(l)^l\right)^{\frac{\xi+\eta}{2}},C_4^{C_5}\}\\
				&\leq \max\{\tilde{C}_r^\eta(2C_3\Theta_T^{1-\alpha}(p+3)^2\Xi(l)^l)^{\frac{\xi+\eta}{2}}, \Xi(l)\}.
			\end{split}
		\end{align*}
		By the definitions of $\xi,\eta,l$, condition \eqref{8-5.6} is equivalent to
		\begin{align*}
			\frac{\xi+\eta}{2}l\leq 1.
		\end{align*}
		Thus,
		\begin{align}\label{3-14}
			\Xi(s)\leq (2\tilde{C}_r^2C_3\Theta_T^{1-\alpha})^{\frac{\xi+\eta}{2}}(p+3)^{\xi+\eta}\Xi(l),
		\end{align}
		which is exactly the desired reverse Hölder inequality.\\
		
		\noindent \textbf{Step 3: Starting the iteration}\\
		By virtue of $\eqref{8-5.55}$, we have
		\begin{align*}
			\frac{s}{l}&=\left((p+2)\Big(\frac{\check{q}_N}{\check{q}_N+2}-\frac{2}{r}\Big)+p-\alpha+3\right)\frac{\check{q}_N+2}{p(\check{q}_N+2)+(1+\alpha)(\check{q}_N+2)+2}\\
			&\to1+\frac{\check{q}_N}{\check{q}_N+2}-\frac{2}{r}>1, \text{ as }p\to \infty.
		\end{align*}
		Therefore, we can choose $p_0 \ge 2$ sufficiently large such that for all $p > p_0$,
		\begin{align}\label{3-15}
			\frac{s}{l}\ge 1+\frac{1}{2}\left(\frac{\check{q}_N}{\check{q}_N+2}-\frac{2}{r}\right)=:d>1,
		\end{align}
		and 
		\begin{align}\label{3-16}
			\frac{s}{l}\le 1+\frac{3}{2}\left(\frac{\check{q}_N}{\check{q}_N+2}-\frac{2}{r}\right)=:d'.
		\end{align}
		Let $l_0, s_0, \xi_0, \eta_0$ be the values obtained by replacing $p$ with $p_0$ in the corresponding definitions. Set $s_0 = l_1$, and let $p_1$ be the value of $p$ determined by $l_1$, with $l_1, s_1, \xi_1, \eta_1$ being the corresponding quantities associated with $p_1$. Repeating this procedure recursively and substituting
        $p=p_k$ into \eqref{3-14}, we obtain, for every
        $k>j$, where $j\in\mathbb N$ will be specified later,
		\begin{align}\label{3-16.1}
			\begin{split}
				\Xi(l_{k+1})&\leq (2\tilde{C}_r^2C_3\Theta_T^{1-\alpha})^{\frac{\xi_k+\eta_k}{2}}(p_k+3)^{\xi_k+\eta_k}\Xi(l_k)\\
				&\leq (2\tilde{C}_r^2C_3\Theta_T^{1-\alpha})^{\frac{\xi_k+\eta_k}{2}+\frac{\xi_{k-1}+\eta_{k-1}}{2}}(p_k+3)^{\xi_k+\eta_k}(p_{k-1}+3)^{\xi_{k-1}+\eta_{k-1}}\Xi(l_{k-1})\\
				&\leq (2\tilde{C}_r^2C_3\Theta_T^{1-\alpha})^{\sum_{i=j}^{k}\frac{\xi_i+\eta_i}{2}}\prod_{i=j}^k(p_i+3)^{\xi_i+\eta_i}\Xi(l_j).
			\end{split}
		\end{align}
		Arguing exactly as in the proofs of \eqref{2-16.115} and \eqref{2-16.13}, we obtain
		\begin{align}\label{new 6}
			\sum_{i=0}^{\infty}\frac{\xi_i+\eta_i}{2}<\infty,\quad 	\prod_{i=0}^\infty(p_i+3)^{\xi_i+\eta_i}<\infty.
		\end{align}
		Consequently, there exists an integer $j\in\mathbb N$ such that
		\begin{align}\label{3-17}
			\sum_{i=j}^{\infty}\frac{\xi_i+\eta_i}{2} < 1.
		\end{align}
        Since the iteration parameters depend only on $\alpha$, $N$, and $\check q_N$, the integer $j$ also depends only on $\alpha$, $N$, and $\check q_N$.
		Therefore, it follows from \eqref{3-16.1} that, for every $k>j$,
		\begin{align}\label{3-19}
			\begin{split}
				\Xi(l_{k+1})\leq 2\tilde{C}_r^2C_3\Theta_T^{1-\alpha}	\prod_{i=0}^\infty(p_i+3)^{\xi_i+\eta_i}\Xi(l_j).
			\end{split}
		\end{align}
		
		\noindent \textbf{Step 4: Estimate of $\Xi(l_j)$}\\
		By the definition of $\Xi(l_j)$, it is sufficient to establish the boundedness of $\Xi(l_j)$ provided that for any $q > 0$, there exists a constant $C_q > 0$, depending on the quantities in Proposition \ref{Prop 4.2} and on $q$, such that
		\begin{align}\label{8-16.14}
			\sup_{0\leq t\leq T}\|\tau\|_{L^{q+2}}\leq C_q.
		\end{align}
		Multiplying $\eqref{3-n8.5}$ by $\tau^{q+1}$, integrating the resulting equation over $\mathbb{T}^N$, and using integration by parts together with Young's inequality, we obtain from \eqref{3-8} that
		\begin{align}\label{82}
			\begin{split}
				\frac{1}{q+2}\frac{d}{dt}\int \tau^{q+2}dx+\frac{c(q+3)\alpha}{2}\int \tau^{q+1-\alpha}|\nabla \tau|^2dx\leq C\left(\int  \tau^{l_q\frac{\check{q}_N+2}{\check{q}_N}}dx\right)^{^{\frac{\check{q}_N}{\check{q}_N+2}}},
			\end{split}
		\end{align}
		where $l_q=q+1+\alpha+\frac{2}{\check{q}_N+2}$. Since \eqref{3-1.1'} holds, we get $l_q<q+3-\alpha$. Then, by Young's inequality, the Sobolev embedding, and Hölder's inequality, we obtain
		\begin{align}\label{83}
			\begin{split}
				C\|\tau\|_{L^{l_q\frac{\check{q}_N+2}{\check{q}_N}}}^{l_q}&\le C\|\tau\|_{L^{(q+3-\alpha)\frac{\check{q}_N+2}{\check{q}_N}}}^{l_q}= C\|\tau^{\frac{q+3-\alpha}{2}}\|_{L^{\frac{2(\check{q}_N+2)}{\check{q}_N}}}^{\frac{2l_q}{q+3-\alpha}}\\
				&\le C\|\tau^{\frac{q+3-\alpha}{2}}\|_{H^1}^{\frac{2l_q}{q+3-\alpha}}\\
				&\le C\|\nabla \tau^{\frac{q+3-\alpha}{2}}\|_{L^2}^{\frac{2l_q}{q+3-\alpha}}+C\|\tau^{\frac{q+3-\alpha}{2}}\|_{L^1}^{\frac{2l_q}{q+3-\alpha}}\\
				&\le\delta\|\nabla \tau^{\frac{q+3-\alpha}{2}}\|_{L^2}^2+C+C\left(\int \tau^{q+2}dx\right)^{\frac{l_q}{q+2}},
			\end{split}
		\end{align}
		where $\delta > 0$ is a sufficiently small constant depending on $q$, $\alpha$, and $c$ such that
		\begin{align*}
			\delta\int |\nabla \tau^{\frac{q+3-\alpha}{2}}|^2dx\leq \frac{c(q+3)\alpha}{2}\int \tau^{q+1-\alpha}|\nabla \tau|^2dx.
		\end{align*}
		Substituting \eqref{83} into \eqref{82} yields
		\begin{align}\label{84}
			\frac{1}{q+2}\frac{d}{dt}\int \tau^{q+2}dx\leq C+C\left(\int \tau^{q+2}dx\right)^{\frac{l_q}{q+2}}.
		\end{align}
		Since $l_q<q+2$ by \eqref{3-1'} and \eqref{3-1.1'}, Young's inequality and Grönwall's inequality applied to \eqref{84} yield \eqref{8-16.14}.
		
		\noindent \textbf{Step 5: Positive lower bound for the density}\\
		By Hölder's inequality, the definition of $\Xi(l_{k+1})$, and $\eqref{3-19}$, we have for any $k>j$, 
		\begin{align*}
			\|\tau\|_{L^{l_{k+1}}(Q_T)}\leq \Xi(l_{k+1})\leq 2\tilde{C}_r^2C_3\Theta_T^{1-\alpha}	\prod_{i=0}^\infty(p_i+3)^{\xi_i+\eta_i}\Xi(l_j). 
		\end{align*}
		Letting $k \to \infty$ and applying Young's inequality, we obtain
        \begin{align*}
        \Theta_T &\le 2\tilde C_r^2 C_3 \Theta_T^{1-\alpha} \prod_{i=0}^\infty (p_i+3)^{\xi_i+\eta_i}\Xi(l_j)+1 \\
        &\le (1-\alpha)\Theta_T + \alpha \left( 2\tilde C_r^2 C_3 \prod_{i=0}^\infty (p_i+3)^{\xi_i+\eta_i}\Xi(l_j) \right)^{1/\alpha} + 1,
        \end{align*}
        which implies that
        \begin{align*}
        \Theta_T \le \left( 2\tilde C_r^2 C_3 \prod_{i=0}^\infty (p_i+3)^{\xi_i+\eta_i}\Xi(l_j) \right)^{1/\alpha} + \frac{1}{\alpha}.
        \end{align*}
        The boundedness of the infinite product and of $\Xi(l_j)$ is guaranteed by \eqref{new 6} and \eqref{8-16.14}, respectively; here $\tilde C_r$ is the Sobolev embedding constant, and $C_3$ depends only on the quantities stated in Proposition~\ref{Prop 4.2}.
		
		In conclusion, the proof of Proposition \ref{Prop 4.2} is now complete.
	\end{proof}
	
	\section{Higher-order estimates}
	Having established upper and lower bounds for the density, we now derive higher-order estimates for the density $\rho$ and the effective velocity $v$. Recall that 
	$(\rho, v)$ satisfies the following system:
	\begin{equation}
		\label{0}
		\left\{
		\begin{array}{l}
			\rho_t+\divg(\rho v)-c\Delta(\rho^\alpha)=0,\\
			\rho v_t+\rho u\cdot\nabla v+\nabla P
			=\divg(\rho^\alpha\mathbb{F}[v]),
		\end{array}
		\right.
	\end{equation}
	where $\mathbb{F}[v]=\nu\nabla v+(\nu-c)(\nabla v)^t+(\alpha-1)(2\nu-c)\divg v\mathbb{I}$.
	
	The following proposition summarizes the lower-order 
	estimates for $\rho$ and $v$. It collects the basic energy estimates established in Propositions \ref{Prop ene1}--\ref{Prop ene3}, together with the upper and lower bounds for the density obtained in Propositions \ref{Prop 2d RT} and \ref{Prop 4.2}, and the higher integrability of the effective velocity established in Proposition \ref{Prop 4.1}. These estimates provide the foundation for deriving higher-order estimates for $(\rho,v)$.
	\begin{prop}\label{Prop 7.1}
		Assume that $\eqref{2d lower}$ or $\eqref{3d lower}$ holds. Then there exists a constant $C > 0$, depending on $\gamma,\alpha,\nu,\varepsilon,E_0,T^*,\beta,N,\hat{q}_N,\|\rho_0^{1/(\hat{q}_N+2)} v_0\|_{L^{\hat{q}_N+2}},M_0^{-1},\|\rho_0\|_{L^\infty},\check{q}_N$, $\|\rho_0^{1/(\check{q}_N+2)} v_0\|_{L^{\check{q}_N+2}}$, and $\|\tau_0\|_{L^\infty}$ such that
		\begin{align}\label{10-3}
			\begin{split}
				\sup_{0\le t\le T}\big(\|\nabla\rho\|_{L^2}&+ \|\rho\|_{L^\infty}+\|\tau\|_{L^\infty}+\|v\|_{L^{\check{q}_N+2}}\big)\\
				&+\int_0^T\big(\|\nabla\rho\|_{L^2}^2+\|\nabla v\|_{L^2}^2+\|\nabla \rho\|_{L^4}^4+\|\nabla^2\rho\|_{L^2}^2\big)dt\leq C. 
			\end{split}
		\end{align}
	\end{prop}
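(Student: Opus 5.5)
This proposition mainly collects earlier results, so the plan is to read each term of \eqref{10-3} off a previous estimate once the density is pinned between two positive constants. By Lemma \ref{Lem cond}, \eqref{2d lower} (resp. \eqref{3d lower}) implies \eqref{2d upper} (resp. \eqref{3d upper}). Hence Propositions \ref{Prop 3.1}, \ref{Prop 2d RT}, \ref{Prop 4.1} and \ref{Prop 4.2} all apply. They give $\sup_t\|\rho\|_{L^\infty}\le C$, $\sup_t\|\tau\|_{L^\infty}\le C$ and $\sup_t\int\rho|v|^{\check q_N+2}dx\le C$. Since $\rho\ge C^{-1}$, the last bound yields $\sup_t\|v\|_{L^{\check q_N+2}}\le C$.

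Next I convert the weighted energy bounds into unweighted ones, using that every power $\rho^{a}$ is bounded above and below. From \eqref{2-1}, $\sup_t\|\nabla(\rho^{\alpha-\frac12})\|_{L^2}\le C$. Since $\nabla\rho=\frac{2}{2\alpha-1}\rho^{\frac32-\alpha}\nabla(\rho^{\alpha-\frac12})$, this gives $\sup_t\|\nabla\rho\|_{L^2}\le C$. Integrating over $[0,T]$ with $T<T^*$ then gives $\int_0^T\|\nabla\rho\|_{L^2}^2dt\le C$. From \eqref{2-2}, $\int_{Q_T}\rho^\alpha|\nabla v|^2\le C$, hence $\int_0^T\|\nabla v\|_{L^2}^2dt\le C$. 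From \eqref{2-3}, $\int_{Q_T}\rho^{3\alpha-6}|\nabla\rho|^4\le C$, hence $\int_0^T\|\nabla\rho\|_{L^4}^4dt\le C$.

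For the remaining term $\int_0^T\|\nabla^2\rho\|_{L^2}^2dt$ I use the second part of \eqref{2-3}. Expanding gives
\begin{align*}
\nabla(\rho^{\frac32\alpha-2}\nabla\rho)=\rho^{\frac32\alpha-2}\nabla^2\rho+\big(\tfrac32\alpha-2\big)\rho^{\frac32\alpha-3}\nabla\rho\otimes\nabla\rho .
\end{align*}
Thus $|\nabla^2\rho|\le C|\nabla(\rho^{\frac32\alpha-2}\nabla\rho)|+C|\nabla\rho|^2$, using the density bounds. Squaring and integrating over $Q_T$, the first piece is controlled by \eqref{2-3} and the second by the $L^4_TL^4$ bound on $\nabla\rho$ just obtained.

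The only step that needs any care is this last one, recovering $\nabla^2\rho$ from the quantity in \eqref{2-3}. It reduces to the pointwise identity above together with the density bounds, so I do not expect a genuine obstacle. Finally, all constants depend only on the quantities listed, since each cited proposition carries exactly those dependencies.
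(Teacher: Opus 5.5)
Your proposal is correct and follows the paper's approach. The paper gives no separate argument for this proposition: it simply collects Propositions \ref{Prop ene1}--\ref{Prop ene3}, \ref{Prop 2d RT}, \ref{Prop 4.1} and \ref{Prop 4.2}, with Lemma \ref{Lem cond} guaranteeing that the upper-bound hypotheses hold, and your explicit conversions of the weighted bounds into unweighted ones via the two-sided density bounds (including recovering $\nabla^2\rho$ from $\nabla(\rho^{\frac32\alpha-2}\nabla\rho)$ and $|\nabla\rho|^2$) correctly supply the details it leaves implicit.
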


    \subsection{First-level higher-order estimates}
	Next, we establish a technical lemma to overcome the main difficulty caused by the nonlinearity of the density equation in deriving the first-level higher-order derivative 
	estimates in three dimensions.
	
	\begin{lema}\label{Lemma 7.1}
		Let $N=3$. Suppose that
		\begin{align}\label{ggg1}
			\frac{9\sqrt{3}-4\sqrt{2}}{9\sqrt{3}-2\sqrt{2}}
			< \alpha < 1.
		\end{align}
		Then, there exist a small constant $\delta_\alpha > 0$ 
		depending only on $\alpha$, and a large 
		constant $C > 0$ depending on $\alpha, \nu$, and  
		$\varepsilon$, such that
		\begin{align}\label{ggg0}
			\begin{split}
				\frac{1}{4c}\frac{d}{dt}\int |\nabla\rho|^4 dx 
				+ \frac{\delta_\alpha}{2}\int \rho^{\alpha-1}
				|\nabla\rho|^2|\nabla^2\rho|^2 dx 
				\leq C\int \rho^{1-\alpha}|\nabla\rho|^2|\nabla(\rho v)|^2 dx.
			\end{split}
		\end{align}
	\end{lema}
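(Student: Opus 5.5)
The plan is to derive \eqref{ggg0} directly from the density equation $\eqref{0}_1$, written as $\rho_t=c\Delta(\rho^\alpha)-\divg(\rho v)$. I would take its gradient, test with $\frac1c|\nabla\rho|^2\nabla\rho$, and integrate by parts once, using the regularity in \eqref{10-1}. This gives
\begin{align*}
\frac{1}{4c}\frac{d}{dt}\int|\nabla\rho|^4dx=-\int\Delta(\rho^\alpha)\,\divg(|\nabla\rho|^2\nabla\rho)\,dx+\frac1c\int\divg(\rho v)\,\divg(|\nabla\rho|^2\nabla\rho)\,dx .
\end{align*}
For the convective term I would use $|\divg(|\nabla\rho|^2\nabla\rho)|\le C|\nabla\rho|^2|\nabla^2\rho|$ and $|\divg(\rho v)|\le\sqrt3|\nabla(\rho v)|$. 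Young's inequality with the weight $\rho^{\alpha-1}$ then bounds it by $\frac{\delta_\alpha}{2}\int\rho^{\alpha-1}|\nabla\rho|^2|\nabla^2\rho|^2dx+C\int\rho^{1-\alpha}|\nabla\rho|^2|\nabla(\rho v)|^2dx$. The whole task therefore reduces to a coercivity estimate for the diffusion term:
\begin{align*}
\int\Delta(\rho^\alpha)\,\divg(|\nabla\rho|^2\nabla\rho)\,dx\ \ge\ \delta_\alpha\int\rho^{\alpha-1}|\nabla\rho|^2|\nabla^2\rho|^2dx .
\end{align*}

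To prove the coercivity estimate, I would expand $\Delta(\rho^\alpha)=\alpha\rho^{\alpha-1}\Delta\rho-\alpha(1-\alpha)\rho^{\alpha-2}|\nabla\rho|^2$ and $\divg(|\nabla\rho|^2\nabla\rho)=|\nabla\rho|^2\Delta\rho+2\nabla\rho\cdot\nabla^2\rho\cdot\nabla\rho$. The leading part is
\begin{align*}
\alpha\int\rho^{\alpha-1}\big(|\nabla\rho|^2(\Delta\rho)^2+2\Delta\rho\,\nabla\rho\cdot\nabla^2\rho\cdot\nabla\rho\big)dx .
\end{align*}
This expression is not pointwise comparable to $|\nabla\rho|^2|\nabla^2\rho|^2$. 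I would convert it by the Bochner-type integration by parts $\int w(\Delta\rho)^2=\int w|\nabla^2\rho|^2+\int(\partial_jw\,\partial_i\rho\,\partial_{ij}\rho-\partial_iw\,\partial_i\rho\,\Delta\rho)$ with $w=\rho^{\alpha-1}|\nabla\rho|^2$. Here $\nabla w$ contains a term like $\nabla\rho\cdot\nabla^2\rho$, which reproduces quadratic Hessian expressions, and a term like $(\alpha-1)\rho^{\alpha-2}|\nabla\rho|^2\nabla\rho$, which produces cubic cross terms. The remaining lower-order part carries the small factor $1-\alpha$. It consists of integrals $\int\rho^{\alpha-2}|\nabla\rho|^4\Delta\rho$ and $\int\rho^{\alpha-2}|\nabla\rho|^2\nabla\rho\cdot\nabla^2\rho\cdot\nabla\rho$.

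To organize these pieces, I would introduce the quantities
\begin{align*}
A=\int\rho^{\alpha-1}|\nabla\rho|^2|\nabla^2\rho|^2dx,\qquad B=\int\rho^{\alpha-3}|\nabla\rho|^6dx .
\end{align*}
Integrating $\nabla(\rho^{\alpha-2})\cdot\nabla\rho\,|\nabla\rho|^4$ by parts gives $(2-\alpha)B\le\int\rho^{\alpha-2}|\nabla\rho|^4(|\Delta\rho|+4|\nabla^2\rho|)$. With $|\Delta\rho|\le\sqrt3|\nabla^2\rho|$ and Cauchy--Schwarz, this yields $B\le\frac{\sqrt3+4}{2-\alpha}\sqrt{AB}$, and with the sharper bookkeeping it yields an explicit constant of this type. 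Every cross term can likewise be bounded by $\sqrt{AB}$ with explicit constants involving $\sqrt3$ (from $|\Delta\rho|\le\sqrt3|\nabla^2\rho|$ in dimension three) and $\sqrt2$ (from the anisotropic splitting of $\nabla\rho\cdot\nabla^2\rho$). One then obtains a lower bound of the form $\alpha A-a(1-\alpha)\sqrt{AB}-b(1-\alpha)B$. Combining it with the relation between $A$ and $B$ gives $\ge\delta_\alpha A$ exactly when a quadratic form in $(\sqrt A,\sqrt B)$ is positive definite. I expect this discriminant condition to reduce to \eqref{ggg1}, i.e. $\alpha>\frac{9\sqrt3-4\sqrt2}{9\sqrt3-2\sqrt2}$.

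The main obstacle is this last step: tracking the Bochner remainder terms and the optimal Cauchy--Schwarz constants sharply enough that the positivity threshold comes out precisely at \eqref{ggg1}. If the naive bounds are too lossy, I would first try splitting $\nabla^2\rho$ into its component along $\hat n=\nabla\rho/|\nabla\rho|$ and its orthogonal part. This makes $\nabla\rho\cdot\nabla^2\rho\cdot\nabla\rho=|\nabla\rho|^2\partial_{nn}\rho$ explicit and optimizes the Young weights pointwise. After that, the $\frac{\delta_\alpha}{2}$ absorption from the convective term closes \eqref{ggg0}.
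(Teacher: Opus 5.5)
Your reduction is correct and matches the paper. You test the differentiated density equation with $\frac1c|\nabla\rho|^2\nabla\rho$, absorb the convective term by Young's inequality with weight $\rho^{\alpha-1}$, and reduce to coercivity of the diffusion term. The gap is that this coercivity estimate, which is the whole content of the lemma (the threshold \eqref{ggg1} is part of the statement), is left as an expectation. Moreover, the bookkeeping you sketch would not reach \eqref{ggg1}.

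No Bochner identity is needed. The identity $\int\Delta f\,\divg G\,dx=\int\nabla^2 f:\nabla G\,dx$ with $f=\rho^\alpha$, $G=|\nabla\rho|^2\nabla\rho$ (which is what the paper uses) makes all $\Delta\rho$ cross terms cancel exactly, leaving
\[
\alpha\int\rho^{\alpha-1}|\nabla\rho|^2\big(|\nabla^2\rho|^2+2|\nabla|\nabla\rho||^2\big)dx-3\alpha(1-\alpha)\int\rho^{\alpha-2}|\nabla\rho|^2\,\nabla\rho\cdot\nabla^2\rho\cdot\nabla\rho\,dx .
\]
Your ansatz $\alpha A-a(1-\alpha)\sqrt{AB}-b(1-\alpha)B$ amounts to bounding the last integral by $3\alpha(1-\alpha)\sqrt{AB}$ and dropping the nonnegative $2|\nabla|\nabla\rho||^2$ term. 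Even with the sharp $A$--$B$ relation from (ii) below, that gives coercivity only for $2-\alpha>9\sqrt3(1-\alpha)$, i.e. $\alpha>\frac{9\sqrt3-2}{9\sqrt3-1}\approx0.93$.

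Two ingredients are missing.

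(i) Since $|\nabla\rho\cdot\nabla^2\rho\cdot\nabla\rho|\le|\nabla\rho|^2|\nabla|\nabla\rho||$, absorb the cross term by Young into the extra good term $2\alpha\int\rho^{\alpha-1}|\nabla\rho|^2|\nabla|\nabla\rho||^2$. The loss then becomes quadratic in $1-\alpha$, and the diffusion term is bounded below by $\alpha A-\frac{9}{8}\alpha(1-\alpha)^2B$.

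(ii) Use the sharp relation $B\le\frac{27}{(2-\alpha)^2}A$ instead of the triangle-inequality constant $4+\sqrt3$. It follows from $(2-\alpha)B=\int\rho^{\alpha-2}|\nabla\rho|^4\,\nabla^2\rho:(4\hat n\otimes\hat n+\mathbb{I})\,dx$ together with $|4\hat n\otimes\hat n+\mathbb{I}|^2=16+8+3=27$ in three dimensions.

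Together these give $\delta_\alpha=\alpha\big(1-\frac{243(1-\alpha)^2}{8(2-\alpha)^2}\big)$, which is positive exactly on \eqref{ggg1}. The $\sqrt3$ and $\sqrt2$ in the threshold come from $\sqrt{27}$ and from the Young constant $\frac{9}{8}$, not from $|\Delta\rho|\le\sqrt3|\nabla^2\rho|$ or an anisotropic splitting. Both ingredients are needed: with (i) but your constant $4+\sqrt3$ you only get $\alpha\gtrsim0.80$, while \eqref{ggg1} starts at about $0.778$. Your fallback of splitting along $\hat n$ and optimizing Young weights pointwise points toward (i). As written, however, it is not carried out, so the lemma is not yet proved.
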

	\begin{proof}
		Applying $\nabla$ to equation $\eqref{0}_1$ yields
		\begin{align*}
			\partial_t \nabla\rho+\nabla\divg (\rho v)-c\nabla\Delta(\rho^\alpha)=0.
		\end{align*}
		Multiplying the above equation by $\frac{1}{c}|\nabla\rho|^2\nabla\rho$, integrating the resulting equation over $\mathbb{T}^3$, and using integration by parts, we obtain
		\begin{align}\label{hhh7.5}
			\begin{split}
				&\quad\frac{1}{4c}\frac{d}{dt}\int |\nabla\rho|^4dx+\int |\nabla\rho|^2\nabla^2 \rho :\nabla^2 (\rho^\alpha) dx+2\int |\nabla\rho|\nabla|\nabla\rho|\cdot\nabla^2 (\rho^\alpha)\cdot\nabla\rho dx\\
				&\leq C\int |\nabla(\rho v)||\nabla^2\rho||\nabla\rho|^2dx.
			\end{split}
		\end{align}
		Direct calculations show that
		\begin{align*}
			\begin{split}
				&\quad\int |\nabla\rho|^2\nabla^2 \rho :\nabla^2 (\rho^\alpha) dx\\
				&=\int |\nabla\rho|^2\nabla^2 \rho :\nabla(\alpha\rho^{\alpha-1}\nabla\rho)dx\\
				&=\alpha\int \rho^{\alpha-1}|\nabla\rho|^2|\nabla^2 \rho|^2dx+\alpha(\alpha-1)\int \rho^{\alpha-2}|\nabla\rho|^2\nabla\rho\cdot\nabla^2\rho\cdot\nabla\rho dx\\
				&=\alpha\int \rho^{\alpha-1}|\nabla\rho|^2|\nabla^2 \rho|^2dx+\alpha(\alpha-1)\int \rho^{\alpha-2}|\nabla\rho|^3\nabla\rho\cdot\nabla|\nabla\rho|dx,
			\end{split}
		\end{align*}
		Moreover,
		\begin{align*}
			\begin{split}
				&\quad2\int|\nabla\rho|\nabla|\nabla\rho|\cdot\nabla^2 (\rho^\alpha)\cdot\nabla\rho dx\\
				&=2\int|\nabla\rho| \nabla|\nabla\rho|\cdot\nabla(\alpha\rho^{\alpha-1}\nabla\rho)\cdot\nabla\rho dx\\
				&=2\alpha\int \rho^{\alpha-1}|\nabla\rho|^2|\nabla|\nabla\rho||^2dx+2\alpha(\alpha-1)\int \rho^{\alpha-2}|\nabla\rho|^3\nabla|\nabla\rho|\cdot\nabla\rho dx.
			\end{split}
		\end{align*}
		Therefore, Young's inequality yields
		\begin{align}\label{hhh8}
			\begin{split}
				&\quad\int |\nabla\rho|^2\nabla^2 \rho :\nabla^2 (\rho^\alpha) dx+2\int|\nabla\rho|\nabla|\nabla\rho|\cdot\nabla^2 (\rho^\alpha)\cdot\nabla\rho dx\\
				&=\alpha\int \rho^{\alpha-1}|\nabla\rho|^2\big(|\nabla^2\rho|^2+3(\alpha-1)\rho^{-1}|\nabla\rho|\nabla\rho\cdot\nabla|\nabla\rho|+2|\nabla|\nabla\rho||^2\big)dx\\
				&\ge \alpha\int \rho^{\alpha-1}|\nabla\rho|^2\big(|\nabla^2\rho|^2-3(1-\alpha)\rho^{-1}|\nabla\rho|^2|\nabla|\nabla\rho||+2|\nabla|\nabla\rho||^2\big)dx\\
				&\ge \alpha\int \rho^{\alpha-1}|\nabla\rho|^2\Big(|\nabla^2\rho|^2-\frac{9}{8}(1-\alpha)^2\rho^{-2}|\nabla\rho|^4\Big)dx\\
				&=\alpha\int \rho^{\alpha-1}|\nabla\rho|^2|\nabla^2\rho|^2dx-\frac{9\alpha(1-\alpha)^2}{8}\int \rho^{\alpha-3}|\nabla\rho|^6dx. 
			\end{split}
		\end{align}
		
		Next, we claim that
		\begin{align}\label{hhh10}
			\int \rho^{\alpha-3}|\nabla\rho|^6dx\leq \frac{27}{(2-\alpha)^2}\int \rho^{\alpha-1}|\nabla\rho|^2|\nabla^2\rho|^2dx.
		\end{align}
		Indeed, by integration by parts, we have
		\begin{align*}
			\begin{split}
				&\quad\int \rho^{\alpha-3}|\nabla\rho|^6dx\\
				&=\sum_{i=1}^{3}\int \rho^{\alpha-3}|\nabla\rho|^4(\partial_i\rho)^2dx\\
                &=-\sum_{i=1}^3\int \rho\partial_i(\rho^{\alpha-3}|\nabla\rho|^4\partial_i\rho)dx\\
				&=-(\alpha-3)\int \rho^{\alpha-3}|\nabla\rho|^6dx-4\int \rho^{\alpha-2}|\nabla\rho|^2\nabla\rho\cdot\nabla^2 \rho\cdot\nabla\rho dx-\int \rho^{\alpha-2}|\nabla\rho|^4\Delta\rho dx,
			\end{split}
		\end{align*}
	which implies that
		\begin{align*}
			\begin{split}
				\int \rho^{\alpha-3}|\nabla\rho|^6dx&=\frac{4}{2-\alpha}\int \rho^{\alpha-2}|\nabla\rho|^2\nabla\rho\cdot\nabla^2 \rho\cdot\nabla\rho dx+\frac{1}{2-\alpha}\int \rho^{\alpha-2}|\nabla\rho|^4\Delta\rho dx\\
				&=\frac{1}{2-\alpha}\int_{\{|\nabla\rho|>0\}} \rho^{\alpha-2}|\nabla\rho|^4\nabla^2\rho:\left(4\frac{\nabla\rho}{|\nabla\rho|}\otimes\frac{\nabla\rho}{|\nabla\rho|}+\mathbb{I}\right) dx.
			\end{split}
		\end{align*}
		Since $N=3$, on the set $\{|\nabla\rho|>0\}$, we have
		\begin{align*}
			\left|4\frac{\nabla\rho}{|\nabla\rho|}\otimes\frac{\nabla\rho}{|\nabla\rho|}+\mathbb{I}\right|^2=16+8+3=27.
		\end{align*}
		Therefore, by Young's inequality,
		\begin{align*}
			\int \rho^{\alpha-3}|\nabla\rho|^6dx&\leq \frac{3\sqrt{3}}{2-\alpha}\int \rho^{\alpha-2}|\nabla\rho|^4|\nabla^2\rho|dx\\
			&\leq \frac{1}{2}\int \rho^{\alpha-3}|\nabla\rho|^6dx+ \frac{27}{2(2-\alpha)^2}\int \rho^{\alpha-1}|\nabla\rho|^2|\nabla^2\rho|^2dx,
		\end{align*}
		which immediately gives \eqref{hhh10}.
		
		Substituting \eqref{hhh10} into \eqref{hhh8}, 
		we obtain
		\begin{align}\label{hhh11}
			\begin{split}
				&\quad\int |\nabla\rho|^2\nabla^2 \rho :\nabla^2 (\rho^\alpha) dx+2\int|\nabla\rho|\nabla|\nabla\rho|\cdot\nabla^2 (\rho^\alpha)\cdot\nabla\rho dx\\
				&\ge \alpha\Big(1-\frac{243(1-\alpha)^2}{8(2-\alpha)^2}\Big)\int \rho^{\alpha-1}|\nabla\rho|^2|\nabla^2\rho|^2dx. 
			\end{split}
		\end{align}
		Combining \eqref{hhh11} with \eqref{hhh7.5}, 
		it follows that
		\begin{align}\label{hhh12}
			\begin{split}
				\frac{1}{4c}\frac{d}{dt}\int |\nabla\rho|^4dx+\delta_\alpha\int \rho^{\alpha-1}|\nabla\rho|^2|\nabla^2\rho|^2dx\leq C\int |\nabla(\rho v)||\nabla^2\rho||\nabla\rho|^2dx,
			\end{split}
		\end{align}
		where
		\begin{align*}
			\delta_\alpha=\alpha\Big(1-\frac{243(1-\alpha)^2}{8(2-\alpha)^2}\Big)>0,
		\end{align*}
		owing to \eqref{ggg1}. Applying 
		Young's inequality to \eqref{hhh12} yields \eqref{ggg0}.
		
		This completes the proof of Lemma \ref{Lemma 7.1}.
	\end{proof}

	The following elliptic estimate plays a crucial role in deriving the higher-order derivative estimates.
	\begin{lema}\label{Lema 7.2}
		Assume that $\alpha,\nu>0$ and $0<\varepsilon\le\nu$. Let $w\in H^3$ and $F\in H^1$ 
		satisfy
		\begin{align}\label{10-6}
			\nu\Delta w+\big(\nu-c+(\alpha-1)(2\nu-c)\big) \nabla\divg w=F,
		\end{align}
		where $c=\nu+\sqrt{\nu^2-\varepsilon^2}$. Then there exists a 
		constant $C>0$ depending only on $\alpha,\nu,\varepsilon$ 
		such that
		\begin{align}
			\|\nabla^2 w\|_{L^2}&\leq C\|F\|_{L^2},\label{10-4}\\
			\|\nabla^3 w\|_{L^2}&\leq C\|\nabla F\|_{L^2}. \label{10-5}
		\end{align}
	\end{lema}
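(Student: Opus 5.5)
On $\mathbb{T}^N$ this is a constant-coefficient Lamé-type system, so I would use the Fourier transform, or equivalently energy identities. Set $\kappa:=\nu-c+(\alpha-1)(2\nu-c)$. Then \eqref{10-6} reads $\nu\Delta w+\kappa\nabla\divg w=F$. The symbol is $-\nu|\xi|^2\mathbb{I}-\kappa\,\xi\otimes\xi$. Its eigenvalues are $-\nu|\xi|^2$ on vectors orthogonal to $\xi$ and $-(\nu+\kappa)|\xi|^2$ along $\xi$. The estimates \eqref{10-4}--\eqref{10-5} therefore follow from Plancherel once $\nu>0$ and $\nu+\kappa\neq 0$, and they hold with $C=\max\{1/\nu,\,1/|\nu+\kappa|\}$ up to the usual combinatorial factors. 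The zero Fourier mode plays no role, because both sides of each estimate involve only derivatives of $w$ (or $F$ and $\nabla F$). Indeed $\widehat{\partial_j\partial_k w}(\xi)=-\xi_j\xi_k\hat w$ vanishes at $\xi=0$, and $\hat F(0)=0$ automatically since $F$ is a sum of derivatives.

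The key step is to check that $\nu+\kappa$ does not vanish, and to identify its sign. We have $\nu+\kappa=2\nu-c+(\alpha-1)(2\nu-c)=\alpha(2\nu-c)$. Since $c=\nu+\sqrt{\nu^2-\varepsilon^2}$ and $\varepsilon>0$, we get $c<2\nu$. Hence $\nu+\kappa=\alpha(2\nu-c)=\alpha(\nu-\sqrt{\nu^2-\varepsilon^2})>0$, and this depends only on $\alpha,\nu,\varepsilon$. This is the only place the hypothesis $\varepsilon>0$, i.e.\ $c<2\nu$, enters.

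The argument can also be written without Fourier analysis, which may suit the paper's style better. Decompose $w$ as in Helmholtz, or argue directly. Taking the divergence of \eqref{10-6} gives $(\nu+\kappa)\Delta\divg w=\divg F$, so $\|\nabla\divg w\|_{L^2}\le C\|F\|_{L^2}$ by testing against $\divg w$ minus its mean. Then $\nu\Delta w=F-\kappa\nabla\divg w$ gives $\|\Delta w\|_{L^2}\le C\|F\|_{L^2}$. On the torus, integration by parts yields $\|\nabla^2w\|_{L^2}=\|\Delta w\|_{L^2}$, which is \eqref{10-4}. For \eqref{10-5}, apply $\partial_k$ to \eqref{10-6}. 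Since the coefficients are constant, $\partial_k w$ solves the same system with right-hand side $\partial_k F$, so \eqref{10-4} applied to $\partial_k w$ gives $\|\nabla^3 w\|_{L^2}\le C\|\nabla F\|_{L^2}$. The regularity $w\in H^3$ justifies every integration by parts.

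I expect no real obstacle. The only delicate point is the sign computation $\nu+\kappa=\alpha(2\nu-c)>0$. One should note that $\kappa$ itself may be negative, since $\alpha<1$ and $\nu-c\le0$. So the system is not trivially coercive in the form $\nu|\nabla w|^2+\kappa|\divg w|^2$. This is why one should use the spectral or divergence decomposition above rather than a naive energy estimate.
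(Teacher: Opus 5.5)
Your proposal is correct and is essentially the paper's proof. The paper likewise computes the Fourier symbol of the constant-coefficient operator and finds the eigenvalues $\nu|\xi|^2$ (multiplicity $N-1$) and $\alpha(2\nu-c)|\xi|^2$, applies Parseval with constant $1/\min\{\nu,\alpha(2\nu-c)\}$, and gets the third-order estimate by applying $\partial_i$ to the system and repeating the argument; your real-space variant via $\alpha(2\nu-c)\Delta\divg w=\divg F$ is a valid alternative but not needed.
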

	
	\begin{proof}
		Let $\mathcal{L}w:=-\nu\Delta w-\big(\nu-c+(\alpha-1)(2\nu-c)\big) \nabla\divg w$ and $m_0=\min\{\nu,\alpha(2\nu-c)\}$. 
		Since $\nu,\alpha>0$ and $c\in [\nu,2\nu)$, we have $m_0>0$. 
		For $\xi\ne 0$, the Fourier symbol of the operator $\mathcal{L}$ 
		is given by
		\begin{align*}
			\sigma_{\mathcal{L}}(\xi)=\nu|\xi|^2\mathbb{I}+\big(\nu-c+(\alpha-1)(2\nu-c)\big)\xi\otimes\xi,
		\end{align*}
		whose eigenvalues are $\nu|\xi|^2$ with multiplicity $N-1$ 
		and $\alpha(2\nu-c)|\xi|^2$ with multiplicity $1$. Since 
		$\sigma_{\mathcal{L}}(\xi)$ is a real symmetric matrix, it 
		follows that
		\begin{align*}
			|\sigma_{\mathcal{L}}(\xi) \zeta|\ge m_0|\xi|^2|\zeta|,\text{ for all }\zeta\in \mathbb{C}^N. 
		\end{align*}
		By Parseval's identity, we deduce
		\begin{align*}
			\|\nabla^2 w\|_{L^2}^2&=\sum_{i, j=1}^N\|\partial_{ij}w\|_{L^2}^2=\sum_{i, j=1}^N\sum_{\xi}|\xi_i|^2|\xi_j|^2|\hat{w}(\xi)|^2=\sum_{\xi}|\xi|^4|\hat{w}(\xi)|^2\\
			&\leq \sum_{\xi\ne 0}\frac{1}{m_0^2}|\sigma_{\mathcal{L}}(\xi)\hat{w}(\xi)|^2\leq  \frac{1}{m_0^2}\sum_{\xi}|\hat{F}(\xi)|^2=\frac{1}{m_0^2}\|F\|_{L^2}^2,
		\end{align*}
		which yields \eqref{10-4}. Applying the 
		differential operator $\partial_i$ ($i=1,\dots, N$) to \eqref{10-6}, 
		we obtain
		\begin{align*}
			-\mathcal{L}\partial_i w=\partial_i F.
		\end{align*}
		Repeating the above argument yields $\eqref{10-5}$. This completes the proof of Lemma \ref{Lema 7.2}.
	\end{proof}

	We are now in a position to establish the first-level higher-order estimates.
	\begin{prop}\label{Prop7.2}
		Under the assumptions of Theorem \ref{Thm 1.1}, 
		there exists a constant $C > 0$, depending on 
		$\gamma,\alpha,\nu,\varepsilon,E_0,T^*,\beta,
		N,\hat{q}_N,\|\rho_0^{1/(\hat{q}_N+2)} v_0\| 
		_{L^{\hat{q}_N+2}},M_0^{-1},\|\rho_0\|_{L^\infty},
		\check{q}_N,\|\rho_0^{1/(\check{q}_N+2)} v_0\|
		_{L^{\check{q}_N+2}}$, $\|\tau_0\|_{L^\infty},
		\|\rho_0\|_{H^2}$, and $\|v_0\|_{H^1}$ such that
		\begin{align}\label{hhh0}
			\begin{split}
				\sup_{0\leq t\leq T}\Big(\|\nabla^2\rho\|_{L^2}&+\|\nabla v\|_{L^2}+\|\rho_t\|_{L^2}\Big)\\
				&+\int_0^T\Big(\|\nabla^3 \rho\|_{L^2}^2+\|\nabla\rho_t\|_{L^2}^2+\|v_t\|_{L^2}^2+\|\nabla^2 v\|_{L^2}^2\Big)dt\leq C.
			\end{split}
		\end{align}
	\end{prop}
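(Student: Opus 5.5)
The plan is to build a coupled energy functional $\mathcal{E}(t)\sim\int(|\nabla^2\rho|^2+|\nabla v|^2)dx$ together with its dissipation $\Psi(t)\sim\int(|\nabla\rho_t|^2+|\nabla\Delta(\rho^\alpha)|^2+|v_t|^2+|\nabla^2v|^2)dx$. We then close a Gr\"onwall inequality, using throughout the lower-order bounds of Proposition \ref{Prop 7.1}. In particular we use $\underline C\le\rho\le\overline C$ and $v\in L^\infty_TL^{\check q_N+2}$ with $\check q_N+2>N$; note that $\check q_3>1$ in 3D.

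\textbf{Density part.} Rewrite $\eqref{0}_1$ as $\rho_t-c\Delta(\rho^\alpha)=-\divg(\rho v)$. Apply $\nabla$ and test with $-\nabla\Delta(\rho^\alpha)$, and separately test $\rho_t$-type identities with $\nabla\rho_t$. The time-derivative term $\int\nabla\rho_t\cdot\nabla\Delta(\rho^\alpha)$ is handled by writing $\Delta(\rho^\alpha)=\alpha\rho^{\alpha-1}\Delta\rho+\alpha(\alpha-1)\rho^{\alpha-2}|\nabla\rho|^2$. This produces $\frac{d}{dt}\frac{\alpha}{2}\int\rho^{\alpha-1}|\Delta\rho|^2$, which is equivalent to $\|\nabla^2\rho\|_{L^2}^2$ by the density bounds and periodic elliptic regularity. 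The commutator remainders are of the form $\int|\nabla\rho|^2|\nabla^2\rho|^2$ and $\int|\nabla\rho|^6$, the terms \eqref{der2}. The forcing gives $\|\nabla\divg(\rho v)\|_{L^2}^2\lesssim\|\nabla^2 v\|^2+\||\nabla\rho||\nabla v|\|^2+\||v||\nabla^2\rho|\|^2+\dots$. Each piece is handled by H\"older with $v\in L^{\check q_N+2}$, Gagliardo--Nirenberg, and Young, absorbing into $\Psi$ with small coefficient and leaving $C(1+\|\nabla^2\rho\|^2_{L^2}+\|\nabla v\|^2_{L^2})\mathcal{E}$-type terms. From the equation we also read off $\|\rho_t\|_{L^2}$ and $\|\nabla^3\rho\|_{L^2}\lesssim\|\nabla\Delta(\rho^\alpha)\|+$ lower order.

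\textbf{Velocity part.} Test $\eqref{0}_2$ with $v_t$. We write $\divg(\rho^\alpha\mathbb F[v])=\rho^\alpha\divg\mathbb F[v]+\nabla\rho^\alpha\cdot\mathbb F[v]$, so that $\int\rho^\alpha\mathbb F[v]:\nabla v_t$ gives $\frac{d}{dt}$ of a quadratic form coercive in $\|\nabla v\|^2$; coercivity comes from the energy computation in Proposition \ref{Prop ene2}. This leaves $\int\rho^{\alpha-1}\rho_t|\nabla v|^2$ and $\int\rho|u||\nabla v||v_t|$. Here $u=v-c\alpha\rho^{\alpha-2}\nabla\rho$, so the convective term involves $|v||\nabla v|$ and $|\nabla\rho||\nabla v|$, again controlled via Gagliardo--Nirenberg. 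Next, Lemma \ref{Lema 7.2} applied to $w=v$ with $F=\rho^{-\alpha}(\rho v_t+\rho u\cdot\nabla v+\nabla P-\nabla\rho^\alpha\cdot\mathbb F[v])$ converts control of $\|v_t\|$ into $\|\nabla^2v\|_{L^2}$. The contribution $\||\nabla\rho||\nabla v|\|_{L^2}$ is bounded by $\|\nabla\rho\|_{L^4}\|\nabla v\|_{L^4}$ and interpolated. Adding a small multiple of the velocity inequality to the density inequality gives \eqref{der1}.

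\textbf{Main obstacle: the terms \eqref{der2}.} For $N=2$ we follow the route sketched in Section 2. We bound them by $C\|\nabla^2(\rho^\alpha)\|_{L^3}^3\le\frac12\Psi+C\|\nabla^2\rho\|_{L^2}^2\mathcal{E}$, using that $\|\nabla\rho\|_{L^4}^4\lesssim\|\nabla^2\rho\|_{L^2}^2$ is integrable in time by Proposition \ref{Prop 7.1}. For $N=3$ we add a large multiple $K$ of Lemma \ref{Lemma 7.1} to \eqref{der1}. The dissipation $\frac{K\delta_\alpha}{2}\int\rho^{\alpha-1}|\nabla\rho|^2|\nabla^2\rho|^2$ absorbs \eqref{der2} via \eqref{hhh10}. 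The new right-hand side $K\int|\nabla\rho|^2|\nabla(\rho v)|^2$ must then be split by H\"older into $\|\nabla\rho\|_{L^6}^2\|\nabla v\|_{L^3}^2$-type terms. These are bounded by $\epsilon\Psi$ plus a time-integrable coefficient times $\mathcal{E}+\int|\nabla\rho|^4$. This step requires care because $K$ is large, so the absorption must use the smallness from interpolation rather than from $K$. The functional we close Gr\"onwall on is $\mathcal{E}+K\int|\nabla\rho|^4$, with time-integrable coefficients coming from $\|\nabla^2\rho\|_{L^2}^2$, $\|\nabla v\|_{L^2}^2$, and $\|\nabla\rho\|_{L^4}^4$. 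This yields \eqref{hhh0}.
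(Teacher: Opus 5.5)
Your overall architecture is the paper's: the same $\mathcal E$ and $\Psi$, the continuity equation tested essentially against $\Delta\rho_t$, the $v_t$-test plus Lemma \ref{Lema 7.2}, the $\|\nabla^2(\rho^\alpha)\|_{L^3}^3$ route in 2D, and a large multiple of Lemma \ref{Lemma 7.1} in 3D. However, two steps fail as written.

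First, the weighting is reversed. The density inequality contains $A_1\|\nabla^2 v\|_{L^2}^2$ with a fixed, non-small constant. It comes from $\rho\nabla\divg v$ inside $\nabla\divg(\rho v)$, tested against $\nabla\rho_t$ or $\nabla\Delta(\rho^\alpha)$. So it cannot be ``absorbed into $\Psi$ with small coefficient'', and adding a \emph{small} multiple of the velocity inequality does nothing for it. You must add a \emph{large} multiple of the velocity inequality ($2A_1/\varepsilon_1$ in the paper). You then have to check that the velocity right-hand side, in particular $\frac12\int(\rho^\alpha)_t\mathbb{F}[v]:\nabla v$, is absorbed into the density dissipation with arbitrarily small coefficients. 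The paper arranges this by substituting the continuity equation and integrating by parts so that $v$ appears undifferentiated, which lets it use $v\in L^\infty_TL^q$.

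Second, and this is the real gap in 3D, the remainders of the density estimate are not only \eqref{der2}. Producing $\frac{d}{dt}\frac{\alpha}{2}\int\rho^{\alpha-1}|\Delta\rho|^2$ unavoidably generates the commutator $\frac{\alpha}{2}\int(\rho^{\alpha-1})_t|\Delta\rho|^2$ (the paper's $I_7$), which you omit. After inserting $\rho_t=-\divg(\rho v)+c\Delta(\rho^\alpha)$, it contains the pure-density cubic term $\int\rho^{\alpha-2}\Delta(\rho^\alpha)|\Delta\rho|^2\sim\int|\nabla^2\rho|^3$. In 3D this term is exactly critical:
\begin{itemize}
\item Gagliardo--Nirenberg gives either $\|\nabla^2\rho\|_{L^3}^3\lesssim\|\rho\|_{L^\infty}\|\nabla^3\rho\|_{L^2}^2$, which is not absorbable, or $\epsilon\Psi+C\|\nabla^2\rho\|_{L^2}^6$, which Gr\"onwall cannot close.
\item The $L^q$ bound on $v$ is useless because $v$ does not appear.
\item Lemma \ref{Lemma 7.1} does not control it directly, since it carries no factor $|\nabla\rho|$.
\end{itemize}
The paper (see \eqref{I_7'00}) integrates by parts once more, moving one derivative off a factor $\partial_{ii}\rho$. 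This bounds the term by $\epsilon\|\nabla^3(\rho^\alpha)\|_{L^2}^2+C\int|\nabla\rho|^2|\nabla^2\rho|^2+C\int|\nabla\rho|^6$, and only then does Lemma \ref{Lemma 7.1} apply. The $\divg(\rho v)$ part is likewise moved onto $\rho^{\alpha-2}|\Delta\rho|^2$ so that only $|v|$ appears.

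Relatedly, if only $q>3$ is used, your claim that the $v$-terms leave $C(1+\|\nabla^2\rho\|_{L^2}^2+\|\nabla v\|_{L^2}^2)\mathcal E$ is too optimistic in 3D. For example, $\|\nabla\rho\|_{L^4}^2\|\nabla v\|_{L^4}^2$ with $\|\nabla\rho\|_{L^4}^2\lesssim\|\nabla^2\rho\|_{L^2}$ gives $\epsilon\|\nabla^2 v\|_{L^2}^2+C\|\nabla^2\rho\|_{L^2}^{2+12/q}$, which is superlinear for $q<6$. The paper instead integrates $\int|\nabla\rho|^2|\nabla v|^2$ by parts into $\int|v||\nabla v||\nabla\rho||\nabla^2\rho|+\int|v||\nabla^2 v||\nabla\rho|^2$ and uses $\|\nabla(\rho^\alpha)\|_{L^{12}}^2\lesssim\|\nabla^2(\rho^\alpha)\|_{L^6}$. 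Your $\|\nabla\rho\|_{L^6}^2\|\nabla v\|_{L^3}^2$ split does close with $q>3$, but only if $\|\nabla\rho\|_{L^6}^6\lesssim\int|\nabla\rho|^2|\nabla^2\rho|^2$ is taken from Lemma \ref{Lemma 7.1}'s dissipation, not from $\|\nabla\rho\|_{H^1}$. (The theorem's hypotheses actually force $q=\check q_3+2>2\alpha/(1-\alpha)+2>9$, which would rescue these cruder routes, but you do not invoke this.)
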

	\begin{proof}
		First, we derive a differential inequality involving the second-order derivatives of the density. Note that equation $\eqref{0}_1$ can be rewritten as
		\begin{align}\label{hhh0.5}
			\rho_t+\div(\rho v)-c\alpha\rho^{\alpha-1}\Delta\rho-c\alpha(\alpha-1)\rho^{\alpha-2}|\nabla\rho|^2=0.
		\end{align}
		On the one hand, multiplying the above equation 
		by $\Delta\rho_t$, integrating the resulting equation 
		over $\mathbb{T}^N$, and using integration by parts 
		together with Young's inequality, in combination with 
		\eqref{10-3}, we obtain
		\begin{align}\label{hhh1}
			\begin{split}
				&\quad\frac{c\alpha}{2}\frac{d}{dt}\int \rho^{\alpha-1}|\Delta\rho|^2dx+\int |\nabla\rho_t|^2dx\\
				&\leq C\int |\nabla\rho_t||\nabla^2(\rho v)|dx+C\int |\nabla\rho_t|(|\nabla\rho||\nabla^2\rho|+|\nabla\rho|^3)dx+\frac{c\alpha}{2}\int (\rho^{\alpha-1})_t|\Delta\rho|^2 dx\\
				&\leq \frac{1}{2}\int |\nabla\rho_t|^2dx+C\int (|\nabla^2(\rho v)|^2+ |\nabla\rho|^2|\nabla^2 \rho|^2+|\nabla\rho|^6)dx+\frac{c\alpha}{2}\int (\rho^{\alpha-1})_t|\Delta\rho|^2 dx.
			\end{split}
		\end{align}
		On the other hand, applying $\eqref{0}_1$ once again yields that there exists a sufficiently small constant $\varepsilon_0 > 0$ such that
		\begin{align}\label{hhh2}
			\varepsilon_0\int |\nabla\Delta(\rho^\alpha)|^2dx\leq \frac{1}{4}\int |\nabla\rho_t|^2dx+C\int |\nabla^2(\rho v)|^2dx.
		\end{align}
		Adding $\eqref{hhh1}$ and $\eqref{hhh2}$ yields
		\begin{align}\label{hhh3}
			\begin{split}
				&\quad\frac{c\alpha}{2}\frac{d}{dt}\int \rho^{\alpha-1}|\Delta\rho|^2dx+\frac{1}{4}\int |\nabla\rho_t|^2dx+\varepsilon_0\int |\nabla\Delta(\rho^\alpha)|^2dx\\
				&\leq A_1\int |\nabla^2 v|^2dx+C\int |\nabla\rho|^2|\nabla v|^2dx+C\int|\nabla^2\rho|^2|v|^2dx\\
				&\quad+C\int |\nabla\rho|^2|\nabla^2 \rho|^2dx+C\int|\nabla\rho|^6dx+\frac{c\alpha}{2}\int (\rho^{\alpha-1})_t|\Delta\rho|^2 dx,
			\end{split}
		\end{align}
		where we write $A_1$ for the constant of the first term on the right-hand side of the above inequality, which depends on the quantities stated in Proposition \ref{Prop7.2}.
		
		Next, we turn to the equation for $v$ to derive
		a differential inequality that incorporates the 
		first-order derivatives of $v$. On the one hand, multiplying $\eqref{0}_2$ by $v_t$, integrating the resulting equation over $\mathbb{T}^N$, and using integration by parts together with Young's inequality, in combination with 
		\eqref{10-3}, we get
		\begin{align}\label{hhh4}
			\begin{split}
				&\quad\frac{1}{2}\frac{d}{dt}\int\rho^\alpha\mathbb{F}[v]:\nabla vdx+\int \rho |v_t|^2dx\\
				&\leq C\int \rho|u||\nabla v||v_t|dx+C\int|\nabla\rho||v_t|dx+\int\frac{(\rho^\alpha)_t}{2}\mathbb{F}[v]:\nabla vdx\\
				&\leq \frac{1}{2}\int \rho |v_t|^2dx+C\int\left( |v|^2|\nabla v|^2+|\nabla\rho|^2|\nabla v|^2+|\nabla\rho|^2\right)dx+\int\frac{(\rho^\alpha)_t}{2}\mathbb{F}[v]:\nabla vdx,
			\end{split}
		\end{align}
        where we have used
        \begin{align*}
            \mathbb{F}[v_t]:\nabla v=\mathbb{F}[v]:\nabla v_t. 
        \end{align*}
		On the other hand, It follows from $\eqref{0}_2$ that $v$ satisfies the elliptic system:
		\begin{align}\label{ell sys}
			\begin{split}
				&\quad\nu\Delta v+\left((\nu-c)+(\alpha-1)(2\nu-c)\right)\nabla\divg v\\
				&=\frac{1}{\rho^\alpha}\left(\rho v_t+\rho (v-c\alpha\rho^{\alpha-2}\nabla\rho)\cdot\nabla v+\nabla P-\nabla(\rho^\alpha)\cdot\mathbb{F}[v]\right),
			\end{split}
		\end{align}
		which, along with Lemma \ref{Lema 7.2} and \eqref{10-3}, gives a small constant $\varepsilon_1 > 0$ such that
		\begin{align}\label{hhh4.5}
			\varepsilon_1\int|\nabla^2 v|^2dx\leq \frac{1}{4}\int \rho |v_t|^2dx+C\int  \left(|v|^2|\nabla v|^2+|\nabla\rho|^2|\nabla v|^2+|\nabla\rho|^2\right)dx.
		\end{align}
		Thus, summing $\eqref{hhh4}$ and $\eqref{hhh4.5}$ gives
		\begin{align}\label{hhh5}
			\begin{split}
				&\quad\frac{1}{2}\frac{d}{dt}\int\rho^\alpha\mathbb{F}[v]:\nabla vdx+\frac{1}{4}\int \rho |v_t|^2dx+\varepsilon_1\int|\nabla^2 v|^2dx\\
				&\leq C\int |v|^2|\nabla v|^2dx+C\int|\nabla\rho|^2|\nabla v|^2dx+C\int|\nabla\rho|^2dx+\int\frac{(\rho^\alpha)_t}{2}\mathbb{F}[v]:\nabla vdx.
			\end{split}
		\end{align}
		
		Next, multiplying \eqref{hhh5} by $2A_1/\varepsilon_1$ and adding the result to \eqref{hhh3} yields
		\begin{align}\label{I_0}
			\begin{split}
				&\quad\frac{c\alpha}{2}\frac{d}{dt}\int \rho^{\alpha-1}|\Delta\rho|^2dx+\frac{A_1}{\varepsilon_1}\frac{d}{dt}\int\rho^\alpha\mathbb{F}[v]:\nabla vdx\\
				&\quad+\frac{1}{4}\int |\nabla\rho_t|^2dx+\varepsilon_0\int |\nabla\Delta(\rho^\alpha)|^2dx+\frac{A_1}{2\varepsilon_1}\int \rho |v_t|^2dx+A_1\int|\nabla^2 v|^2dx\\
				&\leq C\int |\nabla\rho|^2|\nabla v|^2dx+C\int|\nabla^2\rho|^2|v|^2dx+C\int |v|^2|\nabla v|^2dx+C\int|\nabla\rho|^2dx\\
				&\quad+C\int |\nabla\rho|^2|\nabla^2 \rho|^2dx+C\int|\nabla\rho|^6dx\\
				&\quad+\frac{c\alpha}{2}\int (\rho^{\alpha-1})_t|\Delta\rho|^2 dx+\int\frac{(\rho^\alpha)_t}{2}\mathbb{F}[v]:\nabla vdx=:\sum_{i=1}^{8}I_i.
			\end{split}
		\end{align}
		We proceed to estimate $I_1,\dots,I_8$ and prove \eqref{hhh0} separately for the two cases $N=2$ and $N=3$.\\
		
		\textbf{Case I: $N=2$.}
		
		In this case, for notational simplicity, we set $q=\check{q}_2+2$. Then it follows from the definition of $\check{q}_2$ that $q>2$, and from Proposition \ref{Prop 7.1} that
		\begin{align}\label{10-7}
			\sup_{0\le t\le T}\|v\|_{L^q}\leq C.
		\end{align}
		
		For the term $I_1$, integration by parts gives
		\begin{align*}
			\begin{split}
				I_1&=C\sum_{i, j=1}^2\int |\nabla\rho|^2(\partial_i v_j)^2dx\\
				&\leq C\int |v||\nabla v||\nabla\rho||\nabla^2\rho|dx+C\int |v||\nabla^2 v||\nabla\rho|^2dx\\
				&=:I_1^1+I_1^2.
			\end{split}
		\end{align*}
		For $I_1^1$, applying H\"older's inequality, the Sobolev embedding, Young's inequality, and \eqref{10-7} yields
		\begin{align}\label{hhh6}
			\begin{split}
				I_1^1&=C\int |v||\nabla v||\nabla\rho||\nabla^2\rho|dx\\
                &\leq C\|v\|_{L^{q}}\|\nabla v\|_{L^{\frac{4q}{q-2}}}\||\nabla\rho||\nabla^2\rho|\|_{L^{\frac{4q}{3q-2}}}\\
				&\leq C\|\nabla v\|_{H^1}\||\nabla\rho||\nabla^2\rho|\|_{L^2}^{\frac{q+2}{2q}}\||\nabla\rho||\nabla^2\rho|\|_{L^1}^{\frac{q-2}{2q}}\\
				&\leq \frac{A_1}{32}\|\nabla v\|_{H^1}^2+\delta\int 	|\nabla\rho|^2|\nabla^2\rho|^2dx+C_\delta\left(\int |\nabla\rho||\nabla^2\rho|dx\right)^2\\
                &\leq \frac{A_1}{32}\|\nabla v\|_{H^1}^2+\delta\int 	|\nabla\rho|^2|\nabla^2\rho|^2dx+C_\delta\|\nabla\rho\|_{L^2}^2\|\nabla^2\rho\|_{L^2}^2,
			\end{split}
		\end{align}
		where $\delta>0$ is to be determined later. By \eqref{10-3} and Young's inequality, we have
		\begin{align}\label{hhh7}
			\begin{split}
				\delta\int |\nabla\rho|^2|\nabla^2\rho|^2dx&\leq \delta C\int \rho^{2-2\alpha}|\nabla(\rho^\alpha)|^2|\nabla(\rho^{1-\alpha}\nabla(\rho^\alpha))|^2dx\\ 
				&\leq \delta C\int |\nabla(\rho^\alpha)|^2|\nabla^2(\rho^\alpha)|^2dx+\delta C\int  |\nabla(\rho^\alpha)|^6dx.
			\end{split}
		\end{align}
		Applying integration by parts twice and using \eqref{10-3}, together with Young's inequality, yields
		\begin{align*}
			\int |\nabla(\rho^\alpha)|^2|\nabla^2 (\rho^\alpha)|^2dx&= \sum_{i=1}^2\int(\partial_i (\rho^\alpha))^2|\nabla^2(\rho^\alpha)|^2dx\\
			&\leq   C\int \rho^\alpha|\nabla(\rho^\alpha)||\nabla^2(\rho^\alpha)||\nabla^3(\rho^\alpha)|dx+ C\int\rho^\alpha|\nabla^2 (\rho^\alpha)|^3dx\\
			&\leq  C\int |\nabla(\rho^\alpha)||\nabla^2(\rho^\alpha)||\nabla^3(\rho^\alpha)|dx+C \sum_{i,j=1}^{2}\int (\partial_{ij}(\rho^\alpha))^2|\nabla^2 (\rho^\alpha)|dx \\
			&\leq  C\int |\nabla(\rho^\alpha)||\nabla^2(\rho^\alpha)||\nabla^3(\rho^\alpha)|dx\\
			&\leq \frac{1}{2} \int |\nabla(\rho^\alpha)|^2|\nabla^2 (\rho^\alpha)|^2dx+ C\int |\nabla^3(\rho^\alpha)|^2dx,
		\end{align*}
		which implies that
		\begin{align}\label{14-2}
			\int |\nabla(\rho^\alpha)|^2|\nabla^2 (\rho^\alpha)|^2dx\leq C\int |\nabla^3(\rho^\alpha)|^2dx. 
		\end{align}
		Another application of integration by parts twice, along with \eqref{10-3}, Young's inequality, and \eqref{14-2}, gives
		\begin{align*}
			\int |\nabla(\rho^\alpha)|^6dx&=\sum_{i=1}^2\int (\partial_i (\rho^\alpha))^2|\nabla(\rho^\alpha)|^4dx\\
			&\leq C\int \rho^\alpha|\nabla(\rho^\alpha)|^4|\nabla^2 (\rho^\alpha)|dx\\
			&\leq C\int |\nabla(\rho^\alpha)|^4|\nabla^ 2(\rho^\alpha)|dx\\
			&=C\sum_{i=1}^2\int (\partial_i (\rho^\alpha))^2|\nabla(\rho^\alpha)|^2|\nabla^2 (\rho^\alpha)|dx\\
			&\leq C\int \rho^\alpha|\nabla (\rho^\alpha)|^2|\nabla^2 (\rho^\alpha)|^2dx+ C\int \rho^\alpha|\nabla (\rho^\alpha)|^3|\nabla^3 (\rho^\alpha)|dx\\
            &\leq C\int |\nabla (\rho^\alpha)|^2|\nabla^2 (\rho^\alpha)|^2dx+ C\int |\nabla (\rho^\alpha)|^3|\nabla^3 (\rho^\alpha)|dx\\
			&\leq \frac{1}{2}\int |\nabla(\rho^\alpha)|^6dx+C\int |\nabla^3(\rho^\alpha)|^2dx,
		\end{align*}
		which yields
		\begin{align}\label{14-3}
			\int |\nabla(\rho^\alpha)|^6dx\leq C\int |\nabla^3(\rho^\alpha)|^2dx. 
		\end{align}
		Substituting \eqref{14-2} and \eqref{14-3} into \eqref{hhh7}, we obtain
		\begin{align}\label{14-4}
			\delta\int |\nabla\rho|^2|\nabla^2\rho|^2dx&\leq \delta C\int |\nabla^3 (\rho^\alpha)|^2dx=:\delta A_2\int |\nabla^3 (\rho^\alpha)|^2dx,
		\end{align}
		where we denote by $A_2$ the constant appearing in the above inequality, which depends on the quantities stated in Proposition \ref{Prop7.2}. Choosing $\delta>0$ sufficiently small so that $\delta A_2\leq \varepsilon_0/32$, it follows from \eqref{hhh6}, \eqref{14-4} and \eqref{10-3} that
		\begin{align}\label{I_1^1'}
			\begin{split}
				I_1^1&=C\int |v||\nabla v||\nabla\rho||\nabla^2\rho|dx\\
                &\leq \frac{A_1}{32}\|\nabla v\|_{H^1}^2+\frac{\varepsilon_0}{32}\int|\nabla^3 (\rho^\alpha)|^2dx+C\int|\nabla^2\rho|^2dx.
			\end{split}
		\end{align}
		For $I_1^2$, H\"older's inequality, \eqref{10-3}, the Gagliardo--Nirenberg inequality, and the Sobolev embedding show that
		\begin{align}\label{I_1^2}
			\begin{split}
				I_1^2&=C\int |v||\nabla^2 v||\nabla\rho|^2dx\\
                &\leq C\|v\|_{L^q}\|\nabla^2 v\|_{L^2}\|\nabla(\rho^\alpha)\|_{L^{\frac{4q}{q-2}}}^2\\
				&\leq C\|v\|_{L^q}\|\nabla^2 v\|_{L^2}\|\nabla(\rho^\alpha)\|_{L^2}^{\frac{4q}{3q+2}}\|\nabla(\rho^\alpha)\|_{W^{1,\frac{4q}{q-2}}}^{\frac{2q+4}{3q+2}}\\
				&\leq C\|v\|_{L^q}\|\nabla^2 v\|_{L^2}\|\nabla(\rho^\alpha)\|_{L^2}^{\frac{4q}{3q+2}}\|\nabla(\rho^\alpha)\|_{H^2}^{\frac{2q+4}{3q+2}}\\
				&\leq \frac{A_1}{32}\|\nabla^2 v\|_{L^2}^2+\frac{\varepsilon_0}{32}\|\nabla (\rho^\alpha)\|_{H^2}^2+C,
			\end{split}
		\end{align}
		where the last step follows from Young's inequality, \eqref{10-7}, and \eqref{10-3}.
		
		For $I_2$, H\"older's inequality and \eqref{10-3} yield
		\begin{align}\label{I_222}
			\begin{split}
				I_2&=C\int |v|^2|\nabla^2\rho|^2dx\\
                &\leq C\|v\|_{L^q}^2\|\nabla^2 \rho\|_{L^{\frac{2q}{q-2}}}^2\\
				&\leq C\|v\|_{L^q}^2\|\nabla(\rho^{1-\alpha}\nabla(\rho^\alpha))\|_{L^{\frac{2q}{q-2}}}^2\\
				&\leq C\|v\|_{L^q}^2\|\nabla^2 (\rho^\alpha)\|_{L^{\frac{2q}{q-2}}}^2+C\|v\|_{L^q}^2\|\nabla(\rho^\alpha)\|_{L^{\frac{4q}{q-2}}}^4.
			\end{split}
		\end{align}
        Applying the Gagliardo--Nirenberg inequality, the Sobolev embedding, Young's inequality, \eqref{10-7}, and \eqref{10-3} yields
        \begin{align}\label{42-1}
        \begin{split}
				&\quad C\|v\|_{L^q}^2\|\nabla^2 (\rho^\alpha)\|_{L^{\frac{2q}{q-2}}}^2\\
				&\leq C\|v\|_{L^q}^2\|\nabla^2 (\rho^\alpha)\|_{L^2}^{\frac{2q-4}{q}}\|\nabla^2 (\rho^\alpha)\|_{H^1}^{\frac{4}{q}}\\
				&\leq \frac{\varepsilon_0}{32}\|\nabla^2(\rho^\alpha)\|_{H^1}^2+C\|\nabla^2 (\rho^\alpha)\|_{L^2}^2,
            \end{split}
        \end{align}
and
\begin{align}\label{42-2}
        \begin{split}
				&\quad C\|v\|_{L^q}^2\|\nabla(\rho^\alpha)\|_{L^{\frac{4q}{q-2}}}^4\\
				&\leq C\|v\|_{L^q}^2\|\nabla(\rho^\alpha)\|_{L^2}^{\frac{8q}{3q+2}}\|\nabla(\rho^\alpha)\|_{W^{1,\frac{4q}{q-2}}}^{\frac{4q+8}{3q+2}}\\
                &\leq C\|v\|_{L^q}^2\|\nabla(\rho^\alpha)\|_{L^2}^{\frac{8q}{3q+2}}\|\nabla(\rho^\alpha)\|_{H^2}^{\frac{4q+8}{3q+2}}\\
				&\leq \frac{\varepsilon_0}{32}\|\nabla (\rho^\alpha)\|_{H^2}^2+C.
            \end{split}
        \end{align}
        Substituting \eqref{42-1} and \eqref{42-2} into \eqref{I_222}, we obtain
        \begin{align}\label{I_2}
        \begin{split}
            I_2&=C\int |v|^2|\nabla^2\rho|^2dx\\
            &\leq \frac{\varepsilon_0}{16}\|\nabla (\rho^\alpha)\|_{H^2}^2+C\|\nabla^2 (\rho^\alpha)\|_{L^2}^2+C.
        \end{split}
        \end{align}

		For $I_3$, H\"older's inequality, the Gagliardo--Nirenberg inequality, Young's inequality, and \eqref{10-7} yield
		\begin{align}\label{I_3}
			\begin{split}
				I_3&=C\int |v|^2|\nabla v|^2dx\\
                &\leq C\|v\|_{L^q}^2\|\nabla v\|_{L^{\frac{2q}{q-2}}}^2\leq C\|v\|_{L^q}^2\|\nabla v\|_{L^2}^{\frac{2q-4}{q}}\|\nabla v\|_{H^1}^{\frac{4}{q}}\\
				&\leq \frac{A_1}{32}\|\nabla^2 v\|_{L^2}^2+C\|\nabla v\|_{L^2}^2.
			\end{split}
		\end{align}
		
		For $I_4$, it follows from \eqref{10-3} that
		\begin{align}\label{I_4}
			I_4=C\int |\nabla\rho|^2 dx\leq C. 
		\end{align}
		
		For $I_5$ and $I_6$, we use \eqref{10-3} to get
		\begin{align*}
			\begin{split}
				I_5+I_6&=C\int |\nabla\rho|^2|\nabla^2 \rho|^2dx+C\int|\nabla\rho|^6dx\\
                &\leq C\int \rho^{2-2\alpha}|\nabla(\rho^\alpha)|^2|\nabla(\rho^{1-\alpha}\nabla(\rho^\alpha))|^2dx+C\int \rho^{6-6\alpha}|\nabla(\rho^\alpha)|^6dx\\
				&\leq C\int |\nabla(\rho^\alpha)|^2|\nabla^2 (\rho^\alpha)|^2dx+C\int |\nabla(\rho^\alpha)|^6 dx.
			\end{split}
		\end{align*}
		Note that integration by parts, together with \eqref{10-3} and Young's inequality, implies
		\begin{align*}
        \begin{split}
			\int |\nabla(\rho^\alpha)|^6dx&=-\int \rho^\alpha\divg(|\nabla(\rho^\alpha)|^4\nabla(\rho^\alpha))dx\\
            &\leq C\int |\nabla(\rho^\alpha)|^4|\nabla^2 (\rho^\alpha)|dx\\
            &\leq \frac{1}{2}\int |\nabla(\rho^\alpha)|^6dx+C\int |\nabla^2(\rho^\alpha)|^3dx,\\
			\int |\nabla(\rho^\alpha)|^2|\nabla^2 (\rho^\alpha)|^2dx&\leq \frac{1}{3}\int |\nabla(\rho^\alpha)|^6dx+\frac{2}{3}\int |\nabla^2(\rho^\alpha)|^3dx\\
            &\le C\int |\nabla^2(\rho^\alpha)|^3dx.
            \end{split}
		\end{align*}
        Therefore, applying the Gagliardo--Nirenberg inequality and Young's inequality, we obtain
        \begin{align}\label{I_5+I_6}
            \begin{split}
                I_5+I_6&\le C\int |\nabla(\rho^\alpha)|^2|\nabla^2 (\rho^\alpha)|^2dx+C\int |\nabla(\rho^\alpha)|^6 dx+C\int |\nabla^2(\rho^\alpha)|^3dx\\
                &\le C\|\nabla^2 (\rho^\alpha)\|_{L^3}^3\\
                &\leq C\|\nabla^2 (\rho^\alpha)\|_{L^2}^2\|\nabla^2 (\rho^\alpha)\|_{H^1}\\
				&\leq \frac{\varepsilon_0}{32}\|\nabla^ 2(\rho^\alpha)\|_{H^1}^2+C\|\nabla^2 (\rho^\alpha)\|_{L^2}^4.
            \end{split}
        \end{align}

		For $I_7$, we employ $\eqref{0}_1$, integration by parts, and \eqref{10-3} to obtain
		\begin{align}\label{I_7'0}
			\begin{split}
				I_7&=\frac{c\alpha(\alpha-1)}{2}\int\rho^{\alpha-2}\rho_t |\Delta\rho|^2dx\\
				&= -\frac{c\alpha(\alpha-1)}{2}\sum_{i=1}^2\int\rho^{\alpha-2}\partial_i(\rho v_i)|\Delta\rho|^2dx+\frac{c^2\alpha(\alpha-1)}{2}\int\rho^{\alpha-2}\Delta(\rho^\alpha)|\Delta\rho|^2dx\\
                &= \frac{c\alpha(\alpha-1)}{2}\sum_{i=1}^2\int\rho v_i\partial_i(\rho^{\alpha-2}|\Delta\rho|^2)dx+\frac{c^2\alpha(\alpha-1)}{2}\int\rho^{\alpha-2}\Delta(\rho^\alpha)|\Delta\rho|^2dx\\
				&\leq C\int \rho|v||\Delta\rho||\nabla^3 \rho|dx+C\int \rho|v||\nabla\rho||\Delta\rho|^2dx+C\int |\Delta(\rho^\alpha)||\Delta\rho|^2dx.
			\end{split}
		\end{align}
		Using \eqref{10-3}, we obtain the following fact:
		\begin{align}\label{10-10}
			\begin{split}
				|\nabla^3\rho|&\leq C\sum_{i,j,k}|\partial_{ijk}\rho|\leq C\sum_{i,j,k} |\partial_{ij}\big(\rho^{1-\alpha}\partial_k(\rho^\alpha)\big)|\\
				&\leq C\sum_{i,j,k}|\partial_{ij}\big(\rho^{1-\alpha}\big)||\partial_k(\rho^\alpha)|+C\sum_{i,j,k}|\partial_i(\rho^{1-\alpha})||\partial_{jk}(\rho^\alpha)|+C\sum_{i,j,k}\rho^{1-\alpha}|\partial_{ijk}(\rho^\alpha)|\\
				&\leq C\sum_{i,j,k}\rho^{1-2\alpha}|\partial_{ij}\big(\rho^{\alpha}\big)||\partial_k(\rho^\alpha)|+C\sum_{i,j,k}\rho^{1-3\alpha}|\partial_{i}\big(\rho^{\alpha}\big)||\partial_j(\rho^\alpha)||\partial_k(\rho^\alpha)|\\
				&\quad+C\sum_{i,j,k}\rho^{1-\alpha}|\partial_{ijk}(\rho^\alpha)|\\
				&\leq C\big(|\nabla^3(\rho^\alpha)|+|\nabla^2(\rho^\alpha)||\nabla(\rho^\alpha)|+|\nabla(\rho^\alpha)|^3 \big),
			\end{split}
		\end{align}
         which, together with Young's inequality, implies that
        \begin{align}\label{41-1}
            \begin{split}
            &\quad C\int \rho|v||\Delta\rho||\nabla^3\rho|dx\\
            &\le C\int \rho|v||\Delta\rho|\big(|\nabla^3(\rho^\alpha)|+|\nabla^2(\rho^\alpha)||\nabla(\rho^\alpha)|+|\nabla(\rho^\alpha)|^3 \big)dx\\
            &\le \frac{\varepsilon_0}{32}\|\nabla^3(\rho^\alpha)\|_{L^2}^2+C\int |v|^2|\nabla^2\rho|^2dx+C\int |\nabla^2(\rho^\alpha)|^2|\nabla(\rho^\alpha)|^2dx+C\int|\nabla(\rho^\alpha)|^6dx.
            \end{split}
        \end{align}
        Applying Young's inequality and \eqref{10-3} again yields
        \begin{align}\label{41-2}
            \begin{split}
                C\int \rho|v||\nabla\rho||\Delta\rho|^2dx&\le C\int |v|^2|\Delta\rho|^2dx+C\int |\nabla\rho|^2|\nabla^2\rho|^2dx\\
                &\le C\int |v|^2|\Delta\rho|^2dx+ C\int|\nabla(\rho^\alpha)|^2(|\nabla^2(\rho^\alpha)|^2+|\nabla(\rho^\alpha)|^4)dx\\
                &\le C\int |v|^2|\Delta\rho|^2dx+ C\int|\nabla(\rho^\alpha)|^2|\nabla^2(\rho^\alpha)|^2dx+C\int|\nabla(\rho^\alpha)|^6dx,\\
                C\int |\Delta(\rho^\alpha)||\Delta\rho|^2dx&\le C\int |\Delta(\rho^\alpha)|(|\nabla^2(\rho^\alpha)|+|\nabla(\rho^\alpha)|^2)^2dx\\
                &\le C\int |\nabla^2(\rho^\alpha)|^3 dx+C\int |\nabla^2(\rho^\alpha)||\nabla(\rho^\alpha)|^4dx\\
                &\le C\int |\nabla^2(\rho^\alpha)|^3 dx+ C\int|\nabla(\rho^\alpha)|^2|\nabla^2(\rho^\alpha)|^2dx+C\int|\nabla(\rho^\alpha)|^6dx.
            \end{split}
        \end{align}
		Substituting \eqref{41-1} and \eqref{41-2} into \eqref{I_7'0}, and using \eqref{I_2} and \eqref{I_5+I_6}, we obtain that
		\begin{align}\label{I_7}
			\begin{split}
				I_7&\leq \frac{\varepsilon_0}{32}\|\nabla^3(\rho^\alpha)\|_{L^2}^2+C\int |v|^2|\nabla^2\rho|^2dx\\
                &\quad+C\int \big(|\nabla(\rho^\alpha)|^2|\nabla^2(\rho^\alpha)|^2+|\nabla(\rho^\alpha)|^6+|\nabla^2(\rho^\alpha)|^3\big)dx\\
                &\leq \frac{\varepsilon_0}{8}\|\nabla(\rho^\alpha)\|_{H^2}^2+C\|\nabla^2 (\rho^\alpha)\|_{L^2}^4+C.
			\end{split}
		\end{align}
		
		For $I_8$, using integration by parts,  Young's inequality, and \eqref{10-3}, we have
		\begin{align}\label{I_8,}
			\begin{split}
				I_8&=\frac{\alpha}{2}\int\rho^{\alpha-1}\rho_t\mathbb{F}[v]:\nabla vdx\\
				&=\frac{c\alpha}{2}\sum_{i, j=1}^2\int \rho^{\alpha-1}\Delta(\rho^\alpha)(\mathbb{F}[v])_{ij}\partial_i v_jdx-\frac{\alpha}{2}\sum_{i=1}^2\int \rho^{\alpha-1}\partial_i(\rho v_i)\mathbb{F}[v]:\nabla vdx\\
				&=- \frac{c\alpha}{2}\sum_{i, j=1}^2\int \partial_i(\rho^{\alpha-1}\Delta(\rho^\alpha)(\mathbb{F}[v])_{ij})v_j dx+\frac{\alpha}{2}\sum_{i=1}^2\int\rho v_i\partial_i(\rho^{\alpha-1}\mathbb{F}[v]:\nabla v)dx\\
				&\leq C\int |\nabla\rho||\Delta(\rho^\alpha)||\nabla v||v|dx+C\int |\nabla\Delta(\rho^\alpha)||\nabla v||v|dx+C\int |\Delta(\rho^\alpha)||\nabla^2 v||v|dx\\
				&\quad+C\int |\nabla\rho||v||\nabla v|^2dx+C\int |v||\nabla v||\nabla^2 v|dx\\
				&\leq \frac{\varepsilon_0}{32}\|\nabla^3 (\rho^\alpha)\|_{L^2}^2+\frac{A_1}{32}\|\nabla^2 v\|_{L^2}^2+C\int |\nabla\rho||\Delta\rho||v||\nabla v|dx\\
				&\quad+C\int |v|^2|\nabla v|^2dx+C\int |v|^2|\Delta\rho|^2dx+C\int |v|^2|\nabla\rho|^4dx+C\int |\nabla\rho|^2|\nabla v|^2dx,
			\end{split}
		\end{align}
		where in the last inequality we have used
		\begin{align*}
			\int |\nabla\rho||\Delta(\rho^\alpha)||\nabla v||v|dx&\leq C\int (|\nabla\rho|^2+|\Delta\rho|)|\nabla\rho||\nabla v||v|dx\\
			&\leq C\int |v|^2|\nabla\rho|^4dx+C\int |\nabla\rho|^2|\nabla v|^2dx+C\int |\nabla\rho||\Delta\rho||v||\nabla v|dx,\\
			\int |\Delta(\rho^\alpha)|^2|v|^2dx&\leq C\int (|\nabla\rho|^4+|\Delta\rho|^2)|v|^2dx,\\
			\int |\nabla\rho||v||\nabla v|^2&\leq C\int |v|^2|\nabla v|^2dx+C\int |\nabla\rho|^2|\nabla v|^2dx. 
		\end{align*}
		We now turn to estimating the terms on the right-hand side of \eqref{I_8,}. Applying \eqref{I_1^1'} to the third term, \eqref{I_3} to the fourth term, \eqref{I_2} to the fifth terms, \eqref{42-2} to the sixth terms, and \eqref{I_1^1'} together with \eqref{I_1^2} to the seventh term, we obtain
		\begin{align}\label{I_8}
			\begin{split}
				I_8\leq \frac{7\varepsilon_0}{32}\|\nabla^3 (\rho^\alpha)\|_{L^2}^2+\frac{5A_1}{32}\|\nabla^2 v\|_{L^2}^2+C\|\nabla^2 \rho\|_{L^2}^2+C\|\nabla^2 (\rho^\alpha)\|_{L^2}^2+C\|\nabla v\|_{L^2}^2+C.
			\end{split}
		\end{align}
		
		Substituting $\eqref{I_1^1'}$ through $\eqref{I_1^2}$, $\eqref{I_2}$ through $\eqref{I_5+I_6}$, $\eqref{I_7}$ and \eqref{I_8}  into $\eqref{I_0}$ yields
		\begin{align*}
			\begin{split}
				&\quad\frac{c\alpha}{2}\frac{d}{dt}\int \rho^{\alpha-1}|\Delta\rho|^2dx+\frac{A_1}{\varepsilon_1}\frac{d}{dt}\int\rho^\alpha\mathbb{F}[v]:\nabla vdx\\
				&+\frac{1}{4}\int |\nabla\rho_t|^2dx+\frac{\varepsilon_0}{2}\int |\nabla\Delta(\rho^\alpha)|^2dx+\frac{A_1}{2\varepsilon_1}\int \rho |v_t|^2dx+\frac{3A_1}{4}\int|\Delta v|^2dx\\
				&\leq C\|\nabla^2 \rho\|_{L^2}^2+C\|\nabla^2 (\rho^\alpha)\|_{L^2}^4+C\|\nabla v\|_{L^2}^2+C\\
				&\leq C\|\nabla^2 \rho\|_{L^2}^4+C\|\nabla\rho\|_{L^4}^8+C\|\nabla v\|_{L^2}^2+C.
			\end{split}
		\end{align*}
		Applying the Gagliardo--Nirenberg inequality and \eqref{10-3}, we obtain
		\begin{align*}
			C\|\nabla\rho\|_{L^4}^8\leq C\|\nabla\rho\|_{L^2}^{4}\|\nabla\rho\|_{H^1}^4\leq C+C\|\nabla^2\rho\|_{L^2}^4.
		\end{align*}
		Hence, we obtain from \eqref{10-3} that
		\begin{align}\label{I_0'}
			\begin{split}
				&\quad\frac{c\alpha}{2}\frac{d}{dt}\int \rho^{\alpha-1}|\Delta\rho|^2dx+\frac{A_1}{\varepsilon_1}\frac{d}{dt}\int\rho^\alpha\mathbb{F}[v]:\nabla vdx\\
				&+\frac{1}{4}\int |\nabla\rho_t|^2dx+\frac{\varepsilon_0}{2}\int |\nabla\Delta(\rho^\alpha)|^2dx+\frac{A_1}{2\varepsilon_1}\int \rho |v_t|^2dx+\frac{3A_1}{4}\int|\Delta v|^2dx\\
				&\leq C\|\nabla^2 \rho\|_{L^2}^4+C\|\nabla v\|_{L^2}^2+C\\
				&\leq C\|\nabla^2 \rho\|_{L^2}^2\int \rho^{\alpha-1}|\Delta\rho|^2dx+C\|\nabla v\|_{L^2}^2+C.
			\end{split}
		\end{align}
		Owing to $c\in[\nu,2\nu)$, $\alpha\in(1/2,1)$, and $|\divg v|\leq \sqrt{2}|\nabla v|$, it follows that
		\begin{align*}
			\mathbb{F}[v]:\nabla v&=\nu|\nabla v|^2+(\nu-c)\nabla v:(\nabla v)^t+(\alpha-1)(2\nu-c)(\divg v)^2\\
			&\ge \nu|\nabla v|^2-(c-\nu)|\nabla v|^2-2(1-\alpha)(2\nu-c)|\nabla v|^2\\
			&\ge (2\alpha-1)(2\nu-c)|\nabla v|^2,
		\end{align*}
		which, together with Gr\"onwall's inequality, \eqref{10-3}, and \eqref{I_0'}, implies that
		\begin{align}\label{10-8}
			\begin{split}
				&\sup_{0\le t\le T}\big(\|\nabla^2\rho\|_{L^2}+\|\nabla v\|_{L^2}\big)\\
				&+\int_0^T\big(\|\nabla\rho_t\|_{L^2}^2+\|\nabla\Delta(\rho^\alpha)\|_{L^2}^2+\|v_t\|_{L^2}^2+\|\nabla^2 v\|_{L^2}^2\big)\leq C. 
			\end{split}
		\end{align}
		Furthermore, it follows from $\eqref{0}_1$, H\"older's inequality, and the Sobolev embedding that
		\begin{align*}
			\|\rho_t\|_{L^2}&\leq C\|\nabla(\rho v)\|_{L^2}+C\|\Delta(\rho^{\alpha})\|_{L^2}\\
			&\leq C\|\nabla\rho\|_{L^{\frac{2q}{q-2}}}\|v\|_{L^q}+C\|\rho\|_{L^\infty}\|\nabla v\|_{L^2}+C\|\nabla^2\rho\|_{L^2}+C\|\nabla\rho\|_{L^4}^2\\
			&\leq C\|\nabla\rho\|_{H^1}\|v\|_{L^q}+C\|\rho\|_{L^\infty}\|\nabla v\|_{L^2}+C\|\nabla^2\rho\|_{L^2}+C\|\nabla\rho\|_{H^1}^2,
		\end{align*}
		which, together with \eqref{10-3}, \eqref{10-7}, and \eqref{10-8}, yields
		\begin{align}\label{10-9}
			\sup_{0\le t\le T}\|\rho_t\|_{L^2}\leq C. 
		\end{align}
		Finally, applying H\"older's inequality, the Gagliardo--Nirenberg inequality, \eqref{10-10}, and \eqref{10-3}, we obtain
		\begin{align*}
			\begin{split}
				\|\nabla^3\rho\|_{L^2}&\leq C\|\nabla^3(\rho^\alpha)\|_{L^2}+C\||\nabla^2(\rho^\alpha)||\nabla(\rho^\alpha)|\|_{L^2}+C\||\nabla(\rho^\alpha)|^3\|_{L^2}\\
				&\leq C\|\nabla^3(\rho^\alpha)\|_{L^2}+C\||\nabla^2\rho||\nabla\rho|\|_{L^2}+C\||\nabla\rho|^3\|_{L^2}\\
				&\leq C\|\nabla^3(\rho^\alpha)\|_{L^2}+C\|\nabla^2\rho\|_{L^4}\|\nabla\rho\|_{L^4}+C\|\nabla\rho\|_{L^6}^3\\
				&\leq C\|\nabla^3(\rho^\alpha)\|_{L^2}+C\|\nabla^2\rho\|_{L^2}^{\frac{1}{2}}\|\nabla^3\rho\|_{L^2}^{\frac{1}{2}}\|\nabla\rho\|_{H^1}+C\|\nabla\rho\|_{H^1}^3\\
				&\leq \frac{1}{2}\|\nabla^3\rho\|_{L^2}+C\|\nabla^3(\rho^\alpha)\|_{L^2}+C,
			\end{split}
		\end{align*}
		where in the last inequality we have used Young's inequality together with \eqref{10-3} and \eqref{10-8}. Consequently, together with \eqref{10-8}, this implies that
		\begin{align}\label{10-11}
			\int_0^T\|\nabla^3\rho\|_{L^2}^2dt\leq C.
		\end{align}
		Combining \eqref{10-8}--\eqref{10-11} yields \eqref{hhh0} in the two-dimensional case.

		\textbf{Case II: $N=3$.}
		
		In this case, for notational simplicity, we set $q=\check{q}_3+2$. Then it follows from the definition of $\check{q}_3$ that $q>3$, and from Proposition \ref{Prop 7.1} that
		\begin{align}\label{11-1}
			\sup_{0\le t\le T}\|v\|_{L^q}\leq C. 
		\end{align}
		
		For the term $I_1$, integration by parts gives
		\begin{align*}
			\begin{split}
				I_1&=C\sum_{i, j=1}^3\int |\nabla\rho|^2(\partial_i v_j)^2dx\\
				&\leq C\int |v||\nabla v||\nabla\rho||\nabla^2\rho|dx+C\int |v||\nabla^2 v||\nabla\rho|^2dx\\
				&=:I_1^1+I_1^2.
			\end{split}
		\end{align*}
		For $I_1^1$, employing H\"older's inequality, Young's inequality, the Sobolev embedding, \eqref{10-3}, and \eqref{11-1}, we deduce that
		\begin{align*}
			\begin{split}
				I_1^1&\leq C\|v\|_{L^{q}}\|\nabla v\|_{L^{\frac{6q}{2q-3}}}\||\nabla\rho||\nabla^2\rho|\|_{L^{\frac{6q}{4q-3}}}\\
				&\leq C\|v\|_{L^{q}}\|\nabla v\|_{L^{\frac{6q}{2q-3}}}\||\nabla\rho||\nabla^2\rho|\|_{L^2}^{\frac{2q+3}{3q}}\||\nabla\rho||\nabla^2\rho|\|_{L^1}^{\frac{q-3}{3q}}\\
                &\leq C\|v\|_{L^{q}}\|\nabla v\|_{H^1}\||\nabla\rho||\nabla^2\rho|\|_{L^2}^{\frac{2q+3}{3q}}\||\nabla\rho||\nabla^2\rho|\|_{L^1}^{\frac{q-3}{3q}}\\
				&\leq \frac{A_1}{32}\|\nabla v\|_{H^1}^2+\delta\int |\nabla\rho|^2|\nabla^2\rho|^2dx+C_\delta\left(\int |\nabla\rho||\nabla^2\rho|dx\right)^2\\
				&\leq \frac{A_1}{32}\|\nabla v\|_{H^1}^2+\delta\int |\nabla\rho|^2|\nabla^2\rho|^2dx+C_\delta\int |\nabla^2\rho|^2dx,
			\end{split}
		\end{align*}
		where $\delta>0$ is a small constant to be determined subsequently. Proceeding exactly as in the proof of \eqref{hhh7}--\eqref{14-3} in the two-dimensional case, we obtain a constant $A_3>0$, depending on the quantities stated in Proposition \ref{Prop7.2}, such that
		\begin{align*}
			\delta\int |\nabla\rho|^2|\nabla^2\rho|^2dx\leq \delta A_3\int |\nabla^3(\rho^\alpha)|^2dx.
		\end{align*}
		Hence, choosing $\delta>0$ sufficiently small so that $\delta A_3\leq \frac{\varepsilon_0}{32}$ yields
		\begin{align}\label{I_11'}
        \begin{split}
			I_1^1&=C\int |v||\nabla v||\nabla\rho||\nabla^2\rho|dx\\
            &\leq\frac{A_1}{32}\|\nabla v\|_{H^1}^2+\frac{\varepsilon_0}{32}\|\nabla^3(\rho^\alpha)\|_{L^2}^2+C\int |\nabla^2\rho|^2dx.
        \end{split}
		\end{align}
		For $I_1^2$, H\"older's inequality, the interpolation inequality, and \eqref{10-3} yield
		\begin{align}\label{I_1^2'0}
			\begin{split}
				I_1^2&\leq C\|v\|_{L^q}\|\nabla^2 v\|_{L^2}\|\nabla(\rho^\alpha)\|_{L^{12}}^{\frac{6q+12}{5q}}\|\nabla(\rho^\alpha)\|_{L^2}^{\frac{4q-12}{5q}}.
			\end{split}
		\end{align}
		Applying integration by parts, H\"older's inequality, and \eqref{10-3}, we obtain
		\begin{align*}
			\begin{split}
				\int|\nabla(\rho^\alpha)|^{12}dx&=C\sum_{i=1}^3\int (\partial_i (\rho^\alpha))^2|\nabla(\rho^\alpha)|^{10}dx\\
				&\leq C\int \rho^\alpha|\nabla(\rho^\alpha)|^{10}|\nabla^2 (\rho^\alpha)|dx \\
				&\leq C\int |\nabla(\rho^\alpha)|^{10}|\nabla^2(\rho^\alpha)|dx\\
				&\leq C\|\nabla(\rho^\alpha)\|_{L^{12}}^{10}\|\nabla^2(\rho^\alpha)\|_{L^6},\\
			\end{split}
		\end{align*}
		which, together with the Sobolev embedding, implies that
		\begin{align}\label{14-10}
			\|\nabla(\rho^{\alpha})\|_{L^{12}}\leq C\|\nabla^2(\rho^\alpha)\|_{L^6}^{\frac{1}{2}}\leq C\|\nabla^2 (\rho^\alpha)\|_{H^1}^{\frac{1}{2}}.
		\end{align}
		Substituting \eqref{14-10} into \eqref{I_1^2'0} and applying \eqref{10-3}, \eqref{11-1}, and Young's inequality, we get
		\begin{align}\label{I_1^2'}
			\begin{split}
				I_1^2&=C\int |v||\nabla^2 v||\nabla\rho|^2dx\\
                &\leq C\|v\|_{L^q}\|\nabla^2 v\|_{L^2}\|\nabla^2(\rho^\alpha)\|_{H^1}^{\frac{3q+6}{5q}}\|\nabla(\rho^\alpha)\|_{L^2}^{\frac{4q-12}{5q}}\\
				&\leq  \frac{A_1}{32}\|\nabla^2 v\|_{L^2}^2+\frac{\varepsilon_0}{32}\|\nabla^2 (\rho^\alpha)\|_{H^1}^2+C\|\nabla(\rho^\alpha)\|_{L^2}^4\\
				&\leq \frac{A_1}{32}\|\nabla^2 v\|_{L^2}^2+\frac{\varepsilon_0}{32}\|\nabla^2 (\rho^\alpha)\|_{H^1}^2+C.
			\end{split}
		\end{align}

        For $I_2$, H\"older's inequality and \eqref{10-3} yield
		\begin{align}\label{I_222'}
			\begin{split}
				I_2&=C\int |v|^2|\nabla^2\rho|^2dx\\
                &\leq C\|v\|_{L^q}^2\|\nabla^2 \rho\|_{L^{\frac{2q}{q-2}}}^2\\
				&\leq C\|v\|_{L^q}^2\|\nabla(\rho^{1-\alpha}\nabla(\rho^\alpha))\|_{L^{\frac{2q}{q-2}}}^2\\
				&\leq C\|v\|_{L^q}^2\|\nabla^2 (\rho^\alpha)\|_{L^{\frac{2q}{q-2}}}^2+C\|v\|_{L^q}^2\|\nabla(\rho^\alpha)\|_{L^{\frac{4q}{q-2}}}^4.
			\end{split}
		\end{align}
        Applying the Gagliardo--Nirenberg inequality, the Sobolev embedding, Young's inequality, \eqref{11-1}, and \eqref{10-3} yields
        \begin{align}\label{42-1'}
        \begin{split}
				&\quad C\|v\|_{L^q}^2\|\nabla^2 (\rho^\alpha)\|_{L^{\frac{2q}{q-2}}}^2\\
				&\leq C  \|\nabla^2 (\rho^\alpha)\|_{L^2}^{\frac{2q-6}{q}}\|\nabla^2 (\rho^\alpha)\|_{H^1}^{\frac{6}{q}}\\
				&\leq \frac{\varepsilon_0}{32}\|\nabla^2 (\rho^\alpha)\|_{H^1}^2+C\|\nabla^2 (\rho^\alpha)\|_{L^2}^2\\
                &\leq \frac{\varepsilon_0}{32}\|\nabla^2 (\rho^\alpha)\|_{H^1}^2+C\|\nabla^2 \rho\|_{L^2}^2+C\|\nabla\rho\|_{L^4}^4,
            \end{split}
        \end{align}
and, additionally applying \eqref{14-10}, we obtain
\begin{align}\label{42-2'}
        \begin{split}
				&\quad C\|v\|_{L^q}^2\|\nabla(\rho^\alpha)\|_{L^{\frac{4q}{q-2}}}^4\\
				&\leq C  \|\nabla(\rho^\alpha)\|_{L^2}^{\frac{8q-24}{5q}}\|\nabla(\rho^\alpha)\|_{L^{12}}^{\frac{12q+24}{5q}}\\
				&\leq C  \|\nabla(\rho^\alpha)\|_{L^2}^{\frac{8q-24}{5q}}\|\nabla^2(\rho^\alpha)\|_{H^1}^{\frac{6q+12}{5q}}\\
				&\leq \frac{\varepsilon_0}{32}\|\nabla^2 (\rho^\alpha)\|_{H^1}^2+C\|\nabla(\rho^\alpha)\|_{L^2}^4\\
                &\le \frac{\varepsilon_0}{32}\|\nabla^2 (\rho^\alpha)\|_{H^1}^2+C\|\nabla\rho\|_{L^2}^4.
            \end{split}
        \end{align}
        Substituting \eqref{42-1'} and \eqref{42-2'} into \eqref{I_222'}, we obtain
        \begin{align}\label{I_2'}
            I_2&\leq \frac{\varepsilon_0}{16}\|\nabla^2(\rho^\alpha)\|_{H^1}^2+C\|\nabla^2 \rho\|_{L^2}^2+C\|\nabla\rho\|_{L^4}^4.
        \end{align}

		For $I_3$, we apply H\"older's inequality, the Gagliardo--Nirenberg inequality, Young's inequality, and \eqref{11-1} to obtain
		\begin{align}\label{I_3'}
			\begin{split}
				I_3&= C\int |v|^2|\nabla v|^2dx\\
                &\leq C\|v\|_{L^q}^2\|\nabla v\|_{L^{\frac{2q}{q-2}}}^2\\
				&\leq C\|v\|_{L^q}^2\|\nabla v\|_{L^2}^{\frac{2q-6}{q}}\|\nabla v\|_{H^1}^{\frac{6}{q}}\\
				&\leq \frac{A_1}{32}\|\nabla^2 v\|_{L^2}^2+C\|\nabla v\|_{L^2}^2.
			\end{split}
		\end{align}
		
		For $I_4$, it follows directly from \eqref{10-3} that
		\begin{align}\label{I_4'}
			\begin{split}
				I_4=C\int |\nabla\rho|^2dx&\leq C. 
			\end{split}
		\end{align}
		
		For $I_5$ and $I_6$, employing integration by parts, \eqref{10-3}, and Young's inequality, we deduce that
		\begin{align}\label{11-2}
			\begin{split}
				\int |\nabla\rho|^6dx &\leq \sum_{i=1}^3\int |\nabla\rho|^4(\partial_i\rho)^2dx\\
				&\leq C\int \rho |\nabla\rho|^4|\nabla^2\rho|dx\\
				&\leq \frac{1}{2}\int |\nabla\rho|^6dx+C\int |\nabla\rho|^2|\nabla^2\rho|^2dx,
			\end{split}
		\end{align}
		which in turn gives
		\begin{align}\label{I_5+I_6'}
			\begin{split}
				I_5+I_6\leq C\int|\nabla\rho|^2|\nabla^2 \rho|^2dx.
			\end{split}
		\end{align}
		
		For $I_7$, we invoke $\eqref{0}_1$, integration by parts, \eqref{10-3}, and Young's inequality to deduce
		\begin{align}\label{I_7'00}
			\begin{split}
				I_7
				&= -\frac{c\alpha(\alpha-1)}{2}\sum_{i=1}^3\int\rho^{\alpha-2}\partial_i(\rho v_i)|\Delta\rho|^2dx+\frac{c^2\alpha(\alpha-1)}{2}\sum_{i=1}^3\int\rho^{\alpha-2}\Delta(\rho^\alpha)\Delta\rho \partial_{ii}\rho dx\\
                &= \frac{c\alpha(\alpha-1)}{2}\sum_{i=1}^3\int\rho v_i\partial_i(\rho^{\alpha-2}|\Delta\rho|^2)dx-\frac{c^2\alpha(\alpha-1)}{2}\sum_{i=1}^3\int\partial_i(\rho^{\alpha-2}\Delta(\rho^\alpha)\Delta\rho)\partial_{i}\rho dx\\
				&\leq C\int \rho|v||\Delta\rho||\nabla^3 \rho|dx+C\int \rho|v||\nabla\rho||\Delta\rho|^2dx\\
                &\quad+C\int|\nabla\rho|^2|\Delta(\rho^\alpha)||\nabla^2\rho|dx+C\int  |\nabla\Delta(\rho^\alpha)||\nabla^2\rho||\nabla\rho|dx+C\int |\Delta(\rho^\alpha)||\nabla^3\rho||\nabla\rho|dx.
			\end{split}
		\end{align}
        Using \eqref{10-10} (which also holds in the three-dimensional case) and Young's inequality, we have
        \begin{align}\label{40-2}
        \begin{split}
        &\quad C\int \rho|v||\Delta\rho||\nabla^3 \rho|dx+C\int |\Delta(\rho^\alpha)||\nabla^3\rho||\nabla\rho|dx\\
        &\le C\int (|v||\Delta\rho|+|\Delta\rho||\nabla\rho|+|\nabla\rho|^3)(|\nabla^3(\rho^\alpha)|+|\nabla^2(\rho^\alpha)||\nabla(\rho^\alpha)|+|\nabla(\rho^\alpha)|^3 )dx\\
        &\le \frac{\varepsilon_0}{64}\|\nabla^3(\rho^\alpha)\|_{L^2}^2+C\int(|v|^2|\Delta\rho|^2+|\nabla^2\rho|^2|\nabla\rho|^2+|\nabla\rho|^6+|\nabla^2(\rho^\alpha)|^2|\nabla(\rho^\alpha)|^2+|\nabla(\rho^\alpha)|^6) dx\\
        &\le \frac{\varepsilon_0}{64}\|\nabla^3(\rho^\alpha)\|_{L^2}^2+C\int(|v|^2|\Delta\rho|^2+|\nabla\rho|^2|\nabla^2\rho|^2+|\nabla\rho|^6)dx, 
        \end{split}
        \end{align}
        where the last inequality follows from
        \begin{align*}
        |\nabla^2(\rho^\alpha)|\le C|\nabla^2\rho|+C|\nabla\rho|^2,\quad |\nabla(\rho^\alpha)|\le C|\nabla\rho|. 
        \end{align*}
        Applying Young's inequality again yields
        \begin{align}\label{40-3}
            \begin{split}
                C\int \rho|v||\nabla\rho||\Delta\rho|^2dx&\le C\int|v|^2|\Delta\rho|^2+C\int|\nabla\rho|^2|\nabla^2\rho|^2dx,\\
                C\int|\nabla\rho|^2|\Delta(\rho^\alpha)||\nabla^2\rho| dx&\le C\int |\nabla\rho|^2(|\nabla^2\rho|+|\nabla\rho|^2)|\nabla^2\rho|dx\\
                &\le C\int |\nabla\rho|^2|\nabla^2\rho|^2dx+C\int |\nabla\rho|^4|\nabla^2\rho|dx\\
                &\le C\int |\nabla\rho|^2|\nabla^2\rho|^2dx+C\int |\nabla\rho|^6dx,\\
                C\int |\nabla\Delta(\rho^\alpha)||\nabla^2\rho||\nabla\rho|dx&\le \frac{\varepsilon_0}{64}\|\nabla^3(\rho^\alpha)\|_{L^2}^2+C\int |\nabla\rho|^2|\nabla^2\rho|^2dx.
            \end{split}
        \end{align}
       Substituting \eqref{40-2} and \eqref{40-3} into \eqref{I_7'00} and using \eqref{11-2} and \eqref{I_2'}, we have
		\begin{align}\label{I_7'}
			\begin{split}
				I_7&\leq \frac{\varepsilon_0}{32}\|\nabla^3(\rho^\alpha)\|_{L^2}^2+C\int |v|^2|\Delta\rho|^2dx+ C\int \big(|\nabla\rho|^2|\nabla^2\rho|^2+|\nabla\rho|^6\big)dx\\
				&\leq \frac{\varepsilon_0}{32}\|\nabla^3(\rho^\alpha)\|_{L^2}^2+C\int |v|^2|\Delta\rho|^2dx+ C\int |\nabla\rho|^2|\nabla^2\rho|^2dx\\
                &\leq  \frac{3\varepsilon_0}{32}\|\nabla^2(\rho^\alpha)\|_{H^1}^2+ C\int |\nabla\rho|^2|\nabla^2\rho|^2dx+C\|\nabla^2 \rho\|_{L^2}^2+C\|\nabla\rho\|_{L^4}^4.
			\end{split}
		\end{align}
		
		For $I_8$, in exactly the same manner as in the proof of $\eqref{I_8,}$ in the 2D case, we arrive at
		\begin{align}\label{I_8,,}
			\begin{split}
				I_8&\leq \frac{\varepsilon_0}{32}\|\nabla^3 (\rho^\alpha)\|_{L^2}^2+\frac{A_1}{32}\|\nabla^2 v\|_{L^2}^2+C\int |\nabla\rho||\Delta\rho||v||\nabla v|dx\\
				&\quad+C\int |v|^2|\nabla v|^2dx+C\int |v|^2|\Delta\rho|^2dx+C\int |v|^2|\nabla\rho|^4dx+C\int |\nabla\rho|^2|\nabla v|^2dx.
			\end{split}
		\end{align}
		We proceed to estimate the terms on the right-hand side of the preceding inequality. Employing \eqref{I_11'} for the third term, \eqref{I_3'} for the fourth, \eqref{I_2'} for the fifth, \eqref{42-2'} for the sixth, and \eqref{I_11'} in combination with \eqref{I_1^2'} for the seventh, we deduce that
		\begin{align}\label{I_8'}
			\begin{split}
				I_8\leq \frac{7\varepsilon_0}{32}\|\nabla^3 (\rho^\alpha)\|_{L^2}^2+\frac{5A_1}{32}\|\nabla^2 v\|_{L^2}^2+C\|\nabla^2 \rho\|_{L^2}^2+C\|\nabla \rho\|_{L^4}^4+C\|\nabla v\|_{L^2}^2+C.
			\end{split}
		\end{align}
		
		Substituting the estimates for $I_1-I_8$, namely \eqref{I_11'}, \eqref{I_1^2'}, \eqref{I_2'}, \eqref{I_3'}, \eqref{I_4'}, \eqref{I_5+I_6'}, \eqref{I_7'}, and \eqref{I_8'}, into \eqref{I_0}, and applying \eqref{10-3}, we obtain
		\begin{align}\label{I_0''}
			\begin{split}
				&\quad\frac{c\alpha}{2}\frac{d}{dt}\int \rho^{\alpha-1}|\Delta\rho|^2dx+\frac{A_1}{\varepsilon_1}\frac{d}{dt}\int\rho^\alpha\mathbb{F}[v]:\nabla vdx\\\
				&+\frac{1}{4}\int |\nabla\rho_t|^2dx+\frac{9\varepsilon_0}{16}\int |\nabla\Delta(\rho^\alpha)|^2dx+\frac{A_1}{2\varepsilon_1}\int \rho |v_t|^2dx+\frac{3A_1}{4}\int|\Delta v|^2dx\\
				&\leq C\int|\nabla\rho|^2|\nabla^2 \rho|^2dx+C\|\nabla^2 \rho\|_{L^2}^2+C\|\nabla \rho\|_{L^4}^4+C\|\nabla v\|_{L^2}^2+C\\
				&\leq A_4\int\rho^{\alpha-1}|\nabla^2\rho|^2|\nabla \rho|^2dx+C\|\nabla^2 \rho\|_{L^2}^2+C\|\nabla \rho\|_{L^4}^4+C\|\nabla v\|_{L^2}^2+C,
			\end{split}
		\end{align}
		where we write $A_4$ for the constant of the first term on the right-hand side of the above inequality, which depends on the quantities stated in Proposition \ref{Prop7.2}. Multiplying \eqref{ggg0} by $4A_4/\delta_\alpha$,  adding the result to \eqref{I_0''}, and using \eqref{10-3} yields
		\begin{align}\label{I_0'''}
			\begin{split}
				&\quad\frac{c\alpha}{2}\frac{d}{dt}\int \rho^{\alpha-1}|\Delta\rho|^2dx+\frac{A_1}{\varepsilon_1}\frac{d}{dt}\int\rho^\alpha\mathbb{F}[v]:\nabla vdx+\frac{A_4}{\delta_{\alpha}c}\frac{d}{dt}\int |\nabla\rho|^4 dx \\\
				&+\frac{1}{4}\int |\nabla\rho_t|^2dx+\frac{9\varepsilon_0}{16}\int |\nabla\Delta(\rho^\alpha)|^2dx+\frac{A_1}{2\varepsilon_1}\int \rho |v_t|^2dx+\frac{3A_1}{4}\int|\Delta v|^2dx\\
				&\quad+A_4\int\rho^{\alpha-1}|\nabla^2\rho|^2|\nabla \rho|^2dx\\
				&\leq C\int |v|^2|\nabla\rho|^4dx+C\int |\nabla\rho|^2|\nabla v|^2dx+C\|\nabla^2 \rho\|_{L^2}^2+C\|\nabla \rho\|_{L^4}^4+C\|\nabla v\|_{L^2}^2+C.
			\end{split}
		\end{align}
		We employ \eqref{42-2'} to handle the first term on the right-hand side of the above inequality, and \eqref{I_11'} in combination with \eqref{I_1^2'} to handle the second, and obtain
		\begin{align}\label{I_0''''}
			\begin{split}
				&\quad\frac{c\alpha}{2}\frac{d}{dt}\int \rho^{\alpha-1}|\Delta\rho|^2dx+\frac{A_1}{\varepsilon_1}\frac{d}{dt}\int\rho^\alpha\mathbb{F}[v]:\nabla vdx+\frac{A_4}{\delta_{\alpha}c}\frac{d}{dt}\int |\nabla\rho|^4 dx \\\
				&+\frac{1}{4}\int |\nabla\rho_t|^2dx+\frac{15\varepsilon_0}{32}\int |\nabla\Delta(\rho^\alpha)|^2dx+\frac{A_1}{2\varepsilon_1}\int \rho |v_t|^2dx+\frac{11A_1}{16}\int|\Delta v|^2dx\\
				&\quad+A_4\int\rho^{\alpha-1}|\nabla^2\rho|^2|\nabla \rho|^2dx\\
				&\leq C\|\nabla^2 \rho\|_{L^2}^2+C\|\nabla \rho\|_{L^4}^4+C\|\nabla v\|_{L^2}^2+C.
			\end{split}
		\end{align}
		Owing to $c\in[\nu,2\nu)$, $\alpha\in(2/3,1)$, and $|\divg v|\leq \sqrt{3}|\nabla v|$, it follows that
		\begin{align*}
			\begin{split}
				\mathbb{F}[v]:\nabla v&=\nu|\nabla v|^2+(\nu-c)\nabla v:(\nabla v)^t+(\alpha-1)(2\nu-c)(\divg v)^2\\
				&\ge \nu |\nabla v|^2-(c-\nu)|\nabla v|^2-3(1-\alpha)(2\nu-c)|\nabla v|^2\\
				&\ge (3\alpha-2)(2\nu-c)|\nabla v|^2.
			\end{split}
		\end{align*}
		Therefore, integrating \eqref{I_0''''} in time and using \eqref{10-3}, we obtain
		\begin{align}\label{11-3}
			\begin{split}
				&\sup_{0\le t\le T}\big(\|\nabla^2\rho\|_{L^2}+\|\nabla v\|_{L^2}+\|\nabla\rho\|_{L^4}\big)\\
				&+\int_0^T\big(\|\nabla\rho_t\|_{L^2}^2+\|\nabla\Delta(\rho^\alpha)\|_{L^2}^2+\|v_t\|_{L^2}^2+\|\nabla^2 v\|_{L^2}^2+\||\nabla\rho||\nabla^2 \rho|\|_{L^2}^2\big)dt\leq C.
			\end{split}
		\end{align}
		Similar to the derivation of \eqref{10-9} and \eqref{10-11} in the 2D case, we have
		\begin{align}\label{11-4}
			\sup_{0\le t\leq T}\|\rho_t\|_{L^2}+\int_0^T\|\nabla^3\rho\|_{L^2}^2dt\leq C. 
		\end{align}
		Combining \eqref{11-3} and \eqref{11-4} yields \eqref{hhh0} in the three-dimensional case.
		
		This completes the proof of Proposition \ref{Prop7.2}.
	\end{proof}

    \subsection{Second-level higher-order estimates}
	We are now in a position to establish estimates for the second-order derivatives of the effective velocity.
	\begin{prop}\label{Prop 7.3}
		Under the assumptions of Theorem \ref{Thm 1.1}, 
		there exists a constant $C > 0$, depending on 
		$\gamma,\alpha,\nu,\varepsilon,E_0,T^*,\beta,
		N,\hat{q}_N,\|\rho_0^{1/(\hat{q}_N+2)} v_0\| 
		_{L^{\hat{q}_N+2}},M_0^{-1},\|\rho_0\|_{L^\infty},
		\check{q}_N,\|\rho_0^{1/(\check{q}_N+2)} v_0\|
		_{L^{\check{q}_N+2}}$, $\|\tau_0\|_{L^\infty},
		\|\rho_0\|_{H^2}$, and $\|v_0\|_{H^2}$ such that
		\begin{align}\label{17-0}
			\sup_{0\leq t\leq T}\Big(\|v_t\|_{L^2}+\|\nabla^2 v\|_{L^2}\Big)+\int_0^T \Big(\|\nabla v_t\|_{L^2}^2+\|\nabla^3 v\|_{L^2}^2\Big)dt\leq C.
		\end{align}
	\end{prop}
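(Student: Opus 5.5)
The plan is to differentiate the momentum equation $\eqref{0}_2$ in time and test the result with $v_t$. This is the standard second-level estimate. All lower-order information is taken from Proposition \ref{Prop 7.1} and Proposition \ref{Prop7.2}: two-sided density bounds, $\sup_t(\|\nabla^2\rho\|_{L^2}+\|\nabla v\|_{L^2}+\|\rho_t\|_{L^2})$, and $L^2_t$ control of $\nabla^3\rho$, $\nabla\rho_t$, $v_t$, $\nabla^2 v$. Applying $\partial_t$ to $\rho v_t+\rho u\cdot\nabla v+\nabla P=\divg(\rho^\alpha\mathbb{F}[v])$ and multiplying by $v_t$ gives
\begin{align*}
\frac12\frac{d}{dt}\int\rho|v_t|^2dx+\int\rho^\alpha\mathbb{F}[v_t]:\nabla v_t\,dx
=&-\frac12\int\rho_t|v_t|^2dx-\int(\rho u)_t\cdot\nabla v\cdot v_tdx-\int\rho u\cdot\nabla v_t\cdot v_tdx\\
&+\int P_t\divg v_t\,dx-\int(\rho^\alpha)_t\mathbb{F}[v]:\nabla v_tdx .
\end{align*}
Coercivity comes from $\mathbb{F}[w]:\nabla w\ge (N\alpha-N+1)(2\nu-c)|\nabla w|^2$, as shown at the end of the proof of Proposition \ref{Prop7.2}. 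Together with the lower density bound, the left side controls $\|\nabla v_t\|_{L^2}^2$. To avoid $\rho_t|v_t|^2$ with a bad sign, I would keep $\rho_t=-\divg(\rho v)+c\Delta(\rho^\alpha)$ and integrate by parts. In any case $\rho_t$ appears only in $L^2_tH^1\cap L^\infty_tL^2$, which suffices.

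Each right-hand term is handled with H\"older, Gagliardo--Nirenberg and Young, working with $u=v-c\alpha\rho^{\alpha-2}\nabla\rho$. One has $\|u\|_{L^6}\le C(\|v\|_{H^1}+\|\nabla\rho\|_{H^1})\le C$, and $u_t=v_t-c\alpha(\rho^{\alpha-2}\nabla\rho)_t$ is bounded by $|v_t|+C|\nabla\rho_t|+C|\rho_t||\nabla\rho|$.

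The typical term is $\int\rho|u||\nabla v_t||v_t|\le \delta\|\nabla v_t\|_{L^2}^2+C\|u\|_{L^6}^2\|v_t\|_{L^3}^2$. Interpolating $\|v_t\|_{L^3}$ between $L^2$ and $H^1$, this reduces to $\delta\|\nabla v_t\|_{L^2}^2+C\|v_t\|_{L^2}^2$.

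The term $\int\rho u_t\cdot\nabla v\cdot v_t$ is bounded by $\|u_t\|_{L^2}\|\nabla v\|_{L^3}\|v_t\|_{L^6}$. Since $\|\nabla v\|_{L^3}\le C\|\nabla v\|_{H^1}^{1/2}$, this gives $\delta\|\nabla v_t\|^2_{L^2}+C(\|\nabla\rho_t\|_{L^2}^2+\|\nabla^2v\|_{L^2}^2+1)(\|v_t\|_{L^2}^2+1)$.

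The term $\int(\rho^\alpha)_t\mathbb{F}[v]:\nabla v_t$ is bounded by $\|\rho_t\|_{L^3}\|\nabla v\|_{L^6}\|\nabla v_t\|_{L^2}$. Since $\|\rho_t\|_{L^3}\le C\|\rho_t\|_{H^1}$, this gives $\delta\|\nabla v_t\|^2+C\|\nabla\rho_t\|_{H^0}^2\|\nabla^2 v\|_{L^2}^2+C\|\nabla^2 v\|_{L^2}^2$.

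The main obstacle is this last product: $\|\nabla\rho_t\|^2\|\nabla^2 v\|^2$ is not of Gr\"onwall form unless $\|\nabla^2v\|_{L^2}$ is tied to $\|v_t\|_{L^2}$. I would resolve this through the elliptic system \eqref{ell sys} and Lemma \ref{Lema 7.2}:
\[
\|\nabla^2 v\|_{L^2}\le C\big(\|v_t\|_{L^2}+\|u\|_{L^6}\|\nabla v\|_{L^3}+\|\nabla\rho\|_{L^2}+\|\nabla\rho\|_{L^6}\|\nabla v\|_{L^3}\big)\le C\|v_t\|_{L^2}+\tfrac12\|\nabla^2 v\|_{L^2}+C,
\]
so $\|\nabla^2 v\|_{L^2}\le C(\|v_t\|_{L^2}+1)$. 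All bad terms are then bounded by $a(t)(\|\sqrt\rho v_t\|_{L^2}^2+1)$ with $a\in L^1(0,T)$, where $a$ is built from $\|\nabla\rho_t\|_{L^2}^2$ and $\|\nabla^2 v\|_{L^2}^2$ via \eqref{hhh0}.

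Gr\"onwall then requires $\|\sqrt\rho v_t(0)\|_{L^2}$. This I would read off from the equation at $t=0$, bounded by $\|v_0\|_{H^2}$ and $\|\rho_0\|_{H^2}$ (with $\rho_0\in H^3$). This yields $\sup_t\|v_t\|_{L^2}+\int_0^T\|\nabla v_t\|^2_{L^2}\le C$, and hence $\sup_t\|\nabla^2 v\|_{L^2}\le C$.

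Finally, $\nabla^3 v$ follows from \eqref{10-5} applied to \eqref{ell sys}: $\|\nabla^3v\|_{L^2}\le C\|\nabla F\|_{L^2}$. The term $\nabla F$ involves $\nabla v_t$, $\nabla(u\cdot\nabla v)$, $\nabla^2 P$ and $\nabla(\nabla\rho^\alpha\cdot\mathbb{F}[v])$. Each is controlled in $L^2_tL^2$ by $\|\nabla v_t\|_{L^2}$, $\|\nabla^3\rho\|_{L^2}$ and the established $H^2$ bounds, with the $|\nabla^2\rho||\nabla v|$ and $|\nabla u||\nabla v|$ products estimated by $L^3$--$L^6$ H\"older pairs. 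Integrating in time gives \eqref{17-0}.
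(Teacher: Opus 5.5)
Your argument is correct, but it is organized differently from the paper's.

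\textbf{The paper's route.} The paper divides by $\rho$ and works with $v_t-\rho^{\alpha-1}\divg\mathbb{F}[v]=\dots$, with energy $\frac12\|v_t\|_{L^2}^2$. This form produces terms such as $\int|\rho_t||\nabla^2v||v_t|\,dx$ and $\int|\nabla\rho||\nabla v_t||v_t|\,dx$. To close them, the paper adds the $H^3$ elliptic estimate \eqref{16-2} to the energy inequality, absorbs $\|\nabla^2 v\|_{L^3}$ and $\|\nabla v\|_{L^\infty}$ into $\varepsilon_4\|\nabla^3 v\|_{L^2}^2$, and uses $\|\nabla\rho\|_{L^\infty}\le C\|\nabla\rho\|_{H^2}$. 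Gr\"onwall then runs with coefficient $1+\|\nabla\rho_t\|_{L^2}^2+\|\nabla^3\rho\|_{L^2}^2$. The tie $\|\nabla^2 v\|_{L^2}\le C(\|v_t\|_{L^2}+1)$ from \eqref{ell sys} enters only at the end, to get $\sup_t\|\nabla^2v\|_{L^2}$.

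\textbf{Your route.} You keep the divergence form with energy $\frac12\int\rho|v_t|^2dx$. After integrating by parts, $\rho_t$ only ever multiplies $\nabla v$, so all products close with $L^3$--$L^6$ pairings and $\|u\|_{L^6}\le C$. You use the same elliptic tie pointwise in time inside the Gr\"onwall step, turning $\|\rho_t\|_{H^1}^2\|\nabla v\|_{H^1}^2$ into $C(1+\|\nabla\rho_t\|_{L^2}^2)(1+\|v_t\|_{L^2}^2)$. You then recover $\nabla^3 v$ afterwards from \eqref{10-5}.

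\textbf{Comparison.} Your route is leaner: it uses no third derivatives of $\rho$ or $v$ and no $L^\infty$ norms in the Gr\"onwall step. The paper's route instead obtains the $L^2_tH^3$ bound on $v$ within the same differential inequality.

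\textbf{Two details to make explicit.}
\begin{itemize}
\item The $\rho_t u\cdot\nabla v\cdot v_t$ part of $(\rho u)_t$ is not treated in your write-up. The bound $\|\rho_t\|_{L^3}\|u\|_{L^6}\|\nabla v\|_{L^6}\|v_t\|_{L^6}$ puts it in the same class as your $(\rho^\alpha)_t$ term.
\item In the final $\nabla^3 v$ step, the products $|u||\nabla^2 v|$ and $|\nabla\rho||\nabla^2 v|$ force either $\nabla^2 v$ into $L^3$ or $\nabla\rho$ (resp.\ $u$) into $L^\infty$. So you need one of two things: a Gagliardo--Nirenberg absorption of $\|\nabla^2 v\|_{L^3}$ into $\|\nabla^3 v\|_{L^2}$, or $\|\nabla\rho\|_{L^\infty}\le C\|\nabla\rho\|_{H^2}\in L^2(0,T)$ via \eqref{hhh0}. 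The $H^2$ bounds alone are not enough here.
\end{itemize}
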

	\begin{proof}
		Since $\rho>0$ on $\mathbb{T}^N\times[0,T]$, we rewrite the system \eqref{0} as
		\begin{equation}
			\label{0'}
			\left\{
			\begin{array}{l}
				\rho_t+\div(\rho v)-c\Delta(\rho^\alpha)=0,\\
				v_t-\rho^{\alpha-1}\divg\mathbb{F}[v]=-u \cdot\nabla v-\gamma\rho^{\gamma-2}\nabla\rho+\alpha\rho^{\alpha-2}\nabla\rho\cdot\mathbb{F}[v].
			\end{array}
			\right.
		\end{equation}
		Differentiating $\eqref{0'}_2$ with respect to $t$ yields
		\begin{align*}
			&\quad v_{tt}-(\rho^{\alpha-1})_t\divg \mathbb{F}[v]-\rho^{\alpha-1}\divg \mathbb{F}[v_t]\\
			&=-u_t\cdot\nabla v-u\cdot\nabla v_t-\gamma(\rho^{\gamma-2})_t\nabla\rho -\gamma\rho^{\gamma-2}\nabla\rho_t\\
			&\quad+\alpha(\rho^{\alpha-2})_t\nabla\rho\cdot\mathbb{F}[v]+\alpha\rho^{\alpha-2}\nabla\rho_t\cdot\mathbb{F}[v]+\alpha\rho^{\alpha-2}\nabla\rho\cdot\mathbb{F}[v_t].
		\end{align*}
		Multiplying the above equation by $v_t$, integrating the resulting equation over $\mathbb{T}^N$, and applying integration by parts together with the identity
		$$\divg \mathbb{F}[v_t]=\divg\big(\nu\nabla v_t+(2\nu\alpha-c\alpha-\nu)(\nabla v_t)^t\big)$$
		and \eqref{10-3}, we obtain

		\begin{align}\label{16-1}
			\begin{split}
				&\quad\frac{1}{2}\frac{d}{dt}\int |v_t|^2dx+\int \rho^{\alpha-1}(\nu\nabla v_t+(2\nu\alpha-c\alpha-\nu)(\nabla v_t)^t):\nabla v_tdx\\
				&=-\int\nabla(\rho^{\alpha-1})\cdot \mathbb{F}[v_t]\cdot v_tdx+\int (\rho^{\alpha-1})_t \divg \mathbb{F}[v]\cdot v_tdx \\
				&\quad +\int \big( -(v-c\alpha \rho^{\alpha-2}\nabla\rho)_t\cdot\nabla v-(v-c\alpha \rho^{\alpha-2}\nabla\rho)\cdot\nabla v_t-\gamma(\rho^{\gamma-2})_t\nabla\rho\\
				&\quad -\gamma\rho^{\gamma-2}\nabla\rho_t+\alpha(\rho^{\alpha-2})_t\nabla\rho\cdot\mathbb{F}[v]+\alpha\rho^{\alpha-2}\nabla\rho_t\cdot\mathbb{F}[v]+\alpha\rho^{\alpha-2}\nabla\rho\cdot\mathbb{F}[v_t]\big)\cdot v_t dx\\
				&\leq C\int |\nabla\rho||\nabla v_t| |v_t|dx+C\int |\rho_t||\nabla^2 v||v_t|dx+C\int |\nabla v| |v_t|^2dx+C\int |\rho_t||\nabla\rho||\nabla v||v_t|dx\\
				&\quad+C\int |\rho_t||\nabla v||\nabla v_t|dx+C\int |v||\nabla v_t||v_t|dx+C\int |\rho_t||\nabla\rho||v_t|dx+C\int |\nabla\rho_t||v_t|dx,
			\end{split}
		\end{align}
		where we have used the following fact, derived from integration by parts and \eqref{10-3}:
		\begin{align*}
			&\quad c\alpha\int \rho^{\alpha-2}\nabla\rho_t\cdot\nabla v\cdot v_t dx+\alpha\int \rho^{\alpha-2}\nabla\rho_t \cdot \mathbb{F}[v]\cdot v_t dx\\
			&=-c\alpha\int \rho_t \divg(\rho^{\alpha-2}\nabla v\cdot v_t)dx-\alpha\int \rho_t\divg(\rho^{\alpha-2}\mathbb{F}[v]\cdot v_t)dx \\
			&\leq C\int |\nabla\rho||\rho_t||\nabla v||v_t|dx+C\int |\rho_t||\nabla^2 v||v_t|dx+C\int |\rho_t||\nabla v||\nabla v_t|dx.
		\end{align*}
		From $\alpha\in(1-1/N,1)$, $c\in[\nu,2\nu)$, and \eqref{10-3}, it follows that there exists a positive constant $\varepsilon_3>0$ such that
		\begin{align}\label{16-2.5}
			\begin{split}
				&\quad\int \rho^{\alpha-1}(\nu\nabla v_t+(2\nu\alpha-c\alpha-\nu)(\nabla v_t)^t):\nabla v_tdx\\
				&\ge\int\rho^{\alpha-1}(\nu|\nabla v_t|^2+(2\nu\alpha-c\alpha-\nu)|\nabla v_t|^2)dx\\
				&\ge (2\nu-c)\alpha\int \rho^{\alpha-1}|\nabla v_t|^2dx\\
				&\ge \varepsilon_3\int |\nabla v_t|^2dx,
			\end{split}
		\end{align}
		where we have used
		\begin{align*}
			2\nu\alpha-c\alpha-\nu\leq \nu-c\leq 0,
		\end{align*}
        and 
        \begin{align*}
			(\nabla v_t)^t \colon \nabla v_t \leq |\nabla v_t|^2.
                \end{align*}
		Furthermore, since $v$ satisfies the elliptic system \eqref{ell sys}, it follows from Lemma \ref{Lema 7.2} and \eqref{10-3} that there exists a sufficiently small $\varepsilon_4>0$ such that
		\begin{align}\label{16-2}
			\begin{split}
				\varepsilon_4\int |\nabla^3 v|^2dx&\leq\frac{ \varepsilon_3}{2}\int |\nabla v_t|^2dx+C\int |\nabla\rho|^2|v_t|^2dx\\
				&\quad+C\int |\nabla\rho|^2|v|^2|\nabla v|^2dx+C\int |\nabla v|^4dx+C\int |v|^2|\nabla^2 v|^2dx\\
				&\quad+C\int |\nabla\rho|^4|\nabla v|^2dx+C\int |\nabla^2\rho|^2|\nabla v|^2dx+C\int |\nabla\rho|^2|\nabla^2 v|^2dx\\
				&\quad+C\int |\nabla\rho|^4dx+C\int |\nabla^2\rho|^2dx.
			\end{split}
		\end{align}
		Adding \eqref{16-1} and \eqref{16-2}, and using \eqref{16-2.5}, we obtain
		\begin{align}\label{J_0}
			\begin{split}
				&\quad\frac{1}{2}\frac{d}{dt}\int |v_t|^2dx+\frac{ \varepsilon_3}{2}\int |\nabla v_t|^2dx+\varepsilon_4\int |\nabla^3 v|^2dx\\
				&\leq C\int |\nabla\rho||\nabla v_t| |v_t|dx+C\int |\rho_t||\nabla^2 v||v_t|dx+C\int |\nabla v| |v_t|^2dx+C\int |\rho_t||\nabla\rho||\nabla v||v_t|dx\\
				&\quad+C\int |\rho_t||\nabla v||\nabla v_t|dx+C\int |v||\nabla v_t||v_t|dx+C\int |\rho_t||\nabla\rho||v_t|dx+C\int |\nabla\rho_t||v_t|dx\\
				&\quad +C\int |\nabla\rho|^2|v_t|^2dx+C\int |\nabla\rho|^2|v|^2|\nabla v|^2dx+C\int |\nabla v|^4dx+C\int |v|^2|\nabla^2 v|^2dx\\
				&\quad +C\int |\nabla\rho|^4|\nabla v|^2dx+C\int |\nabla^2\rho|^2|\nabla v|^2dx+C\int |\nabla\rho|^2|\nabla^2 v|^2dx+C\int |\nabla\rho|^4dx\\
				&\quad +C\int |\nabla^2\rho|^2dx=:\sum_{j=1}^{17}J_j. 
			\end{split}
		\end{align}
		
		We now turn to estimating each term. For $J_1$, by Hölder's inequality, Young's inequality, the Sobolev embedding, together with \eqref{10-3} and \eqref{hhh0}, we have
		\begin{align}\label{J_1}
			\begin{split}
				J_1&\leq \delta\|\nabla v_t\|_{L^2}^2+C_\delta\|\nabla\rho\|_{L^\infty}^2\|v_t\|_{L^2}^2\\
				&\leq \delta\|\nabla v_t\|_{L^2}^2+C_\delta\|\nabla\rho\|_{H^2}^2\|v_t\|_{L^2}^2\\
				&\leq \delta\|\nabla v_t\|_{L^2}^2+C_\delta(1+\|\nabla^3\rho\|_{L^2}^2)\|v_t\|_{L^2}^2, 
			\end{split}
		\end{align}
		where $\delta>0$ is a small constant to be chosen later.
		
		For $J_2$, H\"older's inequality, the Gagliardo--Nirenberg inequality, Young's inequality, and \eqref{hhh0} give
		\begin{align}\label{J_2}
			\begin{split}
				J_2&\leq C\|\rho_t\|_{L^6}\|\nabla^2 v\|_{L^3}\|v_t\|_{L^2}\\
				&\leq C\|\rho_t\|_{H^1}\|\nabla^2 v\|_{L^2}^{1-\frac{N}{6}}\|\nabla^2 v\|_{H^1}^{\frac{N}{6}}\|v_t\|_{L^2}\\
				&\leq \delta\|\nabla^3 v\|_{L^2}^2+C_\delta\|\nabla^2 v\|_{L^2}^2+C_\delta\|\rho_t\|_{H^1}^2\|v_t\|_{L^2}^2\\
				&\leq  \delta\|\nabla^3 v\|_{L^2}^2+C_\delta\|\nabla^2 v\|_{L^2}^2+C_\delta(1+\|\nabla\rho_t\|_{L^2}^2)\|v_t\|_{L^2}^2.
			\end{split}
		\end{align}

		For $J_3$, employing H\"older's inequality, the Gagliardo--Nirenberg inequality, \eqref{hhh0}, and Young's inequality, we obtain
		\begin{align}\label{J_3}
			\begin{split}
				J_3&\leq C\|\nabla v\|_{L^2}\|v_t\|_{L^4}^2\\
				&\leq C\|v_t\|_{L^2}^{2-\frac{N}{2}}\|v_t\|_{H^1}^{\frac{N}{2}}\\
				&\leq \delta\|v_t\|_{H^1}^2+C_\delta\|v_t\|_{L^2}^2.
			\end{split}
		\end{align}
		
		For $J_4$, by H\"older's inequality, the Sobolev embedding, Young's inequality, \eqref{10-3} and \eqref{hhh0}, we have
		\begin{align}\label{J_4}
			\begin{split}
				J_4&\leq C\|\rho_t\|_{L^2}\|\nabla\rho\|_{L^\infty}\|\nabla v\|_{L^\infty}\|v_t\|_{L^2}\\
				&\leq C\|\nabla\rho\|_{H^2}\|\nabla v\|_{H^2}\|v_t\|_{L^2}\\
				&\leq \delta\|\nabla v\|_{H^2}^2+C_\delta\|\nabla \rho\|_{H^2}^2\|v_t\|_{L^2}^2\\
				&\leq \delta\|\nabla v\|_{H^2}^2+C_\delta(1+\|\nabla^3\rho\|_{L^2}^2)\|v_t\|_{L^2}^2. 
			\end{split}
		\end{align}
		
		For $J_5$, it follows from H\"older's inequality, the Gagliardo--Nirenberg inequality, Young's inequality, and \eqref{hhh0} that
		\begin{align}\label{J_5}
			\begin{split}
				J_5&\leq C\|\rho_t\|_{L^2}\|\nabla v\|_{L^\infty}\|\nabla v_t\|_{L^2}\\
				&\leq C\|\nabla v\|_{L^2}^{\frac{4-N}{4}}\|\nabla v\|_{H^2}^{\frac{N}{4}}\|\nabla v_t\|_{L^2}\\
				&\leq \delta\|\nabla v_t\|_{L^2}^2+\delta\|\nabla v\|_{H^2}^2+C_\delta\|\nabla v\|_{L^2}^2\\
				&\leq  \delta\|\nabla v_t\|_{L^2}^2+\delta\|\nabla v\|_{H^2}^2+C_\delta. 
			\end{split}
		\end{align}
		
		For $J_6$, by H\"older's inequality, the Gagliardo--Nirenberg inequality, \eqref{hhh0}, \eqref{10-3}, and Young's inequality, one has
		\begin{align}\label{J_6}
			\begin{split}
				J_6&\leq C\|v\|_{L^6}\|\nabla v_t\|_{L^2}\|v_t\|_{L^{3}}\\
				&\leq C\|\nabla v_t\|_{L^2}\|v_t\|_{L^{2}}^{1-\frac{N}{6}}\|v_t\|_{H^1}^{\frac{N}{6}}\\
				&\leq \delta\|v_t\|_{H^1}^2+C_\delta\|v_t\|_{L^2}^2.
			\end{split}
		\end{align}
		
		For $J_7-J_9$, employing H\"older's inequality, Young's inequality, the Sobolev embedding, \eqref{10-3}, and \eqref{hhh0}, we obtain 
		\begin{align}\label{J_7-9}
			\begin{split}
				\sum_{j=7}^{9}J_j&\leq C\|\rho_t\|_{L^2}\|\nabla\rho\|_{L^\infty}\|v_t\|_{L^2}+C\|\nabla\rho_t\|_{L^2}\|v_t\|_{L^2}+C\|\nabla\rho\|_{L^\infty}^2\|v_t\|_{L^2}^2\\
				&\leq C\|\nabla\rho\|_{H^2}^2+C\|v_t\|_{L^2}^2+C\|\nabla\rho_t\|_{L^2}^2+C\|\nabla\rho\|_{H^2}^2\|v_t\|_{L^2}^2\\
				&\leq C(1+\|\nabla^3\rho\|_{L^2}^2)(1+\|v_t\|_{L^2}^2)+C\|\nabla\rho_t\|_{L^2}^2.
			\end{split}
		\end{align}
		
		For $J_{10},\dots,J_{17}$, arguing analogously and applying Hölder's inequality, \eqref{hhh0}, \eqref{10-3}, the Sobolev embedding, Young's inequality, and the Gagliardo–Nirenberg inequality, we obtain
		\begin{align}\label{J_10-17}
			\begin{split}
				\sum_{j=10}^{17}J_j&\leq C\|\nabla\rho\|_{L^6}^2\|v\|_{L^6}^2\|\nabla v\|_{L^6}^2+C\|\nabla v\|_{L^4}^4+C\|v\|_{L^6}^2\|\nabla^2 v\|_{L^3}^2\\
				&\quad +C\|\nabla\rho\|_{L^6}^4\|\nabla v\|_{L^6}^2+C\|\nabla^2\rho\|_{L^2}^2\|\nabla v\|_{L^\infty}^2+C\|\nabla \rho\|_{L^6}^2\|\nabla^2 v\|_{L^3}^2\\
				&\quad +C\|\nabla\rho\|_{L^4}^4+C\|\nabla^2 \rho\|_{L^2}^2\\
				&\leq C\|\nabla\rho\|_{H^1}^2\|v\|_{H^1}^2\|\nabla v\|_{H^1}^2+C\|\nabla v\|_{L^2}^2\|\nabla v\|_{L^\infty}^2+C\|v\|_{H^1}^2\|\nabla^2 v\|_{L^3}^2\\
				&\quad +C\|\nabla\rho\|_{H^1}^4\|\nabla v\|_{H^1}^2+C\|\nabla^2\rho\|_{L^2}^2\|\nabla v\|_{L^\infty}^2+C\|\nabla \rho\|_{L^6}^2\|\nabla^2 v\|_{L^3}^2\\
				&\quad +C\|\nabla\rho\|_{L^4}^4+C\|\nabla^2 \rho\|_{L^2}^2\\
				&\leq C\big(\|\nabla v\|_{H^1}^2+\|\nabla v\|_{L^\infty}^2+\|\nabla^2 v\|_{L^3}^2\big)+C\\
                &\leq C\big(\|\nabla v\|_{H^1}^2+\|\nabla v\|_{L^2}^{\frac{4-N}{2}}\|\nabla v\|_{H^2}^{\frac{N}{2}}+\|\nabla^2 v\|_{L^2}^{2-\frac{N}{3}}\|\nabla^2 v\|_{H^1}^{\frac{N}{3}} \big)+C\\
				&\leq \delta\|\nabla v\|_{H^2}^2+C_\delta\|\nabla v\|_{H^1}^2+C.
			\end{split}
		\end{align}
		
		Substituting the estimates for $J_1-J_{17}$, namely \eqref{J_1}, \eqref{J_2}, \eqref{J_3}, \eqref{J_4}, \eqref{J_5}, \eqref{J_6}, \eqref{J_7-9}, and \eqref{J_10-17}, into \eqref{J_0}, choosing $\delta>0$ sufficiently small, and applying \eqref{hhh0}, we obtain
		\begin{align}\label{J_0'}
			\begin{split}
				&\quad\frac{1}{2}\frac{d}{dt}\int |v_t|^2dx+\frac{ \varepsilon_3}{4}\int |\nabla v_t|^2dx+\frac{\varepsilon_4}{2}\int |\nabla^3 v|^2dx\\
				&\leq C\big(1+\|\nabla\rho_t\|_{L^2}^2+\|\nabla^3\rho\|_{L^2}^2\big)\|v_t\|_{L^2}^2+C\big(1+\|\nabla^2 v\|_{L^2}^2+\|\nabla\rho_t\|_{L^2}^2+\|\nabla^3\rho\|_{L^2}^2\big).
			\end{split}
		\end{align}
		By Gr\"onwall's inequality in conjunction with \eqref{hhh0}, we deduce that
		\begin{align}\label{17-1}
			\sup_{0\leq t\leq T}\|v_t\|_{L^2}+\int_0^T \big(\|\nabla v_t\|_{L^2}^2+\|\nabla^3 v\|_{L^2}^2\big)dt\leq C.
		\end{align}
		Furthermore, since $v$ satisfies the elliptic system \eqref{ell sys}, we can apply Lemma \ref{Lema 7.2} together with H\"older's inequality, Young's inequality, \eqref{10-3}, and \eqref{hhh0} to obtain
		\begin{align*}
			\|\nabla^2 v\|_{L^2}&\leq C\|v_t\|_{L^2}+C\|v\cdot\nabla v\|_{L^2}+C\|\nabla\rho\cdot\nabla v\|_{L^2}+C\|\nabla\rho\|_{L^2}\\
			&\leq C\|v_t\|_{L^2}+C\|v\|_{L^6}\|\nabla v\|_{L^3}+C\|\nabla\rho\|_{L^6}\|\nabla v\|_{L^3}+C\|\nabla\rho\|_{L^2}\\
			&\leq C\|v_t\|_{L^2}+C\|\nabla v\|_{L^3}+C \\
			&\leq C\|v_t\|_{L^2}+C\|\nabla v\|_{L^2}^{1-\frac{N}{6}}\|\nabla v\|_{H^1}^{\frac{N}{6}}+C\\
			&\leq \frac{1}{2}\|\nabla^2 v\|_{L^2}+C\|v_t\|_{L^2}+C,
		\end{align*}
		which, together with \eqref{17-1}, implies that
		\begin{align}\label{17-1.1}
			\sup_{0\leq t\leq T}\|\nabla^2 v\|_{L^2}\leq C.
		\end{align}
		
		Combining \eqref{17-1} and \eqref{17-1.1} yields \eqref{17-0}. This completes the proof of Proposition \ref{Prop 7.3}.
	\end{proof}
	
	We conclude this section by deriving estimates for the third-order derivatives of the density.
	\begin{prop}\label{Prop 7.4}
		Under the assumptions of Theorem \ref{Thm 1.1}, 
		there exists a constant $C > 0$, depending on 
		$\gamma,\alpha,\nu,\varepsilon,E_0,T^*,\beta,
		N,\hat{q}_N,\|\rho_0^{1/(\hat{q}_N+2)} v_0\| 
		_{L^{\hat{q}_N+2}},M_0^{-1},\|\rho_0\|_{L^\infty},
		\check{q}_N,\|\rho_0^{1/(\check{q}_N+2)} v_0\|
		_{L^{\check{q}_N+2}}$, $\|\tau_0\|_{L^\infty},
		\|\rho_0\|_{H^3}$, and $\|v_0\|_{H^2}$ such that
		\begin{align}\label{hhh9}
			\sup_{0\leq t\leq T}\Big(\|\nabla^3 \rho\|_{L^2}+\|\nabla\rho_t\|_{L^2}\Big)+\int_0^T \Big(\|\nabla^4 \rho\|_{L^2}^2+\|\nabla^2 \rho_t\|_{L^2}^2+\|\rho_{tt}\|_{L^2}^2\Big)dt\leq C.
		\end{align}
	\end{prop}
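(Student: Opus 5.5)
The plan is to use the time-differentiated continuity equation as the main energy identity. Differentiating $\eqref{0}_1$ in $t$ gives
\begin{align*}
\rho_{tt}+\divg(\rho v)_t-c\Delta(\alpha\rho^{\alpha-1}\rho_t)=0 .
\end{align*}
We multiply this by $-\Delta\rho_t$ (equivalently, test the $\nabla$ of the equation against $\nabla\rho_t$) and integrate over $\mathbb{T}^N$. Expanding $\Delta(\alpha\rho^{\alpha-1}\rho_t)=\alpha\rho^{\alpha-1}\Delta\rho_t+\text{l.o.t.}$, this yields
\begin{align*}
\frac12\frac{d}{dt}\|\nabla\rho_t\|_{L^2}^2+c\alpha\int\rho^{\alpha-1}|\Delta\rho_t|^2dx
\le C\int\big(|\nabla(\rho v)_t|+|\nabla\rho||\nabla\rho_t|+|\nabla^2\rho||\rho_t|+|\nabla\rho|^2|\rho_t|\big)|\Delta\rho_t|dx .
\end{align*}
The density bounds in \eqref{10-3} make the dissipation coercive: it controls $\|\nabla^2\rho_t\|_{L^2}^2$ up to a multiple of $\|\nabla\rho_t\|_{L^2}^2$.

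The right-hand side is handled with Young's inequality, absorbing $\delta\|\Delta\rho_t\|_{L^2}^2$ and leaving three groups of terms.
\begin{itemize}
\item The term $\|\nabla(\rho v)_t\|_{L^2}^2$ expands to
\begin{align*}
\|\nabla(\rho v)_t\|_{L^2}^2\le C\big(\|\nabla\rho_t\|_{L^2}^2\|v\|_{L^\infty}^2+\|\rho_t\|_{L^3}^2\|\nabla v\|_{L^6}^2+\|\nabla\rho\|_{L^\infty}^2\|v_t\|_{L^2}^2+\|\nabla v_t\|_{L^2}^2\big).
\end{align*}
Here $\|v\|_{L^\infty}\le C\|v\|_{H^2}\le C$ by Proposition \ref{Prop 7.3}, $\|\rho_t\|_{H^1}$ is controlled by $\|\nabla\rho_t\|_{L^2}$ and \eqref{hhh0}, and $\|\nabla\rho\|_{L^\infty}^2\le C(1+\|\nabla^3\rho\|_{L^2}^2)$.
\item The terms $|\nabla\rho||\nabla\rho_t|$ and $|\nabla^2\rho||\rho_t|$ are bounded by $\|\nabla\rho\|_{L^\infty}^2\|\nabla\rho_t\|_{L^2}^2$ and $\|\nabla^2\rho\|_{L^3}^2\|\rho_t\|_{L^6}^2$. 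By Gagliardo--Nirenberg and \eqref{hhh0}, both are bounded by $C(1+\|\nabla^3\rho\|_{L^2}^2)(1+\|\nabla\rho_t\|_{L^2}^2)$.
\end{itemize}

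The key observation is that $\|\nabla^3\rho\|_{L^2}$ is itself controlled by $\|\nabla\rho_t\|_{L^2}$, so it need not be carried as a separate energy. From $\eqref{0}_1$, $c\Delta(\rho^\alpha)=\rho_t+\divg(\rho v)$, hence
\begin{align*}
\|\nabla\Delta(\rho^\alpha)\|_{L^2}\le C\|\nabla\rho_t\|_{L^2}+C\|\nabla^2(\rho v)\|_{L^2}\le C\|\nabla\rho_t\|_{L^2}+C .
\end{align*}
The last step uses Proposition \ref{Prop 7.3} and \eqref{hhh0}. Combining this with elliptic regularity for $\rho^\alpha$ on the torus and the algebraic relation \eqref{10-10} gives
\begin{align*}
\|\nabla^3\rho\|_{L^2}\le C\|\nabla\rho_t\|_{L^2}+C ,
\end{align*}
after absorbing the lower-order products exactly as in the derivation of \eqref{10-11}.

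Inserting this into the energy inequality gives
\begin{align*}
\frac{d}{dt}\|\nabla\rho_t\|_{L^2}^2+\|\nabla^2\rho_t\|_{L^2}^2\le C\big(1+\|\nabla\rho_t\|_{L^2}^2+\|\nabla^3\rho\|_{L^2}^2\big)\|\nabla\rho_t\|_{L^2}^2+C\big(1+\|\nabla v_t\|_{L^2}^2\big).
\end{align*}
The coefficient $\|\nabla\rho_t\|_{L^2}^2+\|\nabla^3\rho\|_{L^2}^2$ is integrable in time by \eqref{hhh0}, and $\|\nabla v_t\|_{L^2}^2$ is integrable by \eqref{17-0}. The initial value $\|\nabla\rho_t(0)\|_{L^2}$ is bounded by $\|\rho_0\|_{H^3}$ and $\|v_0\|_{H^2}$. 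Gr\"onwall's inequality then gives $\sup_t\|\nabla\rho_t\|_{L^2}$ and $\int_0^T\|\nabla^2\rho_t\|_{L^2}^2dt$, and the relation above yields $\sup_t\|\nabla^3\rho\|_{L^2}$.

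The remaining quantities follow by differentiating $\eqref{0}_1$ in space and in time.
\begin{itemize}
\item Applying the same argument one derivative higher, $\|\nabla^2\Delta(\rho^\alpha)\|_{L^2}\le C\|\nabla^2\rho_t\|_{L^2}+C\|\nabla^3(\rho v)\|_{L^2}$. The right-hand side is square-integrable in time by \eqref{17-0} and the bounds already obtained, so $\int_0^T\|\nabla^4\rho\|_{L^2}^2dt\le C$ after the nonlinear commutator terms are absorbed.
\item For $\rho_{tt}$, the time-differentiated equation gives
\begin{align*}
\|\rho_{tt}\|_{L^2}\le C\|\nabla(\rho v)_t\|_{L^2}+C\|\Delta(\rho^{\alpha-1}\rho_t)\|_{L^2} ,
\end{align*}
and the right-hand side is bounded in $L^2_T$ by the estimates already in hand.
\end{itemize}

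The step I expect to be the main obstacle is the fast-diffusion nonlinearity in $\Delta(\rho^{\alpha-1}\rho_t)$. It produces cross terms such as $|\nabla^2\rho||\rho_t||\Delta\rho_t|$ and $|\nabla\rho|^2|\rho_t||\Delta\rho_t|$. These must be closed using only the first-level bounds \eqref{hhh0} together with the $L^\infty$ control of $\nabla\rho$ obtained through $\|\nabla^3\rho\|_{L^2}$, which is why the equivalence $\|\nabla^3\rho\|_{L^2}\lesssim\|\nabla\rho_t\|_{L^2}+1$ is used before applying Gr\"onwall's inequality.
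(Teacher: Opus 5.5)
Your argument is correct and is essentially the paper's proof with the two multipliers swapped. The paper tests the time-differentiated equation \eqref{17-3} with $\rho_{tt}$, giving energy $\int\rho^{\alpha-1}|\nabla\rho_t|^2dx$ and dissipation $\|\rho_{tt}\|_{L^2}^2$, and recovers $\nabla^2\rho_t$ elliptically as in \eqref{17-7}. You test with $-\Delta\rho_t$ and read off $\rho_{tt}$ from the equation afterwards. Both versions then close by Gr\"onwall and recover $\sup_t\|\nabla^3\rho\|_{L^2}$ and $\int_0^T\|\nabla^4\rho\|_{L^2}^2dt$ elliptically.

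One correction to your closing remark: you do not need $\|\nabla^3\rho\|_{L^2}\le C\|\nabla\rho_t\|_{L^2}+C$ before applying Gr\"onwall. The bound $\int_0^T\|\nabla^3\rho\|_{L^2}^2dt\le C$ already holds by \eqref{hhh0}, so that relation is only needed afterwards for the supremum bound, exactly as in the paper.
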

	\begin{proof}
		Differentiating $\eqref{0'}_1$ with respect to $t$ yields
		\begin{align}\label{17-3}
			\rho_{tt}+(\divg(\rho v))_t-c\alpha(\alpha-1)(\rho^{\alpha-2}|\nabla\rho|^2)_t-c\alpha(\rho^{\alpha-1}\Delta\rho)_t=0.
		\end{align}
		Multiplying the above equation by $\rho_{tt}$, integrating the resulting equation over $\mathbb{T}^N$, and applying Young's inequality together with \eqref{10-3}, we obtain
		\begin{align}\label{17-6}
			\begin{split}
				&\quad\frac{c\alpha}{2}\frac{d}{dt}\int \rho^{\alpha-1}|\nabla\rho_t|^2dx+\int \rho_{tt}^2dx\\
				&=\frac{c\alpha(\alpha-1)}{2}\int \rho^{\alpha-2}\rho_t|\nabla\rho_t|^2dx+c\alpha(\alpha-1)\int \rho^{\alpha-2}\rho_t\Delta\rho \rho_{tt}dx\\
				&\quad +c\alpha(\alpha-1)(\alpha-2)\int \rho^{\alpha-3}\rho_t|\nabla\rho|^2\rho_{tt}dx
				+c\alpha(\alpha-1)\int \rho^{\alpha-2}\nabla\rho\cdot\nabla\rho_t \rho_{tt}dx\\
				&\quad-\int (\divg(\rho v))_t \rho_{tt}dx\\
				&\leq \frac{1}{2}\int \rho_{tt}^2dx+C\int |\rho_t||\nabla\rho_t|^2dx+C\int |\rho_t|^2|\Delta\rho|^2dx+C\int |\rho_t|^2|\nabla\rho|^4dx\\
               &\quad +C\int |\nabla\rho|^2|\nabla\rho_t|^2dx
				 +C\int \Big(|\nabla\rho_t|^2|v|^2+|\nabla\rho|^2|v_t|^2+|\rho_t|^2|\nabla v|^2+|\nabla v_t|^2\Big)dx.
			\end{split}
		\end{align}
		Furthermore, since $\rho>0$ on $\mathbb{T}^N\times[0,T]$, we can rewrite \eqref{17-3} as
		\begin{align*}
			\Delta\rho_t= \frac{1}{c\alpha\rho^{\alpha-1}}\big(\rho_{tt}+(\divg (\rho v))_t-c\alpha(\alpha-1)(\rho^{\alpha-2}|\nabla\rho|^2)_t-c\alpha(\rho^{\alpha-1})_t\Delta\rho\big), 
		\end{align*}
		which, together with \eqref{10-3}, implies that there exists a sufficiently small $\varepsilon_5>0$ such that
		\begin{align}\label{17-7}
			\begin{split}
				\varepsilon_5\int |\nabla^2 \rho_t|^2dx&\leq \frac{1}{4}\int |\rho_{tt}|^2dx+C\int \Big(|\nabla\rho_t|^2|v|^2+|\nabla\rho|^2|v_t|^2+|\rho_t|^2|\nabla v|^2+|\nabla v_t|^2\Big)dx\\
				&\quad+C\int |\rho_t|^2|\nabla\rho|^4dx+C\int |\nabla\rho|^2|\nabla\rho_t|^2dx+C\int |\rho_t|^2|\Delta\rho|^2dx.
			\end{split}
		\end{align}
		Adding \eqref{17-6} and \eqref{17-7} yields
		\begin{align}\label{K_0}
			\begin{split}
				&\quad\frac{c\alpha}{2}\frac{d}{dt}\int \rho^{\alpha-1}|\nabla\rho_t|^2dx+\frac{1}{4}\int \rho_{tt}^2dx+\varepsilon_5\int |\nabla^2 \rho_t|^2dx\\
				&\leq C\int |\rho_t||\nabla\rho_t|^2dx+C\int |\rho_t|^2|\Delta\rho|^2dx+C\int |\rho_t|^2|\nabla\rho|^4dx+C\int |\nabla\rho|^2|\nabla\rho_t|^2dx\\
				&\quad +C\int |\nabla\rho_t|^2|v|^2dx+C\int|\nabla\rho|^2|v_t|^2dx+C\int |\rho_t|^2|\nabla v|^2dx+C\int |\nabla v_t|^2dx\\
				&=:\sum_{k=1}^{8}K_k. 
			\end{split}
		\end{align}
		
		It remains to estimate each term. For $K_1$, H\"older's inequality, \eqref{hhh0}, the Gagliardo--Nirenberg inequality, and Young's inequality yield
		\begin{align}\label{K_1}
			\begin{split}
				K_1&\leq C\|\rho_t\|_{L^2}\|\nabla\rho_t\|_{L^4}^2\\
				&\leq C\|\nabla\rho_t\|_{L^2}^{2-\frac{N}{2}}\|\nabla\rho_t\|_{H^1}^{\frac{N}{2}}\\
				&\leq \delta\|\nabla^2\rho_t\|_{L^2}^2+C_\delta\|\nabla\rho_t\|_{L^2}^2,
			\end{split}
		\end{align}
		where $\delta>0$ is a sufficiently small constant to be determined later.
		
		For $K_2-K_4$, by H\"older's inequality, the Sobolev embedding, \eqref{10-3}, and \eqref{hhh0}, we have
		\begin{align}\label{K_2}
			\begin{split}
				\sum_{k=2}^{4}K_k&\leq C\|\rho_t\|_{L^6}^2\|\Delta\rho\|_{L^6}^2+C\|\rho_t\|_{L^6}^2\|\nabla\rho\|_{L^6}^4+C\|\nabla\rho\|_{L^\infty}^2\|\nabla\rho_t\|_{L^2}^2\\
				&\leq C\|\rho_t\|_{H^1}^2\|\nabla^2 \rho\|_{H^1}^2+C\|\rho_t\|_{H^1}^2\|\nabla\rho\|_{H^1}^4+C\|\nabla\rho\|_{H^2}^2\|\nabla\rho_t\|_{L^2}^2\\
				&\leq C(1+\|\nabla\rho_t\|_{L^2}^2)(1+\|\nabla^3\rho\|_{L^2}^2). 
			\end{split}
		\end{align}
		
		For $K_5-K_8$, we apply H\"older's inequality, the Sobolev embedding, \eqref{10-3}, \eqref{hhh0}, and \eqref{17-0} to obtain the following estimates:
		\begin{align}\label{K_5}
			\begin{split}
				\sum_{k=5}^{8}K_k&\leq C\|\nabla\rho_t\|_{L^2}^2\|v\|_{L^\infty}^2+C\|\nabla\rho\|_{L^\infty}^2\|v_t\|_{L^2}^2+C\|\rho_t\|_{L^6}^2\|\nabla v\|_{L^6}^2+C\|\nabla v_t\|_{L^2}^2\\
				&\leq C\|\nabla\rho_t\|_{L^2}^2\|v\|_{H^2}^2+C\|\nabla\rho\|_{H^2}^2\|v_t\|_{L^2}^2+C\|\rho_t\|_{H^1}^2\|\nabla v\|_{H^1}^2+C\|\nabla v_t\|_{L^2}^2\\
				&\leq C(1+\|\nabla\rho_t\|_{L^2}^2+\|\nabla^3\rho\|_{L^2}^2+\|\nabla v_t\|_{L^2}^2).
			\end{split}
		\end{align}
		
		Substituting  \eqref{K_1}--\eqref{K_5} into \eqref{K_0} and taking $\delta>0$ sufficiently small, we obtain
		\begin{align*}
			\begin{split}
				&\quad\frac{c\alpha}{2}\frac{d}{dt}\int \rho^{\alpha-1}|\nabla\rho_t|^2dx+\frac{1}{4}\int \rho_{tt}^2dx+\frac{\varepsilon_5}{2}\int |\nabla^2 \rho_t|^2dx\\
				&\leq C\|\nabla^3\rho\|_{L^2}^2\|\nabla\rho_t\|_{L^2}^2+C(1+\|\nabla\rho_t\|_{L^2}^2+\|\nabla^3\rho\|_{L^2}^2+\|\nabla v_t\|_{L^2}^2)\\
				&\leq C\|\nabla^3\rho\|_{L^2}^2\int \rho^{\alpha-1}|\nabla\rho_t|^2dx+C(1+\|\nabla\rho_t\|_{L^2}^2+\|\nabla^3\rho\|_{L^2}^2+\|\nabla v_t\|_{L^2}^2),
			\end{split}
		\end{align*}
		which, together with Gr\"onwall's inequality, \eqref{10-3}, \eqref{hhh0}, and \eqref{17-0}, implies that
		\begin{align}\label{17-8}
			\sup_{0\leq t\leq T}\|\nabla\rho_t\|_{L^2}+\int_0^T\big(\|\rho_{tt}\|_{L^2}^2+\|\nabla^2\rho_t\|_{L^2}^2\big)dt\leq C. 
		\end{align}
		
	    We next estimate $\|\nabla^3\rho\|_{L^\infty_TL^2}$. Since $\rho>0$ on $\mathbb{T}^N\times[0,T]$, we can rewrite \eqref{hhh0.5} as
		\begin{align}\label{17-8.5}
			\Delta\rho=\frac{1}{c\alpha\rho^{\alpha-1}}\big(\rho_t+\divg(\rho v)-c\alpha(\alpha-1)\rho^{\alpha-2}|\nabla\rho|^2\big). 
		\end{align}
		Standard $L^p$ estimates for elliptic equations, together with H\"older's inequality, the Sobolev embedding, Young's inequality, the Gagliardo--Nirenberg inequality, \eqref{10-3}, \eqref{hhh0}, and \eqref{17-0}, yield
		\begin{align*}
			\int |\nabla^3\rho|^2dx&\leq C\int |\nabla\rho_t|^2dx+C\int |\nabla\rho|^2|\rho_t|^2dx+C\int |\nabla\rho|^2|\nabla(\rho v)|^2dx\\
			&\quad +C\int |\nabla^2(\rho v)|^2dx+C\int |\nabla\rho|^6dx+C\int |\nabla\rho|^2|\nabla^2\rho|^2dx\\
			&\leq C\|\nabla\rho_t\|_{L^2}^2+C\|\nabla\rho\|_{L^\infty}^2\|\rho_t\|_{L^2}^2+C\|\nabla\rho\|_{L^4}^4\|v\|_{L^\infty}^2+C\|\nabla\rho\|_{L^6}^2\|\nabla v\|_{L^6}^2\\
			&\quad +C\|\nabla^2 v\|_{L^2}^2+C\|\nabla^2 \rho\|_{L^2}^2+C\|\nabla\rho\|_{L^6}^6+C\|\nabla\rho\|_{L^\infty}^2\|\nabla^2 \rho\|_{L^2}^2\\
			&\leq C\|\nabla\rho_t\|_{L^2}^2+C\|\nabla\rho\|_{L^\infty}^2\|\rho_t\|_{L^2}^2+C\|\nabla\rho\|_{H^1}^4\|v\|_{H^2}^2+C\|\nabla\rho\|_{H^1}^2\|\nabla v\|_{H^1}^2\\
			&\quad +C\|\nabla^2 v\|_{L^2}^2+C\|\nabla^2 \rho\|_{L^2}^2+C\|\nabla\rho\|_{H^1}^6+C\|\nabla\rho\|_{L^\infty}^2\|\nabla^2 \rho\|_{L^2}^2\\
			&\leq C\|\nabla\rho\|_{L^\infty}^2+C\|\nabla\rho_t\|_{L^2}^2+C\\
			&\leq \frac{1}{2}\|\nabla^3\rho\|_{L^2}^2+C\|\nabla\rho_t\|_{L^2}^2+C, 
		\end{align*}
		which, together with \eqref{17-8}, implies that
		\begin{align}\label{17-9}
			\sup_{0\leq t\leq T}\|\nabla^3 \rho\|_{L^2}\leq C. 
		\end{align}
		
		Finally, we estimate $\|\nabla^4 \rho\|_{L^2_TL^2}$. Applying the standard elliptic estimates and H\"older's inequality to \eqref{17-8.5} yields
		\begin{align}\label{17-10}
			\begin{split}
				\int |\nabla^4 \rho|^2dx&\leq C\int (|\nabla^2\rho|^2+|\nabla\rho|^4)|\rho_t|^2dx+C\int |\nabla\rho|^2|\nabla\rho_t|^2dx+C\int |\nabla^2\rho_t|^2dx\\
				&\quad +C\int (|\nabla^2\rho|^2+|\nabla\rho|^4)|\nabla(\rho v)|^2dx+C\int |\nabla\rho|^2|\nabla^2(\rho v)|^2dx+C\int |\nabla^3(\rho v)|^2dx\\
				&\quad+C\int (|\nabla^2\rho|^2+|\nabla\rho|^4)|\nabla\rho|^4dx+C\int (|\nabla^2\rho|^4+|\nabla\rho|^2|\nabla^3\rho|^2)dx\\
				&\leq C\big(\|\nabla^2\rho\|_{L^6}^2+\|\nabla\rho\|_{L^\infty}^4\big)\big(\|\rho_t\|_{L^6}^2+\|\nabla (\rho v)\|_{L^6}^2+\|\nabla\rho\|_{L^\infty}^4\big)\\
				&\quad +C\|\nabla\rho\|_{L^\infty}^2\big(\|\nabla\rho_t\|_{L^2}^2+\|\nabla^2(\rho v)\|_{L^2}^2\big)\\
				&\quad +C\big(\|\nabla^2\rho_t\|_{L^2}^2+\|\nabla^3(\rho v)\|_{L^2}^2+\|\nabla^2\rho\|_{L^4}^4+\|\nabla\rho\|_{L^\infty}^2\|\nabla^3\rho\|_{L^2}^2\big).
			\end{split}
		\end{align}
		Using \eqref{10-3}, \eqref{hhh0}, \eqref{17-0}, \eqref{17-8}, \eqref{17-9}, and the Sobolev embedding, we have the following bounds:
		\begin{align*}
			\begin{split}
				\|\nabla^2\rho\|_{L^6}+\|\nabla\rho\|_{L^\infty}&\leq C\|\nabla\rho\|_{H^2}\leq C, \\
				\|\rho_t\|_{L^6}&\leq C\|\rho_t\|_{H^1}\leq C, \\
				\|\nabla(\rho v)\|_{H^1}&\leq C\|\rho\|_{H^2}\|v\|_{H^2}\leq C,\\
				\|\nabla^3(\rho v)\|_{L^2}&\leq C\|\rho\|_{H^3}\|v\|_{H^3}\leq C+C\|\nabla^3 v\|_{L^2}.
			\end{split}
		\end{align*}
		With the aid of these bounds, we deduce from \eqref{17-10} that
		\begin{align*}
			\|\nabla^4\rho\|_{L^2}^2\leq C\big(1+\|\nabla^2\rho_t\|_{L^2}^2+\|\nabla^3 v\|_{L^2}^2\big), 
		\end{align*}
		which, together with \eqref{17-0} and \eqref{17-8}, implies that
		\begin{align}\label{17-11}
			\int_0^T \|\nabla^4 \rho\|_{L^2}^2\leq C. 
		\end{align}
		
		Combining \eqref{17-8}, \eqref{17-9}, and \eqref{17-11} yields \eqref{hhh9}. Thus, we have completed the proof of Proposition \ref{Prop 7.4}.
	\end{proof}
    
	\section{Proof of Theorem \ref{Thm 1.1}}
    Before proving Theorem \ref{Thm 1.1}, we present the following proposition concerning the global existence and uniqueness of solutions to $(\rho,v)$.
    \begin{prop}\label{Prop 8.1} 
       Under the assumptions of Theorem~\ref{Thm 1.1} on the parameters $N,\alpha,\beta,\gamma$, and assuming that $(\rho_0,v_0)$ satisfies \eqref{iiiini}, problem \eqref{Equ2} with initial data $(\rho_0,v_0)$ admits a unique global strong solution $(\rho,v)$ such that for any $0<T<\infty$ and $(x,t)\in\mathbb{T}^N\times[0,T]$,
        \begin{align}\label{231}
        \left\{
        \begin{array}{l}
        (C(T))^{-1}\leq \rho(x,t)\leq C(T),\\[2pt]
        \rho\in C([0,T];H^3)\cap L^2(0,T;H^4),\quad \rho_t\in C([0,T];H^1)\cap L^2(0,T;H^2),\\[2pt]
        v\in C([0,T];H^2)\cap L^2(0,T;H^3),\quad v_t\in L^\infty(0,T;L^2)\cap L^2(0,T;H^1),
        \end{array}
        \right.
        \end{align}
        where the constant $C(T)>0$ depends on the initial data, $N$, $\gamma$, $\alpha$, $\nu$, $\varepsilon$, and $T$.
    \end{prop}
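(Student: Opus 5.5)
This is a standard continuation argument: Lemma \ref{Lem loc} gives local solvability, and the a priori bounds of Sections 4--6 prevent blow-up at any finite time. Uniqueness on every interval $[0,T]$ follows from the uniqueness part of Lemma \ref{Lem loc}, applied on overlapping time intervals. So the only real content is global existence together with \eqref{231}.

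Let $(\rho,v)$ be the maximal strong solution on $[0,T^*)$ given by Lemma \ref{Lem loc}, and suppose $T^*<\infty$. Under the hypotheses of Theorem \ref{Thm 1.1}, Lemma \ref{Lem cond} shows that \eqref{2d lower} or \eqref{3d lower} holds, and that the upper-bound conditions \eqref{2d upper} or \eqref{3d upper} then hold as well. In three dimensions the range of $\alpha$ in \eqref{3d gamma} is also contained in \eqref{ggg1}. Hence Propositions \ref{Prop ene1}--\ref{Prop 7.4} all apply on $[0,T]$ for every $T<T^*$. Their constants depend only on the initial data and on $T^*$, not on $T$, once we check that the initial quantities entering them are finite. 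These quantities are $\|\rho_0^{1/(\hat q_N+2)}v_0\|_{L^{\hat q_N+2}}$, $\|\rho_0^{1/(\check q_N+2)}v_0\|_{L^{\check q_N+2}}$, $\|\tau_0\|_{L^\infty}$, $\|\rho_0\|_{H^3}$, $\|v_0\|_{H^2}$, $M_0^{-1}$ and $E_0$. They are finite because $v_0=u_0+c\alpha\rho_0^{\alpha-2}\nabla\rho_0\in H^2\hookrightarrow L^\infty$ and $\rho_0$ is bounded above and below.

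This yields a uniform bound on $[0,T^*)$:
\begin{align*}
\sup_{0\le t<T^*}\big(\|\rho\|_{H^3}+\|v\|_{H^2}+\|\rho\|_{L^\infty}+\|\rho^{-1}\|_{L^\infty}\big)\le \bar C .
\end{align*}
We then restart Lemma \ref{Lem loc} from time $t_0=T^*-T_0/2$. Its existence time $T_0$ depends only on $\underline{\rho_0}$, $\overline{\rho_0}$, $\|\rho_0\|_{H^3}$ and $\|v_0\|_{H^2}$, so with data bounded by $\bar C$ it admits a fixed $T_0>0$ independent of $t_0$. The solution therefore extends beyond $T^*$, which is a contradiction, and so $T^*=\infty$.

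The regularity in \eqref{231} on any $[0,T]$ comes from Propositions \ref{Prop7.2}--\ref{Prop 7.4} together with Proposition \ref{Prop 7.1}. The density bounds $(C(T))^{-1}\le\rho\le C(T)$ come from Propositions \ref{Prop 2d RT} and \ref{Prop 4.2}, with $T^*$ replaced by $T$. The time-continuity statements $\rho\in C([0,T];H^3)$, $\rho_t\in C([0,T];H^1)$ and $v\in C([0,T];H^2)$ follow from the $L^2_t$ bounds on $\nabla^4\rho$, $\nabla^2\rho_t$, $\rho_{tt}$, $\nabla^3 v$ and $\nabla v_t$ by the standard Lions--Magenes interpolation lemma. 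For example, $\rho\in L^2H^4$ with $\rho_t\in L^2H^2$ gives $\rho\in C([0,T];H^3)$.

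The main point to verify carefully is that every constant in Sections 4--6 is indeed independent of $T<T^*$ and depends on $T^*$ only through a finite number. This is implicit in how those propositions are stated. Beyond that, the argument is bookkeeping.
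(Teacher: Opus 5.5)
Your proposal is correct and matches the paper's argument: the a priori bounds of Propositions \ref{Prop 7.1}--\ref{Prop 7.4} are uniform for $T<T^*$, and combined with the local theory of Lemma \ref{Lem loc} they contradict the maximality of a finite $T^*$. The one difference is minor: you restart at $T^*-T_0/2$ using an existence time that is uniform over data in a bounded set, whereas the paper uses time continuity to define the limit state $(\rho(T^*),v(T^*))$ and restarts exactly at $T^*$.
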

    \begin{proof}
        To prove Proposition \ref{Prop 8.1}, we decompose its proof into the following several steps.\par
        \textbf{Step 1: A Priori Estimates and Uniform Boundedness.} \\
		The uniform estimates established in Propositions \ref{Prop 7.1}-\ref{Prop 7.4} ensure that the solution remains bounded in certain norms. Specifically, for any $T \in (0, T^*)$, there exists a constant $C$  depending on $\nu,\varepsilon,T^*,E_0,\gamma,\alpha,N,\hat{q}_N$, $\|\rho_0^{1/(\hat{q}_N+2)} v_0\|_{L^{\hat{q}_N+2}},$ $M_0^{-1},\|\rho_0\|_{L^\infty}, \|\tau_0\|_{L^\infty},\check{q}_N$, $\|\rho_0^{1/(\check{q}_N+2)} v_0\|_{L^{\check{q}_N+2}}$, $\|\rho_0\|_{H^3},$ and $\|v_0\|_{H^2}$ such that:
		\begin{equation}\label{uniform estimates}
			\begin{split}
				\sup_{0 \le t \le T} \big(\|\rho\|_{H^3} &+ \|v\|_{H^2} \big)\\
				&+\int_0^T\big(\|\rho\|_{H^4}^2+\|\rho_t\|_{H^2}^2+\|\rho_{tt}\|_{L^2}^2+\|v\|_{H^3}^2+\|v_t\|_{H^1}^2\big)dt \le C < \infty.
			\end{split}
		\end{equation}
		The crucial point is that the constant $C$ is independent of $T$ as $T$ approaches the limit $T^*$.
		
		\medskip
		\noindent \par \textbf{Step 2: Regularity and Temporal Continuity.} \\
		The regularity estimates established above imply that
		\begin{align*}
			\rho &\in H^1(0, T; H^2) \cap L^2(0, T; H^4) \cap L^\infty(0,T;H^3) , \\
			v &\in H^1(0, T; H^1) \cap L^2(0, T; H^3)\cap L^\infty(0,T;H^2).
		\end{align*}
		These inclusions guarantee that the solution trajectories are continuous in time with values in the high-order Sobolev spaces $C([0, T]; H^3)$ and $C([0, T]; H^2)$, respectively. Consequently, the state at the maximal time $T^*$ is rigorously defined by the limit due to \eqref{uniform estimates}:
		\begin{equation}
			(\rho(T^*), v(T^*)) := \lim_{t \to T^*} (\rho(t), v(t)).
		\end{equation}
		\medskip
		\noindent \par \textbf{Step 3: Extension of the Solution and Contradiction.} \\
		The limiting data $(\rho(T^*), v(T^*))$ inherits the full regularity of the preceding states, ensuring $\rho(T^*) \in H^3$ and $v(T^*) \in H^2$. Furthermore, since the estimate $\|\tau(t)\|_{L^\infty} \le C$ holds uniformly for $t < T^*$, the density remains strictly positive at $T^*$.
		
		By utilizing $(\rho(T^*), v(T^*))$ as new initial data, the local existence theory allows us to extend the solution to a larger interval $[0, T^* + \delta)$ for some $\delta > 0$. This provides a direct contradiction to the maximality of $T^*$. Therefore, the maximal existence time satisfies $T^* = \infty$. 
        
        The proof of uniqueness has already been established in the argument for the local-in-time existence.
		
	\end{proof}
    We are now ready to complete the proof of Theorem \ref{Thm 1.1}.
	\begin{proof}[Proof of Theorem \ref{Thm 1.1}]
    Recalling that $v_0 = u_0 + c\alpha\rho_0^{\alpha-2}
		\nabla\rho_0$, we readily see that $v_0 \in H^2$. 
		By virtue of Proposition \ref{Prop 8.1}, we obtain a unique global strong solution $(\rho,v)$ to problem 
		\eqref{Equ2} with the initial data $(\rho_0,v_0)$ 
		on $\mathbb{T}^N \times [0,T]$ satisfying 
		\eqref{231} for any $T > 0$. Define $u = v - c\alpha \rho^{\alpha-2}\nabla\rho$. 
		It follows directly from the definition of the effective velocity that $(\rho,u)$ is a 
		strong solution to the problem \eqref{Equ1}--\eqref{ini data} on $\mathbb{T}^N \times [0,T]$ 
		satisfying \eqref{232}. The uniqueness of $(\rho,u)$ is also deduced from the uniqueness of $(\rho,v)$. This completes the proof of Theorem \ref{Thm 1.1}.
    \end{proof}
	\section{Appendix}
	\subsection{Derivation of the density-effective velocity system}
	The purpose of this subsection is to derive system \eqref{Equ2'} from system \eqref{Equ1} under the assumptions $\mu(\rho)=\nu\rho^\alpha$, $\lambda(\rho)=2\nu(\alpha-1)\rho^\alpha$, $\kappa(\rho)=\varepsilon^2\alpha^2\rho^{2\alpha-3}$, and $\nu\ge\varepsilon>0$. To this end, we first establish several auxiliary identities that will be used in the proof of Lemma \ref{Lem A-2}.
	\begin{lema}\label{Lem A-1}
		Let $N=2$ or $N=3$. For any $\alpha>\frac{N-1}{N}$, it holds that 
		\begin{align}\label{A-1}
			\begin{split}
				\nabla\left(\varepsilon^2\alpha^2\rho^{2\alpha-2}\Delta \rho+\varepsilon^2\alpha^2(\alpha-1)\rho^{2\alpha-3}|\nabla\rho|^2\right)-\div(\varepsilon^2\alpha^2\rho^{2\alpha-3}\nabla\rho\otimes\nabla\rho)\\
				=\frac{2\varepsilon^2\alpha^2}{2\alpha-1}\rho\nabla(\rho^{\alpha-\frac{3}{2}}\Delta\rho^{\alpha-\frac{1}{2}}).
			\end{split}
		\end{align}
	\end{lema}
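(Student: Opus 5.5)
The identity \eqref{A-1} is pointwise and algebraic, so I will prove it by expanding both sides in $\rho$, $\nabla\rho$, $\nabla^2\rho$, $\nabla^3\rho$ and comparing coefficients; no integration is involved. Set $k:=\varepsilon^2\alpha^2$ and write $\beta':=\alpha-\frac12$, so that $2\beta'=2\alpha-1\neq 0$ (this is where $\alpha>\frac{N-1}{N}\ge\frac12$ is used). The left-hand side equals $k$ times
\begin{align*}
L:=\nabla\big(\rho^{2\alpha-2}\Delta\rho+(\alpha-1)\rho^{2\alpha-3}|\nabla\rho|^2\big)-\divg(\rho^{2\alpha-3}\nabla\rho\otimes\nabla\rho).
\end{align*}
Expanding term by term gives
\begin{align*}
L&=\rho^{2\alpha-2}\nabla\Delta\rho+(2\alpha-2)\rho^{2\alpha-3}\Delta\rho\nabla\rho+(\alpha-1)(2\alpha-3)\rho^{2\alpha-4}|\nabla\rho|^2\nabla\rho+2(\alpha-1)\rho^{2\alpha-3}\nabla^2\rho\,\nabla\rho\\
&\quad-(2\alpha-3)\rho^{2\alpha-4}|\nabla\rho|^2\nabla\rho-\rho^{2\alpha-3}\Delta\rho\nabla\rho-\rho^{2\alpha-3}\nabla^2\rho\,\nabla\rho,
\end{align*}
where $\nabla^2\rho\,\nabla\rho=\frac12\nabla|\nabla\rho|^2$.

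For the right-hand side, I first compute $\Delta\rho^{\beta'}=\beta'\rho^{\beta'-1}\Delta\rho+\beta'(\beta'-1)\rho^{\beta'-2}|\nabla\rho|^2$. Multiplying by $\rho^{\alpha-\frac32}=\rho^{\beta'-1}$ gives
\begin{align*}
G:=\rho^{\alpha-\frac32}\Delta\rho^{\alpha-\frac12}=\beta'\rho^{2\alpha-3}\Delta\rho+\beta'(\beta'-1)\rho^{2\alpha-4}|\nabla\rho|^2.
\end{align*}
Taking $\frac{2}{2\alpha-1}\rho\nabla G=\frac{1}{\beta'}\rho\nabla G$ then yields
\begin{align*}
\rho^{2\alpha-2}\nabla\Delta\rho+(2\alpha-3)\rho^{2\alpha-3}\Delta\rho\nabla\rho+2(\beta'-1)\rho^{2\alpha-3}\nabla^2\rho\nabla\rho+(\beta'-1)(2\alpha-4)\rho^{2\alpha-4}|\nabla\rho|^2\nabla\rho.
\end{align*}

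Finally I match the four coefficient groups. For $\nabla\Delta\rho$ both sides give $1$. For $\Delta\rho\nabla\rho$ the left gives $2\alpha-3$, equal to the right. For $\nabla^2\rho\nabla\rho$ the left gives $2\alpha-3$ and the right gives $2(\alpha-\frac32)=2\alpha-3$. For $|\nabla\rho|^2\nabla\rho$ the left gives $(2\alpha-3)(\alpha-2)$ and the right gives $(\alpha-\frac32)(2\alpha-4)$, which agree. Multiplying through by $k$ gives \eqref{A-1}.

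The only real obstacle is bookkeeping of the cubic term $|\nabla\rho|^2\nabla\rho$, which collects contributions from three places on the left. I will check it carefully, using the symmetry of $\nabla^2\rho$ when I write $\divg(\nabla\rho\otimes\nabla\rho)=\Delta\rho\nabla\rho+\nabla^2\rho\nabla\rho$.
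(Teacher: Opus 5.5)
Your proposal is correct and follows essentially the same route as the paper. Both sides are expanded pointwise into the four structures $\rho^{2\alpha-2}\nabla\Delta\rho$, $\rho^{2\alpha-3}\Delta\rho\nabla\rho$, $\rho^{2\alpha-3}\nabla^2\rho\nabla\rho$, $\rho^{2\alpha-4}|\nabla\rho|^2\nabla\rho$, and the coefficients $1$, $2\alpha-3$, $2\alpha-3$, $(\alpha-2)(2\alpha-3)$ are matched; the paper merely rewrites $\frac{2}{2\alpha-1}\Delta\rho^{\alpha-\frac12}$ as $\divg(\rho^{\alpha-\frac32}\nabla\rho)$ before differentiating, which is the same computation.
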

	\begin{proof}
		A direct computation shows that
		\begin{align}\label{A-1.4}
			\begin{split}
				&\quad\frac{\text{LHS of }\eqref{A-1}}{\varepsilon^2\alpha^2}\\
				&=(2\alpha-2)\rho^{2\alpha-3}\nabla\rho\Delta\rho+\rho^{2\alpha-2}\nabla\Delta\rho+(\alpha-1)(2\alpha-3)\rho^{2\alpha-4}|\nabla\rho|^2\nabla\rho\\
				&\quad+2(\alpha-1)\rho^{2\alpha-3}\nabla\rho\cdot\nabla^2\rho-(2\alpha-3)\rho^{2\alpha-4}|\nabla\rho|^2\nabla\rho-\rho^{2\alpha-3}\nabla\rho\Delta\rho-\rho^{2\alpha-3}\nabla\rho\cdot\nabla^2\rho\\
				&=(\alpha-2)(2\alpha-3)\rho^{2\alpha-4}|\nabla\rho|^2\nabla\rho+(2\alpha-3)\rho^{2\alpha-3}\nabla\rho\cdot\nabla^2\rho+\rho^{2\alpha-2}\nabla\Delta\rho\\
				&\quad+(2\alpha-3)\rho^{2\alpha-3}\nabla\rho\Delta\rho.
			\end{split}
		\end{align}
		On the other hand, a direct computation also yields
		\begin{align}\label{A-1.6}
			\begin{split}
				&\quad\frac{\text{RHS of }\eqref{A-1}}{\varepsilon^2\alpha^2}\\
				&=\rho\nabla\big(\rho^{\alpha-\frac{3}{2}}\divg (\rho^{\alpha-\frac{3}{2}}\nabla\rho)\big)\\
				&=\Big(\alpha-\frac{3}{2}\Big)\rho\nabla\big(\rho^{2\alpha-4}|\nabla\rho|^2\big)+\rho\nabla\big(\rho^{2\alpha-3}\Delta\rho\big)\\
				&=(\alpha-2)(2\alpha-3)\rho^{2\alpha-4}|\nabla\rho|^2\nabla\rho+(2\alpha-3)\rho^{2\alpha-3}\nabla\rho\cdot\nabla^2\rho+\rho^{2\alpha-2}\nabla\Delta\rho\\
				&\quad+(2\alpha-3)\rho^{2\alpha-3}\nabla\rho\Delta\rho.
			\end{split}
		\end{align}
		
		This completes the proof of Lemma \ref{Lem A-1}.
	\end{proof}
	Next, we present the derivation of system \eqref{Equ2'} under the assumption that the capillarity coefficient satisfies $\varepsilon\le \nu$.  Motivated by the work of Burtea and Haspot \cite{Burtea-Haspot-40} on the one-dimensional Navier-Stokes-Korteweg system and that of Bresch, Gisclon, and Lacroix-Violet \cite{Bresch-Gisclon-Violet} for the higher-dimensional case, we introduce the effective velocities $v_{\pm}$ defined in \eqref{eff v}. For completeness, we include the details of the derivation. 
	\begin{lema}\label{Lem A-2}
		Let \( N=2 \) or \( N=3 \) and \( \alpha>\frac{N-1}{N} \). Under transformation $\eqref{eff v}$, system \eqref{Equ1} can be transformed into \eqref{Equ2'}.
	\end{lema}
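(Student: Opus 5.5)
We verify the two equations of \eqref{Equ2'} separately, treating $v_\pm=u+c_\pm\alpha\rho^{\alpha-2}\nabla\rho=u+c_\pm\nabla\phi$ with $\phi:=\frac{\alpha}{\alpha-1}\rho^{\alpha-1}$, so that $\rho\nabla\phi=\nabla(\rho^\alpha)$. For the mass equation we write $\rho u=\rho v_\pm-c_\pm\nabla(\rho^\alpha)$. Substituting into $\eqref{Equ1}_1$ then gives $\rho_t+\divg(\rho v_\pm)-c_\pm\Delta(\rho^\alpha)=0$ immediately.

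For the momentum equation we first compute $\rho(\partial_t+u\cdot\nabla)\nabla\phi$. The continuity equation gives $\phi_t+u\cdot\nabla\phi=-\alpha\rho^{\alpha-1}\divg u$. Taking the gradient and commuting,
\begin{align*}
\rho(\partial_t+u\cdot\nabla)\nabla\phi=-\rho\nabla(\alpha\rho^{\alpha-1}\divg u)-\rho(\nabla u)^t\cdot\nabla\phi,
\end{align*}
where the last term is understood componentwise as $\rho\,\partial_i u_j\,\partial_j\phi$. Next we rewrite this in divergence form. Using $\rho\nabla\phi=\nabla\rho^\alpha$ and the identity $\divg(\rho^\alpha(\nabla u)^t)=\rho^\alpha\nabla\divg u+(\nabla u)^t\cdot\nabla\rho^\alpha$, we get
\begin{align*}
\rho(\partial_t+u\cdot\nabla)\nabla\phi=(\alpha-1)\nabla(\rho^\alpha\divg u)\cdot\tfrac{\alpha}{\alpha}-\divg(\rho^\alpha(\nabla u)^t)+\dots
\end{align*}
Precisely, $\rho\nabla(\alpha\rho^{\alpha-1}\divg u)=\nabla(\rho^\alpha\divg u)\cdot\alpha-\dots$; the clean identity we aim for is
\begin{align*}
\rho(\partial_t+u\cdot\nabla)\nabla\phi=-\divg(\rho^\alpha(\nabla u)^t)-(\alpha-1)\nabla(\rho^\alpha\divg u),
\end{align*}
which we check by expanding both sides. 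This is the standard BD computation.

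We then combine with $\eqref{Equ1}_2$, written in nonconservative form as $\rho(\partial_t+u\cdot\nabla)u$. The viscous term equals $\nu\divg(\rho^\alpha\nabla u)+\nu\divg(\rho^\alpha(\nabla u)^t)+2\nu(\alpha-1)\nabla(\rho^\alpha\divg u)$. By Lemma~\ref{Lem A-1}, the Korteweg term equals $\frac{2\varepsilon^2\alpha^2}{2\alpha-1}\rho\nabla(\rho^{\alpha-3/2}\Delta\rho^{\alpha-1/2})$. Adding $c_\pm$ times the identity above, we obtain
\begin{align*}
\rho\frac{Dv_\pm}{Dt}+\nabla P=\nu\divg(\rho^\alpha\nabla u)+(\nu-c_\pm)\divg(\rho^\alpha(\nabla u)^t)+(\alpha-1)(2\nu-c_\pm)\nabla(\rho^\alpha\divg u)+\divg\mathbb{K}.
\end{align*}

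It remains to replace $u$ by $v_\pm-c_\pm\nabla\phi$ on the right. Since $\nabla^2\phi$ is symmetric, the $\nabla\phi$-contributions collect into
\begin{align*}
-c_\pm\big[(2\nu-c_\pm)\divg(\rho^\alpha\nabla^2\phi)+(\alpha-1)(2\nu-c_\pm)\nabla(\rho^\alpha\Delta\phi)\big].
\end{align*}
We must show that this cancels $\divg\mathbb{K}$. The main algebraic step is the identity
\begin{align*}
\divg(\rho^\alpha\nabla^2\phi)+(\alpha-1)\nabla(\rho^\alpha\Delta\phi)=\frac{2\alpha^2}{2\alpha-1}\rho\nabla(\rho^{\alpha-3/2}\Delta\rho^{\alpha-1/2}),
\end{align*}
which is proved by direct expansion, comparing against \eqref{A-1.6}. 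Given this identity, the cancellation reduces to $c_\pm(2\nu-c_\pm)=\varepsilon^2$, and this holds exactly because $c_\pm=\nu\pm\sqrt{\nu^2-\varepsilon^2}$ are the roots of $c^2-2\nu c+\varepsilon^2=0$. These roots are real precisely when $\varepsilon\le\nu$.

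Finally, we note that the right-hand side is built from $\nabla v_\pm$ and that the left-hand side carries the transport by $u$, which matches \eqref{Equ2'}. The expected obstacle is the bookkeeping in the two expansion identities, specifically tracking the cubic terms $\rho^{2\alpha-4}|\nabla\rho|^2\nabla\rho$.
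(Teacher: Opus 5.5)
Your proposal is correct and follows essentially the paper's route. The material-derivative decomposition of $\rho\frac{D}{Dt}v_\pm$ gives the intermediate equation with coefficients $(\nu-c_\pm)$ and $(\alpha-1)(2\nu-c_\pm)$ acting on $u$. After substituting $u=v_\pm-c_\pm\alpha\rho^{\alpha-2}\nabla\rho$, the leftover terms cancel $\divg\mathbb{K}$ by direct expansion together with $c_\pm(2\nu-c_\pm)=\varepsilon^2$. You compare against the right side of Lemma~\ref{Lem A-1} rather than against the expansion of $\divg\mathbb{K}$ itself, which is equivalent.

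Only cosmetic fixes are needed:
\begin{itemize}
\item $\phi=\frac{\alpha}{\alpha-1}\rho^{\alpha-1}$ is undefined at $\alpha=1$, which lies in the stated range. Take $\phi=\log\rho$ there; only $\nabla\phi=\alpha\rho^{\alpha-2}\nabla\rho$ is ever used.
\item Delete the garbled lines before your ``clean identity''. They carry the wrong sign on $(\alpha-1)\nabla(\rho^\alpha\divg u)$, whereas the clean identity itself is correct.
\end{itemize}
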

	\begin{proof}
		We first prove that $\eqref{Equ2'}_1$ holds. From \eqref{Equ1}, we have
		\begin{align*}
			\begin{split}
				&\quad\rho_t+\divg(\rho v_\pm)-c_\pm\Delta(\rho^\alpha)\\
				&=\rho_t+\divg(\rho u)+c_\pm\alpha\divg(\rho^{\alpha-1}\nabla\rho)-c_\pm\Delta(\rho^\alpha)\\
				&=0.
			\end{split}
		\end{align*}
		Next, we prove that $\eqref{Equ2'}_2$ holds. A direct computation shows that
		\begin{align}\label{A-1.5}
			\begin{split}
				\rho\frac{D}{Dt}v_\pm=\rho\frac{D}{Dt}u+\rho\frac{D}{Dt}(c_\pm\alpha\rho^{\alpha-2}\nabla\rho),
			\end{split}
		\end{align}
		where $\frac{D}{Dt}=\partial_t+u\cdot\nabla$. It follows from $\eqref{Equ1}_2$ that
		\begin{align}\label{A-2}
			\begin{split}
				\rho\frac{D}{Dt}u+\nabla P&=\nu\rho^\alpha\Delta u+\nu\rho^\alpha\nabla\divg u+\nu\nabla(\rho^\alpha)\cdot\nabla u+\nu\nabla u\cdot\nabla(\rho^\alpha)\\
				&\quad+2(\alpha-1)\nu\nabla(\rho^\alpha)\divg u+2(\alpha-1)\nu\rho^\alpha\nabla\divg u+\divg\mathbb{K}\\
				&=\nu\rho^\alpha\Delta u+(2\alpha-1)\nu\rho^\alpha\nabla\divg u+\nu\nabla(\rho^\alpha)\cdot\nabla u+\nu\nabla u\cdot\nabla(\rho^\alpha)\\
				&\quad+2(\alpha-1)\nu\nabla(\rho^\alpha)\divg u+\divg\mathbb{K}.
			\end{split}
		\end{align}
		Invoking $\eqref{Equ1}_1$, we obtain
		\begin{align}\label{A-3}
			\begin{split}
				\rho\frac{D}{Dt}(c_\pm\alpha\rho^{\alpha-2}\nabla\rho)&=c_\pm\alpha\rho\frac{D}{Dt}\rho^{\alpha-2}\nabla\rho+c_\pm\alpha\rho^{\alpha-1}\frac{D}{Dt}\nabla\rho\\
				&=c_\pm\alpha\rho\frac{D}{Dt}\rho^{\alpha-2}\nabla\rho+c_\pm\alpha\rho^{\alpha-1}\nabla\frac{D}{Dt}\rho-c_\pm\alpha\rho^{\alpha-1}\nabla u\cdot\nabla\rho\\
				&=-c_\pm\alpha(\alpha-2)\rho^{\alpha-1}\divg u\nabla\rho-c_\pm\alpha\rho^{\alpha-1}\nabla(\rho\divg u)-c_\pm\alpha\rho^{\alpha-1}\nabla u\cdot\nabla\rho\\
				&=c_\pm(-\alpha^2+\alpha)\rho^{\alpha-1}\divg u \nabla\rho-c_\pm\alpha\rho^{\alpha}\nabla\divg u-c_\pm\alpha\rho^{\alpha-1}\nabla u\cdot\nabla\rho\\
				&=c_\pm(1-\alpha)\nabla(\rho^\alpha)\divg u-c_\pm\alpha\rho^{\alpha}\nabla\divg u-c_\pm\nabla u\cdot\nabla(\rho^\alpha).
			\end{split}
		\end{align}
		Adding $\eqref{A-2}$ and $\eqref{A-3}$, we obtain from $\eqref{A-1.5}$ that
		\begin{align*}
			\begin{split}
				&\quad\rho\frac{D}{Dt}v_\pm+\nabla P\\
				&=\nu\rho^\alpha\Delta u+(2\alpha\nu-\nu-c_\pm\alpha)\rho^\alpha\nabla\divg u+\nu\nabla(\rho^\alpha)\cdot\nabla u+(\nu-c_\pm)\nabla u\cdot\nabla(\rho^\alpha)\\
				&\quad+(2\nu\alpha-2\nu-c_\pm\alpha+c_\pm)\nabla(\rho^\alpha)\divg u+\divg \mathbb{K}\\
				&=\nu\divg(\rho^\alpha\nabla u)+(\nu-c_\pm)\divg(\rho^\alpha(\nabla u)^t)+(2\nu-c_\pm)(\alpha-1)\nabla(\rho^\alpha\divg u)+\divg\mathbb{K}\\
				&=\nu\divg(\rho^\alpha\nabla v_\pm)+(\nu-c_\pm)\divg(\rho^\alpha(\nabla v_\pm)^t)+(2\nu-c_\pm)(\alpha-1)\nabla(\rho^\alpha\divg v_\pm)\\
				&\quad-(2\nu-c_\pm) c_\pm\alpha\divg(\rho^\alpha\nabla(\rho^{\alpha-2}\nabla\rho))-(2\nu-c_\pm)c_\pm\alpha(\alpha-1)\nabla(\rho^\alpha\divg(\rho^{\alpha-2}\nabla\rho))+\divg\mathbb{K}.
			\end{split}
		\end{align*}
		Therefore, $\eqref{Equ2'}_2$ holds if
		\begin{align}\label{A-4}
			\frac{\divg \mathbb{K}}{(2\nu-c_\pm)c_\pm}=\alpha\divg(\rho^\alpha\nabla(\rho^{\alpha-2}\nabla\rho))+\alpha(\alpha-1)\nabla(\rho^\alpha\divg (\rho^{\alpha-2}\nabla\rho)).
		\end{align}
		Indeed, it follows from $\eqref{A-1.4}$ that
		\begin{align}\label{A-5}
			\begin{split}
				\frac{\divg\mathbb{K}}{\varepsilon^2}&=\alpha^2(\alpha-2)(2\alpha-3)\rho^{2\alpha-4}|\nabla\rho|^2\nabla\rho+\alpha^2(2\alpha-3)\rho^{2\alpha-3}\nabla\rho\cdot\nabla^2\rho\\&\quad+\alpha^2\rho^{2\alpha-2}\nabla\Delta\rho+\alpha^2(2\alpha-3)\rho^{2\alpha-3}\nabla\rho\Delta\rho.
			\end{split}
		\end{align}
		A direct computation yields that
		\begin{align*}
			\begin{split}
				&\quad\alpha\divg(\rho^\alpha\nabla(\rho^{\alpha-2}\nabla\rho))+\alpha(\alpha-1)\nabla(\rho^\alpha\divg (\rho^{\alpha-2}\nabla\rho))\\
				&=\alpha(\alpha-2)\divg(\rho^{2\alpha-3}\nabla\rho\otimes\nabla\rho)+\alpha\divg(\rho^{2\alpha-2}\nabla^2\rho)\\
				&\quad+\alpha(\alpha-1)(\alpha-2)\nabla(\rho^{2\alpha-3}|\nabla\rho|^2)+\alpha(\alpha-1)\nabla(\rho^{2\alpha-2}\Delta\rho)\\
				&=\alpha(\alpha-2)(2\alpha-3)\rho^{2\alpha-4}|\nabla\rho|^2\nabla\rho+\alpha(\alpha-2)\rho^{2\alpha-3}\nabla\rho\cdot\nabla^2\rho+\alpha(\alpha-2)\rho^{2\alpha-3}\nabla\rho\Delta\rho\\
				&\quad+\alpha(2\alpha-2)\rho^{2\alpha-3}\nabla\rho\cdot\nabla^2\rho+\alpha\rho^{2\alpha-2}\nabla\Delta\rho+\alpha(\alpha-1)(\alpha-2)(2\alpha-3)\rho^{2\alpha-4}|\nabla\rho|^2\nabla\rho\\
				&\quad+2\alpha(\alpha-1)(\alpha-2)\rho^{2\alpha-3}\nabla\rho\cdot\nabla^2\rho+\alpha(\alpha-1)(2\alpha-2)\rho^{2\alpha-3}\nabla\rho\Delta\rho+\alpha(\alpha-1)\rho^{2\alpha-2}\nabla\Delta\rho\\
				&=\alpha^2(\alpha-2)(2\alpha-3)\rho^{2\alpha-4}|\nabla\rho|^2\nabla\rho+\alpha^2(2\alpha-3)\rho^{2\alpha-3}\nabla\rho\cdot\nabla^2\rho\\
				&\quad+\alpha^2\rho^{2\alpha-2}\nabla\Delta\rho+\alpha^2(2\alpha-3)\rho^{2\alpha-3}\nabla\rho\Delta\rho,
			\end{split}
		\end{align*}
		which, together with \eqref{A-5}, implies that
		\begin{align*}
			\frac{\divg \mathbb{K}}{\varepsilon^2}=\alpha\divg(\rho^\alpha\nabla(\rho^{\alpha-2}\nabla\rho))+\alpha(\alpha-1)\nabla(\rho^\alpha\divg (\rho^{\alpha-2}\nabla\rho)).
		\end{align*}
		Since $\varepsilon^2=(2\nu-c_\pm)c_\pm$, identity \eqref{A-4} follows immediately. This completes the proof of Lemma \ref{Lem A-2}.
	\end{proof}
	
	\subsection{Local well-posedness for the density-effective velocity system}
	This subsection is devoted to proving Lemma \ref{Lem loc} via the Schauder fixed point theorem, together with the linearized iteration and compactness arguments in \cite{Cho-Choe-Kim}.
	
	We begin by stating the well-known Schauder fixed point theorem (see Corollary 11.2 in \cite{Gilbarg-Trudinger-2001}).
	\begin{lema}[Schauder fixed point theorem]\label{schauder fixed}
		Let $\mathcal{O}$ be a closed convex set in a Banach space $\mathcal{B}$, and let $\mathcal{T}$ be a continuous mapping of $\mathcal{O}$ into itself such that the image $\mathcal{T}\mathcal{O}$ is precompact. Then $\mathcal{T}$ has a fixed point.
	\end{lema}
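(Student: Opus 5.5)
We reduce this to Brouwer's fixed point theorem by approximating $\mathcal{T}$ with maps of finite-dimensional range, using Schauder projections. Let $K:=\overline{\mathcal{T}\mathcal{O}}$. Since $\mathcal{T}\mathcal{O}$ is precompact and $\mathcal{O}$ is closed, $K$ is a compact subset of $\mathcal{O}$.

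\textbf{Step 1 (Schauder projection).} Fix $\epsilon>0$. By compactness, choose a finite $\epsilon$-net $x_1,\dots,x_m\in K$ of $K$. For $y\in K$ set
\begin{align*}
\mu_i(y)=\max\{\epsilon-\|y-x_i\|_{\mathcal{B}},0\},\qquad P_\epsilon(y)=\frac{\sum_{i=1}^m\mu_i(y)x_i}{\sum_{i=1}^m\mu_i(y)}.
\end{align*}
The denominator never vanishes on $K$, because every $y\in K$ lies within $\epsilon$ of some $x_i$. Hence $P_\epsilon$ is continuous on $K$. Its values lie in $K_\epsilon:=\mathrm{conv}\{x_1,\dots,x_m\}$, which is contained in $\mathcal{O}$ by convexity. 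Moreover, $\|P_\epsilon(y)-y\|<\epsilon$ for all $y\in K$, since $P_\epsilon(y)$ is a convex combination of only those $x_i$ with $\|y-x_i\|<\epsilon$.

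\textbf{Step 2 (finite-dimensional fixed point).} Define $\mathcal{T}_\epsilon:=P_\epsilon\circ\mathcal{T}$ restricted to $K_\epsilon$. This is well defined because $K_\epsilon\subset\mathcal{O}$ and $\mathcal{T}(\mathcal{O})\subset K$. It is continuous and maps $K_\epsilon$ into itself. The set $K_\epsilon$ is a compact convex subset of the finite-dimensional space $\mathrm{span}\{x_1,\dots,x_m\}$, so it is homeomorphic to a closed ball in its affine hull. Brouwer's theorem then yields $x_\epsilon\in K_\epsilon$ with $\mathcal{T}_\epsilon x_\epsilon=x_\epsilon$. Combined with Step 1, this gives $\|\mathcal{T}x_\epsilon-x_\epsilon\|<\epsilon$.

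\textbf{Step 3 (passage to the limit).} Take $\epsilon=1/n$ and write $x_n:=x_{1/n}$. The points $\mathcal{T}x_n$ lie in the compact set $K$, so a subsequence satisfies $\mathcal{T}x_{n_k}\to x\in K\subset\mathcal{O}$. Since $\|\mathcal{T}x_n-x_n\|<1/n$, we also have $x_{n_k}\to x$. Continuity of $\mathcal{T}$ on $\mathcal{O}$ then gives $\mathcal{T}x=\lim_k\mathcal{T}x_{n_k}=x$.

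The only substantive ingredient is Brouwer's theorem on the convex hull $K_\epsilon$, which we take as classical. The one point needing care is Step 1: the approximating map must land back in $\mathcal{O}$. This is exactly where the convexity of $\mathcal{O}$ enters.
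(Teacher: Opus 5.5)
Your proof is correct and complete; the only thing left tacit is $\mathcal{O}\neq\emptyset$, which the statement also needs. The paper does not prove this lemma itself. It cites it from Gilbarg--Trudinger, Corollary 11.2.

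The usual textbook route to this version, as in that source, goes in two stages. First the compact convex case is proved, with exactly the Schauder projections and Brouwer argument you use. Then the precompact-image case is reduced to it by replacing $\mathcal{O}$ with the closed convex hull of $\mathcal{T}\mathcal{O}$, which is compact by Mazur's theorem.

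You merge the two stages. You take the $\epsilon$-net inside $K=\overline{\mathcal{T}\mathcal{O}}$ and form $K_\epsilon=\mathrm{conv}\{x_1,\dots,x_m\}$. You then use only the convexity of $\mathcal{O}$ to get $K_\epsilon\subset\mathcal{O}$, so that $\mathcal{T}$ is defined on $K_\epsilon$. Compactness is needed only for $K$ itself, in the final extraction step.

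What your route buys is a self-contained proof that avoids Mazur's theorem (compactness of closed convex hulls of compact sets in Banach spaces). What the textbook route buys is modularity, since it isolates the compact convex case as a theorem in its own right. In both, the only external input is Brouwer's theorem on a finite-dimensional compact convex set.
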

	
	Next, we consider the local existence and uniqueness of strong solutions to the initial value problem for the nonlinear parabolic equation governing the density.
	\begin{prop}\label{Prop 9.1}
		Assume that the hypotheses of Lemma \ref{Lem loc} hold, and that for some fixed $R>0$, $w$ belongs to the following set:
		\begin{align*}
			\mathcal{B}_R=\left\{f \middle|\, f(0)=v_0,\; \|f\|_{C_TH^2}^2+\|f\|_{L^2_TH^3}^2+\|f_t\|_{L^\infty_T L^2}^2+\|f_t\|_{L^2_T H^1}^2\leq R^2\right\}.
		\end{align*}
		Then there exist constants $M_R>0$, depending only on $\alpha,\nu,\varepsilon,N$, $\|\rho_0\|_{H^3}$, $\overline{\rho_0}$, $\underline{\rho_0}$, and $R$, and $T>0$, depending on $\alpha,\nu,\varepsilon,N$, $\|\rho_0\|_{H^3}$, $\overline{\rho_0}$, $\underline{\rho_0}$, $R$, and $M_R$, such that the initial value problem for the equation
		\begin{align}\label{15-1}
			\rho_t+\divg (\rho w)-c\Delta(\rho^\alpha)=0,
		\end{align}
		with initial data $\rho(0)=\rho_0$ and periodic boundary conditions, admits a unique strong solution $\rho$ on $\mathbb{T}^N\times[0,T]$ satisfying
		\begin{align}
			\|\rho\|_{C_TH^3}^2+\|\rho\|_{L^2_T H^4}^2+\|\rho_t\|_{L^2_T H^2}^2\leq M_R^2,\label{3-2'}\\
			\frac{1}{2}\underline{\rho_0}\leq \rho(x,t) \leq \frac{3}{2}\overline{\rho_0}, \text{ for all }(x,t)\in \mathbb{T}^N\times[0,T]. \label{3-2''}
		\end{align}
		Furthermore, there exists a constant $\tilde{M}_R>0$, depending only on $\alpha,\nu,\varepsilon,N$, $\|\rho_0\|_{H^3}$, $\overline{\rho_0}$, $\underline{\rho_0}$, and $R$, such that
		\begin{align}
				\|\rho\|_{C_TH^3}^2+\|\rho\|_{L^2_T H^4}^2&+\|\rho_t\|_{L^\infty_T H^1}^2+\|\rho_t\|_{L^2_T H^2}^2+\|\rho_{tt}\|_{L^2_TL^2}^2\leq \tilde{M}_R^2.\label{3-2}
		\end{align}
	\end{prop}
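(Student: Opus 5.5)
We treat \eqref{15-1} as a quasilinear uniformly parabolic equation with a prescribed smooth drift $w\in\mathcal{B}_R$. Expanded, it reads
\begin{align*}
\rho_t-c\alpha\rho^{\alpha-1}\Delta\rho=c\alpha(\alpha-1)\rho^{\alpha-2}|\nabla\rho|^2-\divg(\rho w),
\end{align*}
and it is uniformly parabolic as long as $\rho$ stays in $[\frac12\underline{\rho_0},\frac32\overline{\rho_0}]$. We construct the solution by a linearization plus fixed-point argument, in the spirit of \cite{Cho-Choe-Kim}.

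\textbf{Step 1: the linearized problem.} Fix $\sigma$ in the closed convex set
\begin{align*}
\mathcal{O}=\{\sigma:\ \sigma(0)=\rho_0,\ \|\sigma\|_{C_TH^3}^2+\|\sigma\|_{L^2_TH^4}^2+\|\sigma_t\|_{L^2_TH^2}^2\le M^2,\ \tfrac12\underline{\rho_0}\le\sigma\le\tfrac32\overline{\rho_0}\}.
\end{align*}
Solve the linear equation
\begin{align*}
\rho_t-c\alpha\sigma^{\alpha-1}\Delta\rho=c\alpha(\alpha-1)\sigma^{\alpha-2}\nabla\sigma\cdot\nabla\rho-\divg(\rho w)
\end{align*}
by Galerkin or by standard linear parabolic theory, since the coefficient $\sigma^{\alpha-1}$ is bounded above and below and is $C([0,T];H^3)$.

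\textbf{Step 2: energy estimates for the linear solution.} Apply $\nabla^k$ ($k\le3$), test with $\sigma^{1-\alpha}\nabla^k\rho$-type weights or simply with $\nabla^k\rho$, and estimate commutators using $H^3\hookrightarrow W^{1,\infty}$ for $\sigma$ and $w\in L^2_TH^3\cap C_TH^2$. This gives
\begin{align*}
\frac{d}{dt}\|\rho\|_{H^3}^2+\|\nabla^4\rho\|_{L^2}^2\le C(M)\big(1+\|w\|_{H^3}^2+\|\sigma\|_{H^4}^2\big)\|\rho\|_{H^3}^2 .
\end{align*}
Here the top-order term $\nabla^3\divg(\rho w)$ needs $\|w\|_{H^3}$ only in $L^2_T$. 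The hard part is the term $\nabla^3(\sigma^{\alpha-1})\Delta\rho$, which involves $\nabla^3\sigma\in L^\infty_TL^2$ against $\Delta\rho\in L^\infty$; we bound it by $\|\nabla^2\rho\|_{L^\infty}\le C\|\rho\|_{H^4}^{1/2}\|\rho\|_{H^3}^{1/2}$ plus Young.

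\textbf{Step 3: choice of constants.} Using Gr\"onwall over $[0,T]$, we choose $M_R$ as a multiple (say $2$ or $e$) of $C\|\rho_0\|_{H^3}$, where $C$ depends only on $\sigma$-independent lower-order constants. The $M$-dependence enters only through factors $T^{1/2}$ or $\int_0^T\|\sigma\|_{H^4}^2$, so it is absorbed by taking $T$ small. This is the main obstacle: the bound must be organized so that $M$ appears only in time-small factors. To achieve this, we freeze $\rho_0^{\alpha-1}$ as the principal coefficient and treat $\sigma^{\alpha-1}-\rho_0^{\alpha-1}=O(T^{1/2}M)$ perturbatively.

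Next, $\rho_t$ is recovered from the equation. For the pointwise bounds, $\|\rho-\rho_0\|_{L^\infty}\le C\|\rho-\rho_0\|_{H^2}\le CT^{1/2}\|\rho_t\|_{L^2_TH^2}$, which yields \eqref{3-2''} for small $T$. Hence the map $\mathcal{T}:\sigma\mapsto\rho$ sends $\mathcal{O}$ into itself.

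\textbf{Step 4: fixed point.} By Aubin--Lions, $\mathcal{T}\mathcal{O}$ is precompact in $C([0,T];H^2)$. The map is continuous there, by an $L^2$/$H^1$ difference estimate. Lemma \ref{schauder fixed} then gives a fixed point, which is a solution satisfying \eqref{3-2'}.

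\textbf{Uniqueness.} Take the difference of two solutions, test with the difference in $L^2$, and use the upper and lower bounds together with $\nabla\rho_i\in L^\infty$ and Gr\"onwall.

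\textbf{The bound \eqref{3-2}.} Differentiate \eqref{15-1} in time and test with $\rho_t$ and with $\rho_{tt}$, exactly as in Proposition \ref{Prop 7.4}, using $w_t\in L^2_TH^1$. This gives $\rho_t\in L^\infty_TH^1$ and $\rho_{tt}\in L^2_TL^2$ with a constant $\tilde M_R$ depending on $R$ and the initial data only; $\|\nabla\rho_t(0)\|_{L^2}$ is controlled by $\|\rho_0\|_{H^3}$ and $\|v_0\|_{H^2}$ through the equation.
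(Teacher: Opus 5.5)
Your architecture matches the paper's. Both arguments use Schauder's theorem in $C([0,T];H^2)$ on the same set $\mathcal{O}$, a linearization that freezes the diffusion coefficient, $H^3$-level energy estimates, the pointwise bound via $\|\rho-\rho_0\|_{L^\infty}\le CT^{1/2}M$, and Aubin--Lions for precompactness. The paper freezes the whole right-hand side, $\rho_t-c\alpha\eta^{\alpha-1}\Delta\rho=-\divg(\eta w)+c\alpha(\alpha-1)\eta^{\alpha-2}|\nabla\eta|^2$; keeping $\divg(\rho w)$ and $\nabla\sigma\cdot\nabla\rho$ in the unknown, as you do, is also legitimate.

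\textbf{The gap.} It lies in the step you flag as the main obstacle: as written, the estimate does not close. From your displayed inequality, Gr\"onwall gives only
\[
\sup_{0\le t\le T}\|\rho\|_{H^3}^2\le \|\rho_0\|_{H^3}^2\exp\Big\{C(M)\Big(T+\int_0^T\|w\|_{H^3}^2dt+\int_0^T\|\sigma\|_{H^4}^2dt\Big)\Big\}.
\]
The last two integrals are only known to be bounded by $R^2$ and $M^2$. Nothing in the definitions of $\mathcal{B}_R$ and $\mathcal{O}$ makes them small as $T\to0$, so $\int_0^T\|\sigma\|_{H^4}^2$ is not a ``time-small factor''. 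You would need $\|\rho_0\|_{H^3}^2e^{C(M)(R^2+M^2)}\lesssim M^2$, which shrinking $T$ cannot arrange and which fails for large data.

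Freezing $\rho_0^{\alpha-1}$ does not repair this. The principal coefficient was never the problem: its bounds come from $\frac12\underline{\rho_0}\le\sigma\le\frac32\overline{\rho_0}$ and are independent of $M$. The commutators, by contrast, still contain $\nabla^k(\sigma^{\alpha-1}-\rho_0^{\alpha-1})$ for $k\le3$, and these are $O(M)$, not $O(T^{1/2}M)$.

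\textbf{What is needed.} This is what the paper does: every contribution that is not small in $T$ must have an $M$-independent coefficient, and every $M$-dependent coefficient must have time integral $O(T^\theta)$ for some $\theta>0$. Concretely, two things are required.
\begin{enumerate}
\item The coefficient of $\|\nabla^3 w\|_{L^2}^2$ may involve only a pointwise bound on the density. In the paper this is $\|\eta\|_{L^\infty}\le\frac32\overline{\rho_0}$, which produces the additive, $M$-free term $D_1R^2$ in \eqref{21-10}. In your linearization it is $\|\rho\|_{L^\infty}\le C\|\rho\|_{H^2}$ with a Sobolev constant, which produces a harmless Gr\"onwall factor $e^{CR^2}$.
\item $\sigma$ must enter either with at most three derivatives in $L^2$, or through sub-quadratic quantities.
\begin{itemize}
\item For the first option, integrate by parts once in the top-order part of $\nabla^3(\sigma^{\alpha-2}\nabla\sigma\cdot\nabla\rho)$. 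Then no $\|\sigma\|_{H^4}$ appears and the contribution is $C(M)T$.
\item For the second option, use quantities such as $\|\nabla^2\sigma\|_{L^\infty}^2\le C\|\sigma\|_{H^2}^{(4-N)/2}\|\sigma\|_{H^4}^{N/2}$ and $\|\sigma_t\|_{L^\infty}\le C\|\sigma_t\|_{H^2}$. By H\"older their time integrals are $O(T^{1/4})$ and $O(T^{1/2})$, as in \eqref{21-9}.
\end{itemize}
\end{enumerate}
Your treatment of $\nabla^3(\sigma^{\alpha-1})\Delta\rho$ via $\|\nabla^2\rho\|_{L^\infty}\le C\|\rho\|_{H^3}^{1/2}\|\rho\|_{H^4}^{1/2}$ is already of this type. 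It yields $CM^{4/3}\|\rho\|_{H^3}^2$, hence $O(M^{4/3}T)$ after integration. It is inconsistent with the inequality you then display.

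Only with this structure can you fix $M_R$ from $\|\rho_0\|_{H^3},\overline{\rho_0},\underline{\rho_0},R$ first and choose $T$ afterwards. The same bookkeeping is needed for $\|\rho\|_{L^2_TH^4}$ and $\|\rho_t\|_{L^2_TH^2}$.

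\textbf{Two smaller points.}
\begin{itemize}
\item Continuity of $\mathcal{T}$ in $C_TH^2$ from an $L^2$/$H^1$ difference estimate requires interpolating with the uniform $C_TH^3$ bound on $\mathcal{T}\mathcal{O}\subset\mathcal{O}$.
\item \eqref{3-2} needs no time-differentiated energy estimate. $\rho_t\in L^\infty_TH^1$ and $\rho_{tt}\in L^2_TL^2$ follow directly from \eqref{15-1}, \eqref{3-2'}, and $w\in\mathcal{B}_R$.
\end{itemize}
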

	\begin{proof}
		We prove the existence by applying Schauder's fixed point theorem. We fix $0<T\leq 1$ and $M_R>0$ to be determined later, and set $\mathcal{B}=C([0,T];H^2)$. Define
		\begin{align}\label{mathcal O}
			\begin{split}
		\mathcal{O}=\left\{f:f(0)=\rho_0,  \|f\|_{C_TH^3}^2+\|f\|_{L^2_TH^4}^2+\|f_t\|_{L^2_TH^2}^2\leq M_R^2\right.,\\
		\left.\frac{1}{2}\underline{\rho_0} \le f(x,t)\le \frac{3}{2}\overline{\rho_0}, \text{ for all }(x,t)\in\mathbb{T}^N\times[0,T].\right\}
		\end{split}
	\end{align}	
    It is straightforward to verify that $\mathcal{O}$ is convex. We now show that $\mathcal{O}$ is closed. Let $\eta_n\in\mathcal{O}$ with $\eta_n\to \eta$ in $\mathcal{B}$. By the weak lower semicontinuity of the norms and the Sobolev embedding $H^2(\mathbb{T}^N)\hookrightarrow C(\mathbb{T}^N)$ for $N=2,3$, we conclude that $\eta\in\mathcal{O}$. Hence, $\mathcal{O}$ is closed.
	
	\textbf{Step 1: Linearization of the equation and existence of solutions}\\
	Fix $\eta\in \mathcal{O}$ and consider the following linearized equation:
	\begin{align}\label{20-1}
		\rho_t-c\alpha\eta^{\alpha-1}\Delta\rho=-\divg (\eta w)+c\alpha(\alpha-1)\eta^{\alpha-2}|\nabla\eta|^2. 
	\end{align}	
	By the standard existence theory for linear parabolic equations (see Theorem 2.5.1 in \cite{Cherrier-2012}), equation \eqref{20-1} with initial data $\rho_0$ and periodic boundary conditions admits a unique solution $\rho$ on $\mathbb{T}^N\times[0,T]$ satisfying
	\begin{align*}
		\sup_{0\leq t\leq T}\|\rho\|_{H^3}+\int_0^{T} (\|\rho\|_{H^4}^2+\|\rho_t\|_{H^2}^2 )dt\leq C,
	\end{align*}
	where the constant $C$ depends on $\alpha,\nu,\varepsilon,N$, $\rho_0$, and $R$. We denote by $\mathcal{T}$ the mapping that maps $\eta$ to $\rho$.

	\textbf{Step 2: $\mathcal{T}:\eta\mapsto \rho$ maps $\mathcal{O}$ into itself.}\\
	In this step, we use $C$ to denote a constant depending only on $\alpha,\nu,\varepsilon,N$, $\underline{\rho_0}$, and $\overline{\rho_0}$. 
	
	Multiplying \eqref{20-1} by $\rho$, integrating the resulting equation over $\mathbb{T}^N$, and applying integration by parts together with Young's inequality yields
	\begin{align*}
		\begin{split}
		&\quad \frac{1}{2}\frac{d}{dt}\int \rho^2dx+c\alpha\int \eta^{\alpha-1}|\nabla\rho|^2dx\\
		&=-c\alpha\int \rho\nabla(\eta^{\alpha-1})\cdot\nabla\rho dx+\int \eta w\cdot\nabla\rho dx+c\alpha(\alpha-1)\int \eta^{\alpha-2}|\nabla\eta|^2\rho dx\\
		&=\frac{c\alpha}{2}\int \Delta(\eta^{\alpha-1})\rho^2dx+\int \eta w\cdot\nabla\rho dx+c\alpha(\alpha-1)\int \eta^{\alpha-2}|\nabla\eta|^2\rho dx\\
		&\leq \frac{c\alpha}{2}\int \eta^{\alpha-1}|\nabla\rho|^2dx+C(\|\Delta(\eta^{\alpha-1})\|_{L^\infty}+1)\int \rho^2dx+C\int \eta^{2}|w|^2dx+C\int |\nabla\eta|^4dx,
		\end{split}
	\end{align*}
	which implies that
	\begin{align}\label{21-1}
		\begin{split}
			&\quad\frac{d}{dt}\int \rho^2dx+c\alpha\int \eta^{\alpha-1}|\nabla\rho|^2dx\\
			&\leq C(\|\nabla^2\eta\|_{L^\infty}+\|\nabla\eta\|_{L^\infty}^2+1)\int \rho^2dx+C\int \eta^2|w|^2dx+C\int |\nabla\eta|^4dx.
		\end{split}
	\end{align}
	
	 Multiplying \eqref{20-1} by $\Delta\rho$, integrating the resulting equation over $\mathbb{T}^N$, and applying integration by parts together with Young's inequality yields
	\begin{align}\label{21-2}
		\begin{split}
		\frac{d}{dt}\int |\nabla\rho|^2dx+c\alpha\int \eta^{\alpha-1}|\Delta\rho|^2dx \leq C\int |\nabla(\eta w)|^2dx+C\int|\nabla\eta|^4dx.
		\end{split}
	\end{align}
	
	Applying $\Delta$ to \eqref{20-1} yields
	\begin{align}\label{20-2}
		\begin{split}
		\Delta\rho_t-c\alpha\eta^{\alpha-1}\Delta\Delta\rho=&2c\alpha\nabla(\eta^{\alpha-1})\cdot\nabla \Delta\rho+c\alpha\Delta(\eta^{\alpha-1})\Delta\rho\\
		&+\Delta(-\divg(\eta w)+c\alpha(\alpha-1)\eta^{\alpha-2}|\nabla\eta|^2). 
		\end{split}
	\end{align}
	Multiplying the above equation by $\Delta\rho$, integrating over $\mathbb{T}^N$, and applying integration by parts together with Young's inequality yields
	\begin{align*}
		\begin{split}
			&\quad\frac{1}{2}\frac{d}{dt}\int |\Delta\rho|^2dx+\frac{c\alpha}{2}\int \eta^{\alpha-1}|\nabla\Delta\rho|^2dx\\
			&\leq C\int |\nabla\eta|^2|\nabla^2\rho|^2dx+C\int |\Delta(\eta^{\alpha-1})||\nabla^2\rho|^2dx\\
			&\quad +C\int |\nabla^2(\eta w)|^2dx+C\int (|\nabla\eta|^6+ |\nabla\eta|^2|\nabla^2 \eta|^2)dx,
		\end{split}
	\end{align*}
	which implies that
	\begin{align}\label{21-2.5}
		\begin{split}
			&\quad\frac{d}{dt}\int |\Delta\rho|^2dx+c\alpha\int \eta^{\alpha-1}|\nabla\Delta\rho|^2dx\\
			&\leq C(\|\nabla^2\eta\|_{L^\infty}+\|\nabla\eta\|_{L^\infty}^2)\int |\nabla^2\rho|^2dx\\
			&\quad +C\int |\nabla^2(\eta w)|^2dx+C\int (|\nabla\eta|^6+ |\nabla\eta|^2|\nabla^2 \eta|^2)dx.
		\end{split}
	\end{align}
	
	Applying $\nabla$ to \eqref{20-2} yields
	\begin{align}\label{20-3}
		\begin{split}
			\nabla\Delta\rho_t-c\alpha\eta^{\alpha-1}\nabla\Delta\Delta\rho=&c\alpha \nabla(\eta^{\alpha-1})\Delta\Delta\rho+\nabla (2c\alpha\nabla(\eta^{\alpha-1})\cdot\nabla \Delta\rho+c\alpha\Delta(\eta^{\alpha-1})\Delta\rho )\\
			&+\nabla\Delta(-\divg(\eta w)+c\alpha(\alpha-1)\eta^{\alpha-2}|\nabla\eta|^2). 
		\end{split}
	\end{align}
	Multiplying the above equation by $\nabla\Delta\rho$, integrating over $\mathbb{T}^N$, and applying integration by parts together with Young's inequality yields
	\begin{align*}
	\begin{split}
		&\quad\frac{1}{2}\frac{d}{dt}\int |\nabla\Delta\rho|^2dx+\frac{c\alpha}{2}\int \eta^{\alpha-1} |\nabla^2\Delta\rho|^2dx\\
		&\leq C\int |\nabla\eta|^2|\nabla\Delta\rho|^2dx+C\int |\Delta(\eta^{\alpha-1})|^2|\Delta\rho|^2dx+C\int |\nabla^3(\eta w)|^2dx\\
		&\quad+C\int (|\nabla\eta|^8+|\nabla^2\eta|^2|\nabla\eta|^4+|\nabla^2\eta|^4+|\nabla
		\eta|^2|\nabla^3\eta|^2)dx,
	\end{split}
\end{align*}
which implies that
\begin{align}\label{21-3}
	\begin{split}
		&\quad\frac{d}{dt}\int |\nabla\Delta\rho|^2dx+c\alpha\int \eta^{\alpha-1} |\nabla^2\Delta\rho|^2dx\\
		&\leq C(1+\|\nabla^2 \eta\|_{L^\infty}^2+\|\nabla\eta\|_{L^\infty}^4)\int (|\nabla\Delta\rho|^2+|\Delta\rho|^2)dx\\
		&\quad +C\int |\nabla^3(\eta w)|^2dx+C\int (|\nabla\eta|^8+|\nabla^2\eta|^2|\nabla\eta|^4+|\nabla^2\eta|^4+|\nabla
		\eta|^2|\nabla^3\eta|^2)dx.
	\end{split}
\end{align}		

Multiplying \eqref{20-1} by $\rho_t$, integrating the resulting equation over $\mathbb{T}^N$, and applying integration by parts together with Young's inequality yields
\begin{align}\label{21-4}
	\begin{split}
	&\quad c\alpha\frac{d}{dt}\int \eta^{\alpha-1}|\nabla\rho|^2dx+\int |\rho_t|^2dx\\
	&\leq C\int |\eta_t||\nabla\rho|^2dx+C\int |\nabla\eta|^2|\nabla\rho|^2dx+C\int |\nabla(\eta w)|^2dx+C\int |\nabla\eta|^4dx\\
	&\leq C(\|\eta_t\|_{L^\infty}+\|\nabla\eta\|_{L^\infty}^2)\int |\nabla\rho|^2dx+C\int |\nabla(\eta w)|^2dx+C\int |\nabla\eta|^4dx.
	\end{split}
\end{align}
	
Multiplying \eqref{20-1} by $\Delta\rho_t$, integrating the resulting equation over $\mathbb{T}^N$, and applying integration by parts together with Young's inequality yields
\begin{align}\label{21-5}
	\begin{split}
	&\quad c\alpha\frac{d}{dt}\int \eta^{\alpha-1}|\Delta\rho|^2dx+\int |\nabla\rho_t|^2dx\\
	&\leq C\int |\eta_t||\Delta\rho|^2dx+C\int |\nabla^2(\eta w)|^2dx+C\int (|\nabla\eta|^6+|\nabla\eta|^2|\nabla^2\eta|^2)dx\\
	&\leq C\|\eta_t\|_{L^\infty}\int |\Delta\rho|^2dx+C\int |\nabla^2(\eta w)|^2dx+C\int (|\nabla\eta|^6+|\nabla\eta|^2|\nabla^2\eta|^2)dx.
	\end{split}
\end{align}

We multiply \eqref{20-2} by $\Delta\rho_t$, integrate the resulting equation over $\mathbb{T}^N$, and use integration by parts along with Young's inequality to obtain
\begin{align}\label{21-6}
	\begin{split}
	&\quad c\alpha\frac{d}{dt}\int \eta^{\alpha-1}|\nabla\Delta\rho|^2dx+\int |\Delta\rho_t|^2dx\\
	&\leq C\int |\eta_t||\nabla\Delta\rho|^2dx+C\int |\nabla\eta|^2|\nabla\Delta\rho|^2dx+C\int |\Delta(\eta^{\alpha-1})|^2|\Delta\rho|^2dx\\
	&\quad+C\int |\nabla^3(\eta w)|^2dx+C\int (|\nabla\eta|^8+|\nabla^2\eta|^2|\nabla\eta|^4+|\nabla^2\eta|^4+|\nabla
	\eta|^2|\nabla^3\eta|^2)dx\\
	&\leq C(1+\|\eta_t\|_{L^\infty}+\|\nabla^2\eta\|_{L^\infty}^2+\|\nabla\eta\|_{L^\infty}^4)\int (|\nabla\Delta\rho|^2+|\Delta\rho|^2)dx\\
	&\quad+C\int |\nabla^3(\eta w)|^2dx+C\int (|\nabla\eta|^8+|\nabla^2\eta|^2|\nabla\eta|^4+|\nabla^2\eta|^4+|\nabla
	\eta|^2|\nabla^3\eta|^2)dx.
	\end{split}
\end{align}

Taking the sum of \eqref{21-1}, \eqref{21-2}, \eqref{21-2.5}, \eqref{21-3}, \eqref{21-4}, \eqref{21-5}, and \eqref{21-6} yields
\begin{align}\label{21-7}
	\begin{split}
		&\quad\frac{d}{dt}\int\left(  (\rho^2+|\nabla\rho|^2+|\Delta\rho|^2+|\nabla\Delta\rho|^2 )+c\alpha\eta^{\alpha-1} (|\nabla\rho|^2+|\Delta\rho|^2+|\nabla \Delta\rho|^2 ) \right)dx\\
		&+\int \left(c\alpha\eta^{\alpha-1} (|\nabla\rho|^2+|\Delta\rho|^2+|\nabla\Delta\rho|^2+|\nabla^2\Delta\rho|^2)+ (|\rho_t|^2+|\nabla\rho_t|^2+|\Delta\rho_t|^2 )\right)dx\\
		&\leq C(1+\|\nabla^2\eta\|_{L^\infty}^2+\|\nabla\eta\|_{L^\infty}^4+\|\eta_t\|_{L^\infty})\|\rho\|_{H^3}^2\\
		&\quad+C\int  (\eta^2|w|^2+ |\nabla(\eta w)|^2+|\nabla^2(\eta w)|^2+|\nabla^3(\eta w)|^2 )dx\\
		&\quad +C\int  (|\nabla\eta|^4+|\nabla\eta|^6+|\nabla\eta|^2|\nabla^2 \eta|^2+ |\nabla\eta|^8+|\nabla^2\eta|^2|\nabla\eta|^4+|\nabla^2\eta|^4+|\nabla
		\eta|^2|\nabla^3\eta|^2 )dx.
	\end{split}
\end{align}
Note that the Gagliardo--Nirenberg inequality, H\"older's inequality, and the Sobolev embedding imply the following facts:
\begin{align}\label{21-9}
	\begin{split}
		&\quad C\int_0^T(1+\|\nabla^2\eta\|_{L^\infty}^2+\|\nabla\eta\|_{L^\infty}^4+\|\eta_t\|_{L^\infty})dt\\
		&\leq C\int_0^T  (1+\|\nabla^2\eta\|_{L^2}^{\frac{4-N}{2}}\|\nabla^2\eta\|_{H^2}^{\frac{N}{2}}+\|\nabla\eta\|_{H^2}^4+\|\eta_t\|_{H^2} )dt\\
        &\le CT+C\left(\int_0^T\|\nabla^2\eta\|_{H^2}^2dt\right)^{\frac{N}{4}}T^{\frac{4-N}{4}}+C_{M_R}T+C\left(\int_0^T\|\eta_t\|_{H^2}^2dt\right)^{\frac{1}{2}}T^{\frac{1}{2}}\\
		&\leq C_{M_R}T^{\frac{1}{4}}, 
	\end{split}
\end{align}
and 
\begin{align}\label{21-10}
	\begin{split}
		&\quad\sum_{i=0}^3C\int_0^T\int|\nabla^i(\eta w)|^2dxdt\\
		&=\sum_{i=0}^2C\int_0^T\int |\nabla^i(\eta w)|^2dxdt+C\int_0^T\int|\nabla^3(\eta w)|^2dxdt\\
		&\leq C\int_0^T\|\eta\|_{H^2}^2\|w\|_{H^2}^2dt+C\int_0^T\|\eta\|_{H^3}^2\|w\|_{H^2}^2 dt\\
		&\quad +C\int_0^T\|\eta\|_{L^\infty}^2\|\nabla^3 w\|_{L^2}^2dt\\
		&\leq C_{R,M_R}T+D_1R^2,
	\end{split}
\end{align}
and 
\begin{align}\label{21-11}
	\begin{split}
		&\quad C\int_0^T\int (|\nabla\eta|^4+|\nabla\eta|^6+|\nabla\eta|^2|\nabla^2 \eta|^2+ |\nabla\eta|^8+|\nabla^2\eta|^2|\nabla\eta|^4+|\nabla^2\eta|^4+|\nabla
		\eta|^2|\nabla^3\eta|^2)dxdt\\
		&\leq C\int_0^T(1+\|\eta\|_{H^3}^8) dt\leq C_{M_R}T. 
	\end{split}
\end{align}
With these facts at hand, we apply Gr\"onwall's inequality to \eqref{21-7} and obtain that, 
\begin{align}\label{21-8}
	\begin{split}
		&\sup_{0 \le t \le T}\int\left( (\rho^2+|\nabla\rho|^2+|\Delta\rho|^2+|\nabla\Delta\rho|^2)+c\alpha\eta^{\alpha-1}(|\nabla\rho|^2+|\Delta\rho|^2+|\nabla \Delta\rho|^2) \right) dx\\
		&\leq e^{C_{M_R}T^{\frac{1}{4}}}(D_0+D_1R^2+C_{R, M_R}T),
	\end{split}
\end{align}
where $D_0$ depends on $\alpha,\nu,\varepsilon,N$, $\|\rho_0\|_{H^3}$, $\underline{\rho_0}$, and $\overline{\rho_0}$; $D_1$ depends only on $\alpha,\nu,\varepsilon,N$, $\underline{\rho_0}$, and $\overline{\rho_0}$; $C_{M_R}$ depends on $\alpha,\nu,\varepsilon,N$, $\underline{\rho_0}$, $\overline{\rho_0}$, and $M_R$; and $C_{R,M_R}$ depends on $\alpha,\nu,\varepsilon,N$, $\underline{\rho_0}$, $\overline{\rho_0}$, $R$, and $M_R$. It follows directly from \eqref{21-8} that
\begin{align}\label{21-12}
	\begin{split}
	\sup_{0\leq t\leq T}\|\rho\|_{H^3}^2
	&\leq e^{C_{M_R}T^{\frac{1}{4}}}(D_0+D_1R^2+C_{R, M_R}T).
		\end{split}
\end{align}

We now specify the constants $M_R$ and $T$. Define $M_R$ as follows:
\begin{align*}
	M_R^2=\left(\frac{4}{c\alpha}\max\left\{(\frac{3}{2}\overline{\rho_0})^{1-\alpha},(\frac{1}{2}\underline{\rho_0})^{1-\alpha}\right\}+4\right)(D_0+D_1R^2). 
\end{align*}
Let $T_1>0$ be sufficiently small, depending on $\alpha,\nu,\varepsilon,N$, $\underline{\rho_0}$, $\overline{\rho_0}$, $R$, and $M_R$, such that
\begin{align}\label{21-13}
	\sup_{0\leq t\leq T_1}\|\rho\|_{H^3}^2\leq 2(D_0+D_1R^2).
\end{align}
Set $T\leq \min\{1, T_1\}$. Integrating \eqref{21-7} over $[0,T]$ and using \eqref{21-9}, \eqref{21-10}, \eqref{21-11}, and \eqref{21-13} yields
\begin{align*}
	\begin{split}
		&\quad\int_0^T\int \left(c\alpha\eta^{\alpha-1} (|\nabla\rho|^2+|\Delta\rho|^2+|\nabla\Delta\rho|^2+|\nabla^2\Delta\rho|^2)+ (|\rho_t|^2+|\nabla\rho_t|^2+|\Delta\rho_t|^2 )\right)dxdt\\
		&\leq D_0+2(D_0+D_1R^2)C_{M_R}T^{\frac{1}{4}}+C_{R,M_R}T+D_1R^2,
	\end{split}
\end{align*}
which, combined with the fact that $\eta\in\mathcal{O}$, implies that
\begin{align*}
	\begin{split}
		&\quad\int_0^T (\|\nabla\rho\|_{H^3}^2+\|\rho_t\|_{H^2}^2)dt\\
		&\leq \left(\frac{1}{c\alpha}\max\left\{(\frac{3}{2}\overline{\rho_0})^{1-\alpha},(\frac{1}{2}\underline{\rho_0})^{1-\alpha}\right\}+1\right)\left(D_0+2(D_0+D_1R^2)C_{M_R}T^{\frac{1}{4}}+C_{R,M_R}T+D_1R^2\right). 
	\end{split}
\end{align*}
Combining this with \eqref{21-13} gives
\begin{align*}
	\begin{split}
		&\quad\int_0^T (\|\rho\|_{H^4}^2+\|\rho_t\|_{H^2}^2)dt\leq 2(D_0+D_1R^2)T\\
		&+ \left(\frac{1}{c\alpha}\max\left\{(\frac{3}{2}\overline{\rho_0})^{1-\alpha},(\frac{1}{2}\underline{\rho_0})^{1-\alpha}\right\}+1\right)\left(D_0+2(D_0+D_1R^2)C_{M_R}T^{\frac{1}{4}}+C_{R,M_R}T+D_1R^2\right).
	\end{split}
\end{align*}
Let $T_2>0$ be sufficiently small, depending on $\alpha,\nu,\varepsilon,N$, $\underline{\rho_0}$, $\overline{\rho_0}$, $R$, and $M_R$, such that
\begin{align}\label{21-14}
	\int_0^{T_2}(\|\rho\|_{H^4}^2+\|\rho_t\|_{H^2}^2)dt\leq \left(\frac{2}{c\alpha}\max\left\{(\frac{3}{2}\overline{\rho_0})^{1-\alpha},(\frac{1}{2}\underline{\rho_0})^{1-\alpha}\right\}+2\right)(D_0+D_1R^2). 
\end{align}
Set $T\leq \min\{1,T_1,T_2\}$. Then \eqref{21-13} and \eqref{21-14} together imply
\begin{align}\label{21-15}
	\|\rho\|_{C_TH^3}^2+\|\rho\|_{L^2_TH^4}^2+\|\rho_t\|_{L^2_TH^2}^2\leq M_R^2. 
\end{align}
Note that for any $t\in[0,T]$,
\begin{align*}
	\|\rho(t)-\rho_0\|_{L^\infty}\leq \int_0^t\|\rho_t\|_{L^\infty}ds\leq C\left(\int_0^t\|\rho_t\|_{H^2}^2ds\right)^{1/2}\sqrt{T}\leq CM_R\sqrt{T}.
\end{align*}
Let $T_3>0$ sufficiently small, depending on $M_R$ and $\underline{\rho_0}$, so that
\begin{align*}
	CM_R\sqrt{T_3}\leq \frac{1}{2}\underline{\rho_0}.
\end{align*}
Taking $T=\min\{1,T_1,T_2,T_3\}$, we obtain, for every $t\in[0,T]$,
\begin{align*}
	\|\rho(t)-\rho_0\|_{L^\infty}\leq \frac{1}{2}\underline{\rho_0},
\end{align*}
which yields
\begin{align}\label{21-16}
	\frac{1}{2}\underline{\rho_0}\leq \rho(x,t)\leq \frac{3}{2}\overline{\rho_0},\quad \text{for all }x\in\mathbb{T}^N. 
\end{align}
Combining \eqref{21-15} and \eqref{21-16} gives $\rho\in\mathcal{O}$, which completes the proof of Step 2.

\textbf{Step 3: $\mathcal{T}$ is continuous.}\\
In this step, we aim to prove that there exists a constant $C>0$, possibly depending on $\alpha,\nu,\varepsilon,N$, $\rho_0$, $R$, and $M_R$, such that for any $\eta_1,\eta_2\in\mathcal{O}$,
\begin{align}\label{22-0}
	\|\mathcal{T}\eta_2-\mathcal{T}\eta_1\|_{C_TH^2}\leq C\|\eta_2-\eta_1\|_{C_TH^2}.
\end{align}
Denote $\mathcal{T}\eta_i=\rho_i$, $i=1,2$. Observe that $\rho_1-\rho_2$ satisfies
\begin{align}\label{21-17}
	\begin{split}
	&\quad(\rho_1-\rho_2)_t-c\alpha\eta_1^{\alpha-1}\Delta(\rho_1-\rho_2)-c\alpha(\eta_1^{\alpha-1}-\eta_2^{\alpha-1})\Delta\rho_2\\
	&=-\divg ((\eta_1-\eta_2)w)+c\alpha(\alpha-1)(\eta_1^{\alpha-2}|\nabla\eta_1|^2-\eta_2^{\alpha-2}|\nabla\eta_2|^2).	
	\end{split}
\end{align}
For notational simplicity, we set $\hat{\rho}=\rho_1-\rho_2$ and $\hat{\eta}=\eta_1-\eta_2$. Multiplying the above equation by $\hat{\rho}$, integrating over $\mathbb{T}^N$, and applying integration by parts together with Young's inequality yields
\begin{align*}
	&\quad \frac{1}{2}\frac{d}{dt}\int \hat{\rho}^2dx+c\alpha\int \eta_1^{\alpha-1}|\nabla\hat{\rho}|^2dx\\
	&=-c\alpha\int \hat{\rho}\nabla(\eta_1^{\alpha-1})\cdot\nabla\hat{\rho} dx+c\alpha\int(\eta_1^{\alpha-1}-\eta_2^{\alpha-1})\Delta\rho_2\hat{\rho}dx\\
	&\quad+\int \hat{\eta} w\cdot\nabla\hat{\rho} dx+c\alpha(\alpha-1)\int \big(\eta_1^{\alpha-2}|\nabla\eta_1|^2-\eta_2^{\alpha-2}|\nabla\eta_2|^2\big)\hat{\rho}dx\\
	&\leq \frac{c\alpha}{2}\int \eta_1^{\alpha-1}|\nabla\hat{\rho}|^2dx+C\int|\nabla\eta_1|^2\hat{\rho}^2dx+C\int |\eta_1^{\alpha-1}-\eta_2^{\alpha-1}|^2|\Delta\rho_2|^2dx+C\int \hat{\rho}^2dx\\
	&\quad+ C\int \hat{\eta}^2|w|^2dx+C\int \big|\eta_1^{\alpha-2}-\eta_2^{\alpha-2}|^2\big||\nabla\eta_1|^4dx+C\int \eta_2^{2\alpha-4}\big||\nabla\eta_1|^2-|\nabla\eta_2|^2\big|^2dx.
\end{align*}
Note that the following facts hold:
\begin{align}\label{21-18}
	\begin{split}
	|\eta_1^{\alpha-1}-\eta_2^{\alpha-1}|^2&\leq C|\hat{\eta}|^2,\\
	|\eta_1^{\alpha-2}-\eta_2^{\alpha-2}|^2&\leq C|\hat{\eta}|^2,\\
	\big||\nabla\eta_1|^2-|\nabla\eta_2|^2\big|^2&\leq C|\nabla(\eta_1+\eta_2)|^2|\nabla\hat{\eta}|^2\leq C|\nabla\hat{\eta}|^2.
	\end{split}
\end{align}
With the aid of these inequalities, applying the Sobolev embedding and H\"older's inequality yields
\begin{align*}
	\begin{split}
		&\quad \frac{d}{dt}\int \hat{\rho}^2dx+c\alpha\int \eta_1^{\alpha-1}|\nabla\hat{\rho}|^2dx\\
		&\leq C(1+\|\nabla\eta_1\|_{L^\infty}^2)\|\hat{\rho}\|_{L^2}^2+C\|\nabla^2\rho_2\|_{L^6}^2\|\hat{\eta}\|_{L^6}^2+C(\|w\|_{L^\infty}^2+\|\nabla\eta_1\|_{L^\infty}^4+1)\|\hat{\eta}\|_{H^1}^2\\
		&\leq C(1+\|\nabla\eta_1\|_{H^2}^2)\|\hat{\rho}\|_{L^2}^2+C\|\nabla^2\rho_2\|_{H^1}^2\|\hat{\eta}\|_{H^1}^2+C(\|w\|_{H^2}^2+\|\nabla\eta_1\|_{H^2}^4+1)\|\hat{\eta}\|_{H^1}^2, 
	\end{split}
\end{align*}
which, together with the definitions of $w$, $\eta_i$, and $\rho_i$ ($i=1,2$), implies that
\begin{align}\label{22-1}
	\frac{d}{dt}\int \hat{\rho}^2dx+c\alpha\int \eta_1^{\alpha-1}|\nabla\hat{\rho}|^2dx&\leq C\|\hat{\rho}\|_{L^2}^2+C\|\hat{\eta}\|_{H^1}^2.
\end{align}

Multiplying \eqref{21-17} by $\Delta\hat{\rho}$, integrating the resulting equation over $\mathbb{T}^N$, and applying integration by parts, Young's inequality, the Sobolev embedding, and \eqref{21-18} yields
\begin{align*}
	&\quad\frac{d}{dt}\int |\nabla\hat{\rho}|^2dx+c\alpha\int \eta_1^{\alpha-1}|\Delta\hat{\rho}|^2dx\\
	&\leq C\int\hat{\eta}^2|\Delta\rho_2|^2dx+C\int (|\nabla\hat{\eta}|^2|w|^2+|\nabla w|^2\hat{\eta}^2)dx\\
	&\quad +C\int \big|\eta_1^{\alpha-2}-\eta_2^{\alpha-2}|^2\big||\nabla\eta_1|^4dx+C\int \eta_2^{2\alpha-4}\big||\nabla\eta_1|^2-|\nabla\eta_2|^2\big|^2dx\\
	&\leq C(\|\nabla^2\rho_2\|_{L^6}^2+\|w\|_{L^\infty}^2+\|\nabla w\|_{L^6}^2+\|\nabla\eta_1\|_{L^\infty}^4+1)\|\hat{\eta}\|_{H^1}^2\\
	&\leq C(\|\nabla^2\rho_2\|_{H^1}^2+\|w\|_{H^2}^2+\|\nabla w\|_{H^1}^2+\|\nabla\eta_1\|_{H^2}^4+1)\|\hat{\eta}\|_{H^1}^2,
\end{align*}
which implies that
\begin{align}\label{22-2}
	&\quad\frac{d}{dt}\int |\nabla\hat{\rho}|^2dx+c\alpha\int \eta_1^{\alpha-1}|\Delta\hat{\rho}|^2dx\leq C\|\hat{\eta}\|_{H^1}^2.
\end{align}

Applying $\Delta$ to \eqref{21-17} yields
\begin{align}
	\begin{split}
		\Delta\hat{\rho}_t-c\alpha\eta_1^{\alpha-1}\Delta\Delta\hat{\rho}&=2c\alpha\nabla(\eta_1^{\alpha-1})\cdot\nabla\Delta\hat{\rho}+c\alpha\Delta(\eta_1^{\alpha-1})\Delta\hat{\rho}+\Delta\big(c\alpha(\eta_1^{\alpha-1}-\eta_2^{\alpha-1})\Delta\rho_2\big)\\
		&\quad 
		+\Delta\big(-\divg (\hat{\eta}w)+c\alpha(\alpha-1)(\eta_1^{\alpha-2}|\nabla\eta_1|^2-\eta_2^{\alpha-2}|\nabla\eta_2|^2)\big). 
	\end{split}
\end{align}
Multiplying the above equation by $\Delta\hat{\rho}$, integrating the resulting equation over $\mathbb{T}^N$, and applying integration by parts, Young's inequality, and the Sobolev embedding yields
\begin{align*}
		&\quad\frac{d}{dt}\int |\Delta\hat{\rho}|^2dx+c\alpha\int \eta_1^{\alpha-1}|\nabla\Delta\hat{\rho}|^2dx\\
		&\leq C\int |\nabla(\eta_1^{\alpha-1})|^2|\nabla^2\hat{\rho}|^2dx+C\int |\Delta(\eta_1^{\alpha-1})||\nabla^2\hat{\rho}|^2dx\\
		&\quad +C\int \hat{\eta}^2|\nabla^3\rho_2|^2dx+C\int |\nabla(\eta_1^{\alpha-1}-\eta_2^{\alpha-1})|^2|\nabla^2 \rho_2|^2dx\\
		&\quad +C\int |\nabla^2(\hat{\eta} w)|^2dx+C\int |\nabla(\eta_1^{\alpha-2}|\nabla\eta_1|^2-\eta_2^{\alpha-2}|\nabla\eta_2|^2)|^2dx\\
		&\leq C(\|\nabla^2\eta_1\|_{L^\infty}+\|\nabla\eta_1\|_{L^\infty}^2)\int |\nabla^2\hat{\rho}|^2dx\\
		&\quad +C\|\hat{\eta}\|_{L^\infty}^2\|\nabla^3\rho_2\|_{L^2}^2+C\|\hat{\eta}\|_{L^\infty}^2\|\nabla\eta_1\|_{L^\infty}^2\|\nabla^2\rho_2\|_{L^2}^2+C\|\eta_2\|_{L^\infty}^2\|\nabla\hat{\eta}\|_{L^6}^2\|\nabla^2\rho_2\|_{L^6}^2\\
		&\quad +C\|\hat{\eta}\|_{H^2}^2\|w\|_{H^2}^2+C\int |\nabla(\eta_1^{\alpha-2}|\nabla\eta_1|^2-\eta_2^{\alpha-2}|\nabla\eta_2|^2)|^2dx\\
	&\leq C(\|\nabla^2\eta_1\|_{H^2}+\|\nabla\eta_1\|_{H^2}^2)\int |\nabla^2\hat{\rho}|^2dx\\
	&\quad +C\|\hat{\eta}\|_{H^2}^2\|\nabla^3\rho_2\|_{L^2}^2+C\|\hat{\eta}\|_{H^2}^2\|\nabla\eta_1\|_{H^2}^2\|\nabla^2\rho_2\|_{L^2}^2+C\|\eta_2\|_{L^\infty}^2\|\nabla\hat{\eta}\|_{H^1}^2\|\nabla^2\rho_2\|_{H^1}^2\\
	&\quad +C\|\hat{\eta}\|_{H^2}^2\|w\|_{H^2}^2+C\int |\nabla(\eta_1^{\alpha-2}|\nabla\eta_1|^2-\eta_2^{\alpha-2}|\nabla\eta_2|^2)|^2dx\\
		&\leq C(\|\nabla^4\eta_1\|_{L^2}+1)\int |\nabla^2\hat{\rho}|^2dx+C\|\hat{\eta}\|_{H^2}^2+C\int |\nabla(\eta_1^{\alpha-2}|\nabla\eta_1|^2-\eta_2^{\alpha-2}|\nabla\eta_2|^2)|^2dx. 
\end{align*}
Since $\eta\in\mathcal{O}$, it follows that
	\begin{align*}
		\begin{split}
	&\quad C\int |\nabla(\eta_1^{\alpha-2}|\nabla\eta_1|^2-\eta_2^{\alpha-2}|\nabla\eta_2|^2)|^2dx\\
	&=C \sum_{i=1}^N \int |\partial_i(\eta_1^{\alpha-2}|\nabla\eta_1|^2-\eta_2^{\alpha-2}|\nabla\eta_2|^2)|^2dx\\
	&\leq C \sum_{i=1}^N\int |\partial_i(\eta_1^{\alpha-2})(|\nabla\eta_1|^2-|\nabla\eta_2|^2)|^2dx+C \sum_{i=1}^N\int |\partial_i(\eta_1^{\alpha-2})-\partial_i(\eta_2^{\alpha-2})|^2|\nabla\eta_2|^4dx\\
	&\quad +C \sum_{i=1}^N\int |\eta_1^{\alpha-2}-\eta_2^{\alpha-2}|^2|\partial_i |\nabla\eta_1|^2|^2dx+C \sum_{i=1}^N\int \eta_2^{2\alpha-4}|\partial_i(|\nabla\eta_1|^2-|\nabla\eta_2|^2)|^2dx\\
	&\leq C\|\hat{\eta}\|_{H^2}^2.
	\end{split}
\end{align*}
Therefore, we have
\begin{align}\label{22-3}
	\begin{split}
		\frac{d}{dt}\int |\Delta\hat{\rho}|^2dx+c\alpha\int \eta_1^{\alpha-1}|\nabla\Delta\hat{\rho}|^2dx\leq  C(\|\nabla^4\eta_1\|_{L^2}+1)\|\nabla^2\hat{\rho}\|_{L^2}^2+C\|\hat{\eta}\|_{H^2}^2.
	\end{split}
\end{align}
Adding \eqref{22-1}, \eqref{22-2}, and \eqref{22-3} yields
\begin{align}\label{23-1}
	\frac{d}{dt}\|\hat{\rho}\|_{H^2}^2\leq C(1+\|\nabla^4\eta_1\|_{L^2})\|\hat{\rho}\|_{H^2}^2+C\|\hat{\eta}\|_{H^2}^2,
\end{align}
which, together with Gr\"onwall's inequality and the fact that $\eta_1\in\mathcal{O}$, implies that \eqref{22-0} holds.

\textbf{Step 4: $\mathcal{T}\mathcal{O}$ is precompact.}\\
Take a sequence $\{\rho_n\}_{n=1}^\infty$ in $\mathcal{T}\mathcal{O}$. Since $\mathcal{T}$ maps $\mathcal{O}$ into itself, we have $\rho_n\in\mathcal{O}$ for every $n$. By the Aubin--Lions lemma, there exists a subsequence of $\{\rho_n\}_{n=1}^\infty$, still denoted by $\{\rho_n\}_{n=1}^\infty$, such that
\begin{align*}
	\rho_n\to\rho \quad\text{in } C([0,T];H^2),
\end{align*}
which shows that $\mathcal{T}\mathcal{O}$ is precompact in $\mathcal{B}$.

\textbf{Step 5: Existence, uniqueness, and regularity of solutions}

By combining the conclusions of Steps 1--4, we can apply Lemma \ref{schauder fixed} to obtain a fixed point of the mapping $\mathcal{T}$, which we denote by $\rho$. It follows that $\rho\in\mathcal{O}$ is a strong solution to equation \eqref{15-1} on $\mathbb{T}^N\times[0,T]$, subject to the initial condition $\rho_0$ and periodic boundary conditions.

To prove uniqueness, let $\tilde{\rho}\in\mathcal{O}$ be another strong solution to equation \eqref{15-1} on $\mathbb{T}^N\times[0,T]$ with the same initial data $\rho_0$ and periodic boundary conditions. Repeating the argument leading to \eqref{23-1}, we obtain
\begin{align*} 
	\frac{d}{dt}\|\rho-\tilde{\rho}\|_{H^2}^2\leq C(1+\|\nabla^4\rho\|_{L^2})\|\rho-\tilde{\rho}\|_{H^2}^2, 
\end{align*}
and Gr\"onwall's inequality then implies that $\rho\equiv\tilde{\rho}$. This proves uniqueness.

Finally, the bounds on $\|\rho_t\|_{L^{\infty}_TH^1}$ and $\|\rho_{tt}\|_{L^2_TL^2}$ in \eqref{3-2} follow from the regularity $\rho\in\mathcal{O}$, $w\in \mathcal{B}_R$, and equation \eqref{15-1}.

This completes the proof of Proposition \ref{Prop 9.1}.
\end{proof}
	
	We then proceed to linearize the momentum equation for the effective velocity, and prove the existence and uniqueness of strong solutions to the linearized system.
		\begin{prop}\label{Prop 9.2}
		Assume that the hypotheses of Lemma \ref{Lem loc} hold. There exist constants $R>0$, sufficiently large depending on $\gamma,\alpha,\nu,\varepsilon,N$, $\underline{\rho_0}$, $\overline{\rho_0}$, $\|\rho_0\|_{H^3}$, and $\|v_0\|_{H^2}$, and $T>0$, sufficiently small depending on $\gamma,\alpha,\nu,\varepsilon,N$, $\underline{\rho_0}$, $\overline{\rho_0}$, $\|\rho_0\|_{H^3}$, $\|v_0\|_{H^2}$, $R$, and $\tilde{M}_R$, so that the following assertion holds. For any $w\in\mathcal{B}_R$, let $\rho$ be the unique strong solution to equation \eqref{15-1} on $\mathbb{T}^N\times[0,T]$ with initial value $\rho_0$ and periodic boundary conditions, as established in Proposition \ref{Prop 9.1}. Then the linear parabolic equation
		\begin{align}\label{3-3}
			\rho v_t +  \rho w\cdot\nabla v -\divg(\rho^\alpha\mathbb{F}[v])
			&=-\nabla P+c\nabla(\rho^\alpha)\cdot\nabla v. 
		\end{align}
		with initial data $v_0$ and periodic boundary conditions possesses a unique strong solution $v$ on $\mathbb{T}^N\times[0,T]$ with $v\in \mathcal{B}_R$.
	\end{prop}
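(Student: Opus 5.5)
Since \eqref{3-3} is linear in $v$ once $\rho$ and $w$ are fixed, I would treat it as a linear parabolic system with coefficients that are positive and regular. By Proposition \ref{Prop 9.1}, $\rho$ satisfies $\frac12\underline{\rho_0}\le\rho\le\frac32\overline{\rho_0}$, and \eqref{3-2} holds with constant $\tilde M_R$. Dividing \eqref{3-3} by $\rho$ gives
\begin{align*}
v_t-\rho^{\alpha-1}\divg\mathbb{F}[v]=-w\cdot\nabla v-\gamma\rho^{\gamma-2}\nabla\rho+\rho^{-1}\nabla(\rho^\alpha)\cdot\mathbb{F}[v]+c\rho^{-1}\nabla(\rho^\alpha)\cdot\nabla v .
\end{align*}
The principal part $\rho^{\alpha-1}\divg\mathbb{F}[v]$ is strongly elliptic. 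Its symbol is $\rho^{\alpha-1}\sigma_{\mathcal L}(\xi)$, with eigenvalues $\nu|\xi|^2$ and $\alpha(2\nu-c)|\xi|^2$, exactly as in Lemma \ref{Lema 7.2}. It also satisfies a Gårding-type inequality: the coercivity computation in \eqref{16-2.5} gives $\int\rho^{\alpha-1}\mathbb{F}[\phi]:\nabla\phi\,dx\ge \varepsilon_3\|\nabla\phi\|_{L^2}^2$ up to lower-order terms. Existence and uniqueness of a strong solution on $[0,T]$ with $v\in C_TH^2\cap L^2_TH^3$ and $v_t\in L^\infty_TL^2\cap L^2_TH^1$ then follow from standard linear theory, namely Galerkin approximation on the torus or the linear parabolic theory already cited (Theorem 2.5.1 of \cite{Cherrier-2012}), as in Step 1 of Proposition \ref{Prop 9.1}. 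Uniqueness is immediate from the $L^2$ energy estimate applied to the difference of two solutions, which solves the homogeneous equation.

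The real content is the quantitative bound $v\in\mathcal{B}_R$, i.e. the four norms summing to at most $R^2$. I would derive three a priori estimates.
\begin{itemize}
\item[(a)] Test \eqref{3-3} by $v_t$. Using $\mathbb{F}[v_t]:\nabla v=\mathbb{F}[v]:\nabla v_t$ as in \eqref{hhh4}, this gives $\frac{d}{dt}\int\rho^\alpha\mathbb{F}[v]:\nabla v+\int\rho|v_t|^2\le$ lower-order terms. Combined with the elliptic estimate of Lemma \ref{Lema 7.2} applied to \eqref{ell sys} with $u$ replaced by $w$, it controls $\sup\|\nabla v\|_{L^2}$ and $\int_0^T\|\nabla^2v\|_{L^2}^2$.
\item[(b)] Differentiate in $t$ and test by $v_t$, as in Proposition \ref{Prop 7.3}. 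This controls $\sup\|v_t\|_{L^2}$ and $\int_0^T\|\nabla v_t\|_{L^2}^2$.
\item[(c)] Apply Lemma \ref{Lema 7.2} again to recover $\sup\|\nabla^2v\|_{L^2}$ and $\int_0^T\|\nabla^3v\|_{L^2}^2$.
\end{itemize}
Every coefficient involving $\rho$, $\rho_t$, $w$ or $w_t$ is bounded using $\tilde M_R$ and $R$ together with Sobolev embedding. The Grönwall exponents then take the form $C(\tilde M_R,R)T^{\theta}$ for some $\theta>0$, by H\"older in time, exactly as in \eqref{21-9}. The constant multiplying the initial data is independent of $R$.

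The main obstacle is closing the bound. The estimates must take the form
\begin{align*}
\|v\|_{C_TH^2}^2+\|v\|_{L^2_TH^3}^2+\|v_t\|_{L^\infty_TL^2}^2+\|v_t\|_{L^2_TH^1}^2\le e^{C(R,\tilde M_R)T^\theta}\big(D_2+C(R,\tilde M_R)T^{\theta}\big),
\end{align*}
where $D_2$ depends only on $\gamma,\alpha,\nu,\varepsilon,N,\underline{\rho_0},\overline{\rho_0},\|\rho_0\|_{H^3},\|v_0\|_{H^2}$, and not on $R$.

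The delicate point is $\|v_t(0)\|_{L^2}$. It is read off from the equation at $t=0$, so it involves $w(0)=v_0$, $\nabla^2 v_0$ and $\nabla\rho_0$, and hence is bounded by data alone. The other delicate point is the term $\int\rho^{\alpha-1}$-weighted $\nabla w_t$ appearing in step (b). It must enter only through $\int_0^T\|w_t\|_{H^1}\|\cdots\|$ and therefore produce a factor $T^{1/2}R$, never a bare $R^2$ on the right-hand side. After that, I would choose $R^2=4D_2$ and then $T$ small enough that the right-hand side is at most $R^2$, which gives $v\in\mathcal{B}_R$.
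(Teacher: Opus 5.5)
\textbf{Gap in step (c): the fixed-time elliptic bound for $\sup_t\|\nabla^2 v\|_{L^2}$ does not close.}

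You obtain spatial regularity differently from the paper. The paper runs direct energy estimates at every level: it tests the equation by $v$, its gradient by $\nabla v$, its second derivatives by $\nabla^2 v$, the equation by $v_t$, its gradient by $\nabla v_t$, and the $t$-differentiated equation by $v_t$. It sums these into \eqref{33-1} and applies Gr\"onwall once. As a result, every factor involving $w$ or $\rho$ enters only as a time-integrated coefficient and is paid for by a power of $T$.

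You instead follow the strategy of Propositions~\ref{Prop7.2} and~\ref{Prop 7.3} and recover $\nabla^2 v$ and $\nabla^3 v$ from Lemma~\ref{Lema 7.2}. That works for $\int_0^T\|\nabla^3 v\|_{L^2}^2$, but not for $\sup_t\|\nabla^2 v\|_{L^2}$. At a fixed time, Lemma~\ref{Lema 7.2} applied to \eqref{ell sys} (with $w$ in the convection term) gives
\begin{align*}
\|\nabla^2 v(t)\|_{L^2}\le C\|v_t(t)\|_{L^2}+C\big(\|w(t)\|_{L^\infty}+\|\nabla\rho(t)\|_{L^\infty}\big)\|\nabla v(t)\|_{L^2}+C\|\nabla\rho(t)\|_{L^2}.
\end{align*}
Under your rule of bounding coefficients by $R$ and $\tilde M_R$, the middle term is of size $(R+\tilde M_R)\|\nabla v_0\|_{L^2}$. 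This is because $\sup_t\|\nabla v(t)\|_{L^2}\ge\|\nabla v_0\|_{L^2}$, and no time integral is present to produce $T^\theta$.

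Nor is $\tilde M_R$ small compared with $R$. In Proposition~\ref{Prop 9.1}, the source $\divg(\rho w)$ feeds $\int_0^T\|\nabla^3 w\|_{L^2}^2\le R^2$ into $\sup_t\|\rho\|_{H^3}^2$ with no factor of $T$; this is the $D_1R^2$ term in $M_R^2$. So you only get
\begin{align*}
\sup_t\|\nabla^2 v\|_{L^2}^2\lesssim D_2+(R^2+\tilde M_R^2)\|\nabla v_0\|_{L^2}^2,
\end{align*}
which cannot be made $\le R^2=4D_2$ unless $\|\nabla v_0\|_{L^2}$ is small. Your claim that $R$ enters only through $T^\theta$ is correct for steps (a), (b) and for the $L^2_TH^3$ part of (c). It fails for this pointwise-in-time elliptic bound.

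\textbf{Repairs.} Any one of the following closes the argument, but it has to be stated.
\begin{enumerate}
\item Bound the fixed-time coefficients by norms that stay data-controlled on short intervals, using $w(0)=v_0$ and $\rho(0)=\rho_0$:
\begin{align*}
\|w(t)\|_{H^1}\le\|v_0\|_{H^1}+T^{1/2}R,\qquad \|\rho(t)\|_{H^2}\le\|\rho_0\|_{H^2}+T^{1/2}M_R.
\end{align*}
Then estimate
\begin{align*}
\|w\cdot\nabla v\|_{L^2}+\|\nabla\rho\cdot\nabla v\|_{L^2}\le C\big(\|w\|_{L^6}+\|\nabla\rho\|_{L^6}\big)\|\nabla v\|_{L^2}^{1-N/6}\|\nabla v\|_{H^1}^{N/6},
\end{align*}
and treat $\nabla(\rho^\alpha)\cdot\mathbb{F}[v]$ the same way, absorbing $\|\nabla^2 v\|_{L^2}$ into the left side.
\item Avoid the fixed-time elliptic step and use
\begin{align*}
\frac{d}{dt}\|\nabla^2 v\|_{L^2}^2=-2\sum_{i,j}\int\partial_j v_t\cdot\partial_i\partial_i\partial_j v\,dx\le\|\nabla v_t\|_{L^2}^2+C\|\nabla^3 v\|_{L^2}^2.
\end{align*}
Then $\sup_t\|\nabla^2 v\|_{L^2}^2$ is controlled by $\|\nabla^2 v_0\|_{L^2}^2$ plus the $L^2_T$ quantities from (b) and (c), which already carry the good $T$-dependence.
\item Do the direct $\nabla^2$ energy estimate \eqref{31-3}, as the paper does.
\end{enumerate}

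\textbf{Two smaller points.} First, with $R^2=4D_2$, your $D_2$ must already include the inverse coercivity constant that multiplies the $L^2_TH^3$ and $L^2_TH^1$ norms; compare \eqref{35-1}. Second, in step (b) the $w$-term is $w_t\cdot\nabla v$ tested against $v_t$, handled as $\|w_t\|_{L^3}\|\nabla v\|_{L^6}\|v_t\|_{L^2}$. There is no $\nabla w_t$ term there.
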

	\begin{proof}
		We shall specify the constants $R,T>0$ later. By Proposition \ref{Prop 9.1}, we know that $\rho$ is well-defined whenever $T\leq \min\{1,T_1,T_2,T_3\}$. Thus, without loss of generality, we may assume that $T$ is sufficiently small. Since $\rho$ has a positive lower bound, \eqref{3-3} is equivalent to the following equation:
        \begin{align}\label{3-3'}
            v_t +  w\cdot\nabla v -\rho^{-1}\divg(\rho^\alpha\mathbb{F}[v])
			&=-\rho^{-1}\nabla P+c\rho^{-1}\nabla(\rho^\alpha)\cdot\nabla v.
        \end{align}
        Set $g=-\rho^{-1}\nabla P$. Then
		\begin{align*}
			g\in L^\infty(0,T;H^2)\cap L^2(0,T;H^3),\quad g_t\in L^2(0,T;H^1). 
		\end{align*}
		Therefore, the standard existence theory for linear parabolic systems shows that equation \eqref{3-3'} with initial data $v_0$ and periodic boundary conditions admits a unique strong solution $v$ on $\mathbb{T}^N\times[0,T]$ satisfying
		\begin{align*}
			v\in C([0,T];H^2)\cap L^2(0,T;H^3),\quad v_t\in L^\infty(0,T;L^2)\cap L^2(0,T;H^1).
		\end{align*}
		
		We now choose $R$ and $T$ appropriately so that $v\in\mathcal{B}_R$. Throughout the proof, we agree that the constant $C$ depends only on $\gamma,\alpha,\nu,\varepsilon,N$, $\underline{\rho_0}$, and $\overline{\rho_0}$. Multiplying \eqref{3-3'} by $v$, integrating the resulting equation over $\mathbb{T}^N$, and applying integration by parts yields
		\begin{align*}
			\begin{split}
				&\quad \frac{1}{2}\frac{d}{dt}\int |v|^2dx+\int \rho^{\alpha-1}\mathbb{F}[v]:\nabla vdx\\
				&\leq C\int |\nabla\rho||\nabla v||v|dx+C\int |w||\nabla v||v|dx+C\int |\nabla\rho||v|dx.
			\end{split}
		\end{align*}
		Since $c\in[\nu,2\nu)$ and $|\divg v|\leq \sqrt{N}|\nabla v|$, for $\alpha\in(1-1/N,1]$, we have
		\begin{align}\label{30-1}
			\begin{split}
				\mathbb{F}[v]:\nabla v&=\nu|\nabla v|^2+(\nu-c)\nabla v:(\nabla v)^t+(\alpha-1)(2\nu-c)(\divg v)^2\\
				&\ge \nu|\nabla v|^2-(c-\nu)|\nabla v|^2-N(1-\alpha)(2\nu-c)|\nabla v|^2\\
				&=(N\alpha-N+1)(2\nu-c)|\nabla v|^2,
			\end{split}
		\end{align}
		while for $\alpha>1$, we have
        \begin{align}\label{30-1'}
            \begin{split}
            \mathbb{F}[v]:\nabla v\ge (2\nu-c)|\nabla v|^2. 
            \end{split}
        \end{align}
    Therefore, applying Young's inequality yields
		\begin{align}\label{31-1}
			\begin{split}
				&\quad \frac{d}{dt}\int |v|^2dx+\min\{N\alpha-N+1,1\}(2\nu-c)\int \rho^{\alpha-1}|\nabla v|^2dx\\
				&\leq C\int |\nabla\rho|^2|v|^2dx+C\int |w|^2|v|^2dx+C\int |v|^2dx+C\int |\nabla\rho|^2dx\\
				&\leq C(\|\nabla\rho\|_{L^\infty}^2+\|w\|_{L^\infty}^2+1)\int |v|^2dx+C\int |\nabla\rho|^2dx.
			\end{split}
		\end{align}
		
	Applying $\nabla$ to \eqref{3-3'} yields
	\begin{align}\label{30-2}
		\begin{split}
			\nabla v_t+\nabla(w\cdot \nabla v)-\rho^{\alpha-1}\nabla\divg \mathbb{F}[v]&=\nabla(\rho^{\alpha-1})\otimes\divg \mathbb{F}[v]+\nabla(\rho^{-1}\nabla(\rho^\alpha)\cdot\mathbb{F}[v])\\
			&\quad+\nabla(-\rho^{-1}\nabla P+c\rho^{-1}\nabla(\rho^\alpha)\cdot\nabla v).
		\end{split}
	\end{align}
	Multiplying the above equation by $\nabla v$, integrating over $\mathbb{T}^N$, and applying integration by parts together with Young's inequality yields
	\begin{align}\label{31-2}
		\begin{split}
			&\quad\frac{d}{dt}\int |\nabla v|^2dx+\min\{N\alpha-N+1,1\}(2\nu-c)\int\rho^{\alpha-1}|\nabla^2 v|^2dx\\
			&\leq C\int |\nabla\rho|^2|\nabla v|^2dx+C\int |w|^2|\nabla v|^2dx+C\int |\nabla\rho|^2dx\\
			&\leq C(\|\nabla\rho\|_{L^\infty}^2+\|w\|_{L^\infty}^2)\int |\nabla v|^2dx+C\int |\nabla\rho|^2dx,
		\end{split}
	\end{align}
	where we have used the following fact, obtained via integration by parts and \eqref{30-1}:
	\begin{align*}
		\begin{split}
			&-\sum_{i,j,k}\int \rho^{\alpha-1}\partial_i\partial_j(\mathbb{F}[v])_{jk}\partial_i v_kdx\\
			&\ge  \sum_{i,j,k}\int \rho^{\alpha-1}\partial_i(\mathbb{F}[v])_{jk}\partial_{ij}v_k dx-C\int |\nabla\rho||\nabla^2 v||\nabla v|dx\\
			&=\sum_i\int \rho^{\alpha-1}\mathbb{F}[\partial_i v]:\nabla\partial_i vdx-C\int |\nabla\rho||\nabla^2 v||\nabla v|dx\\
			&\ge \min\{N\alpha-N+1,1\}(2\nu-c)\int\rho^{\alpha-1}|\nabla^2 v|^2dx-C\int |\nabla\rho||\nabla^2 v||\nabla v|dx.
		\end{split}
	\end{align*}
		
	Proceeding similarly, applying $\partial_i$ to \eqref{30-2} for $i=1,\dots,N$, multiplying the resulting equation by $\nabla\partial_i v$, integrating over $\mathbb{T}^N$, summing over $i$, and using Young's inequality, integration by parts, H\"older's inequality, and the Sobolev embedding yields
		\begin{align}\label{31-3}
		\begin{split}
			&\quad\frac{d}{dt}\int |\nabla^2 v|^2dx+\min\{N\alpha-N+1,1\}(2\nu-c)\int\rho^{\alpha-1}|\nabla^3 v|^2dx\\
			&\leq C\int |\nabla\rho|^2|\nabla^2 v|^2dx+C\int (|\nabla w|^2|\nabla v|^2+|w|^2|\nabla^2 v|^2)dx+C\int (|\nabla^2\rho|^2+|\nabla\rho|^4)|\nabla v|^2dx\\
			&\quad+C\int (|\nabla^2\rho|^2+|\nabla\rho|^4)dx\\
			&\leq C(1+\|w\|_{H^2}^2+\|\nabla\rho\|_{L^\infty}^4+\|\nabla^2\rho\|_{L^\infty}^2)\|\nabla v\|_{H^1}^2+C\int (|\nabla^2\rho|^2+|\nabla\rho|^4)dx.
		\end{split}
	\end{align}
		
		Then, we multiply \eqref{3-3'} by $v_t$, integrate the resulting equation over $\mathbb{T}^N$, and use integration by parts along with Young's inequality to obtain
		\begin{align}\label{31-4}
			\begin{split}
				&\quad \frac{d}{dt}\int\rho^{\alpha-1}\mathbb{F}[v]:\nabla vdx+\int |v_t|^2dx\\
				&\leq C\int |\rho_t||\nabla v|^2dx+C\int |\nabla\rho|^2|\nabla v|^2dx+C\int |w|^2|\nabla v|^2dx+C\int |\nabla\rho|^2dx\\
				&\leq C(\|\rho_t\|_{L^\infty}+\|w\|_{L^\infty}^2+\|\nabla\rho\|_{L^\infty}^2)\int |\nabla v|^2dx+C\int |\nabla\rho|^2dx,
			\end{split}
		\end{align}
        where we have used
        \begin{align*}
        \mathbb{F}[v]:\nabla v_t=\mathbb{F}[v_t]:\nabla v. 
        \end{align*}
		
		Multiplying \eqref{30-2} by $\nabla v_t$, integrating the resulting equation over $\mathbb{T}^N$, and applying integration by parts, Young's inequality, H\"older's inequality, and the Sobolev embedding yields
		\begin{align}\label{31-5}
			\begin{split}
				&\quad\frac{d}{dt}\sum_k\int \rho^{\alpha-1}\mathbb{F}[\partial_k v]:\nabla\partial_k vdx+\int |\nabla v_t|^2dx\\
				&\leq C\int |\rho_t||\nabla^2 v|^2dx+C\int (|\nabla w|^2|\nabla v|^2+|w|^2|\nabla^2 v|^2)dx\\
				&\quad+C\int (|\nabla^2\rho|^2|\nabla v|^2+|\nabla\rho|^4|\nabla v|^2+|\nabla\rho|^2|\nabla^2 v|^2)dx+C\int (|\nabla^2\rho|^2+|\nabla\rho|^4)dx\\
				&\leq C(1+\|\rho_t\|_{L^\infty}+\|w\|_{H^2}^2+\|\nabla\rho\|_{L^\infty}^4+\|\nabla^2\rho\|_{L^\infty}^2)\|\nabla v\|_{H^1}^2\\
				&\quad+C\int (|\nabla^2\rho|^2+|\nabla\rho|^4)dx.
			\end{split}
		\end{align}
		
		Differentiating \eqref{3-3'} with respect to $t$ yields
			\begin{align}\label{30-3}
				\begin{split}
			v_{tt} +  (w\cdot\nabla v)_t -\rho^{\alpha-1}\divg\mathbb{F}[v_t]
			&=(\rho^{\alpha-1})_t\divg\mathbb{F}[v]+(\rho^{-1}\nabla(\rho^\alpha)\cdot\mathbb{F}[v])_t\\
			&\quad+(-\rho^{-1}\nabla P+c\rho^{-1}\nabla(\rho^\alpha)\cdot\nabla v)_t. 
			\end{split}
		\end{align}
		Multiplying the above equation by $v_t$, integrating the resulting equation over $\mathbb{T}^N$, applying integration by parts together with Young's inequality, and using \eqref{30-1} with $v$ replaced by $v_t$ yields
		\begin{align*}
			\begin{split}
				&\quad\frac{d}{dt}\int |v_t|^2dx+\min\{N\alpha-N+1,1\}(2\nu-c)\int \rho^{\alpha-1}|\nabla v_t|^2 dx\\
				&\leq C\int|\nabla\rho|^2|v_t|^2dx+C\int (|w_t||\nabla v||v_t|+|w|^2|v_t|^2)dx\\
				&\quad +C\int (|\rho_t||\nabla^2 v||v_t|+|\rho_t||\nabla\rho||\nabla v||v_t|+|\nabla\rho_t||\nabla v||v_t|)dx\\
				&\quad +C\int(|\rho_t||\nabla\rho||v_t|+|\nabla\rho_t||v_t|)dx.
			\end{split}
		\end{align*}
 Therefore, applying H\"older's inequality, Young's inequality, the Sobolev embedding, and the following facts:
		\begin{align*}
			\begin{split}
				\int |w_t||\nabla v||v_t|dx&\leq C\|w_t\|_{L^3}\|\nabla v\|_{L^6}\|v_t\|_{L^2}\leq  C\|w_t\|_{L^3}(\|\nabla v\|_{H^1}^2+\|v_t\|_{L^2}^2),\\
				\int|\rho_t||\nabla^2 v||v_t| dx&\leq C\|\rho_t\|_{L^\infty}(\|\nabla^2 v\|_{L^2}^2+\|v_t\|_{L^2}^2),\\
				\int |\rho_t||\nabla\rho||\nabla v||v_t|dx&\leq C(\|\rho_t\|_{L^\infty}^2+\|\nabla\rho\|_{L^\infty}^2)(\|\nabla v\|_{L^2}^2+\|v_t\|_{L^2}^2),\\
				\int|\nabla\rho_t||\nabla v||v_t|dx&\leq C\|\nabla\rho_t\|_{L^3}\|\nabla v\|_{L^6}\|v_t\|_{L^2}\leq  C\|\nabla\rho_t\|_{L^3}(\|\nabla v\|_{H^1}^2+\|v_t\|_{L^2}^2),
			\end{split}
		\end{align*}
		we obtain
		\begin{align}\label{31-6}
			\begin{split}
				&\quad\frac{d}{dt}\int |v_t|^2dx+\min\{N\alpha-N+1,1\}(2\nu-c)\int \rho^{\alpha-1}|\nabla v_t|^2 dx\\
				&\leq C(1+\|\nabla\rho\|_{L^\infty}^2+\|w_t\|_{L^3}+\|w\|_{H^2}^2+\|\rho_t\|_{L^\infty}^2+\|\nabla\rho_t\|_{L^3})(\|\nabla v\|_{H^1}^2+\|v_t\|_{L^2}^2)\\
				&\quad+C\int (|\rho_t|^2|\nabla\rho|^2+|\nabla\rho_t|^2)dx.
			\end{split}
		\end{align}
		
		Adding \eqref{31-1}, \eqref{31-2}, \eqref{31-3}, \eqref{31-4}, \eqref{31-5}, and \eqref{31-6} yields
\begin{align}\label{33-1}
    &\frac{d}{dt}\int \big((|v|^2+|\nabla v|^2+|\nabla^2 v|^2)+\rho^{\alpha-1} ( \mathbb{F}[v]:\nabla v + \sum_k\mathbb{F}[\partial_k v]:\nabla\partial_k v ) + |v_t|^2 \big)dx \notag \\
    &\quad + \int \min\{N\alpha-N+1,1\}(2\nu-c)\rho^{\alpha-1} ( |\nabla v|^2+|\nabla^2 v|^2+|\nabla^3 v|^2+|\nabla v_t|^2 ) dx \notag \\
    &\quad + \int ( |v_t|^2+|\nabla v_t|^2) dx \notag \\
    &\leq C(1+\|w\|_{H^2}^2+\|\nabla\rho\|_{L^\infty}^4+\|\nabla^2\rho\|_{L^\infty}^2+\|\rho_t\|_{L^\infty}^2+\|w_t\|_{L^3}+\|\nabla\rho_t\|_{L^3})(\|v\|_{H^2}^2+\|v_t\|_{L^2}^2) \notag \\
    &\quad + C\int (|\nabla\rho|^2+|\nabla^2\rho|^2+|\nabla\rho|^4) dx + C\int (|\rho_t|^2|\nabla\rho|^2+|\nabla\rho_t|^2) dx.
\end{align}
		From \eqref{3-2}, $w\in\mathcal{B}_R$, H\"older's inequality, the Sobolev embedding, and the Gagliardo--Nirenberg inequality, we know the following facts:
        \begin{align*}
    &\quad C\int_0^T ( 1+\|w\|_{H^2}^2+\|\nabla\rho\|_{L^\infty}^4+\|\nabla^2\rho\|_{L^\infty}^2+\|\rho_t\|_{L^\infty}^2+\|w_t\|_{L^3}+\|\nabla\rho_t\|_{L^3} ) dt \\
    &\leq C\int_0^T ( 1+\|w\|_{H^2}^2+\|\nabla\rho\|_{H^2}^4+\|\nabla^2\rho\|_{L^2}^{\frac{4-N}{2}}\|\nabla^2\rho\|_{H^2}^{\frac{N}{2}}+ \|\rho_t\|_{L^2}^{\frac{4-N}{2}}\|\rho_t\|_{H^2}^{\frac{N}{2}}+\|w_t\|_{H^1}+\|\nabla\rho_t\|_{H^1} ) dt \\
    &\leq C_{R,\tilde{M}_R}\int_0^T( 1+\|\nabla^4\rho\|_{L^2}^{\frac{N}{2}}+\|\nabla^2\rho_t\|_{L^2}^{\frac{N}{2}}+\|\nabla w_t\|_{L^2}+\|\nabla^2 \rho_t\|_{L^2}) dt \\
    &\leq C_{R,\tilde{M}_R} \left(T + \Big(\int_0^T\|\nabla^2\rho_t\|_{L^2}^2 dt\Big)^{\frac{N}{4}} T^{\frac{4-N}{4}}+ \Big(\int_0^T\|\nabla^4 \rho\|_{L^2}^2 dt\Big)^{\frac{N}{4}} T^{\frac{4-N}{4}} + \Big(\int_0^T\|\nabla w_t\|_{L^2}^2 dt\Big)^{\frac{1}{2}} T^{\frac{1}{2}} \right) \\
    &\leq C_{R,\tilde{M}_R}T^{\frac{1}{4}}.
\end{align*}
		and
		\begin{align*}
			\begin{split}
				&\quad C\int_0^T\int (|\nabla\rho|^2+|\nabla^2\rho|^2+|\nabla\rho|^4)dxdt\\
				&\leq C\int_0^T(\|\nabla\rho\|_{L^2}^2+\|\nabla^2\rho\|_{L^2}^2+\|\nabla\rho\|_{H^2}^4)dt\le C_{\tilde{M}_R}T, 
			\end{split}
		\end{align*}
		and
		\begin{align*}
			\begin{split}
				&\quad C\int_0^T\int (|\rho_t|^2|\nabla\rho|^2+|\nabla\rho_t|^2)dxdt\\
				&\leq C\int_0^T(\|\rho_t\|_{L^2}^2\|\nabla\rho\|_{L^\infty}^2+\|\nabla\rho_t\|_{L^2}^2)dt\\
				&\leq C\int_0^T(\|\rho_t\|_{L^2}^2\|\nabla\rho\|_{H^2}^2+\|\nabla\rho_t\|_{L^2}^2)dt\leq C_{\tilde{M}_R}T.
			\end{split}
		\end{align*}
		With these facts at hand, we apply Gr\"onwall's inequality to \eqref{33-1} and obtain that, 
		\begin{align}\label{30-4}
			\begin{split}
				&\sup_{0 \le t \le T}\int \big((|v|^2+|\nabla v|^2+|\nabla^2 v|^2)+\rho^{\alpha-1}(\mathbb{F}[v]:\nabla v+\sum_k\mathbb{F}[\partial_k v]:\nabla\partial_k v)+|v_t|^2\big)dx\\
				&\leq e^{C_{R,\tilde{M}_R}T^{\frac{1}{4}}}(D_2+C_{\tilde{M}_R}T),
			\end{split}
		\end{align}
		where 
        \begin{align}
            D_2:=1+\|v_0\|_{H^2}^2+\int \big(\rho^{\alpha-1}(\mathbb{F}[v]:\nabla v+\sum_k\mathbb{F}[\partial_k v]:\nabla\partial_k v)+|v_t|^2\big)(x,0)dx 
        \end{align}
        depends on $\gamma,\alpha,\nu,\varepsilon,N$, $\|\rho_0\|_{H^3}$, $\|v_0\|_{H^2}$, $\underline{\rho_0}$, and $\overline{\rho_0}$; $C_{R,\tilde{M}_R}$ depends on $\gamma,\alpha,\nu,\varepsilon,N$, $\underline{\rho_0}$, $\overline{\rho_0}$, $R$, and $\tilde{M}_R$; and $C_{\tilde{M}_R}$ depends on $\gamma,\alpha,\nu,\varepsilon,N$, $\underline{\rho_0}$, $\overline{\rho_0}$, and $\tilde{M}_R$. It follows from \eqref{30-4} that
		\begin{align*}
			\begin{split}
				\sup_{0 \le t \le T}(\|v\|_{H^2}^2+\|v_t\|_{L^2}^2)\leq e^{C_{R, \tilde{M}_R}T^{\frac{1}{4}}}(D_2+C_{\tilde{M}_R}T). 
			\end{split}
		\end{align*}
		
		We now choose $R$ and $T$ appropriately so that $v\in\mathcal{B}_R$. First, we define
		\begin{align}\label{35-1}
			R^2=\left(\frac{4}{\min\{N\alpha-N+1,1\}(2\nu-c)}\max\left\{(\frac{3}{2}\overline{\rho_0})^{1-\alpha},(\frac{1}{2}\underline{\rho_0})^{1-\alpha}\right\}+4\right)D_2. 
		\end{align}
		Let $T_4>0$ be sufficiently small, depending on $\gamma,\alpha,\nu,\varepsilon,N$, $\underline{\rho_0}$, $\overline{\rho_0}$, $\|\rho_0\|_{H^3}$, $\|v_0\|_{H^2}$, $R$, and $\tilde{M}_R$, such that
		\begin{align}\label{33-2}
			\sup_{0\leq t\leq T_4}(\|v\|_{H^2}^2+\|v_t\|_{L^2}^2)\leq 2D_2.
		\end{align}
		Set $T\leq \min\{1,T_1,T_2,T_3,T_4\}$. Integrating \eqref{33-1} over $[0,T]$ and using \eqref{33-2} yields
		\begin{align*}
			\begin{split}
				&\int_0^T\int\big(\min\{N\alpha-N+1,1\}(2\nu-c)\rho^{\alpha-1}(|\nabla v|^2+|\nabla^2 v|^2+|\nabla^3 v|^2+|\nabla v_t|^2)\\
				&\quad+(|v_t|^2+|\nabla v_t|^2)\big)dxdt\\
				&\leq D_2+2D_2C_{R,\tilde{M}_R}T^{\frac{1}{4}}+C_{\tilde{M}_R}T. 
			\end{split}
		\end{align*}
		Consequently,
		\begin{align*}
			\begin{split}
				&\quad \int_0^T (\|\nabla v\|_{H^2}^2+\|v_t\|_{H^1}^2)dt\\
				&\leq \left(\frac{1}{\min\{N\alpha-N+1,1\}(2\nu-c)}\max\left\{(\frac{3}{2}\overline{\rho_0})^{1-\alpha},(\frac{1}{2}\underline{\rho_0})^{1-\alpha}\right\}+1\right)\\
                &\quad\times(D_2+2D_2C_{R,\tilde{M}_R}T^{\frac{1}{4}}+C_{\tilde{M}_R}T),
			\end{split}
		\end{align*}
		which, together with \eqref{33-2}, implies that
		\begin{align*}
			\begin{split}
				&\quad \int_0^T (\|v\|_{H^3}^2+\|v_t\|_{H^1}^2)dt\\
				&\leq 2D_2T+ \left(\frac{1}{\min\{N\alpha-N+1,1\}(2\nu-c)}\max\left\{(\frac{3}{2}\overline{\rho_0})^{1-\alpha},(\frac{1}{2}\underline{\rho_0})^{1-\alpha}\right\}+1\right)\\
                &\quad\times(D_2+2D_2C_{R,\tilde{M}_R}T^{\frac{1}{4}}+C_{\tilde{M}_R}T).
			\end{split}
		\end{align*}
		Let $T_5>0$ be sufficiently small, depending on $\gamma,\alpha,\nu,\varepsilon,N$, $\underline{\rho_0}$, $\overline{\rho_0}$, $\|\rho_0\|_{H^3}$, $\|v_0\|_{H^2}$, $R$, and $\tilde{M}_R$, such that
		\begin{align}\label{34-1}
        \begin{split}
			&\quad\int_0^{T_5} (\|v\|_{H^3}^2+\|v_t\|_{H^1}^2)dt\\
            &\leq \left(\frac{2}{\min\{N\alpha-N+1,1\}(2\nu-c)}\max\left\{(\frac{3}{2}\overline{\rho_0})^{1-\alpha},(\frac{1}{2}\underline{\rho_0})^{1-\alpha}\right\}+2\right)D_2. 
        \end{split}
		\end{align}
		Taking $T=\min\{1,T_1,T_2,T_3,T_4,T_5\}$, we obtain from \eqref{33-2} and \eqref{34-1} that
		\begin{align*}
			\sup_{0\leq t\leq T}(\|v\|_{H^2}^2+\|v_t\|_{L^2}^2)+ \int_0^{T} (\|v\|_{H^3}^2+\|v_t\|_{H^1}^2)dt\leq R^2,
		\end{align*}
		which shows that $v\in\mathcal{B}_R$.
		
		This completes the proof of Proposition \ref{Prop 9.2}.
	\end{proof}

    Next, we establish the existence of solutions to the original density-effective velocity system via an iterative procedure based on the linearized system. Throughout the proof below, we assume that $R$ is the constant from Proposition \ref{Prop 9.2}, $\tilde{M}_R$ is the constant from Proposition \ref{Prop 9.1}, and $0<T\le\min\{1,T_1,T_2,T_3,T_4,T_5\}$. Let $v^0$ be the unique strong solution to the system
	\begin{align*}
		\left\{
		\begin{array}{l}
			(v^0)_t-\Delta v^0=0,\\
			v^0(0)=v_0, 
		\end{array}
		\right.
	\end{align*}
	defined on $\mathbb{T}^N\times[0,T]$. By the standard energy estimates for the heat equation and \eqref{35-1}, we may choose $T_6>0$ sufficiently small, depending only on $\|v_0\|_{H^2}$ and $R$, such that 
	\begin{align*}
		&\quad\sup_{0\leq t\leq T_6}(\|v^{0}\|_{H^2}^2+\|v^{0}_t\|_{L^2}^2)+\int_0^{T_6}(\|v^0\|_{H^3}^2+\|v^0_t\|_{H^1}^2)dt\leq R^2. 
	\end{align*}
    This is valid due to the fact that when $T_6$ is sufficiently small, we can deduce from the standard heat equation estimates that
\begin{equation*}
\sup_{0\leq t\leq T_6} \left( \|v^{0}\|_{H^2}^2 + \|v^{0}_t\|_{L^2}^2 \right) + \int_0^{T_6} \left( \|v^0\|_{H^3}^2 + \|v^0_t\|_{H^1}^2 \right) dt \leq 4\|v_0\|_{H^2}^2 \le R^2.
\end{equation*}
	Let $T\le T_6$. By Proposition \ref{Prop 9.1}, there exists a unique $\rho^1$ defined on $\mathbb{T}^N\times[0,T]$ solving the system
	\begin{align*}
		\left\{
		\begin{array}{l}
			\rho^1_t+\divg (\rho^1 v^0)-c\Delta((\rho^1)^\alpha)=0,\\
			\rho^1(0) =\rho_0, 
		\end{array}
		\right.
	\end{align*}
	and satisfying \eqref{3-2''} and \eqref{3-2}. By Proposition \ref{Prop 9.2}, there exists a unique $v^1$ defined on $\mathbb{T}^N\times[0,T]$ solving the system
	\begin{align*}
		\left\{
		\begin{array}{l}
			\rho^1v^1_t +  \rho^1v^0\cdot\nabla v^1 -\divg ((\rho^1)^\alpha\mathbb{F}[v^1])
			=-\nabla ((\rho^1)^\gamma)+c\nabla((\rho^1)^\alpha)\cdot\nabla v^1,\\
			v^1(0) =v_0, 
		\end{array}
		\right.
	\end{align*}
	and satisfying $v^1\in\mathcal{B}_R$. Proceeding inductively, we obtain a sequence $(\rho^n,v^n)$, $n\in\mathbb{N}^+$, defined on $\mathbb{T}^N\times[0,T]$, solving the system
	\begin{align}\label{36-1}
		\left\{
		\begin{array}{l}
			\rho^n_t+\divg (\rho^n v^{n-1})-c\Delta((\rho^n)^\alpha)=0,\\
			\rho^nv^n_t +  \rho^nv^{n-1}\cdot\nabla v^n -\divg ((\rho^n)^\alpha\mathbb{F}[v^n])
			=-\nabla ((\rho^n)^\gamma)+c\nabla((\rho^n)^\alpha)\cdot\nabla v^n,\\
			(\rho^n,v^n)(0)=(\rho_0,v_0),
		\end{array}
		\right.
	\end{align}
	with $\rho^n$ satisfying \eqref{3-2''} and \eqref{3-2}, and $v^n\in\mathcal{B}_R$.

   Having established the uniform bounds for the sequence $(\rho^n, v^n)$ in the corresponding solution spaces, the following proposition asserts the strong convergence of this sequence, which ensures that the limit function obtained thereby solves the original density-effective velocity system.
\begin{prop}\label{Prop 9.3}
Assume that the hypotheses of Lemma \ref{Lem loc} hold. Then there exists a sufficiently small constant $T>0$, depending on $\gamma,\alpha,\nu,\varepsilon,N$, $\underline{\rho_0}$, $\overline{\rho_0}$, $\|\rho_0\|_{H^3}$, $\|v_0\|_{H^2}$, $R$, and $\tilde{M}_R$, such that $\{\rho^n\}_{n=1}^\infty$ and $\{(v^n)_i\}_{n=1}^\infty$ for $i=1,\dots,N$ are Cauchy sequences in $L^\infty(0,T;L^2)$ and $L^2(0,T;H^1)$.
\end{prop}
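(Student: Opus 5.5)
We will prove the contraction by estimating the differences of consecutive iterates. Set
$$\bar\rho^{n+1}=\rho^{n+1}-\rho^n,\qquad \bar v^{n+1}=v^{n+1}-v^n .$$
The plan is to show that the quantity
$$\Phi^{n+1}(t)=\sup_{0\le s\le t}\big(\|\bar\rho^{n+1}\|_{L^2}^2+\|\bar v^{n+1}\|_{L^2}^2\big)+\int_0^t\big(\|\nabla\bar\rho^{n+1}\|_{L^2}^2+\|\nabla\bar v^{n+1}\|_{L^2}^2\big)ds$$
satisfies $\Phi^{n+1}(T)\le C T^{\theta}\,\Phi^{n}(T)$ (or $\le \tfrac12\Phi^n(T)$) for some $\theta>0$. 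Shrinking $T$ then makes the series $\sum_n\Phi^n(T)^{1/2}$ converge, which gives the Cauchy property in $L^\infty(0,T;L^2)\cap L^2(0,T;H^1)$. All the coefficients we need are controlled uniformly in $n$: by \eqref{3-2''} and \eqref{3-2} for $\rho^n$, and by $v^n\in\mathcal{B}_R$. In particular we will use
$$\|\rho^n\|_{H^3},\ \|v^n\|_{H^2},\ \|\rho^n_t\|_{H^1}\le C,\qquad \tfrac12\underline{\rho_0}\le\rho^n\le\tfrac32\overline{\rho_0},\qquad \int_0^T\|v^n\|_{H^3}^2+\|v^n_t\|_{H^1}^2\,dt\le R^2,$$
and these give $L^\infty$ control of $\rho^n$, $\nabla\rho^n$, $v^n$, and $\nabla v^n$ (the last one in $L^2_t$).

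\textbf{Density difference.} Subtracting the first equations of \eqref{36-1} at levels $n+1$ and $n$ gives
$$\bar\rho^{n+1}_t+\divg(\bar\rho^{n+1}v^n+\rho^n\bar v^n)-c\Delta\big((\rho^{n+1})^\alpha-(\rho^n)^\alpha\big)=0.$$
We write $(\rho^{n+1})^\alpha-(\rho^n)^\alpha=a^n\bar\rho^{n+1}$ with $a^n=\alpha\int_0^1(\theta\rho^{n+1}+(1-\theta)\rho^n)^{\alpha-1}d\theta$. By the density bounds $a^n$ is bounded above and below by positive constants, and $\|\nabla a^n\|_{L^\infty}\le C$. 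Testing with $\bar\rho^{n+1}$ and integrating by parts, we get
$$\frac{d}{dt}\|\bar\rho^{n+1}\|_{L^2}^2+c_0\|\nabla\bar\rho^{n+1}\|_{L^2}^2\le C\|\bar\rho^{n+1}\|_{L^2}^2+C\|\bar v^n\|_{L^2}^2 .$$
Here the term $\int\rho^n\bar v^n\cdot\nabla\bar\rho^{n+1}$ is absorbed by Young's inequality. Because it is absorbed into the gradient, only the $L^2$ norm of $\bar v^n$ (not its gradient) appears on the right.

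\textbf{Velocity difference.} It is most convenient to subtract the divided form \eqref{3-3'}, namely
$$v_t+w\cdot\nabla v-\rho^{-1}\divg(\rho^\alpha\mathbb{F}[v])=-\rho^{-1}\nabla P+c\rho^{-1}\nabla(\rho^\alpha)\cdot\nabla v,$$
and then test with $\bar v^{n+1}$. Using the coercivity \eqref{30-1}, the leading term $(\rho^{n+1})^{\alpha-1}\divg\mathbb{F}[\bar v^{n+1}]$ yields $c_1\|\nabla\bar v^{n+1}\|_{L^2}^2$, up to a commutator with $\nabla(\rho^{n+1})^{\alpha-1}$, which is harmless. The remaining error terms are of the following kinds:
\begin{itemize}
\item $\bar v^n\cdot\nabla v^n\cdot\bar v^{n+1}$, bounded by $\|\bar v^n\|_{L^2}\|\nabla v^n\|_{L^\infty}\|\bar v^{n+1}\|_{L^2}$;
\item $v^{n}\cdot\nabla\bar v^{n+1}\cdot\bar v^{n+1}$, absorbed using $\|v^n\|_{L^\infty}$;
\item coefficient differences multiplying $\nabla^2v^n$ or $\nabla v^n$, such as $\big((\rho^{n+1})^{\alpha-1}-(\rho^n)^{\alpha-1}\big)\divg\mathbb{F}[v^n]$;
\item the pressure difference $(\rho^{n+1})^{-1}\nabla P(\rho^{n+1})-(\rho^n)^{-1}\nabla P(\rho^n)$.
\end{itemize}
The last two kinds carry $\nabla^2v^n$ or $\nabla\bar\rho^{n+1}$, so we integrate by parts to move one derivative onto $\bar v^{n+1}$ wherever needed. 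We then bound them by
$$C\big(1+\|\nabla^2 v^n\|_{L^3}^2\big)\|\bar\rho^{n+1}\|_{L^2}^2+C\|\nabla\bar\rho^{n+1}\|_{L^2}^2\ \ (\text{with a small coefficient})+\delta\|\nabla\bar v^{n+1}\|_{L^2}^2 .$$
Since $\int_0^T\|\nabla^2v^n\|_{L^3}^2\,dt\le C$, the factor in front of $\|\bar\rho^{n+1}\|_{L^2}^2$ is integrable in time.

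\textbf{Closing the argument.} We add the density inequality, multiplied by a large constant $K$, to the velocity inequality. The $\|\nabla\bar\rho^{n+1}\|_{L^2}^2$ terms coming from the velocity estimate are then absorbed by the dissipation $K c_0\|\nabla\bar\rho^{n+1}\|_{L^2}^2$. The result is
$$\frac{d}{dt}\big(K\|\bar\rho^{n+1}\|_{L^2}^2+\|\bar v^{n+1}\|_{L^2}^2\big)+\text{dissipation}\le \beta_n(t)\big(K\|\bar\rho^{n+1}\|_{L^2}^2+\|\bar v^{n+1}\|_{L^2}^2\big)+C\big(1+\|\nabla v^n\|_{L^\infty}^2\big)\|\bar v^n\|_{L^2}^2,$$
with $\int_0^T\beta_n\,dt\le C$. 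Both differences vanish at $t=0$. Applying Gr\"onwall's inequality therefore gives
$$\Phi^{n+1}(T)\le C\int_0^T\big(1+\|\nabla v^n\|_{L^\infty}^2\big)dt\;\sup_{[0,T]}\|\bar v^n\|_{L^2}^2\le C\,\big(T+T^{1/4}\big)\,\Phi^n(T),$$
using Gagliardo--Nirenberg, $\|\nabla v^n\|_{L^\infty}^2\lesssim\|v^n\|_{H^2}^{2-N/2}\|v^n\|_{H^3}^{N/2}$, and H\"older in time. Choosing $T$ small makes the factor at most $\tfrac12$, which completes the contraction.

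The main obstacle is the velocity difference. There the coefficient differences multiply second derivatives of $v^n$, which are only $L^2$ in time, and the pressure and viscosity differences create $\nabla\bar\rho^{n+1}$ terms. The essential point is that these are controlled by the parabolic dissipation of $\bar\rho^{n+1}$ obtained in the density step, and that the source term on the right involves only $\|\bar v^n\|_{L^2}$ with a time-integrable weight, so that smallness of $T$ produces the contraction.
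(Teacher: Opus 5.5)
Your proposal is correct and follows essentially the same route as the paper. Both arguments use $L^2$ energy estimates for the differences, the mean-value representation $(\rho^{n+1})^\alpha-(\rho^n)^\alpha=a^n(\rho^{n+1}-\rho^n)$, a large weight on the density estimate to absorb the $\|\nabla(\rho^{n+1}-\rho^n)\|_{L^2}^2$ terms produced by the velocity equation, and Gr\"onwall with $T$ small. The differences are minor. You test the momentum equation after dividing by $\rho^{n+1}$, which removes the paper's term $-\int (\rho^{n+1}-\rho^n)v^n_t\cdot(v^{n+1}-v^n)\,dx$ but creates extra coefficient-difference terms. You also extract smallness as $T^{1/4}$ from time-integrable weights, whereas the paper uses the uniform-in-time bound $\|\nabla v^n\|_{L^3}\le C$.
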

\begin{proof}
	Throughout the proof of this proposition, we agree that the constant $C>0$ depends only on 
    \begin{align}\label{Cdep}
    \gamma,\alpha,\nu,\varepsilon,N, \underline{\rho_0}, \overline{\rho_0}, \|\rho_0\|_{H^3}, \|v_0\|_{H^2}, R, \tilde{M}_R. 
    \end{align}
     In addition, we take $T\in(0,\min\{1,T_1,T_2,T_3,T_4,T_5,T_6\}]$ sufficiently small, to be specified later.
	
	\textbf{Step 1: Derivation of the linearized difference system.}\\
	For \(n\ge 1\), we define
	\[
	r^{n+1}:=\rho^{n+1}-\rho^n,\quad
	\phi^{n+1}:=v^{n+1}-v^n.
	\]
	By subtracting the system for $(\rho^n, v^n)$ from that for $(\rho^{n+1}, v^{n+1})$, we obtain the following difference system:
	\begin{equation}\label{system_diffe}
		\left\{
		\begin{aligned}
			& r_t^{n+1} + \divg(r^{n+1}v^n) + \divg(\rho^n \phi^n) - c\Delta((\rho^{n+1})^\alpha-(\rho^n)^\alpha) = 0,\quad r^{n+1}(0)=0,\\[2mm]
			& \rho^{n+1}\phi_t^{n+1} + \rho^{n+1}v^n\cdot\nabla \phi^{n+1} - \mathcal{A}_{\rho^{n+1}}\phi^{n+1} = F_{dis}^n, \quad \phi^{n+1}(0)=0,
		\end{aligned}
		\right.
	\end{equation}
	where the operator $\mathcal{A}_{\rho}$ acting on a function $z$ is defined as
\begin{equation}\label{mathcal A}
	\mathcal{A}_{\rho}z:=\divg(\rho^\alpha\mathbb{F}[z])+\alpha c\rho^{\alpha-1}\nabla\rho\cdot\nabla z,
\end{equation}
and the term $F_{\mathrm{dis}}^n$ is defined as
\begin{equation}\label{Fn}
	F_{dis}^n:=
	-r^{n+1}v_t^n
	-r^{n+1}v^n\cdot\nabla v^n
	-\rho^n \phi^n\cdot\nabla v^n
	-\nabla\bigl((\rho^{n+1})^\gamma-(\rho^n)^\gamma\bigr)
	+\bigl(\mathcal A_{\rho^{n+1}}-\mathcal A_{\rho^n}\bigr)v^n.
\end{equation}
We set
\begin{equation*}
	a^n(x,t):= \alpha\int_0^1 \bigl(\theta\rho^{n+1}(x,t)+(1-\theta)\rho^n(x,t)\bigr)^{\alpha-1}d\theta.
\end{equation*}
From \eqref{3-2''} and \eqref{3-2}, we know that
\begin{align}\label{lower a}
	\begin{split}
		C^{-1}\le a^{n}(x,t)\le C,\quad\sup_{0 \le t \le T}\|\nabla a^n\|_{L^\infty}\leq C,
	\end{split}
\end{align}
and by Newton--Leibniz formula, we have
\begin{equation*}
	(\rho^{n+1})^\alpha-(\rho^n)^\alpha=a^n r^{n+1}.
\end{equation*}
Thus, system \eqref{system_diffe} can be  rewritten as
\begin{equation}\label{system_diffe11}
	\left\{
	\begin{aligned}
		& r_t^{n+1} +\divg(r^{n+1}v^n) + \divg(\rho^n \phi^n) - c\Delta(a^n r^{n+1}) = 0,\quad r^{n+1}(0)=0,\\[2mm]
		& \rho^{n+1}\phi_t^{n+1} + \rho^{n+1}v^n\cdot\nabla \phi^{n+1} - \mathcal{A}_{\rho^{n+1}}\phi^{n+1} = F_{dis}^n, \quad \phi^{n+1}(0)=0. 
	\end{aligned}
	\right.
\end{equation}

\textbf{Step 2: Energy estimates for the difference system.}\\
For any $n\ge 2$, we define the energy functional
\begin{equation}\label{mathfrakEn}
	\mathfrak E^{n}(t)
	:=
	\Lambda\|r^{n}(t)\|_{L^2}^2
	+\frac{1}{2}\int\rho^{n}(x,t)|\phi^{n}(x,t)|^2 dx,
\end{equation}
and
\begin{equation}\label{mathfrakDn1}
	\mathfrak D^{n}(t)
	:=
	\|r^{n}(t)\|_{H^1}^2+\|\phi^{n}(t)\|_{H^1}^2,
\end{equation}
where the constant $\Lambda$ is given in \eqref{relation} and depends only on $\gamma,\alpha,\nu,\varepsilon,N$, $\underline{\rho_0}$, $\overline{\rho_0}$, $\|\rho_0\|_{H^3}$, $\|v_0\|_{H^2}$, $R$, and $\tilde{M}_R$. We claim that there exists a continuous, monotonically increasing function $\kappa(\cdot)$ satisfying $\kappa(s)\to 0$ as $s\to 0$, such that for all $n\ge 2$,
\begin{equation}\label{contractionen}
	\sup_{0\le t\le T}\mathfrak E^{n+1}(t)+\int_0^T \mathfrak D^{n+1}(t) dt\le \kappa(T)\left(\sup_{0\le t\le T}\mathfrak E^n(t)+\int_0^T \mathfrak D^n(t) dt\right). 
\end{equation}
	
Indeed, multiplying $\eqref{system_diffe11}_1$ by $r^{n+1}$, integrating the resulting equation over $\mathbb{T}^N$, and applying integration by parts, we obtain
\begin{equation}\label{differencer}
	\begin{split}
		&\quad\frac{1}{2}\frac{d}{dt}\int |r^{n+1}|^2dx+c\int a^n|\nabla r^{n+1}|^2dx\\
		&=- \int \divg (r^{n+1}v^n)r^{n+1}dx-\int \divg (\rho^n \phi^n)r^{n+1}dx-c\int \nabla a^n\cdot\nabla r^{n+1}r^{n+1}dx\\
		&=:L_1+L_2+L_3.
	\end{split}
\end{equation}
In view of \eqref{lower a}, there exists a small constant $c_1>0$, depending on the quantities in \eqref{Cdep}, such that
\begin{align}\label{L0}
	\begin{split}
		c\int a^n|\nabla r^{n+1}|^2dx\ge c_1\int |\nabla r^{n+1}|^2dx. 
	\end{split}
\end{align}
We now estimate the terms $L_1$--$L_3$. For the term $L_1$, applying integration by parts, H\"older's inequality, Young's inequality, the Sobolev embedding, and the fact that $v^n\in\mathcal{B}_R$ yields
\begin{equation}\label{L1}
	\begin{aligned}
		L_1&=-\frac12\int  |r^{n+1}|^2\divg  v^n\,dx\\
		&\le C\|r^{n+1}\|_{L^2}\|r^{n+1}\|_{L^6} \|\nabla v^n\|_{L^3}\\
		&\le \frac{c_1}{6}\|r^{n+1}\|_{H^1}^2 + C\|r^{n+1}\|_{L^2}^2.
	\end{aligned}
\end{equation}
For the term $L_2$, applying integration by parts together with H\"older's and Young's inequalities, we have
\begin{equation}\label{L2}
	\begin{aligned}
		L_2&=\int \rho^n \phi^n\cdot\nabla r^{n+1}dx\\
		&\le C\|\rho^n\|_{L^\infty}\|\phi^n\|_{L^2}\|\nabla r^{n+1}\|_{L^2}\\
		&\le \frac{c_1}{6}\|r^{n+1}\|_{H^1}^2 + C\|\phi^n\|_{L^2}^2.
	\end{aligned}
\end{equation}
For the term $L_3$, employing \eqref{lower a} together with H\"older's and Young's inequalities yields
\begin{align}\label{L3}
	\begin{split}
		L_3&\leq C\|r^{n+1}\|_{L^2}\|\nabla a^{n}\|_{L^\infty}\|\nabla r^{n+1}\|_{L^2}\\
		&\leq \frac{c_1}{6}\|r^{n+1}\|_{H^1}^2 + C\|r^{n+1}\|_{L^2}^2.
	\end{split}
\end{align}
Substituting \eqref{L0}, \eqref{L1}, \eqref{L2}, and \eqref{L3} into \eqref{differencer}, we obtain
\begin{equation}\label{rn+1l2}
	\frac{d}{dt}\|r^{n+1}\|_{L^2}^2+
	{c_1}\|r^{n+1}\|_{H^1}^2
	\le C\|r^{n+1}\|_{L^2}^2 + C\|\phi^n\|_{L^2}^2.
\end{equation}

Next, multiplying $\eqref{system_diffe11}_2$ by $\phi^{n+1}$, integrating the resulting equation over $\mathbb{T}^N$, and applying integration by parts, we arrive at
\begin{equation}\label{rhoun+1diff}
	\begin{split}
		&\quad\frac12\frac{d}{dt}\int \rho^{n+1}|\phi^{n+1}|^2dx\\
		&=\frac12\int\bigl(\rho_t^{n+1}+\divg(\rho^{n+1}v^n)\bigr)|\phi^{n+1}|^2dx+\int \mathcal A_{\rho^{n+1}}\phi^{n+1}\cdot \phi^{n+1}dx\\
		&\quad+\int F^n_{dis}\cdot \phi^{n+1}dx\\
		&=: M_1+M_2+M_3.
	\end{split}
\end{equation}
For the term $M_2$, it follows from integration by parts, Young's inequality, the Sobolev embedding, and \eqref{3-2} that
\begin{equation}\label{M2estimate}
	\begin{split}
		M_2&=\int\divg({(\rho^{n+1})}^\alpha\mathbb{F}[\phi^{n+1}])\cdot \phi^{n+1}dx+\alpha c\int (\rho^{n+1})^{\alpha-1}\nabla\rho^{n+1}\cdot\nabla \phi^{n+1}\cdot \phi^{n+1}dx \\
		&=-\int(\rho^{n+1})^\alpha\mathbb{F}[\phi^{n+1}]:\nabla \phi^{n+1}dx+\alpha c\int (\rho^{n+1})^{\alpha-1}\nabla\rho^{n+1}\cdot\nabla \phi^{n+1}\cdot \phi^{n+1}dx\\
		&\le -c_2\|\nabla \phi^{n+1}\|_{L^2}^2+C\|\nabla\rho^{n+1}\|_{L^\infty}\|\nabla \phi^{n+1}\|_{L^2}\|\phi^{n+1}\|_{L^2}\\
        &\le -c_2\|\nabla \phi^{n+1}\|_{L^2}^2+C\|\nabla\rho^{n+1}\|_{H^2}\|\nabla \phi^{n+1}\|_{L^2}\|\phi^{n+1}\|_{L^2}\\
		&\leq -\frac{7}{8}{c}_2\|\nabla \phi^{n+1}\|_{L^2}^2+C\|\phi^{n+1}\|_{L^2}^2,
	\end{split}
\end{equation}
where $c_2>0$ is a small constant depending on the quantities stated in \eqref{Cdep} and satisfying
\begin{align*}
c_2\|\nabla \phi^{n+1}\|_{L^2}^2\le \int(\rho^{n+1})^\alpha\mathbb{F}[\phi^{n+1}]:\nabla \phi^{n+1}dx,
\end{align*}
which is actually a consequence of \eqref{30-1} and \eqref{30-1'}.\\
For the term $M_1$, using $\eqref{36-1}_1$, integration by parts, \eqref{3-2}, and Young's inequality, we have
\begin{equation}\label{M1estimate}
	\begin{split}
		M_1&=\frac{c}{2}\int\Delta((\rho^{n+1})^\alpha)|\phi^{n+1}|^2dx\\
		&=-c\int\nabla((\rho^{n+1})^{\alpha})\cdot \nabla \phi^{n+1}\phi^{n+1}dx\\
		&\le C\|\nabla((\rho^{n+1})^{\alpha})\|_{L^\infty}\|\phi^{n+1}\|_{L^2}\|\nabla \phi^{n+1}\|_{L^2}\\
		&\le \frac{c_2}{32}\|\phi^{n+1}\|_{H^1}^2 + C\|\phi^{n+1}\|_{L^2}^2.
	\end{split}
\end{equation}
For the term $M_3$, we expand $F_{\mathrm{dis}}^n$ as follows:
\begin{equation}\label{M3esti}
	\begin{split}
		M_3&= \int \Big( -r^{n+1} v_t^n - r^{n+1} v^n \cdot \nabla v^n - \rho^n \phi^n \cdot \nabla v^n \\
		&\qquad\quad - \nabla \bigl( (\rho^{n+1})^\gamma - (\rho^n)^\gamma \bigr) + \bigl( \mathcal{A}_{\rho^{n+1}} - \mathcal{A}_{\rho^n} \bigr) v^n \Big) \cdot \phi^{n+1} dx\\
		&= -\int r^{n+1} v_t^n\cdot \phi^{n+1}dx - \int r^{n+1} v^n \cdot \nabla v^n\cdot \phi^{n+1}dx - \int \rho^n \phi^n \cdot \nabla v^n\cdot \phi^{n+1}dx  \\
		&\quad- \int \nabla \bigl( (\rho^{n+1})^\gamma - (\rho^n)^\gamma \bigr)\cdot \phi^{n+1}dx \\
		&\quad +\int\Big(\divg\bigr((\rho^{n+1})^{\alpha}\mathbb{F}[v^{n}] \bigr)- \divg((\rho^{n})^{\alpha}\mathbb{F}[v^{n}])\Big)\cdot \phi^{n+1}dx \\
		&\quad+ \alpha c\int \big(({\rho}^{n+1})^{\alpha-1}\nabla{\rho}^{n+1}\cdot\nabla v^{n}-({\rho}^{n})^{\alpha-1}\nabla{\rho}^{n}\cdot\nabla v^{n}\big) \cdot \phi^{n+1}dx=:\sum_{i=1}^{6}M_3^i. 
	\end{split}
\end{equation}
For the term $M_3^1$, using H\"older's inequality, the Sobolev embedding, the Gagliardo--Nirenberg inequality, Young's inequality, and the fact that $v^n\in\mathcal{B}_R$, we obtain
\begin{equation}\label{rn+1vtun+1}
	\begin{aligned}
		|M_3^1|&\le C\|r^{n+1}\|_{L^6}\|v_t^n\|_{L^2}\|\phi^{n+1}\|_{L^3}\\
		&\le C\|r^{n+1}\|_{H^1}\|\phi^{n+1}\|_{L^2}^{\frac{6-N}{6}}\|\phi^{n+1}\|_{H^1}^{\frac{N}{6}}\\
		&\le \frac{c_2}{32}\|\phi^{n+1}\|_{H^1}^2 + C\|r^{n+1}\|_{H^1}^2 + C\|\phi^{n+1}\|_{L^2}^2.
	\end{aligned}
\end{equation}
Similarly, for the terms $M_3^2+M_3^3$, we have
\begin{equation}
\begin{aligned}
	|M_3^2|+|M_{3}^3|
	&\le C\|r^{n+1}\|_{L^2}\|v^n\|_{L^\infty}\|\nabla v^n\|_{L^3}\|\phi^{n+1}\|_{L^6}+C\|\phi^n\|_{L^2}\|\nabla v^n\|_{L^3}\|\phi^{n+1}\|_{L^6}\\
	&\le  C\|r^{n+1}\|_{L^2}\|v^n\|_{H^2}\|\nabla v^n\|_{H^1}\|\phi^{n+1}\|_{H^1}+C\|\phi^n\|_{L^2}\|\nabla v^n\|_{H^1}\|\phi^{n+1}\|_{H^1}\\
	&\le \frac{c_2}{16}\|\phi^{n+1}\|_{H^1}^2 + C\|r^{n+1}\|_{L^2}^2+ C\|\phi^n\|_{L^2}^2.
\end{aligned}
\end{equation}
For the term $M_3^4$, applying integration by parts, H\"older's inequality, and the mean value theorem yields
\begin{equation}
	\begin{aligned}
		\left|M_3^4\right|
		&=\left|\int \bigl((\rho^{n+1})^\gamma-(\rho^n)^\gamma\bigr)\divg \phi^{n+1}dx\right|\\
		&\le C\|r^{n+1}\|_{L^2}\|\nabla \phi^{n+1}\|_{L^2}\\
		&\le \frac{c_2}{32}\|\phi^{n+1}\|_{H^1}^2 + C\|r^{n+1}\|_{L^2}^2.
	\end{aligned}
\end{equation}
For the term $M_3^5$, applying integration by parts, the mean value theorem, $v^n\in\mathcal{B}_R$, and H\"older's and Young's inequalities yields
\begin{equation}
	\begin{aligned}
		|M_3^5|&=\left|\int((\rho^{n+1})^{\alpha}-(\rho^{n})^{\alpha})\mathbb{F}[v^{n}]:\nabla \phi^{n+1}dx\right|\\
		&\le C\|(\rho^{n+1})^\alpha-(\rho^n)^\alpha\|_{L^6}\|\nabla v^n\|_{L^3}\|\nabla \phi^{n+1}\|_{L^2}\\
		&\le C\|r^{n+1}\|_{H^1}\|\nabla \phi^{n+1}\|_{L^2}\\
		&\le \frac{c_2}{32}\|\phi^{n+1}\|_{H^1}^2+C\|r^{n+1}\|_{H^1}^2.
	\end{aligned}
\end{equation}
Similarly, for the term $M_3^6$, we have
\begin{equation}\label{rhon1vn-rhon}
	\begin{aligned}
		|M_3^6|=&\alpha c \left|\int\big((\rho^{n+1})^{\alpha-1}-(\rho^n)^{\alpha-1}\big)\nabla\rho^{n+1}\cdot\nabla v^n\cdot
		\phi^{n+1}dx
		+\int(\rho^n)^{\alpha-1}\nabla r^{n+1}\cdot\nabla v^n \cdot \phi^{n+1}dx\right|\\
		\le&C\|(\rho^{n+1})^{\alpha-1}-(\rho^n)^{\alpha-1}\|_{L^2}\|\nabla\rho^{n+1}\|_{L^\infty}\|\nabla v^n\|_{L^3}\|\phi^{n+1}\|_{L^6}\\
        &\quad+C\|\nabla r^{n+1}\|_{L^2}\|\nabla v^n\|_{L^3}\|\phi^{n+1}\|_{L^6}\\
		\le&C\|r^{n+1}\|_{L^2}\|\phi^{n+1}\|_{H^1}+C\|\nabla r^{n+1}\|_{L^2}\|\phi^{n+1}\|_{H^1}\\
		\le&\frac{c_2}{32}\|\phi^{n+1}\|_{H^1}^2 + C\|r^{n+1}\|_{H^1}^2.
	\end{aligned}
\end{equation}
Substituting \eqref{rn+1vtun+1}--\eqref{rhon1vn-rhon} into \eqref{M3esti}, we obtain
\begin{equation}\label{finalM3}
	|M_3| \le \frac{3}{16}c_2\|\phi^{n+1}\|_{H^1}^2+C\|r^{n+1}\|_{H^1}^2+ C\|\phi^n\|_{L^2}^2+ C\|\phi^{n+1}\|_{L^2}^2.
\end{equation}
Inserting \eqref{M2estimate}, \eqref{M1estimate}, and \eqref{finalM3} into \eqref{rhoun+1diff} yields
\begin{equation}\label{Finaluest}
	\begin{split}
		\frac12\frac{d}{dt}\int \rho^{n+1}|\phi^{n+1}|^2\,dx+\frac{c_2}{2}\|\phi^{n+1}\|_{H^1}^2
		&\le D_3\|r^{n+1}\|_{H^1}^2+D_3\|\phi^n\|_{L^2}^2+ D_3\|\phi^{n+1}\|_{L^2}^2,
	\end{split}
\end{equation}
where $D_3>0$ is a constant depending on the quantities stated in \eqref{Cdep}.

We now choose $\Lambda>0$ sufficiently large so that
\begin{equation}\label{relation}
	\Lambda c_1 - D_3 \ge \frac{c_2}{2}.
\end{equation}
Multiplying \eqref{rn+1l2} by $\Lambda$ and adding it to \eqref{Finaluest}, we obtain
\begin{equation}\label{38-1}
	\begin{aligned}
		&\Lambda\frac{d}{dt}\|r^{n+1}\|_{L^2}^2+\frac12\frac{d}{dt}\int \rho^{n+1}|\phi^{n+1}|^2\,dx+\frac{c_2}{2}\|\phi^{n+1}\|_{H^1}^2+
		\frac{c_2}{2}\|r^{n+1}\|_{H^1}^2\\
		\le& C\|r^{n+1}\|_{L^2}^2 + C\|\phi^n\|_{L^2}^2+C\|\phi^{n+1}\|_{L^2}^2.
	\end{aligned}
\end{equation}
Hence, by the definitions of $\mathfrak E^n$ and $\mathfrak D^n$,
\begin{equation}\label{GRONmathfrake}
	\frac{d}{dt}\mathfrak E^{n+1}(t)+\frac{c_2}{2}\mathfrak D^{n+1}(t)\le  D_4(\mathfrak E^{n}(t)+\mathfrak E^{n+1}(t)), 
\end{equation}
where $D_4>0$ is a constant depending on the quantities stated in \eqref{Cdep}.
Since $\mathfrak E^{n+1}(0)=0$, Gr\"onwall's inequality yields, for \(0\le t\le T\),
\[
\mathfrak E^{n+1}(t)
\le D_4 e^{D_4 t}\int_0^t \mathfrak E^n(s)\,ds
\le D_4 T e^{D_4 T}\sup_{0\le s\le T}\mathfrak E^n(s),
\]
which implies that
\begin{equation}\label{CkTeCkT}
	\sup_{0\le t\le T}\mathfrak E^{n+1}(t)
	\le D_4 T e^{D_4 T}\sup_{0\le t\le T}\mathfrak E^n(t).
\end{equation}
Integrating \eqref{GRONmathfrake} over \([0,T]\)  gives
\begin{equation}\label{Dn+1CkT}
	\int_0^T \mathfrak D^{n+1}(t)dt
	\le (\frac{c_2}{2})^{-1}(D_4 T\sup_{0\le t\le T}\mathfrak E^{n}(t)
	+D_4 T\sup_{0\le t\le T}\mathfrak E^{n+1}(t)).
\end{equation}
Combining \eqref{CkTeCkT} and \eqref{Dn+1CkT} shows that
\begin{align}\label{36-2}
	\begin{split}
	&\quad2\sup_{0\le t\le T}\mathfrak E^{n+1}(t)+\int_0^T \mathfrak D^{n+1}(t)dt\\
	&\le (2D_4 T e^{D_4 T}+\frac{2D_4 T}{c_2})\sup_{0\le t\le T}\mathfrak E^n(t)+\frac{2D_4 T}{c_2}\sup_{0\le t\le T}\mathfrak E^{n+1}(t). 
		\end{split}
\end{align}
We choose $T_7>0$ sufficiently small such that $\frac{2D_4 T_7}{c_2} \le 1$ and  let $T\le T_7$. Define $\kappa(s) = 2D_4 s e^{D_4 s} + \frac{2D_4 s}{c_2
}$, $s\in [0,T]$. Then $\kappa(\cdot)$ is monotonically increasing and satisfies $\kappa(s)\to 0$ as $s\to 0$. Moreover, it follows from \eqref{36-2} that
\begin{align*}
	\sup_{0\le t\le T}\mathfrak E^{n+1}(t)+\int_0^T \mathfrak D^{n+1}(t)dt\le \kappa(T)\left(\sup_{0\le t\le T}\mathfrak E^n(t)+\int_0^T \mathfrak D^n(t) dt\right).
\end{align*}
This completes the proof of Step 2.

\textbf{Step 3: $\{\rho^n\}_{n=1}^\infty$ and $\{v^n\}_{n=1}^\infty$ are Cauchy sequences.}\\
Let $T_8>0$ be sufficiently small, depending on the quantities in \eqref{Cdep}, such that $\kappa(T_8)\le \frac{1}{2}$. Let $T=\min\{1,T_1,T_2,T_3,T_4,T_5,T_6,T_7,T_8\}$. Then, by the contraction estimate established in Step 2, for all $n\ge 2$, 
\begin{align*}
	\sup_{0\le t\le T}\mathfrak E^{n+1}(t)+\int_0^T \mathfrak D^{n+1}(t) dt\le \frac{1}{2}\left(\sup_{0\le t\le T}\mathfrak E^n(t)+\int_0^T \mathfrak D^n(t) dt\right),
\end{align*}
we readily obtain that $\{\rho^n\}_{n=1}^\infty$ and $\{(v^n)_i\}_{n=1}^\infty$, $i=1,\dots,N$, are Cauchy sequences in $L^\infty(0,T;L^2)$ and $L^2(0,T;H^1)$.

Thus, the proof of Proposition \ref{Prop 9.3} is complete.
\end{proof}
	
We now give the proof of Lemma \ref{Lem loc}.
	\begin{proof}[Proof of Lemma \ref{Lem loc}]
		Let $T_0:=\min\{1,T_1,T_2, T_3, T_4, T_5, T_6, T_7,T_8\}$. It follows from Proposition \ref{Prop 9.3} that there exist a pair \((\rho,v)\) such that, as \(n\to\infty\), 
		\begin{align}
			\rho^n\to \rho \quad\text{in }L^{\infty}(0,T_0;L^2)\cap L^2(0,T_0;H^1),\label{conv:rho_strong_low}\\
			v^n\to v \quad\text{in }L^{\infty}(0,T_0;L^2)\cap L^2(0,T_0;H^1), \label{conv:v_strong_low}
		\end{align}
		
		We first prove that $(\rho, v)$ is a strong solution to problem \eqref{Equ2} on $\mathbb{T}^N\times[0,T_0]$ with initial data $(\rho_0,v_0)$, satisfying \eqref{10-1}. Indeed, the following uniform-in-\(n\) high-order bounds hold:
		\begin{equation}\label{TotalK}
			\begin{split}
				\sup_{0\le t\le T_0}
			(\|\rho^n\|_{H^3}^2
				&+\|\rho_t^n\|_{H^1}^2
				+\|v^n\|_{H^2}^2
				+\|v_t^n\|_{L^2}^2)\\
				&+\int_0^{T_0}
			(
				\|\rho^n\|_{H^4}^2
				+\|\rho_t^n\|_{H^2}^2
				+\|v^n\|_{H^3}^2
				+\|v_t^n\|_{H^1}^2+\|\rho^n_{tt}\|_{L^2}^2
			)dt
				\le C, 
			\end{split}
		\end{equation}
	which implies, after passing to a subsequence if necessary,
		\begin{equation}\label{conv:rho_weakstar}
			\begin{aligned}
				&\rho^n \stackrel{*}{\rightharpoonup} \rho\text{ in }L^\infty(0,T_0;H^3),\quad
				\rho_t^n \stackrel{*}{\rightharpoonup} \rho_t  \text{ in }L^\infty(0,T_0;H^1),\\
				&\rho^n \rightharpoonup \rho \text{ in }L^2(0,T_0;H^4),\quad 
				\rho_t^n \rightharpoonup \rho_t \text{ in }L^2(0,T_0;H^2),\quad \rho^n_{tt}\rightharpoonup\rho_{tt} \text{ in }L^2(0,T_0;L^2)\\
				&v^n \stackrel{*}{\rightharpoonup} v \text{ in }L^\infty(0,T_0;H^2),\quad
				v_t^n \stackrel{*}{\rightharpoonup} v_t \text{ in }L^\infty(0,T_0;L^2),\quad 
				\\
				&v^n \rightharpoonup v\text{ in }L^2(0,T_0;H^3),\quad
				v_t^n \rightharpoonup v_t \text{ in }L^2(0,T_0;H^1).
			\end{aligned}
		\end{equation}
		The convergence properties \eqref{conv:rho_strong_low}, \eqref{conv:v_strong_low} and \eqref{conv:rho_weakstar} allow us to pass to the limit in the equations, yielding that $(\rho,v)$ is a weak solution to the problem \eqref{Equ2} with the 
		initial data $(\rho_0,v_0)$. Furthermore, by the weak lower semicontinuity of norm, it follows from \eqref{TotalK} and \eqref{conv:rho_weakstar} that
		\begin{equation}
			\begin{split}
				\sup_{0\le t\le T_0}
			(\|\rho\|_{H^3}^2
				&+\|\rho_t\|_{H^1}^2
				+\|v\|_{H^2}^2
				+\|v_t\|_{L^2}^2)\\
				&+\int_0^{T_0}
				(
				\|\rho\|_{H^4}^2
				+\|\rho_t\|_{H^2}^2
				+\|v\|_{H^3}^2
                +\|v_t\|_{H^1}^2+\|\rho_{tt}\|_{L^2}^2)dt
				\le C. 
			\end{split}
		\end{equation}
		Therefore, the pair $(\rho, v)$ is a strong solution to problem \eqref{Equ2} with initial data $(\rho_0,v_0)$. Moreover, by virtue of the uniform estimates \eqref{TotalK} and the compact embedding $H^3(\mathbb{T}^N)\hookrightarrow C(\mathbb{T}^N)$ for $N=2,3$, we may apply the Aubin--Lions lemma to obtain, up to a subsequence,
		\begin{align*}
			\rho^n \to \rho \quad\text{in } C([0,T_0];C(\mathbb{T}^N)).
		\end{align*}
		This convergence, together with the uniform upper and lower bounds on $\rho^n$, implies that, for all $(x,t)\in\mathbb{T}^N\times[0,T_0]$,
		\begin{align*}
			\frac{1}{2}\underline{\rho_0}\le \rho(x,t)\le \frac{3}{2}\overline{\rho_0}.
		\end{align*}
		It remains to verify the time continuity of the solution. Since $\rho \in L^2(0,T_0;H^4)$ with $\rho_t \in L^2(0,T_0;H^2)$, and $v \in L^2(0,T_0;H^3)$ with $v_t \in L^2(0,T_0;H^1)$, it follows from the standard embedding theorem that
		\[
		\rho\in C([0,T_0];H^3), \quad v\in C([0,T_0];H^2).
		\]
		Moreover, since
		\[
		\rho_{tt}\in L^2(0,T_0;L^2),
		\]
		it follows together with $\rho_t \in L^2(0,T_0;H^2)$ that $\rho_t \in C([0,T_0];H^1)$.
		
		Next, we establish uniqueness by considering the difference of two solutions and deriving suitable energy estimates. Let \((\rho_1,v_1)\) and \((\rho_2,v_2)\) be two strong solutions of \eqref{Equ2} on \(\mathbb{T}^N\times[0,T_0]\) with regularity \eqref{10-1} and the same initial data \((\rho_0,v_0)\). Set
		\[
		r:=\rho_1-\rho_2,\qquad \phi:=v_1-v_2.
		\]
		Analogously to the derivation of \eqref{38-1}, we obtain
		\begin{equation*}
			\begin{split}
				&\frac{d}{dt}
				\Bigl(
				\Lambda\|r\|_{L^2}^2+\frac{1}{2}\int \rho_1|\phi|^2dx
				\Bigr)
				+\frac{c_2}{2}\bigl(\|r\|_{H^1}^2+\|\phi\|_{H^1}^2\bigr)\le
				C
				\Bigl(
				\Lambda\|r\|_{L^2}^2+\frac{1}{2}\int \rho_1|\phi|^2dx
				\Bigr).
			\end{split}
		\end{equation*}
		Therefore, a direct application of Gr\"onwall's inequality yields \(r\equiv 0\) and \(\phi\equiv 0\), which implies that \(\rho_1\equiv \rho_2\) and \(v_1\equiv v_2\) on \(\mathbb{T}^N\times[0,T_0]\).
		
		This completes the proof of Lemma \ref{Lem loc}.
	\end{proof}

	\section*{Acknowledgments}
	X Huang is partially supported by Chinese Academy of Sciences Project for Young Scientists in Basic Research (Grant No. YSBR-031), National Natural Science Foundation of China (Grant Nos. 12494542, 11688101) and National Key R\&D Program of China (Grant No. 2021YFA1000801). 
	
	\vspace{1cm}
	\noindent\textbf{Data availability statement.} Data sharing is not applicable to this article.
	
	\vspace{0.3cm}
	\noindent\textbf{Conflict of interest.} The authors declare that they have no conflict of interest.


\begin{thebibliography}{99}
		\bibitem{Antonelli-Bresch-Spirito}
		P. Antonelli, D. Bresch, S. Spirito, Global weak solutions of the Navier-Stokes-Korteweg equations in one dimension, arXiv:2502.17147 (2025).
		\bibitem{Bresch-Desjardins-Lin}
		D. Bresch, B. Desjardins, C.-K. Lin, On some compressible fluid models: Korteweg,
		Lubrication, and Shallow water systems, Comm. Partial Differential Equations 28 no. 2-3 (2003), 843--868.
		\bibitem{Bresch-Desjardins}
		D. Bresch, B. Desjardins, Existence of global weak solutions for a 2D viscous shallow water equations and convergence to the quasi-geostrophic model, Commun. Math. Phys. 238 no. 1-2 (2003), 211--223.
		\bibitem{Bresch-Gisclon-Violet}
		D. Bresch, M. Gisclon, I. Lacroix-Violet, On Navier–Stokes–Korteweg and Euler–Korteweg systems: spplication to quantum fluids models, Arch. Rational Mech. Anal. 233 (2019), 975--1025.
		\bibitem{Burtea-Haspot}
		C. Burtea, B. Haspot, Vanishing capillarity limit of the Navier-Stokes-Korteweg system in one dimension with degenerate viscosity coefficient and discontinuous initial density, SIAM J. Math. Anal. 54 no. 2 (2022), 1428--1469.
		\bibitem{Burtea-Haspot-40}
		C. Burtea, B. Haspot, Global existence of strong solutions to the one-dimensional Navier–Stokes–Korteweg system with strongly degenerate viscosity, Pure Appl. Anal.  no. 4 (2022), 449--485.
		\bibitem{Bresch-Vasseur-Yu}
		D. Bresch, A. Vasseur, C. Yu, Global existence of entropy-weak solutions to the compressible
		Navier-Stokes equations with nonlinear density dependent viscosities, J. Eur.
		Math. Soc. 24(5) (2022), 1791--1837.
		\bibitem{Chen-Zhang-Zhu}
		G.-Q. Chen, J. Zhang, S. Zhu, Global regular solutions of the multidimensional
		degenerate compressible Navier-Stokes equations with large initial data of spherical symmetry,
		arXiv:2512.18545 (2025).
        \bibitem{Cherrier-2012}
		P. Cherrier, A. J. Milani,  Linear and quasi-linear evolution equations in Hilbert spaces, American Mathematical Society, Providence, RI, 2012.
		\bibitem{Cho-Choe-Kim}
		Y. Cho, H. J. Choe, H. Kim, Unique solvability of the initial boundary value
		problems for compressible viscous fluids, J. Math. Pures Appl. 83 no. 2 (2004), 243--275.
		\bibitem{Dunn-Serrin}
		J. Dunn, J. Serrin, On the thermomechanics of interstitial working, Arch. Rational Mech. Anal. 88 no. 2 (1985), 95--133.
		\bibitem{Gu-Huang}
		Y. Gu, X. Huang, Global spherically symmetric classical solutions for arbitrary large
		initial data of the multi-dimensional non-isentropic compressible Navier-Stokes equations,
		arXiv:2512.24799 (2025).
		\bibitem{Guo-Jiu-Xin}
		Z. Guo, Q. Jiu, Z. Xin, Spherically symmetric isentropic compressible flows with
		density-dependent viscosity coefficients, SIAM J. Math. Anal. 39(5) (2008), 1402--1427.
		\bibitem{Germain-LeFloch}
		P. Germain, P. LeFloch, Finite energy method for compressible fluids: the Navier-Stokes-Korteweg
		model, Comm. Pure Appl. Math. 69 no. 1 (2016), 3--61.
        \bibitem{Gilbarg-Trudinger-2001}
		D. Gilbarg, N. S. Trudinger, Elliptic partial differential equations of second order, Springer-Verlag, Berlin, Heidelberg, 2001.
		\bibitem{Haspot}
		B. Haspot, Existence of global strong solution for Korteweg system with large infinite energy initial data, J. Math. Anal. Appl. 438 no. 1 (2016), 395--443.
		\bibitem{Hattori-Li}
		H. Hattori, D. Li, Solutions for two-dimensional system for materials of Korteweg type, SIAM J. Math. Anal. 25 no. 1 (1994), 85--98.
		\bibitem{Huang-Gu-Lei}
		X. Huang, Y. Gu, M. Lei, Global strong solutions with large initial data for the Cauchy problem of the multi-dimensional compressible Navier-Stokes-Korteweg system, arXiv:2602.10700 (2026).
		\bibitem{Huang-Meng-Zhang-111}
		X. Huang, W. Meng, X. Zhang, On global classical and weak solutions with
		arbitrary large initial data to the multi-dimensional viscous Saint-Venant system and compressible
		Navier-Stokes equations subject to the BD entropy condition under spherical
		symmetry, arXiv:2512.15029 (2025).
		\bibitem{Huang-Meng-Zhang-31}
		X. Huang, W. Meng, X. Zhang, Global regularity of the multi-dimensional compressible
		Navier-Stokes-Korteweg system, arXiv:2602.00455 (2026).
		\bibitem{Jüngel}
		A. Jüngel, Effective velocity in Navier-Stokes equations with third-order derivatives, Nonlinear Anal. 74 no. 8 (2011), 2813--2818.
		\bibitem{Korteweg}
		D. Korteweg, Sur la forme que prennent les équations du mouvement des fluides si l’on tient compte des forces capillaires par des variations de densité, Arch. Néer. Sci. Exactes Sér. II 6 (1901), 1--24.
		\bibitem{Kotschote}
		M. Kotschote, Strong solutions for a compressible fluid model of Korteweg type, Ann. Inst. H. Poincar\'e C Anal. Non Lin\'eaire 25 no. 4 (2008), 679--696.
		\bibitem{Li-Xin}
		J. Li, Z. Xin, Global existence of weak solutions to the barotropic compressible Navier-Stokes
		flows with degenerate viscosities, arXiv:1504.06826 (2015).
		\bibitem{Mellet-Vasseur}
		A. Mellet, A. Vasseur, On the barotropic compressible Navier-Stokes equations, Comm.
		Partial Differential Equations 32(1-3) (2007), 431--452.
		\bibitem{Vasseur-Yu}
		A. Vasseur, C. Yu, Existence of global weak solutions for 3D degenerate compressible
		Navier-Stokes equations, Invent. Math. 206(3) (2016), 935--974.
		\bibitem{Yu-Wu-2021}
		Y. Yu, X. Wu, Global strong solution of 2D Navier-Stokes-Korteweg system, Math. Methods Appl. Sci. 44 no. 14 (2021), 11231--11244.
		\bibitem{Zhang}
		X. Zhang, Spherically symmetric strong solution of compressible flow with large
		data and density-dependent viscosities, J. Math. Anal. Appl. 549(2) (2025), Paper no. 129488.
	\end{thebibliography}
\end{document}